\DeclareMathAlphabet\EuR{U}{eur}{m}{n}
\SetMathAlphabet\EuR{bold}{U}{eur}{b}{n}
\DeclareMathOperator{\aut}{Aut}
\DeclareMathOperator*{\colim}{colim}
\DeclareMathOperator{\cok}{cok}
\DeclareMathOperator{\con}{con}
\DeclareMathOperator{\Endo}{end}
\DeclareMathOperator{\ev}{ev}
\DeclareMathOperator{\Ext}{Ext}
\DeclareMathOperator{\id}{id}
\DeclareMathOperator{\im}{im}
\DeclareMathOperator{\ind}{ind}
\DeclareMathOperator{\odd}{odd}
\DeclareMathOperator{\pr}{pr}
\DeclareMathOperator{\rk}{rk}
\DeclareMathOperator{\tors}{torsion}
\DeclareMathOperator{\trf}{trf}
  \newcommand{\IC}{\mathbb{C}}
  \newcommand{\IH}{\mathbb{H}}
  \newcommand{\II}{\mathbb{I}}
  \newcommand{\IQ}{\mathbb{Q}}
   \newcommand{\Q}{\mathbb{Q}}
  \newcommand{\IR}{\mathbb{R}}
  \newcommand{\IZ}{\mathbb{Z}}
  \newcommand{\C}{\mathbb{C}}
  \newcommand{\R}{\mathbb{R}}
  \newcommand{\Z}{\mathbb{Z}}
  \newcommand{\calh}{\mathcal{H}}
  \newcommand{\calp}{\mathcal{P}}
  \newcommand{\bfko}{{\mathbf k}{\mathbf o}}  
   \newcommand{\bfKO}{{\mathbf K}{\mathbf O}}
  \newcommand{\bfp}{{\mathbf p}}
\newcommand{\eub}[1]{\underline{E}#1}
\newcommand{\bub}[1]{\underline{B}#1}
\DeclareMathOperator{\pt}{\ast}
\newcommand{\ptors}{p\text{-}\tors}
\newcommand{\xycomsquare}[8]                      
{\xymatrix{#1 \ar[r]^{#2} \ar[d]^{#4} &
#3 \ar[d]^{#5}  \\
#6\ar[r]^{#7} &
#8
}
}
\newcommand{\xycomsquareminus}[8]                      
{\xymatrix{#1 \ar[r]^-{#2} \ar[d]^-{#4} &
#3 \ar[d]^-{#5}  \\
#6\ar[r]^-{#7} &
#8
}
}
\newcounter{commentcounter}
\newcommand{\version}[1]                       
{\begin{center} last edited on #1 (or later)\\
last compiled on \today\\
name of tex-file: \jobname
\end{center}
}
\theoremstyle{plain}
\newtheorem{theorem}{Theorem}[section]
\newtheorem{lemma}[theorem]{Lemma}
\newtheorem{corollary}[theorem]{Corollary}
\newtheorem{proposition}[theorem]{Proposition}
\theoremstyle{definition}
\newtheorem{condition}[theorem]{Condition}
\newtheorem{example}[theorem]{Example}
\newtheorem{remark}[theorem]{Remark}
\newtheorem{notation}[theorem]{Notation}
\theoremstyle{remark}
\let\c@equation=\c@theorem\makeatother
\title[The topological $K$-theory of certain crystallographic groups]
{The topological $K$-theory of certain crystallographic groups}
              \author{James F. Davis}
              \email{jfdavis@indiana.edu}
              \urladdr{http://www.indiana.edu/~jfdavis/}
              \address{Department of Mathematics\\
              Indiana University\\
              Rawles Hall\\
              831 East 3rd St\\
              Bloomington, IN 47405\\
              U.S.A.}
              \author{Wolfgang L\"uck}
              \email{wolfgang.lueck@him.uni-bonn.de}
                \urladdr{http://www.him.uni-bonn.de/lueck}
              \address{Rheinische Wilhelms-Universit\"at Bonn\\
               Mathematisches Institut\\
               Endenicher Allee 62, 53115 Bonn, Germany}
               \date{March  2011}
               \keywords{group homology, topological $K$-theory, 
               (unstable) Gromov-Lawson-Rosenberg Conjecture, extensions of $\IZ^n$ by $\IZ/p$.}
    \subjclass[2000]{19L47,46L80, 53C21}
\begin{document}

\begin{abstract}
  Let $\Gamma$ be a semidirect product of the form $\IZ^n \rtimes_{\rho} \IZ/p$
  where $p$ is prime and the $\IZ/p$-action $\rho$ on $\IZ^n$ is free away from
  the origin.  We will compute the topological $K$-theory of the real and
  complex group $C^*$-algebra of $\Gamma$ and show that $\Gamma$ satisfies the
  unstable Gromov-Lawson-Rosenberg Conjecture.  On the way we will analyze the
  (co-)homology and the topological $K$-theory of the classifying spaces
  $B\Gamma$ and $\bub{\Gamma}$.  The latter is the quotient of the induced
  $\IZ/p$-action on the torus $T^n$.
\end{abstract}

\maketitle


\typeout{--------------------   Section 0: Introduction --------------------------}

\setcounter{section}{-1}
\section{Introduction}
\label{sec:Introduction}

Let $p$ be a prime.  Let $\rho \colon \IZ/p \to \aut (\IZ^n)
=\mathrm{GL}(n,\IZ)$ be a group homomorphism. Throughout this paper we will
assume:

\begin{condition}[Free conjugation action]
  \label{cond:fre_action}
  The induced action of $\IZ/p$ on $\IZ^n$ is free when restricted to $\IZ^n -
  0$.
\end{condition}

Denote by
\begin{eqnarray}
  \Gamma &  = & \IZ^n \rtimes_{\rho} \IZ/p
  \label{Gamma_is_Zn_rtimes_Z/p}
\end{eqnarray}
the associated semidirect product. Since $\Gamma$ has a finitely generated, free
abelian subgroup which is normal, maximal abelian, and has finite index,
$\Gamma$ is isomorphic to a crystallographic group.  An example of such group
$\Gamma$ is given by $\IZ^{p-1} \rtimes_{\rho} \IZ/p$ where the action $\rho$ is
given by the regular representation $\Z[\Z/p]$ modulo the ideal generated by the
norm element.  When $n =1$ and $p =2$, $\Gamma$ is the infinite dihedral group.

Let $B\Gamma := \Gamma \backslash E\Gamma$ be the classifying space of $\Gamma$.
Denote by $\eub{\Gamma}$ be the classifying space for proper group actions of
$\Gamma$. Let $\bub{\Gamma} = \Gamma \backslash \eub{\Gamma}$.  The space
$\bub{\Gamma}$ is the quotient of the torus $T^n$ under the $\IZ/p$-action
associated to $\rho$.  It is not a manifold, but an orbifold quotient.

To compute the $K$-theory of the $C^*$-algebra, we will use the Baum-Connes
Conjecture which predicts for a group $G$ that the complex and real assembly
maps
\begin{eqnarray*}
  K^{G}_n(\eub{G}) & \xrightarrow{\cong} & K_n(C^*_r(G));
  \\
  KO^{G}_n(\eub{G}) & \xrightarrow{\cong} & KO_n(C^*_r(G;\R)),
\end{eqnarray*}
are bijective for $n \in \IZ$. The point of the Baum-Connes Conjecture is that it identifies the very
hard to compute topological $K$-theory of the group $C^*$-algebra of $G$ to the  better accessible
evaluation at  $\eub{G}$ of the equivariant homology theory given by equivariant
topological $K$-theory. The Baum-Connes Conjecture has been proved for a large class of groups
which includes crystallographic groups (and many more)
in~\cite{Higson-Kasparov(2001)}.  We will later use the composite maps,
where in each case the second map is induction with the projection $\Gamma \to \{1\}$.
\begin{eqnarray*}
  K_m(C^*_r(\Gamma)) 
  \xleftarrow{\cong} & K^{\Gamma}_n(\eub{\Gamma})   \to  & K_m(\bub{\Gamma});  \\
  KO_m(C^*_r(\Gamma;\R)) 
  \xleftarrow{\cong} & KO^{\Gamma}_m(\eub{\Gamma})   \to  & KO_m(\bub{\Gamma}).  
\end{eqnarray*}

Next we describe the main results of this paper. We will show in
Lemma~\ref{lem:preliminaries_about_Gamma_and_Zn_rho}~%
\ref{lem:preliminaries_about_Gamma_and_Zn_rho:ideals} that $k = n/(p-1)$ is an
integer.  Let $\calp$ be the set of conjugacy classes $\{(P)\}$ of finite
non-trivial subgroups of $\Gamma$.

\begin{theorem}[Topological $K$-theory of the complex group $C^*$-algebra]
  \label{the:Topological_K-theory_of_the_complex_group_Cast-algebra} Let 
$\Gamma  = \Z^n \rtimes_\rho \Z/p$ be a group satisfying
  Condition~\ref{cond:fre_action}.
  \begin{enumerate}

  \item \label{the:Topological_K-theory_of_the_complex_group_Cast-algebra:explicite}
    If $ p = 2$
$$K_m(C^*_r(\Gamma)) \cong
\begin{cases}
  \IZ^{3 \cdot 2^{n-1}} & m \; \text{even};
  \\
  0 & m \; \text{odd}.
\end{cases}
$$
If $p$ is odd
$$K_m(C^*_r(\Gamma)) \cong
\begin{cases}
  \IZ^{d_{\ev}} & m \; \text{even};
  \\
  \IZ^{d_{\odd}} & m \; \text{odd};
\end{cases}
$$
where
\begin{eqnarray*}
  d_{\ev}
  & = &
  \frac{2^{(p-1)k} + p -1}{2p} + \frac{(p-1) \cdot p^{k-1}}{2} + (p-1) \cdot p^k;
  \\
  d_{\odd} 
  & = & 
  \frac{2^{(p-1)k} + p -1}{2p} -  \frac{(p-1) \cdot p^{k-1}}{2}.
\end{eqnarray*}
In particular $K_m(C^*_r(\Gamma))$ is always a finitely generated free abelian
group.

\item \label{the:Topological_K-theory_of_the_complex_group_Cast-algebra:K0_abstract}
  There is an exact sequence
$$0 \to \bigoplus_{(P) \in \calp} \widetilde{R}_{\IC}(P) \to 
K_0(C^*_r(\Gamma)) \to K_0(\bub{\Gamma}) \to 0,$$ where $\widetilde{R}_{\IC}(P)$
is the kernel of the map $R_{\IC}(P) \to \IZ$ sending the class $[V]$ of a
complex $P$-representation $V$ to $\dim_{\IC}(\IC \otimes_{\IC P} V)$.

\item \label{the:Topological_K-theory_of_the_complex_group_Cast-algebra:K1_abstract}
  The map
$$K_1(C^*_r(\Gamma)) \xrightarrow{\cong} K_1(\bub{\Gamma})$$
is an isomorphism.  Restricting to the subgroup $\IZ^n$ of $\Gamma$ induces an
isomorphism
$$K_1(C^*_r(\Gamma)) \xrightarrow{\cong} K_1(C^*_r(\IZ^n_\rho))^{\IZ/p}.$$

\end{enumerate}
\end{theorem}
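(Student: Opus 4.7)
The approach is to identify $K_m(C^*_r(\Gamma))$ with equivariant K-homology via Baum-Connes and then compare two Mayer-Vietoris long exact sequences, one for $\eub{\Gamma}$ and one for $\bub{\Gamma}$. By \cite{Higson-Kasparov(2001)} the Baum-Connes assembly map is an isomorphism for $\Gamma$, so $K_m(C^*_r(\Gamma)) \cong K^\Gamma_m(\eub{\Gamma})$, and we take $\eub{\Gamma} = \IR^n$ with its natural affine action. Condition~\ref{cond:fre_action} ensures that each non-trivial finite subgroup $P \leq \Gamma$ is cyclic of order $p$ with isolated fixed point, and a direct calculation in the semidirect product shows $C_\Gamma(P) = P = N_\Gamma(P)$. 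Choose representatives $\{P\}_{(P) \in \calp}$ for the conjugacy classes.

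The key geometric input is the standard $\Gamma$-pushout
\[
\xymatrix{
\coprod_{(P) \in \calp} \Gamma \times_P EP \ar[r] \ar[d] & \coprod_{(P) \in \calp} \Gamma/P \ar[d] \\
E\Gamma \ar[r] & \eub{\Gamma}
}
\]
(valid because $N_\Gamma P = P$), together with the analogous non-equivariant pushout obtained by collapsing each $BP$ to a point, whose homotopy colimit is $\bub{\Gamma}$. Applying $K$-homology yields two Mayer-Vietoris long exact sequences related by a natural comparison map: the identity on the $K_m(B\Gamma)$ and $K_m(BP)$ summands, and on the third summand the canonical map $K^P_m(\mathrm{pt}) \to K_m(\mathrm{pt})$, which in even degree is the augmentation $R_\IC(P) \twoheadrightarrow \IZ$ and in odd degree is zero. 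The long exact sequence of mapping cones of this comparison --- using crucially that $K^P_1(\mathrm{pt}) = 0$ --- yields that $K_1(C^*_r(\Gamma)) \to K_1(\bub{\Gamma})$ is an isomorphism and produces the short exact sequence
\[
0 \to \bigoplus_{(P) \in \calp} \widetilde{R}_\IC(P) \to K_0(C^*_r(\Gamma)) \to K_0(\bub{\Gamma}) \to 0
\]
of (ii), establishing (ii) and the first statement of (iii).

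For the restriction statement of (iii), the inclusion $\IZ^n \hookrightarrow \Gamma$ induces $\res : K^\Gamma_1(\eub{\Gamma}) \to K^{\IZ^n}_1(\eub{\Gamma}) = K_1(T^n) = K_1(C^*_r(\IZ^n_\rho))$, whose image lies in the $\IZ/p$-fixed subgroup. A similar Mayer-Vietoris comparison applied to the $\IZ/p$-quotient $T^n \to \bub{\Gamma}$, which by Condition~\ref{cond:fre_action} is branched over a finite set, together with the same odd-degree vanishing, identifies $K_1(\bub{\Gamma})$ with $K_1(T^n)^{\IZ/p}$; composing with the first isomorphism of (iii) then yields the desired restriction isomorphism. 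Part (i) follows from (ii) and (iii) by a rank count, using $|\calp| = p^k$ (proved earlier in the paper), $\rk_\IZ \widetilde{R}_\IC(\IZ/p) = p - 1$, and the previously computed ranks of $K_\ast(\bub{\Gamma})$. The main technical obstacle throughout is integrality: the rational version of each statement reduces to a standard equivariant Chern-character calculation, but the integral identifications require careful control of the boundary maps $K_1(\bub{\Gamma}) \to \bigoplus_{(P)} K_0(BP)$ to rule out hidden $p$-torsion, which ultimately rests on the Atiyah-Segal completion description of $K^\ast(BP)$ and the structure of $R_\IC(\IZ/p)$.
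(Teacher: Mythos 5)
Your overall framework (Baum--Connes, the $\Gamma$-pushout for $\eub\Gamma$, and splicing its Mayer--Vietoris sequence with the one for the quotient pushout $\bub\Gamma$) is indeed the skeleton of the paper's argument (Lemma~\ref{lem:long_exact_sequence_for_equivariant_(co-)homology}). But there is a genuine gap at the step where you claim that the resulting long exact sequence, together with $K_1^P(\pt)=0$, already forces $K_1^\Gamma(\eub\Gamma)\to K_1(\bub\Gamma)$ to be an isomorphism and produces the short exact sequence in (ii). What the splicing actually gives is a five-term exact sequence
\[
0 \to K_1^\Gamma(\eub\Gamma) \to K_1(\bub\Gamma) \xrightarrow{\;\partial\;} \bigoplus_{(P)\in\calp}\widetilde R_{\IC}(P) \to K_0^\Gamma(\eub\Gamma)\to K_0(\bub\Gamma)\to 0,
\]
from which one only gets \emph{injectivity} in degree one. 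Both assertions you want are equivalent to $\partial=0$, and nothing in your argument rules out a nontrivial boundary map into the rank-$(p-1)p^k$ free abelian group on the right. The paper closes this gap by an entirely separate computation: it first computes the equivariant $K$-\emph{cohomology} $K^*_\Gamma(\eub\Gamma)$ (Theorem~\ref{the:equivariant_K-cohomology_of_eub(Gamma)}, using the Atiyah--Segal completion theorem and a delicate analysis of the Leray--Serre and Atiyah--Hirzebruch spectral sequences), shows it is finitely generated free, and then applies the equivariant universal coefficient theorem (Theorem~\ref{the:Universal_coefficient_theorem_for_equivariant_K-theory} together with Lemma~\ref{lem:ext_RC(G)_versus_Ext_Z}) to deduce $K^\Gamma_m(\eub\Gamma)\cong\hom_\IZ(K_\Gamma^m(\eub\Gamma),\IZ)$. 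Only once one knows $K_1^\Gamma(\eub\Gamma)$ and $K_1(\bub\Gamma)$ are \emph{both} free abelian of the same rank $\sum_l r_{2l+1}$ can one conclude: the cokernel of the injection is then simultaneously finite (equal ranks) and torsion-free (a subgroup of $\bigoplus\widetilde R_\IC(P)$), hence zero. Your proposal never computes the rank of $K_1^\Gamma(\eub\Gamma)$ independently of $K_1(\bub\Gamma)$, so this argument is unavailable.

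Two smaller points. First, the second isomorphism in (iii) — $K_1(C^*_r(\Gamma))\cong K_1(C^*_r(\IZ^n_\rho))^{\IZ/p}$ — is not established by a ``Mayer--Vietoris comparison applied to the branched cover $T^n\to\bub\Gamma$'' in any sense you make precise; the quotient map only controls $K_*(\bub\Gamma)$ after inverting $p$ (Proposition~\ref{prop:quotient_iso}), and the subgroup of $\IZ/p$-\emph{invariants} in $K_1(T^n)$ is not the $K$-homology of any space. The paper instead uses the double coset formula to identify $\iota^*\circ\iota_*$ with the norm map, shows its cokernel $\widehat H^0(\IZ/p;K_1(C^*_r(\IZ^n)))$ vanishes via Tate duality and the Chern-character decomposition of $K^*(T^n)$, and concludes by matching ranks. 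Second, the map on the third MV summand is the induction map $K_m^P(\pt)\to K_m(\pt)$, which in degree zero sends $[V]\mapsto\dim_\IC(\IC\otimes_{\IC P}V)$ (the multiplicity of the trivial summand), not the augmentation $[V]\mapsto\dim_\IC V$; this is harmless for the rank count but does matter for identifying $\widetilde R_\IC(P)$ correctly as in the statement of (ii).
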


\begin{remark}[Twisted group algebras]\label{rem:relation_to_elpw}
  The computation of
  Theorem~\ref{the:Topological_K-theory_of_the_complex_group_Cast-algebra} has
  already been carried out in the case $p = 2$ and in the case $n = 2$ and 
  $p =3$ in~\cite[Theorem~0.4,
  Example~3.7]{Echterhof-Lueck-Philipps-Walters(2010)}. In view
  of~\cite[Theorem~0.3]{Echterhof-Lueck-Philipps-Walters(2010)} the computation
  presented in this paper yields also computations for the topological
  $K$-theory $K_*(C^*_r(\Gamma,\omega))$ of twisted group algebras for
  appropriate cocycles $\omega$. One may investigate whether the whole program
  of~\cite{Echterhof-Lueck-Philipps-Walters(2010)} can be carried over to the
  more general situation considered in this paper.
\end{remark}

\begin{remark}[Computations by Cuntz and Li]
  \label{rem:Cuntz-Lie}
  Cuntz and Li~\cite{Cuntz-Li(2009integers)} compute the $K$-theory of
  $C^*$-algebras that are associated with rings of integers in number
  fields. They have to make the assumption that the algebraic number field
  contains only $\{\pm 1\}$ as roots of unity. This is related to our
  computation in the case $p = 2$.  Our results, in particular, if we could
  handle instead of a prime $p$ any natural number, may be useful to extend
  their program to the arbitrary case. However, the complexity we already
  encounter in the case of a prime $p$ shows that this is a difficult task.
\end{remark}

We are also interested in the slightly more difficult real case because of
applications to the question whether a closed smooth spin manifold carries a
Riemannian metric with positive scalar curvature (see
Theorem~\ref{the:The_(unstable)_Gromov-Lawson-Rosenberg_Conjecture_holds_for_Gamma}).
The numbers $r_l$ appearing in the next theorem will be defined in~\eqref{r_m}
and analyzed in Subsection~\ref{subsec:On_the_numbers_r_m}.

\begin{theorem}[Topological $K$-theory of the real group $C^*$-algebra]
  \label{the:Topological_K-theory_of_the_real_group_Cast-algebra}
  Let $p$ be an odd prime. Let $\Gamma = \Z^n \rtimes \Z/p$ be a group
  satisfying Condition~\ref{cond:fre_action}. Then for all $m \in \IZ$ :

  \begin{enumerate}

  \item \label{the:Topological_K-theory_of_the_real_group_Cast-algebra:explicite}
$$KO_m(C^*_r(\Gamma;\IR)) \cong
\begin{cases}
  \IZ^{p^k(p-1)/2} \oplus \left(\bigoplus_{l=0}^n KO_{m-l}(\pt)^{r_{l}}\right) &
  m \; \text{even};
  \\
  \bigoplus_{l=0}^n KO_{m-l}(\pt)^{r_{l}} & m \; \text{odd}.
\end{cases}
$$

\item \label{the:Topological_K-theory_of_the_real_group_Cast-algebra:K0_abstract}
  There is an exact sequence
$$0 \to \bigoplus_{(P) \in \calp} \widetilde{KO}_{2m}^{\IZ/p}(\pt) \to 
KO_{2m}(C^*_r(\Gamma;\IR)) \to KO_{2m}(\bub{\Gamma}) \to 0,$$ where
$\widetilde{KO}_m^{\IZ/p}(\pt) = \ker \left( KO_m^{\IZ/p}(\pt) \to
  KO_m(\pt)\right) \cong \IZ^{(p-1)/2}$.  The exact sequence is split after
inverting $p$.

\item \label{the:Topological_K-theory_of_the_real_group_Cast-algebra:K1_abstract}
  The map
$$KO_{2m+1}(C^*_r(\Gamma;\IR)) \xrightarrow{\cong} KO_{2m+1}(\bub{\Gamma})$$
is an isomorphism.  Restricting to the subgroup $\IZ^n$ of $\Gamma$ induces an
isomorphism
$$KO_{2m+1}(C^*_r(\Gamma;\IR)) \xrightarrow{\cong} KO_{2m+1}(C^*_r(\IZ^n_\rho;\IR))^{\IZ/p}.$$

\end{enumerate}
\end{theorem}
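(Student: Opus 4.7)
The plan is to parallel the complex-case argument of Theorem~\ref{the:Topological_K-theory_of_the_complex_group_Cast-algebra}.  First, I invoke the Baum--Connes Conjecture for $\Gamma$, known via Higson--Kasparov since $\Gamma$ is amenable, to identify $KO_m(C^*_r(\Gamma;\IR))$ with $KO^\Gamma_m(\eub{\Gamma})$.  Condition~\ref{cond:fre_action} then supplies the key structural input: a short computation in $\IZ^n \rtimes_\rho \IZ/p$ shows that every non-trivial finite subgroup $P \cong \IZ/p$ is self-normalizing, because a conjugator $(v,\tau^k) \in \Gamma$ must satisfy $(1-\rho(\tau))v = 0$, and the free conjugation action forces $v=0$.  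This permits the Davis--L\"uck pushout
\[\xymatrix@C=3em{\coprod_{(P) \in \calp} \Gamma \times_P EP \ar[r] \ar[d] & E\Gamma \ar[d] \\ \coprod_{(P) \in \calp} \Gamma/P \ar[r] & \eub{\Gamma}}\]
and, after passing to $\Gamma$-quotients, its non-equivariant analogue with corners $\coprod BP$, $B\Gamma$, $\coprod \pt$, and $\bub{\Gamma}$.

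Applying equivariant $KO$-homology to the first pushout and ordinary $KO$-homology to the second yields two Mayer--Vietoris long exact sequences related by a natural comparison map.  Three of the four corners carry free $\Gamma$-actions, and there the comparison is an isomorphism; on the remaining corner $\Gamma/P$ it reduces to the restriction $KO^{\IZ/p}_m(\pt) \to KO_m(\pt)$, which is split surjective via the trivial representation with kernel $\widetilde{KO}^{\IZ/p}_m(\pt)$.  A diagram chase, using this splitting to kill the connecting maps, collapses the comparison to the short exact sequence
\[0 \to \bigoplus_{(P) \in \calp} \widetilde{KO}^{\IZ/p}_m(\pt) \to KO_m(C^*_r(\Gamma;\IR)) \to KO_m(\bub{\Gamma}) \to 0.\]
Invoking the standard fact that for odd $p$ one has $\widetilde{KO}^{\IZ/p}_m(\pt) \cong \IZ^{(p-1)/2}$ for $m$ even and vanishing for $m$ odd establishes (ii) and the first isomorphism of (iii).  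The splitting after inverting $p$ asserted in (ii) comes from the transfer $KO_m(\bub{\Gamma}) \to KO^\Gamma_m(\eub{\Gamma})$, whose composition with the projection is multiplication by an element that is invertible in $\IZ[1/p]$.

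For the second isomorphism in (iii), the restriction $KO^\Gamma_m(\eub{\Gamma}) \to KO^{\IZ^n}_m(\eub{\Gamma}) = KO_m(T^n)$ factors through the $\IZ/p$-invariants because $\IZ^n \triangleleft \Gamma$.  I would analyse it via the Lyndon--Hochschild--Serre spectral sequence
\[E^2_{s,t} = H_s(\IZ/p;\, KO_t(T^n)) \Longrightarrow KO^\Gamma_{s+t}(\eub{\Gamma}),\]
combined with the earlier explicit description of the $\IZ/p$-action on $KO_*(T^n)$.  For odd $p$, all terms with $s \geq 1$ are $p$-torsion; the short exact sequence of (ii) together with the $p$-torsion-freeness of $KO_{2m+1}(\bub{\Gamma})$ (which follows from the paper's earlier cellular analysis of $\bub{\Gamma}$) forces these terms to vanish in odd total degree, making the spectral sequence collapse and identifying the edge map with the asserted restriction.

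Finally, part (i) is obtained by substituting the paper's explicit computation of $KO_m(\bub{\Gamma})$ (in terms of the constants $r_l$) into the short exact sequence of (ii) and using the preliminary lemma's count $|\calp| = p^k$ to produce the summand $\IZ^{p^k(p-1)/2}$.  The main obstacle I anticipate lies in part (iii): the homological LHS spectral sequence naturally maps into $\IZ/p$-coinvariants rather than invariants, so I must use the norm map $KO_*(T^n)_{\IZ/p} \to KO_*(T^n)^{\IZ/p}$ (an isomorphism after inverting $p$, and on the nose in the relevant odd degrees once the targets are known to be $p$-torsion free) to identify the target correctly.  This step is the technical heart of the argument and depends on the explicit structure of the $\IZ/p$-action on $KO_*(T^n)$ developed earlier, compounded by the greater delicacy of the $8$-periodic real theory, where one must keep track of real, complex, and quaternionic constituents of the representation theory of $\IZ/p$.
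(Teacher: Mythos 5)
For parts (i), (ii), and the first isomorphism in (iii), your approach matches the paper's: invoke Baum--Connes via Higson--Kasparov, use the cellular $\Gamma$-pushout~\eqref{G-pushoutfor_EGamma_to_eunderbar_Gamma} and its quotient~\eqref{pushout_for_BGamma_to_bunderbar(Gamma)}, splice the resulting Mayer--Vietoris sequences, and observe that $\widetilde{KO}_m^{\IZ/p}(\pt)$ vanishes in odd degrees, which kills the connecting maps. This is exactly the content of Lemma~\ref{lem:long_exact_sequence_for_equivariant_(co-)homology} together with Theorem~\ref{the:Equivariant_KO-homology}.

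For the second isomorphism in (iii), however, you have a genuine gap. You propose a spectral sequence $E^2_{s,t} = H_s(\IZ/p;\, KO_t(T^n)) \Rightarrow KO^\Gamma_{s+t}(\eub{\Gamma})$, but this is not correct: a Lyndon--Hochschild--Serre-type spectral sequence with that $E^2$-term abuts to the \emph{Borel} $KO$-homology $KO_{s+t}(T^n \times_{\IZ/p} E\IZ/p) \cong KO_{s+t}(B\Gamma)$, not to the genuine equivariant $KO$-homology $KO^\Gamma_{s+t}(\eub\Gamma) \cong KO^{\IZ/p}_{s+t}(T^n)$. Since $\Gamma$ has torsion, these two targets differ (indeed, the whole point of computing $KO^\Gamma_*(\eub\Gamma)$ rather than $KO_*(B\Gamma)$ is that the assembly map $KO_*(B\Gamma) \to KO_*(C^*_r(\Gamma;\IR))$ is \emph{not} an isomorphism here). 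Your worries about invariants versus coinvariants, the $8$-fold periodicity, and the Tate obstruction to the norm map being an isomorphism on the nose are all real, but they are moot because the spectral sequence never existed in the first place; and the quaternionic constituents you flag do not actually appear for $\IZ/p$ with $p$ odd. The paper sidesteps all of this by bootstrapping off the complex case: the natural transformations $i_*\colon KO_* \to K_*$ and $r_*\colon K_* \to KO_*$ satisfy $r_* \circ i_* = 2\cdot\id$, so Theorem~\ref{the:Topological_K-theory_of_the_complex_group_Cast-algebra}~\ref{the:Topological_K-theory_of_the_complex_group_Cast-algebra:K1_abstract} gives bijectivity of $\iota^*$ after inverting $2$; then, since $p$ is odd, one checks bijectivity after inverting $p$ using the already-established isomorphism $KO_{2m+1}(C^*_r(\Gamma;\IR)) \cong KO_{2m+1}(\bub\Gamma)$, Proposition~\ref{prop:quotient_iso} to identify $KO_{2m+1}(B\IZ^n_\rho)_{\IZ/p}[1/p] \xrightarrow{\cong} KO_{2m+1}(\bub\Gamma)[1/p]$, and the fact that the norm map $M_{\IZ/p} \to M^{\IZ/p}$ is automatically an isomorphism after inverting $p$. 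That two-prime argument is both shorter and correct, and you should adopt it rather than a spectral-sequence computation of $KO^\Gamma_*(\eub\Gamma)$ that has no valid input.
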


If $M$ is a closed spin manifold of dimension $m$ with fundamental group $G$,
one can define an invariant $\alpha(M) \in KO_m(C^*_r(G;\R))$ as the index of a
Dirac operator.  If $M$ admits a metric of positive scalar curvature, then
$\alpha(M)= 0$.  This theory and connections with the Gromov-Lawson-Rosenberg
Conjecture will be reviewed in
Subsection~\ref{subsec:The_Gromov-Lawson-Rosenberg_Conjecture}.

\begin{theorem}[(Unstable) Gromov-Lawson-Rosenberg Conjecture]
  \label{the:The_(unstable)_Gromov-Lawson-Rosenberg_Conjecture_holds_for_Gamma}
  Let $p$ be an odd prime.  Let $M$ be a closed spin manifold of dimension $m
  \geq 5$ and fundamental group $\Gamma$ as defined
  in~\eqref{Gamma_is_Zn_rtimes_Z/p}.  Then $M$ admits a metric of positive
  scalar curvature if and only if $\alpha(M)$ is zero.  Moreover if $m$ is odd,
  then $M$ admits a metric of positive scalar curvature if and only if the
  $p$-sheeted covering associated to the projection $\Gamma \to \IZ/p$ does.
\end{theorem}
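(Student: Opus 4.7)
The plan is to apply Stolz's reduction of the unstable Gromov-Lawson-Rosenberg conjecture to a question in connective $ko$-homology, and to feed in the $KO$-computations from Theorem~\ref{the:Topological_K-theory_of_the_real_group_Cast-algebra}. The easy direction, psc implies $\alpha(M) = 0$, is the classical Lichnerowicz-Atiyah-Singer vanishing argument, extended to the $C^*$-algebraic setting by Rosenberg. For the converse, Stolz's theorem reduces the problem to showing that every element of $\ker\bigl(\alpha\colon ko_m(B\Gamma)\to KO_m(C^*_r(\Gamma;\IR))\bigr)$ is represented by a closed spin manifold admitting a metric of positive scalar curvature; a spin bordism from $M$ to such a representative, together with surgery below the middle dimension (permitted since $m\geq 5$), then yields a psc metric on $M$ itself.

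To control $\ker\alpha$ we combine the Atiyah-Hirzebruch spectral sequence for $ko_*(B\Gamma)$ with the computation of $KO_*(C^*_r(\Gamma;\IR))$ established in Theorem~\ref{the:Topological_K-theory_of_the_real_group_Cast-algebra}. The semidirect product structure $\Gamma = \IZ^n \rtimes_\rho \IZ/p$, together with Condition~\ref{cond:fre_action}, restricts the isotropy occurring in $\eub{\Gamma}$ to the trivial subgroup and to conjugates of $\IZ/p$. This decomposes $\ker\alpha$ into a contribution supported over the free locus, controlled through the torsion-free subgroup $\IZ^n$ (where $\alpha$-vanishing spin classes can be realised by psc spin manifolds via surgery on tori), and a contribution supported at the finite isotropy subgroups of order $p$, where Rosenberg's GLR for groups of periodic cohomology applies and psc representatives can be exhibited directly via lens-space bordism. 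Putting these ingredients together identifies $\ker\alpha$ with the subgroup generated by psc-representable classes, and completes the proof of the first assertion.

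For the second assertion, let $\widetilde M \to M$ be the $p$-sheeted cover corresponding to the projection $\Gamma\to\IZ/p$, so $\pi_1(\widetilde M) = \IZ^n$. By naturality of the $\alpha$-invariant under restriction along subgroups, the restriction homomorphism $KO_m(C^*_r(\Gamma;\IR)) \to KO_m(C^*_r(\IZ^n_\rho;\IR))$ carries $\alpha(M)$ to $\alpha(\widetilde M)$. In odd degrees, this map is an isomorphism onto the $\IZ/p$-fixed points by Theorem~\ref{the:Topological_K-theory_of_the_real_group_Cast-algebra}\ref{the:Topological_K-theory_of_the_real_group_Cast-algebra:K1_abstract}, so $\alpha(M) = 0 \Leftrightarrow \alpha(\widetilde M) = 0$, and the claimed equivalence of psc on $M$ and on the $p$-sheeted cover follows from the first part applied to both manifolds. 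The main obstacle throughout is the identification of $\ker\alpha$ inside $ko_m(B\Gamma)$: this requires connective information beyond the periodic $KO$-computations already in hand, and calls for geometric bordism input to produce psc representatives that are compatible with the $\IZ/p$-action coming from the freeness of $\rho$ on $\IZ^n-0$.
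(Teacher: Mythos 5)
Your overall architecture matches the paper's: reduce via the Stolz--Jung surgery theorem to a statement about $D[f_M] \in ko_m(B\Gamma)$, split the $ko$-homology of $B\Gamma$ into a "free" piece and a piece coming from the finite isotropy subgroups $(P)\in\calp$, and dispose of the isotropy piece using GLR for odd-order cyclic groups together with a geometric lift from $ko_*(BP)$ to spin bordism. The treatment of the second assertion (the $p$-sheeted cover) is also essentially the paper's argument, using the isomorphism of Theorem~\ref{the:Topological_K-theory_of_the_real_group_Cast-algebra}~\ref{the:Topological_K-theory_of_the_real_group_Cast-algebra:K1_abstract} in odd degrees.

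However, the way you describe the "free" contribution is not right, and this is where the actual content of the proof lives. You say that over the free locus "$\alpha$-vanishing spin classes can be realised by psc spin manifolds via surgery on tori." Tori do not carry psc metrics, and in fact no positive construction is needed or possible there: what the paper actually proves is that the image $\beta(D[f_M])\in ko_m(\bub\Gamma)$ is a $p$-torsion element (via the injectivity of $p_{\bub\Gamma}$ after inverting $p$, which reduces through Proposition~\ref{prop:quotient_iso} to the injectivity of $ko_m(B\IZ^n)\to KO_m(B\IZ^n)$), and then cites the computation in Theorem~\ref{the:ko-homology} to see that the relevant groups $ko_{2m}(B\Gamma)$ and $ko_{2m+1}(\bub\Gamma)$ have \emph{no} $p$-torsion. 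This forces $D[f_M]=0$ when $m$ is even, and forces $\beta(D[f_M])=0$ when $m$ is odd, so that $D[f_M]$ lifts along $\bigoplus_{(P)}\widetilde{ko}_m(BP)\to ko_m(B\Gamma)$. Your proposal never makes the even/odd case distinction, never uses the $p$-local injectivity of $p_{\bub\Gamma}$, and never invokes a connective analogue of the $KO$-computations (the paper supplies this as Theorem~\ref{the:ko-homology}), so the claimed "decomposition of $\ker\alpha$" is a placeholder rather than an argument. You do flag honestly at the end that "this requires connective information beyond the periodic $KO$-computations already in hand," but that connective information, together with the $p$-local injectivity statement and the surjectivity of $\widetilde{\Omega}^{\mathrm{Spin}}_*(B\IZ/p)\to\widetilde{ko}_*(B\IZ/p)$ (Lemma~\ref{lem_widetildeOmega_spin_to_widetilde_ko_is-surjective}), is precisely where the work is; as written your plan asserts the conclusion of that work rather than carrying it out, and the one piece of mechanism you do name --- psc representatives over the torus --- is not how the free part is controlled.
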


\begin{example}\label{exa:pos_scal}
  Here is an example where the last sentence of
  Theorem~\ref{the:The_(unstable)_Gromov-Lawson-Rosenberg_Conjecture_holds_for_Gamma}
  applies.  Choose an odd integer $k > 1$.  Let $M$ be a balanced product $S^k
  \times_\Gamma \R^n$ where $\Gamma$ acts on the sphere via the projection
  $\Gamma \to \Z/p$ and a free action of $\Z/p$ on the sphere and $\Gamma$ acts
  on $\R^n$ via its crystallographic action.  Then its $p$-fold cover $S^k
  \times T^n$ admits a metric of positive scalar curvature since it is a spin
  boundary or since it is a product of a closed manifold with a closed Riemannian manifold with positive scalar curvature, 
and hence $M$ admits a metric of positive scalar curvature.
\end{example}

\begin{remark} 
  Notice that
  Theorem~\ref{the:The_(unstable)_Gromov-Lawson-Rosenberg_Conjecture_holds_for_Gamma}
  is not true for $\IZ^4 \times \IZ/3$ (see Schick~\cite{Schick(1998e)}),
  whereas it is true for $\IZ^4\rtimes_{\rho} \IZ/3$ for appropriate $\rho$ by
  Theorem~\ref{the:The_(unstable)_Gromov-Lawson-Rosenberg_Conjecture_holds_for_Gamma}.
\end{remark}

The computation of the topological $K$-theory of the reduced complex group
$C^*$-algebra $C^*_r(\Gamma)$ and of the reduced real group $C^*$-algebra
$C^*_r(\Gamma;\IR)$ will be done in a sequence of steps, passing in each step to
a more difficult situation.

We will first compute the (co-)homology of $B\Gamma$ and $\bub{\Gamma}$. A
complete answer is given in
Theorem~\ref{the:Cohomology_of_BGamma_and_bub(Gamma)} and
Theorem~\ref{the:Homology_of_BGamma_and_bub(Gamma)}.

Then we will analyze the complex and real topological $K$-cohomology and
$K$-homology of $B\Gamma$ and $\bub{\Gamma}$.  A complete answer is given in
Theorem~\ref{the:K-cohomology_of_BGamma_and_bub(Gamma)},
Theorem~\ref{the:K-homology_of_BGamma_and_bub(Gamma)},
Theorem~\ref{the:KO-cohomology_of_BGamma_and_bub(Gamma)} and
Theorem~\ref{the:KO-homology_of_BGamma_and_bub(Gamma)} except for the exact
structure of the $p$-torsion in $K^{2m+1}(\bub{\Gamma})$,
$KO^{2m+1}(\bub{\Gamma})$, $K_{2m}(\bub{\Gamma})$, and $KO_{2m}(\bub{\Gamma})$.

In the third step we will compute the equivariant complex and real topological
$K$-theory of $\eub{\Gamma}$, and hence the $K$-theory of the complex and real
$C^*$-algebras of $\Gamma$.  A complete answer is given in
Theorem~\ref{the:Topological_K-theory_of_the_complex_group_Cast-algebra} and
Theorem~\ref{the:Topological_K-theory_of_the_real_group_Cast-algebra}.  It is
rather surprising that we can give a complete answer although we do not know the
full answer for $\bub{\Gamma}$.

Finally we use the Baum-Connes Conjecture to prove
Theorem~\ref{the:Topological_K-theory_of_the_complex_group_Cast-algebra} and
Theorem~\ref{the:Topological_K-theory_of_the_real_group_Cast-algebra} in
Sections~\ref{sec:Topological_K-theory_of_the_group_Cast-algebra}.

The proof of
Theorem~\ref{the:The_(unstable)_Gromov-Lawson-Rosenberg_Conjecture_holds_for_Gamma}
will be presented in
Section~\ref{sec:The_(unstable)_Gromov-Lawson-Rosenberg_Conjecture_holds_for_Gamma}.

Although we are interested in the homological versions, it is important in each
step to deal first with the cohomological versions as well since we will make
use of the multiplicative structure and the Atiyah-Segal Completion Theorem.

This paper was financially supported by the Hausdorff Institute for Mathematics,
the Max-Planck-Institut f\"ur Mathematik, the Sonderforschungsbereich 478 \---
Geometrische Strukturen in der Mathematik \---, the NSF-grant of the first
author, and the Max-Planck-Forschungspreis and the Leibniz-Preis of the second
author. We thank the referee for his detailed report.

The paper is organized as follows:
\tableofcontents


\typeout{------------   Section 1: Group cohomology ------------}

\section{Group cohomology}
\label{sec:Group_cohomology}

In this section we compute the cohomology of $B\Gamma$ and $\eub{\Gamma}$ for
the group $\Gamma$ defined in~\eqref{Gamma_is_Zn_rtimes_Z/p}.  It fits into a
split exact sequence
\begin{eqnarray}
  &
  1 \to \IZ^n \xrightarrow{\iota}  \Gamma \xrightarrow{\pi} \IZ/p \to 1 
  &
  \label{Gamma_as_extension}
\end{eqnarray} 
We write the group operation in $\IZ/p$ and $\Gamma$ multiplicatively and in
$\IZ^n$ additively.  We fix a generator $t \in \Z/p$ and denote the value of
$\rho(t)$ by $\rho \colon \IZ^n \to \IZ^n$.  When wish to emphasize that $\IZ^n$
is a $\IZ[\IZ/p]$-module, we denote it by $\IZ^n_\rho$.


\subsection{Statement of the computation of the cohomology}

\begin{notation}[$\eub{G}$ and $\bub{G}$]
  \label{not:eub(G)_andbub(G)}
  For a discrete group $G$ we let $\eub{G}$ denote the \emph{classifying space
    for proper $G$-actions}.  Let $\bub{G}$ be the quotient space
  $G\backslash\eub{G}$.
\end{notation}

Recall that a model for the classifying space for proper $G$-actions is a
$G$-$CW$-complex $\eub{G}$ such that $\eub{G}^H$ is contractible if $H \subset
G$ is finite and empty otherwise.  Two models are $G$-homotopy equivalent.
There is a $G$-map $EG \to \eub G$ which is unique up to $G$-homotopy.  Hence
there is a map $BG \to \bub G$, unique up to homotopy.  If $G$ is torsion-free,
then $EG = \eub{G}$ and $BG = \bub{G}$. For more information about $\eub{G}$ we
refer for instance to the survey article~\cite{Lueck(2005s)}.

We will write $H^m(G)$ and $H_m(G)$ instead of $H^m(BG)$ and $H_m(BG)$.

\begin{example}[$\eub{\Gamma}$ and $\bub{\Gamma}$]
  Since the group $\Gamma$ is crystallographic and hence acts properly on
  $\IR^n$ by smooth isometries, a model for $\eub{\Gamma}$ is given by $\IR^n$
  with this $\Gamma$-action. In particular $\bub{\Gamma}$ is a quotient of the
  $n$-torus $T^n$ by a $\IZ/p$-action.
\end{example}

The main result of this section is the computation of the group cohomology of
$B\Gamma$ and $\bub{\Gamma}$.  Most of the calculation for $H^*(B\Gamma)$ has
already been carried out by Adem~\cite{Adem(1987)} and later, with different
methods, by Adem-Ge-Pan-Petrosyan~\cite{Adem-Ge-Pan-Petrosyan(2008)}.  The
computation of $H^*(\bub\Gamma)$ has recently and independently obtained by
different methods by Adem-Duman-Gomez~\cite{Adem-Duman-Gomez(2010)}.  We include
a complete proof since the techniques will be needed later when we compute
topological $K$-theory.

Let
$$N = t^0 + t + \dots + t^{p-1} \in \IZ[\IZ/p]$$ 
be the \emph{norm element}. Denote by $I(\IZ/p)$ the \emph{augmentation ideal},
i.e., the kernel of the augmentation homomorphism $\IZ[\IZ/p] \to \IZ$. Let
$\zeta = e^{2\pi i / p} \in \IC$ be a primitive $p$-th root of unity. We have
isomorphisms of $\IZ[\IZ/p]$-modules
$$
\IZ[\IZ/p]/N \cong \IZ[\zeta] \cong I(\IZ/p).
$$

Define natural numbers for $m,j,k \in \IZ_{\ge 0}$.
\begin{eqnarray}
  r_m  & := & \rk_{\IZ}\left(\left(\Lambda^m(\IZ[\zeta]^k\right)^{\IZ/p}\right);
  \label{r_m}
  \\
  a_j &: = & \left|\{(\ell_1, \dots, \ell_k) \in \IZ^k \mid \ell_1+ \dots + \ell_k = j, 0
    \leq \ell_i \leq p-1\}\right|;
  \label{a_j}
  \\ 
  s_m & := & \sum_{j=0}^{m-1} a_j,
  \label{s_m}
\end{eqnarray}  
where here and in the sequel $\Lambda^m$ means the $m$-th exterior power of a
$\IZ$-module.  Notice that these numbers $r_m$, $a_j$ and $s_m$ depend on $k$
but we omit this from the notation since $k$ will be determined by the equation
$n = k(p-1)$ (see Lemma~\ref{lem:preliminaries_about_Gamma_and_Zn_rho}%
~\ref{lem:preliminaries_about_Gamma_and_Zn_rho:ideals}) and hence by $\Gamma$.
Note that $r_0 = 1, r_1 = 0, a_0 = 1, a_1=k, s_0 = 0, s_1 = 1$, and $s_2 = k+1$.
We will give more information about these numbers in
Subsection~\ref{subsec:On_the_numbers_r_m}.

\begin{theorem}[Cohomology of $B\Gamma$ and $\bub{\Gamma}$]\
  \label{the:Cohomology_of_BGamma_and_bub(Gamma)}
  \begin{enumerate}

  \item \label{the:Cohomology_of_BGamma_and_bub(Gamma):Hm(BGamma)} For $m \ge 0$
$$H^m(\Gamma) \cong 
\begin{cases}
  \IZ^{r_m} \oplus (\IZ/p)^{s_{m}} & m \;\text{even;}
  \\
  \IZ^{r_m} & m \;\text{odd.}
\end{cases}
$$

\item \label{the:Cohomology_of_BGamma_and_bub(Gamma):Gamma_to_Zn} For $m \ge 0$
  the restriction map
$$ H^m(\Gamma) \to H^m(\IZ^n_\rho)^{\IZ/p}$$
is split surjective.  The kernel is isomorphic to $(\IZ/p)^{s_m}$ if $m$ is even
and 0 if $m$ is odd.

\item \label{the:Cohomology_of_BGamma_and_bub(Gamma):restriction_to-p-subgr} The map induced
by the various inclusions
$$
 \varphi^{m} \colon {H}^{m}(\Gamma)  \to \bigoplus_{(P) \in \calp} H^{m}(P)
$$
is bijective for $m > n$.

\item
  \label{the:Cohomology_of_BGamma_and_bub(Gamma):bub(Gamma)}
  For $m \ge 0$
$$H^m(\bub{\Gamma}) \cong 
\begin{cases}
  \IZ^{r_m} & m \;\text{even;}
  \\
  \IZ^{r_m} \oplus (\IZ/p)^{p^k-s_m} & m \;\text{odd}, m \ge 3;
  \\
  0 & m = 1.
\end{cases}
$$

\end{enumerate}

\end{theorem}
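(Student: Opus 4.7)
Plan. My strategy is to run the Lyndon–Hochschild–Serre (LHS) spectral sequence on the split extension~(\ref{Gamma_as_extension}) to obtain (i)--(iii), and then exploit the Leray spectral sequence of the natural map $B\Gamma\to\bub{\Gamma}$ to derive (iv) from the previous parts.

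For (i) and (ii), I analyze
$$E_2^{p,q} = H^p\bigl(\IZ/p;H^q(\IZ^n_\rho)\bigr)\Longrightarrow H^{p+q}(\Gamma).$$
By Lemma~\ref{lem:preliminaries_about_Gamma_and_Zn_rho}, $\IZ^n_\rho\cong\IZ[\zeta]^k$ as $\IZ[\IZ/p]$-modules, so $H^q(\IZ^n_\rho)\cong\Lambda^q(\IZ[\zeta]^k)^{\vee}$ as $\IZ[\IZ/p]$-modules (up to a twist that doesn't affect group cohomology). By definition $r_q=\rk_{\IZ}\bigl((\Lambda^q(\IZ[\zeta]^k))^{\IZ/p}\bigr)$ handles the $E_2^{0,q}$ column. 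The short exact sequence $0\to\IZ[\zeta]\to\IZ[\IZ/p]\to\IZ\to 0$, together with the cohomological triviality of $\IZ[\IZ/p]$, exhibits $\IZ[\zeta]$ as a Tate-cohomology dimension shift of $\IZ$; this is the engine for computing $H^p(\IZ/p;\Lambda^q)$ with $p\ge 1$. A multi-degree filtration of $\Lambda^q(\IZ[\zeta]^k)$, whose $\IF_p$-reductions have dimensions summing to $a_q$, identifies the total $p$-torsion in total degree $m$ as $(\IZ/p)^{s_m}$, concentrated in even $m$ by parity. The section $\sigma\colon\IZ/p\to\Gamma$ splits the edge map $H^*(\Gamma)\twoheadrightarrow H^*(\IZ/p)$; combined with the parity separation between the columns $p=0$ (torsion-free) and $p\ge 1$ ($p$-torsion), this forces collapse at $E_2$ and the extensions to split as abelian groups. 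This yields (i); and (ii) is then the statement that the other edge map $H^m(\Gamma)\twoheadrightarrow E_\infty^{0,m}=H^m(\IZ^n_\rho)^{\IZ/p}$ is split surjective with kernel $(\IZ/p)^{s_m}$ in even degree and $0$ in odd.

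Part (iii) follows from Farrell–Tate cohomology. Since $\Gamma$ is virtually torsion-free with virtual cohomological dimension $n$, we have $H^m(\Gamma)\cong\hat H^m(\Gamma)$ for all $m>n$. A short calculation from Condition~\ref{cond:fre_action} shows that each finite subgroup $P\in\calp$ is self-normalizing in $\Gamma$: any translation in $\IZ^n$ normalizing $P$ would have to be fixed by $\rho$, hence zero. So $W_\Gamma(P)$ is trivial, and since every finite subgroup of $\Gamma$ has prime order $p$, Brown's decomposition gives $\hat H^*(\Gamma)\cong\bigoplus_{(P)\in\calp}\hat H^*(P)$, from which (iii) follows using $\hat H^m(P)=H^m(P)$ for $m>0$.

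For (iv), I apply the Leray spectral sequence of the map $f\colon B\Gamma\to\bub{\Gamma}$ induced by $[e,x]\mapsto[x]$ on the model $E\Gamma\times_\Gamma\eub{\Gamma}\simeq B\Gamma$. The homotopy fiber over $[x]\in\bub{\Gamma}$ is the classifying space of the isotropy group, which is trivial away from the image $\bar F\subset\bub{\Gamma}$ of the $p^k$-point fixed set of the $\IZ/p$-action on $T^n$. The higher direct image sheaves $R^q f_*\IZ$ are therefore skyscraper sheaves at $\bar F$ with stalks $H^q(\IZ/p)$ for $q>0$, yielding
$$E_2^{p,q}=\begin{cases} H^p(\bub{\Gamma};\IZ) & q=0,\\ (\IZ/p)^{p^k} & p=0,\ q>0\text{ even},\\ 0 & \text{otherwise},\end{cases}$$
converging to $H^{p+q}(\Gamma)$. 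The only possibly nonzero differentials are $d_{q+1}\colon(\IZ/p)^{p^k}\to H^{q+1}(\bub{\Gamma})$ for $q>0$ even. Combining these with the known $H^*(\Gamma)$ from (i) and the dimensional bound $H^m(\bub{\Gamma})=0$ for $m>n$ (orbifold dimension) pins down $H^*(\bub{\Gamma})$ as claimed; the parity flip (even torsion in $H^*(\Gamma)$, odd torsion in $H^*(\bub{\Gamma})$) comes from the shift by one in the differential. The main obstacles are, first, in (i)--(ii), ruling out higher LHS differentials (the section handles only the bottom row, so one must combine it with parity and the $\IZ[\IZ/p]$-module structure of $\Lambda^q$); and second, in (iv), the extension problems in the filtration of $H^*(\Gamma)$ that can be nonsplit (for instance for $\IZ^2\rtimes\IZ/3$ the inclusion $H^2(\bub{\Gamma})=\IZ\hookrightarrow H^2(\Gamma)=\IZ\oplus(\IZ/3)^2$ is of index $3$), forcing careful bookkeeping to extract the correct ranks and torsion orders.
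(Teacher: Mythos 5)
Your overall plan — the Lyndon--Hochschild--Serre spectral sequence for (i)--(ii), the cellular decomposition (whether via Leray of $B\Gamma\to\bub\Gamma$ or via the $\Gamma$-pushout, the two are equivalent here since the singular locus is discrete) for (iv) — is the same as the paper's, and your Farrell--Tate treatment of (iii) is a legitimate and clean alternative to the paper's dimension argument. But there are two genuine gaps in the two places where the real content lies, and in both you have correctly \emph{identified} the obstacle without actually \emph{resolving} it.

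First, the collapse of the LHS spectral sequence. The checkerboard pattern kills all differentials $d_r^{i,j}$ with $i>0$, and the split section $\sigma\colon\IZ/p\to\Gamma$ kills the differentials entering the bottom row — but those were already zero by parity. The differentials that can actually be nonzero are $d_r^{0,j}$ for $j$ odd, whose source $E_2^{0,j}=H^j(\IZ^n_\rho)^{\IZ/p}$ is torsion-free and whose target is $p$-torsion; there is no parity obstruction, and a surjection $\IZ\to\IZ/p$ is a perfectly good homomorphism. The paper kills these by noting $\widehat H^0(\IZ/p;H^j(\IZ^n_\rho))=0$ for $j$ odd, so the norm map $N=\iota^j\circ\trf^j$ onto $H^j(\IZ^n_\rho)^{\IZ/p}$ is surjective, hence the edge map $\iota^j$ is surjective and $E_2^{0,j}=E_\infty^{0,j}$. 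Your phrase ``combined with parity and the $\IZ[\IZ/p]$-module structure'' does not replace this; some form of the transfer argument (or an explicit cochain-level construction) is required. Also a small imprecision: $\IZ^n_\rho$ is a sum of ideals of $\IZ[\zeta]$, not $\IZ[\zeta]^k$ on the nose; the two only agree after localizing at $p$, which is exactly what one needs for Tate cohomology, but the reduction should be stated.

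Second, and more seriously, part (iv). Once one has the five-term exact sequences
$$0 \to H^{2m}(\bub{\Gamma}) \to H^{2m}(\Gamma) \xrightarrow{\varphi^{2m}} (\IZ/p)^{p^k} \to H^{2m+1}(\bub{\Gamma}) \to H^{2m+1}(\Gamma) \to 0,$$
knowing $H^*(\Gamma)$ and the ambient $(\IZ/p)^{p^k}$ does not determine $\ker\varphi^{2m}$ and $\operatorname{im}\varphi^{2m}$; one needs, e.g., to show that $\varphi^{2m}$ is injective on the torsion subgroup of $H^{2m}(\Gamma)$ and to compute $|\operatorname{im}\varphi^{2m}|=p^{s_{2m+1}}$. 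The dimensional bound $H^m(\bub\Gamma)=0$ for $m>n$ only pins down the high-degree range and cannot be propagated downward across these short exact sequences. The paper's key input (Lemma~\ref{lem:computing_kernel_and_image_of_phi2m}) is the multiplicative argument: one identifies $\ker\varphi^{2m}$ with the kernel of cup product with $\pi^*(\beta)^n$, where $\beta$ generates $H^2(\IZ/p)$, and reads this off from the multiplicative structure of the LHS spectral sequence. Your proposal flags the issue (``forcing careful bookkeeping'') but offers no mechanism; this is not a matter of bookkeeping but a missing idea, and without it the proof of (iv) does not close.

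Your Farrell--Tate route for (iii) is correct: $\Gamma$ has $\operatorname{vcd}=n$, the finite subgroups are exactly the $p^k$ conjugacy classes of order-$p$ subgroups with trivial Weyl group, and distinct such subgroups intersect trivially, so Brown's decomposition identifies $\widehat H^*(\Gamma)$ with $\bigoplus_{(P)}\widehat H^*(P)$ and $\widehat H^m=H^m$ in degrees $>\operatorname{vcd}$. The paper instead deduces (iii) from the pushout long exact sequence plus $\dim\bub\Gamma\le n$; the two arguments have the same depth, and yours is perhaps slightly more conceptual.
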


\begin{remark}[Multiplicative structure]
\label{rem:multiplicative_structure}
A transfer argument shows that the kernel of the restriction map 
$H^m(\Gamma) \to H^m(\IZ^n)$ is $p$-torsion.
Theorem~\ref{the:Cohomology_of_BGamma_and_bub(Gamma)} together with
the exact sequence~\eqref{long_exact_cohomology_sequences_for_bub(Gamma)_BGamma)} implies
that the map induced by the restrictions to the various subgroups
$$H^m(\Gamma) \to H^m(\IZ^n) \oplus  \bigoplus_{(P) \in \calp} H^{m}(P)$$
is injective. The multiplicative structure of the target is obvious. This allows in principle
to detect the multiplicative structure on $H^*(\Gamma)$.
\end{remark}

\subsection{Proof of Theorem~\ref{the:Cohomology_of_BGamma_and_bub(Gamma)}}
\label{subsec:Proof_of_Theorem_ref(the:Cohomology_of_BGamma_and_bub(Gamma))}

The proof of Theorem~\ref{the:Cohomology_of_BGamma_and_bub(Gamma)} needs some
preparation.

\begin{lemma} \label{lem:preliminaries_about_Gamma_and_Zn_rho}
  \begin{enumerate}
  \item \label{lem:preliminaries_about_Gamma_and_Zn_rho:ideals} We have an
    isomorphism of $\Z[\Z/p]$-modules,
$$\IZ^n_\rho \cong I_1 \oplus \dots \oplus I_k,$$
where the $I_j$ are non-zero ideals of $\IZ[\zeta]$.

We have
\begin{eqnarray*}
  \IZ^n_\rho \otimes \IQ & \cong & \IQ(\zeta)^k;
  \\
  n  & = & k(p-1).
\end{eqnarray*}

\item \label{lem:preliminaries_about_Gamma_and_Zn_rho:finite_subgroups} Each
  non-trivial finite subgroup $P$ of $\Gamma$ is isomorphic to $\IZ/p$ and its
  Weyl group $W_{\Gamma}\!P := N_{\Gamma}P/P$ is trivial.

\item \label{lem:preliminaries_about_Gamma_and_Zn_rho:list_of_finite_subgroups}
  There are isomorphisms
$$
H^1(\IZ/p;\IZ^n_{\rho}) \xrightarrow{\cong} \cok(\rho -\id \colon \IZ^n \to
\IZ^n) \cong (\IZ/p)^k;
$$
and a bijection
$$
\cok\bigl(\rho -\id \colon \IZ^n \to \IZ^n\bigr) \xrightarrow{\cong} \calp :=
\{(P)\mid P \subset \Gamma, 1 < |P| < \infty \}.
$$
If we fix an element $s \in \Gamma$ of order $p$, the bijection sends the
element $\overline{u} \in \IZ^n_\rho/(1-\rho)\IZ^n_\rho$ to the subgroup of order
$p$ generated by $us$.

\item \label{lem:preliminaries_about_Gamma_and_Zn_rho:order_of_calp} We have
  $|\calp| = p^k$.

\item \label{lem:preliminaries_about_Gamma_and_Zn_rho:fixed_set} There is a
  bijection from the $\IZ/p$-fixed set of the $\IZ/p$-space $T^n_{\rho} :=
  \R^n_\rho/\Z^n_\rho$ with $H^1(\Z/p; \Z^n_\rho)$.  In particular
  $(T^n_\rho)^{\IZ/p}$ consists of $p^k$ points.

\item \label{lem:preliminaries_about_Gamma_and_Zn_rho:commutator}
  $[\Gamma,\Gamma] = \im\left(\rho - \id \colon \IZ^n \to \IZ^n\right)$.

\item \label{lem:preliminaries_about_Gamma_and_Zn_rho:abelianization}
  $\Gamma/[\Gamma,\Gamma] \cong \cok(\rho -\id \colon \IZ^n \to \IZ^n) \oplus
  \IZ/p = (\IZ/p)^{k+1}$.

\end{enumerate}

\end{lemma}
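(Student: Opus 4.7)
The plan is to prove the seven items essentially in the order given, with part~(i) feeding into the rest. The foundational observation is that the freeness hypothesis forces the norm element $N = 1 + \rho + \cdots + \rho^{p-1}$ to vanish on $\IZ^n_\rho$: the image of $N$ lies in the fixed submodule $(\IZ^n)^{\IZ/p} = \ker(\rho-\id)$, and any nonzero vector in this kernel would violate freeness. Thus $\IZ^n_\rho$ becomes a finitely generated torsion-free module over $\IZ[\IZ/p]/(N) \cong \IZ[\zeta]$, and the structure theorem for modules over the Dedekind domain $\IZ[\zeta]$ yields the decomposition $I_1 \oplus \cdots \oplus I_k$ in~(i). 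Rationalizing gives $\IZ^n_\rho \otimes \IQ \cong \IQ(\zeta)^k$, forcing $n = k(p-1)$.

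For~(ii), the intersection $P \cap \IZ^n$ is a finite subgroup of a torsion-free group and therefore trivial, so $P$ embeds into $\Gamma/\IZ^n = \IZ/p$. The Weyl group claim uses a fixed-point argument: tensoring the freeness hypothesis with $\IR$ (via the observation that $\ker(\rho-\id)$ is cut out by rational equations) shows the linear $\IZ/p$-action on $\IR^n$ is free off the origin, so every order-$p$ element of $\Gamma$, as an affine isometry of $\IR^n$, has a unique fixed point. Any $h \in N_\Gamma P$ centralizes a generator $g$ of $P$ (since the image of $h g h^{-1}$ in the abelian quotient $\IZ/p$ must agree with that of $g$, which is a generator), hence fixes the unique fixed point of $g$, hence lies in the point stabilizer, which is $P$. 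For~(vi)--(vii), since $\IZ^n$ and $\IZ/p$ are abelian, $[\Gamma,\Gamma]$ is generated by commutators $[v,s] = v - \rho(v)$; the subgroup $(1-\rho)\IZ^n$ is $\rho$-stable (a $\IZ[\rho]$-submodule) and the quotient $\Gamma/(1-\rho)\IZ^n \cong \cok(1-\rho) \rtimes \IZ/p$ is abelian because $\rho$ acts as the identity on the cokernel. This yields~(vi), and combined with the order count $|\cok(\rho-\id)| = p^k$ from~(iii) identifies the abelianization as $(\IZ/p)^{k+1}$.

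For~(iii)--(v), apply the standard periodic resolution of $\IZ$ over $\IZ[\IZ/p]$: since $N$ kills $\IZ^n_\rho$, we get $H^1(\IZ/p;\IZ^n_\rho) \cong \IZ^n/(\rho-\id)\IZ^n$. By~(i) and the fact that $(1-\zeta)$ is a prime of $\IZ[\zeta]$ with residue field $\IF_p$, each summand contributes an $\IF_p$, giving $(\IZ/p)^k$. For the bijection to $\calp$, fix $s \in \Gamma$ of order $p$ and send $[u]$ to the conjugacy class of $\langle us \rangle$. A direct computation shows $(us)^p = (Nu)\,s^p = 1$ so $us$ has order $p$, and $v(us)v^{-1} = (u + (1-\rho)v)\,s$ (with the analogous formula for conjugation by $vs^j$ also differing from $u$ by an element of $\im(\rho-\id)$), so the map is well-defined. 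Surjectivity uses that any order-$p$ element $ws^j$ has a power of the form $us$ (choose $m$ with $jm \equiv 1 \bmod p$), and injectivity follows by comparing $s$-components inside a given cyclic subgroup. Part~(iv) is immediate, and for~(v) the long exact $\IZ/p$-cohomology of $0 \to \IZ^n \to \IR^n \to T^n \to 0$, combined with the vanishing of $H^i(\IZ/p;\IR^n)$ for $i \geq 0$ (the higher cohomology vanishes by unique divisibility; the invariants vanish by the same freeness-over-$\IR$ argument used in~(ii)), gives the required isomorphism $(T^n)^{\IZ/p} \cong H^1(\IZ/p;\IZ^n_\rho)$.

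The main obstacle is the bijection in~(iii). The well-definedness, injectivity, and surjectivity all rest on careful bookkeeping between the multiplicative structure of the semidirect product and the $\IZ[\IZ/p]$-module structure on $\IZ^n_\rho$; one must simultaneously track how conjugation affects the $\IZ^n$ coordinate (contributing $(1-\rho)v$) and the $\IZ/p$ coordinate (contributing $(\rho^j-\id)u$, which also lies in $\im(\rho-\id)$). Once~(iii) is in hand, the other parts follow quickly from~(i) and the commutator calculation.
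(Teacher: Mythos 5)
Your proof is correct and follows essentially the same route as the paper's: the norm element $N$ is forced to be zero, the Dedekind structure theorem gives the decomposition, and the rest of the items are deduced as in the paper. The one small deviation is that for $\cok(\rho-\id)\cong(\IZ/p)^k$ you give a direct, self-contained argument from the ideal decomposition and the fact that $(1-\zeta)$ is a prime of $\IZ[\zeta]$ with residue field $\IF_p$, where the paper instead forward-references the Tate cohomology computation of Lemma~\ref{lem:Hochschild-Serre_ss_forKast(BGamma)}; the remaining differences merely fill in details the paper leaves as ``obvious'' or ``easily checked'' (the Weyl-group triviality via unique fixed points in $\IR^n$, and the bookkeeping for the bijection $\cok(\rho-\id)\to\calp$).
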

\begin{proof}~\ref{lem:preliminaries_about_Gamma_and_Zn_rho:ideals} Let $u \in \IZ^n_\rho$.
  Then $N \cdot u $ is fixed by the action of $t \in \IZ/p$ and hence is zero by
  assumption.  Thus $\IZ^n_\rho$ is a finitely generated module over the
  Dedekind domain $\IZ[\IZ/p]/N = \IZ[\zeta]$. Any finitely generated
  torsion-free module over a Dedekind domain is isomorphic to a direct sum of
  non-zero ideals (see~\cite[page~11]{Milnor(1971)}).  Since $I_j \otimes \IQ
  \cong \IQ (\zeta)$, we see $\rk_{\IZ}(I_j) = p-1$.  
\\[2mm]~\ref{lem:preliminaries_about_Gamma_and_Zn_rho:finite_subgroups} This is
  obvious.  
\\[2mm]~\ref{lem:preliminaries_about_Gamma_and_Zn_rho:list_of_finite_subgroups} 
Since  the norm element $N$ acts trivially on $\IZ^n_{\rho}$, we get
$$
\cok\bigl(\rho - \id \colon \IZ^n \to \IZ^n\bigr) = H^1(\IZ/p;\IZ^n_{\rho}).
$$
We will show
$$H^1(\IZ/p;\IZ^n_\rho) \cong \widehat{H}^0(\IZ/p;H^1(\IZ^n_\rho)) \cong (\IZ/p)^{k}$$
in Lemma~\ref{lem:Hochschild-Serre_ss_forKast(BGamma)}~%
\ref{lem:Hochschild-Serre_ss_forKast(BGamma):Tate_cohomology}. One easily checks
that the map $\cok\bigl(\rho -\id \colon \IZ^n \to \IZ^n\bigr) \to \calp$ is
bijective.  \\[2mm]%
\ref{lem:preliminaries_about_Gamma_and_Zn_rho:order_of_calp} This follows from
assertion~\ref{lem:preliminaries_about_Gamma_and_Zn_rho:list_of_finite_subgroups}.
\\[2mm]%
\ref{lem:preliminaries_about_Gamma_and_Zn_rho:fixed_set} Consider the short
exact sequence of $\Z[\Z/p]$-modules
$$
0 \to \Z^n_\rho \to \R^n_\rho \to T^n_\rho \to 0
$$
Then the long exact cohomology sequence
$$
(\Z^n_\rho)^{\Z/p} \to (\R^n_\rho)^{\Z/p} \to (T^n_\rho)^{\Z/p} \to H^1(\Z/p;
\Z^n_\rho) \to H^1(\Z/p; \R^n_\rho)
$$
is isomorphic to
$$
0 \to 0 \to (T^n_\rho)^{\Z/p} \to (\Z/p)^k \to 0~.
$$
\\[2mm]%
\ref{lem:preliminaries_about_Gamma_and_Zn_rho:commutator} For $(i,p) = 1$ we
have $(\zeta^i - 1 )/ (\zeta - 1) \in \IZ[\zeta]^\times$ and hence we get $\ker
(\rho - \id) = \ker (\rho^i - \id) = 0$ and $\im (\rho - \id) = \im (\rho^i -
\id)$. This implies
$$[\Gamma,\Gamma] = \im\left(\rho - \id  \colon \IZ^n \to
  \IZ^n\right).$$ \\[2mm]%
\ref{lem:preliminaries_about_Gamma_and_Zn_rho:abelianization} The isomorphism
$$\cok\bigl(\rho -\id \colon \IZ^n \to \IZ^n\bigr) \oplus \IZ/p 
\xrightarrow{\cong} \Gamma/[\Gamma,\Gamma]
$$ sends $(\overline{u},\overline{i}) \mapsto \overline{us^i}$. 
\end{proof}

Next will analyze the \emph{Hochschild-Serre Spectral sequence}
(see~\cite[page~171]{Brown(1982)})
$$E^{i,j}_2 = H^i(\IZ/p; H^j(\IZ^n_\rho)) \Rightarrow H^{i+j}(\Gamma)$$
of the extension~\eqref{Gamma_as_extension}. We say that a spectral sequence
\emph{collapses} if all differentials $d^{i,j}_r$ are trivial for $r \ge 2$ and
all extension problems are trivial.  The basic properties of the Tate cohomology
$\widehat{H}^i(G;M)$ of a finite group $G$ with coefficients in a $\Z
[G]$-module $M$ are reviewed in Appendix~\ref{tate_and_transfer}.

\begin{lemma}
  \label{lem:Hochschild-Serre_ss_forKast(BGamma)}\
  \begin{enumerate}
  \item \label{lem:Hochschild-Serre_ss_forKast(BGamma):Tate_cohomology}
$$\widehat{H}^i(\IZ/p;H^j(\IZ^n_{\rho})) \cong
\bigoplus_{\substack{\ell_1 + \dots +\ell_k = j\\0 \leq \ell_q\leq p-1}}
\widehat{H}^{i+j}(\IZ/p;\Z) =
\begin{cases} (\IZ/p)^{a_j} & i+j \;\text{even}; \\ 0 & i +j \;\text{odd}.
\end{cases}
$$

\item \label{lem:Hochschild-Serre_ss_forKast(BGamma):collapse} The
  Hochschild-Serre spectral sequence associated to the
  extension~\eqref{Gamma_as_extension} collapses.

\end{enumerate}
\end{lemma}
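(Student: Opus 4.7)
My overall plan is to handle the two parts separately. For (i), I decompose the coefficients using Lemma~\ref{lem:preliminaries_about_Gamma_and_Zn_rho}(i) and reduce the computation to the Tate cohomology of exterior powers of $\IZ_{(p)}[\zeta]$. For (ii), I combine a parity argument with the module structure of the spectral sequence over $H^*(\IZ/p;\IZ)$.

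For (i), combine $H^j(\IZ^n_\rho;\IZ) \cong \Lambda^j((\IZ^n_\rho)^*)$ with $\IZ^n_\rho \cong I_1 \oplus \cdots \oplus I_k$ from Lemma~\ref{lem:preliminaries_about_Gamma_and_Zn_rho}(i). K\"unneth (applied to $BI_1 \times \cdots \times BI_k$) splits $H^j(\IZ^n_\rho)$ $\IZ[\IZ/p]$-equivariantly as a direct sum, indexed by tuples $(\ell_1,\ldots,\ell_k)$ with $\sum_q \ell_q = j$ and $0 \le \ell_q \le p-1$, of $\bigotimes_q \Lambda^{\ell_q}(I_q^*)$; the number of tuples is $a_j$. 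Since Tate cohomology is $p$-torsion, I may tensor with $\IZ_{(p)}$. Each $I_q$ (and its dual $I_q^*$) becomes isomorphic to $\IZ_{(p)}[\zeta]$ after $p$-localization, because $\IZ_{(p)}[\zeta]$ is a DVR --- a Dedekind ring with unique prime $(1-\zeta)$ over $p$ --- so every non-zero fractional ideal is principal. It therefore suffices to show that for each valid tuple,
$$\widehat{H}^i\bigl(\IZ/p;\, \bigotimes_q \Lambda^{\ell_q}(\IZ_{(p)}[\zeta])\bigr) \;\cong\; \widehat{H}^{i+j}(\IZ/p;\IZ).$$

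The core technical statement is that, for $0 \le \ell \le p-1$, the $\IZ_{(p)}[\IZ/p]$-module $\Lambda^\ell(\IZ_{(p)}[\zeta])$ is isomorphic to $\IZ_{(p)} \oplus F$ if $\ell$ is even, and to $\IZ_{(p)}[\zeta] \oplus F$ if $\ell$ is odd, where $F$ is free over $\IZ_{(p)}[\IZ/p]$. This rests on the Heller--Reiner classification of $\IZ_{(p)}$-torsion-free indecomposable $\IZ_{(p)}[\IZ/p]$-modules --- only $\IZ_{(p)}$, $\IZ_{(p)}[\zeta]$, and $\IZ_{(p)}[\IZ/p]$ occur --- combined with a direct count using the rank $\binom{p-1}{\ell}$ and the rank of $\Lambda^\ell(\IZ_{(p)}[\zeta])^{\IZ/p}$. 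The single-factor identity $\widehat{H}^i(\IZ/p;\IZ[\zeta]) \cong \widehat{H}^{i+1}(\IZ/p;\IZ)$ is immediate from the short exact sequence $0 \to \IZ \xrightarrow{N} \IZ[\IZ/p] \to \IZ[\zeta] \to 0$ and the vanishing on the free summand. The general tensor case then follows inductively from the cup product for Tate cohomology, using that each factor is a shift of $\IZ_{(p)}$ modulo a cohomologically trivial summand.

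For (ii), by part (i) the entry $E_2^{i,j}$ for $i \ge 1$ is non-zero only when $i + j$ is even. Since each differential $d_r \colon E_r^{i,j} \to E_r^{i+r, j-r+1}$ flips the parity of $i + j$, whenever source and target both lie in positive columns ($i, i+r \ge 1$) at most one can be non-zero, forcing $d_r = 0$. The only remaining candidates are $d_r \colon E_r^{0,j} \to E_r^{r, j-r+1}$ with $j$ odd, where the source $H^j(\IZ^n_\rho)^{\IZ/p}$ is torsion-free and the target is $p$-torsion. Here I use the $H^*(\IZ/p;\IZ)$-module structure of the spectral sequence: the periodicity generator $u \in H^2(\IZ/p;\IZ)$ acts on $E_2^{0,j}$ as the composite $(H^j)^{\IZ/p} \to \widehat{H}^0(\IZ/p; H^j) \cong \widehat{H}^2(\IZ/p; H^j)$, which vanishes for $j$ odd by part (i); but on the target of $d_r$ the generator $u$ acts invertibly by Tate periodicity in positive columns. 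Since $d_r$ commutes with $u$, the identity $u \cdot d_r(x) = d_r(u \cdot x) = 0$ forces $d_r(x) = 0$. Extension problems on $E_\infty$ split because in each total degree the torsion-free $i = 0$ contribution and the $p$-torsion $i \ge 1$ contributions occupy separate parities of $j$. The main obstacle throughout is the single-factor structure result for $\Lambda^\ell(\IZ_{(p)}[\zeta])$; the rest reduces to formal manipulations with spectral sequences and cup products.
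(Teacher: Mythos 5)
Your overall decomposition of $H^j(\IZ^n_\rho)$ via $\IZ^n_\rho \cong I_1 \oplus \dots \oplus I_k$, the $p$-localization to reduce to $\IZ_{(p)}[\zeta]$, and the splitting into tensor products of exterior powers all match the paper. The difference is in how you handle the single factor $\Lambda^\ell(\IZ_{(p)}[\zeta])$, and there you have a genuine gap. You claim the decomposition $\Lambda^\ell(\IZ_{(p)}[\zeta]) \cong \IZ_{(p)}^{a}\oplus\IZ_{(p)}[\zeta]^{b}\oplus(\text{free})^{c}$ with $(a,b)=(1,0)$ for $\ell$ even, $(0,1)$ for $\ell$ odd, follows from Heller--Reiner plus ``a direct count using the rank $\binom{p-1}{\ell}$ and the rank of $\Lambda^\ell(\IZ_{(p)}[\zeta])^{\IZ/p}$.'' But those two ranks give only the two equations $a + (p-1)b + pc = \binom{p-1}{\ell}$ and $a + c = \rk(\Lambda^\ell)^{\IZ/p}$, which determine only $a-b$ and $a+c$, not $a$ and $b$ separately. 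Concretely, for $p=5,\ell=2$ one gets $a+c=2$ and $a+4b+5c=6$, which admit both $(a,b,c)=(1,0,1)$ and $(a,b,c)=(2,1,0)$; only the first is correct, and your stated input cannot distinguish them. You would need to compute $\widehat H^0$ or $\widehat H^1$ directly to pin down $a$ or $b$ --- which is exactly what is being proved. The paper avoids this circularity entirely: it observes that $\Lambda^\ell \IZ[\IZ/p]$ is a \emph{free} $\IZ[\IZ/p]$-module for $1 \le \ell \le p-1$ (because $\IZ/p$ acts freely on the $\ell$-element subsets of a $p$-element set when $p$ is prime), so the short exact sequence
$$0 \to \Lambda^{\ell-1}\IZ[\zeta] \to \Lambda^{\ell}\IZ[\IZ/p] \to \Lambda^{\ell}\IZ[\zeta] \to 0$$
gives a degree shift in Tate cohomology by dimension shifting, and the computation goes through by induction on $\ell_1+\dots+\ell_k$ without any appeal to the Heller--Reiner classification.

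For part (ii), your vanishing of the differentials $d_r^{0,j}$ ($j$ odd) via the $H^*(\IZ/p;\IZ)$-module structure --- $u$ kills $E_2^{0,j}$ because $\widehat H^0(\IZ/p;H^j)=0$, while $u$ is invertible in positive columns --- is a valid and attractive alternative to the paper's argument, which instead shows the edge map $H^j(\Gamma)\to H^j(\IZ^n_\rho)^{\IZ/p}$ is onto because the norm $\iota^j\circ\trf^j$ is surjective when $\widehat H^0$ vanishes. (One should make the bootstrap over $r$ explicit: once $d_2^{0,*}=0$ one has $E_3=E_2$ in the relevant range, so the $u$-periodicity persists, and so on.) However, your treatment of the extension problem does not work. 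The filtration of $H^m(\Gamma)$ has the torsion-free piece $E_\infty^{0,m}$ at the top and, below it, a tower $F^1 \supset F^2 \supset \cdots$ whose successive quotients $E_\infty^{i,m-i}$ ($i\ge 1$) are elementary abelian $p$-groups. That $E_\infty^{0,m}$ is free abelian does show $F^1$ splits off, but a priori $F^1$ itself, being an iterated extension of $(\IZ/p)$'s, could contain elements of order $p^2$; the ``separate parities of $j$'' observation does not address this. The paper disposes of it with a transfer argument: $\trf^m\circ\iota^m = p\cdot\id$ on $H^m(\Gamma)$, and $H^m(\IZ^n_\rho)$ is torsion-free, so the torsion subgroup of $H^m(\Gamma)$ has exponent $p$; hence $F^1$ is elementary abelian and the filtration is split. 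You need some such argument to finish the proof of collapse.
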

\begin{proof}~\ref{lem:Hochschild-Serre_ss_forKast(BGamma):Tate_cohomology} 
There is a sequence of $\IZ[\IZ/p]$-isomorphisms
$$H^1(\IZ^n_{\rho}) \cong \hom_{\IZ}(H_1(\IZ^n_{\rho}),\IZ) \cong 
\hom_{\IZ}(\IZ^n_{\rho},\IZ) \cong \IZ^n_{\rho^*},$$ where $\rho(t)^* \colon
\IZ^n \to \IZ^n$ for $t \in \IZ/p$ is given by the transpose of the matrix
describing $\rho(t) \colon \IZ^n \to \IZ^n$.  The natural map given by the
product in cohomology
$$\Lambda^jH^1(\IZ^n)  \xrightarrow{\cong} H^j(\IZ^n)$$
is bijective and hence is a $\IZ[\IZ/p]$-isomorphism by naturality.  Thus we
obtain a $\IZ[\IZ/p]$-isomorphism
\begin{eqnarray*}
  H^j(\IZ^n_{\rho}) &\cong & \Lambda^j\IZ^n_{\rho^{\ast}}.
\end{eqnarray*}  
Given a non-zero ideal $I \subset \IZ[\zeta]$, There exists an isomorphism of $\IZ_{(p)}[\zeta]$-modules
$$I \otimes \IZ_{(p)} \xrightarrow{\cong} 
\IZ[\zeta] \otimes _{\IZ}\IZ_{(p)} = \IZ_{(p)}[\zeta].$$ This is true since
$\IZ_{(p)}[\zeta]$ is a discrete valuation ring, hence all ideals are principal.
Since $\Z^n_{\rho^*}$ is isomorphic to a direct sum of ideals of $\Z[\zeta]$, we
obtain for an appropriate natural number $k$ isomorphisms of $\IZ[\zeta] \otimes
_{\IZ}\IZ_{(p)} = \IZ_{(p)}[\zeta]$-modules
\begin{eqnarray*}
  H^j(\IZ_{\rho}^n) \otimes _{\IZ}\IZ_{(p)} \cong
  \Lambda^j\IZ^n_{\rho^*} \otimes _{\IZ}\IZ_{(p)} \cong 
  \Lambda^j\IZ[\zeta]^k \otimes _{\IZ}\IZ_{(p)}. 
\end{eqnarray*}
For every $\IZ[\IZ/p]$-module $M$ the obvious map
$$\widehat{H}^i(\IZ/p;M) \to \widehat{H}^i(\IZ/p;M \otimes _{\IZ}\IZ_{(p)})$$
is bijective. Hence we obtain an isomorphism
$$\widehat{H}^i(\IZ/p;H^j(\IZ^n_{\rho})) \cong
\widehat{H}^i(\IZ/p;\Lambda^j\IZ[\zeta]^k).$$ Since
$$
\Lambda^*\bigl(\bigoplus_k\IZ[\zeta])\bigr) = \bigotimes_k \Lambda^*
(\IZ[\zeta])
$$ 
and $\Lambda^l(\IZ[\zeta]) = 0$ for $l \ge p$, we get
$$
\Lambda^j(\IZ[\zeta]^k)) = \bigoplus_{\substack{\ell_1 + \dots +\ell_k = j\\0
    \leq \ell_q\leq p-1}} \Lambda^{\ell_1} \IZ[\zeta] \otimes \dots \otimes
\Lambda^{\ell_k} \IZ[\zeta].
$$
Therefore we obtain an isomorphism
$$\widehat{H}^i(\IZ/p;H^j(\IZ^n_{\rho})) \cong
\bigoplus_{\substack{\ell_1 + \dots +\ell_k = j\\0 \leq \ell_q\leq p-1}}
\widehat{H}^i\left(\IZ/p;\Lambda^{\ell_1} \IZ[\zeta] \otimes \dots \otimes
  \Lambda^{\ell_k} \IZ[\zeta]\right)).
$$
Hence it suffices to show for $l_1, \ldots ,l_k$ in $\{0,1,\ldots, p-1\}$
$$\widehat{H}^i\left(\IZ/p;\Lambda^{\ell_1} \IZ[\zeta] \otimes \dots \otimes
  \Lambda^{\ell_k} \IZ[\zeta]\right) \cong \widehat{H}^{i+\sum_{a = 1}^k
  l_a}(\IZ/p;\Z).$$ This will be done by induction over $j = \sum_{a = 1}^k
l_a$.  The induction beginning $j = 0$ is trivial, the induction step from $j-1$
to $j \ge 1$ done as follows. We can assume without loss of generality that
$1\le l_1 \le p-1$ otherwise permute the factors.  There is an exact sequence of
$\IZ[\IZ/p]$-modules
\begin{eqnarray*}
  0 \to \IZ \to  \IZ[\IZ/p] \to \IZ[\zeta] \to 0.
\end{eqnarray*}
where $1 \in \Z$ maps to the norm element $N \in \Z[\Z/p]$.  Since this exact
sequence splits as an exact sequence of $\Z$-modules, it induces an exact
sequence of $\IZ[\IZ/p]$-modules
\begin{eqnarray}
  1 \to \Lambda^{l_1-1}\IZ[\zeta] \to \Lambda^{l_1}\IZ[\IZ/p]
  \to \Lambda^{l_1}\IZ[\zeta]  \to 1,
  \label{lem:Hochschild-Serre_ss_forKast(BGamma):Lambda_sequence}
\end{eqnarray}
where the second map is induced by the epimorphism $\IZ[\IZ/p] \to \IZ[\zeta]$
and the first sends $u_1 \wedge u_2 \wedge \ldots \wedge u_{l_1-1}$ to $u_1'
\wedge u_2' \wedge \ldots \wedge u_{l_1-1}' \wedge N$, where $u_b' \in
\IZ[\IZ/p]$ is any element whose image under the projection $\IZ[\IZ/p] \to
\IZ[\zeta]$ is $u_b$. This is independent of the choice of the $u_b'$-s since
two such choices differ by a multiple of the norm element $N \in \IZ[\IZ/p]$.

We next show that the middle term
of~\eqref{lem:Hochschild-Serre_ss_forKast(BGamma):Lambda_sequence} is a free
$\Z[\Z/p]$-module when $1\le l_1 \le p-1$.  Since $\IZ/p = \{t^0,t^1, \ldots,
t^{p-1}\}$ is a $\IZ$-basis for $\IZ[\IZ/p]$, we obtain a $\IZ$-basis for
$\Lambda^{l_1}\IZ[\IZ/p]$ by
$$\{t^I \mid I \subset\IZ/p, |I| = l_1\},$$
where $t^I= t^{i_1} \wedge t^{i_2} \wedge \ldots \wedge t^{i_{l_1}}$ for $I =
\{i_1,i_2,\ldots , i_{l_1}\}$ with $1 \le i_1 < i_2 < \ldots < i_{l_1} \le p-1$.
An element $s \in \IZ/p$ acts on $\Lambda^{l_1}\IZ[\IZ/p]$ by sending the basis
element $t^I$ to $\pm t^{s+I}$.  The $\IZ/p$ action on $\{I \subset \IZ/p, |I| =
l_1\}$ which sends $I$ to $s+I$ for $s \in \IZ/p$, is free.  Indeed, for $s \in
\Z/p -\{0\}$, the permutation of the $p$-element set $\Z/p$ given by $a \mapsto
s + a$ cannot have any proper invariant sets since the permutation has order $p$
and $p$ is prime.  This implies that the $\IZ[\IZ/p]$-module
$\Lambda^{l_1}\IZ[\IZ/p]$ is free.  

We obtain from the exact
sequence~\eqref{lem:Hochschild-Serre_ss_forKast(BGamma):Lambda_sequence} an
exact sequence of $\IZ[\IZ/p]$-modules with a free $\IZ[\IZ/p]$-module in the
middle
\begin{multline*}
  1 \to \Lambda^{l_1-1}\IZ[\zeta] \otimes \Lambda^{\ell_2} \IZ[\zeta] \otimes
  \dots \otimes \Lambda^{\ell_k} \IZ[\zeta] \to \Lambda^{l_1}\IZ[\IZ/p] \otimes
  \Lambda^{\ell_2} \IZ[\zeta] \otimes \dots \otimes \Lambda^{\ell_k} \IZ[\zeta]
  \\
  \to \Lambda^{l_1}\IZ[\zeta] \otimes \Lambda^{\ell_2} \IZ[\zeta] \otimes \dots
  \otimes \Lambda^{\ell_k} \IZ[\zeta] \to 1.
\end{multline*}
Hence we obtain for $i \in \IZ$ an isomorphism
$$ \widehat{H}^{i}\left(\IZ/p;\Lambda^{\ell_1} \IZ[\zeta] \otimes \dots \otimes
  \Lambda^{\ell_k} \IZ[\zeta]\right)\cong
\widehat{H}^{i+1}\left(\IZ/p;\Lambda^{\ell_1-1} \IZ[\zeta] \otimes \dots \otimes
  \Lambda^{\ell_k} \IZ[\zeta]\right).$$ Now apply the induction hypothesis. This
finishes the proof of
assertion~\ref{lem:Hochschild-Serre_ss_forKast(BGamma):Tate_cohomology}.
\\[1mm]%
\ref{lem:Hochschild-Serre_ss_forKast(BGamma):collapse} Next we want to show that
the differentials $d^{i,j}_r$ are zero for all $r \ge 2$ and $i,j$. By the
checkerboard pattern of the $E_2$-term it suffices to show for $r \ge 2$ and
that the differentials $d_r^{0,j}$ are trivial for $r \ge 2$ and all odd $j \ge
1$. This is equivalent to show that for every odd $j \ge 1$ the edge
homomorphism (see Proposition~\ref{prop:edge_homomorphisms}) 
$$\iota^j \colon H^j(\Gamma) \to H^j(\IZ^n_{\rho})^{\IZ/p} = E^{0,j}_2$$ 
is surjective.  But
$\widehat{H}^0(\IZ/p,H^j(\IZ^n_{\rho}))=0$ by
assertion~\ref{lem:Hochschild-Serre_ss_forKast(BGamma):Tate_cohomology}, so the
norm map $N = \iota^j \circ \trf^j\colon H^j(\Z^n_\rho)_{\Z/p} \to
H^j(\Z^n_\rho)^{\Z/p}$ is surjective (see Theorem\ref{norm_is_composite}), so
$\iota^j$ is surjective.

It remains to show that all extensions are trivial.  Since the composite
$$H^{i+j}(\Gamma)   \xrightarrow{\iota^{i+j}} H^{i+j}(\IZ^n_\rho)\xrightarrow{\trf^{i+j}}
H^{i+j}(\Gamma)$$ is multiplication with $p$, the torsion in $H^{i+j}(\Gamma)$
has exponent $p$.  Since $p \cdot E^{i,j}_{\infty} = p \cdot E^{i,j}_2 = 0$ for
$i > 0$, all extensions are trivial and
$$H^m\Gamma \cong \bigoplus_{i+j = m} E^{i,j}_{\infty} =  \bigoplus_{i+j = m} E^{i,j}_2.$$
\end{proof}

\begin{proof}[Proof of 
assertions~\ref{the:Cohomology_of_BGamma_and_bub(Gamma):Hm(BGamma)} 
and~\ref{the:Cohomology_of_BGamma_and_bub(Gamma):Gamma_to_Zn}
of Theorem~\ref{the:Cohomology_of_BGamma_and_bub(Gamma)}]
These are direct consequences of \linebreak
Lemma~\ref{lem:Hochschild-Serre_ss_forKast(BGamma)}.
\end{proof}

\begin{proof}[Proof of assertion~\ref{the:Cohomology_of_BGamma_and_bub(Gamma):restriction_to-p-subgr}
of Theorem~\ref{the:Cohomology_of_BGamma_and_bub(Gamma)}]

We obtain from~\cite[Corollary~2.11]{Lueck-Weiermann(2007)} together with
Lemma~\ref{lem:preliminaries_about_Gamma_and_Zn_rho}~%
\ref{lem:preliminaries_about_Gamma_and_Zn_rho:finite_subgroups} a cellular
$\Gamma$-pushout
\begin{eqnarray}
  &
  \xycomsquareminus{\coprod_{(P) \in \calp} \Gamma \times _P EP}{i_0}{E\Gamma}
  {\coprod_{(P) \in \calp} \pr_P }{f}
  {\coprod_{(P) \in \calp} \Gamma/P}{i_1}{\eub{\Gamma}}
  &
  \label{G-pushoutfor_EGamma_to_eunderbar_Gamma}
\end{eqnarray}
where $i_0$ and $i_1$ are inclusions of $\Gamma$-$CW$-complexes, $\pr_P$ is the
obvious $\Gamma$-equivariant projection and $\calp$ is the set of conjugacy
classes of subgroups of $\Gamma$ of order $p$.  Taking the quotient with respect
to the $\Gamma$-action we obtain
from~\eqref{G-pushoutfor_EGamma_to_eunderbar_Gamma} the cellular pushout
\begin{eqnarray}
  &
  \xycomsquareminus{\coprod_{(P) \in \calp} BP}{j_0}{B\Gamma}
  {\coprod_{(P) \in \calp} \overline{\pr}_P }{\overline{f}}
  {\coprod_{(P) \in \calp} \pt}{j_1}{\bub{\Gamma}}
  &\label{pushout_for_BGamma_to_bunderbar(Gamma)}
\end{eqnarray}
where $j_0$ and $j_1$ are inclusions of $CW$-complexes, $\overline{\pr}_P$ is the
obvious projection.  It yields the following long exact sequence for $m \ge 0$
\begin{multline}
  0 \to {H}^{2m}(\bub{\Gamma}) \xrightarrow{\overline{f}^*} {H}^{2m}(\Gamma)
  \xrightarrow{\varphi^{2m}} \bigoplus_{(P) \in \calp} \widetilde{H}^{2m}(P)
  \\
  \xrightarrow{\delta^{2m}} {H}^{2m+1}(\bub{\Gamma})
  \xrightarrow{\overline{f}^*} {H}^{2m+1}(\Gamma) \to 0
  \label{long_exact_cohomology_sequences_for_bub(Gamma)_BGamma)}
\end{multline}
where $\varphi^*$ is the map induced by the various inclusions $P \subset
\Gamma$ for $(P) \in \calp$.

Now assertion~\ref{the:Cohomology_of_BGamma_and_bub(Gamma):restriction_to-p-subgr}  follows
from~\eqref{long_exact_cohomology_sequences_for_bub(Gamma)_BGamma)} since there is a $n$-dimensional model
for  $\bub{\Gamma}$.
\end{proof}

We still need to prove assertion~\ref{the:Cohomology_of_BGamma_and_bub(Gamma):bub(Gamma)} of
Theorem~\ref{the:Cohomology_of_BGamma_and_bub(Gamma)}.

In order to compute $H^*(\bub \Gamma)$, we need to compute the kernel and image
of $\varphi^{2m}$.

\begin{lemma}
  \label{lem:computing_kernel_and_image_of_phi2m}
  Let $m \geq 1$.
  \begin{enumerate}
  \item
    \label{lem:computing_kernel_and_image_of_phi2m:kernel}
    Let $K^{2m}$ be the kernel of $ \varphi^{2m} $.  There is a short exact
    sequence
$$
0 \to K^{2m} \to H^{2m}(\Z^n_\rho)^{\Z/p} \to \widehat H^0(\Z/p;
H^{2m}(\Z^n_\rho)) \to 0
$$
where the first non-trivial map is the restriction of $\iota^* \colon H^{2m}(\Gamma)
\to H^{2m}(\Z^n_\rho)^{\Z/p}$ to $K^{2m}$ and the second non-trivial map is given by the
quotient map appearing in the definition of Tate cohomology.  It follows that
$K^{2m} \cong \Z^{r_m}$.
\item
  \label{lem:computing_kernel_and_image_of_phi2m:image}
  The image of $\varphi^{2m}$ is isomorphic to
$$\ker\left(H^{2m}(\Gamma) \to H^{2m}(\IZ^n_{\rho})^{\IZ/p}\right) \oplus
\widehat{H}^0(\IZ/p;H^{2m}(\IZ^n_{\rho})) \cong (\IZ/p)^{s_{2m+1}}.$$
\end{enumerate}
\end{lemma}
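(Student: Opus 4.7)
My plan is to prove part (i) via a transfer-based construction, then obtain part (ii) as a quotient consequence. The essential ingredient is the transfer $\trf^\Gamma_{\Z^n} \colon H^{2m}(\Z^n_\rho) \to H^{2m}(\Gamma)$. By the double coset formula applied to $\Z^n \triangleleft \Gamma$ and any non-trivial finite subgroup $P$, the intersection $\Z^n \cap P = 1$ (since $\Z^n$ is torsion-free) collapses every summand to $\trf^P_1 \circ c_g^* \circ \res^{\Z^n}_1$, which vanishes in positive degrees. Thus $\res^\Gamma_P \circ \trf^\Gamma_{\Z^n} = 0$ for $2m \ge 1$, so the transfer factors through $K^{2m}$; since $\iota^* \circ \trf^\Gamma_{\Z^n}$ is the standard norm $N$, we obtain $\im(N) \subseteq \iota^*(K^{2m})$. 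The right-hand map in the sequence of (i) is the Tate quotient, whose kernel is exactly $\im(N)$.

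Next I would prove that $\iota^*|_{K^{2m}}$ is injective, equivalently $K^{2m} \cap T^{2m} = 0$, where $T^{2m} := \ker(\iota^*) \cong (\Z/p)^{s_{2m}}$ is the torsion subgroup exhibited by the collapsing Hochschild--Serre spectral sequence of Lemma~\ref{lem:Hochschild-Serre_ss_forKast(BGamma)}. Decompose $T^{2m} \cong \bigoplus_{i=1}^{2m} E_\infty^{i,2m-i}$. The inflation $\pi^* \colon H^{2m}(\Z/p) \to H^{2m}(\Gamma)$ splits the restriction $\res^\Gamma_{P_0}$ to the canonical section $P_0$, so if $x \in K^{2m}$ its $E_\infty^{2m,0}$-component must vanish; the remaining summands $E_\infty^{i,2m-i}$ for $1 \le i \le 2m-1$ should be detected by the restrictions to the other non-conjugate subgroups $P_u = \langle us\rangle$, parametrised by $u \in H^1(\Z/p; \Z^n_\rho)$. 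Once injectivity is in hand, $K^{2m}$ is torsion-free of rank $r_{2m}$ (matching $\rk H^{2m}(\Gamma)$, since the target of $\varphi^{2m}$ is finite), so $K^{2m} \cong \Z^{r_{2m}}$; moreover $\iota^*(K^{2m})$ is a finite-index free subgroup of $H^{2m}(\Z^n_\rho)^{\Z/p}$ of rank $r_{2m}$ containing $\im(N)$, and order counting (consistent with (ii)) will force equality.

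For part (ii), the first isomorphism theorem gives $\im(\varphi^{2m}) \cong H^{2m}(\Gamma)/K^{2m}$. Using the splitting of $\iota^*$ from Theorem~\ref{the:Cohomology_of_BGamma_and_bub(Gamma)}~(ii) together with the sequence from (i), this quotient fits in
\[
0 \to T^{2m} \to H^{2m}(\Gamma)/K^{2m} \to H^{2m}(\Z^n_\rho)^{\Z/p}/\im(N) \to 0,
\]
whose right-hand term is $\widehat{H}^0(\Z/p; H^{2m}(\Z^n_\rho)) \cong (\Z/p)^{a_{2m}}$. Since both endpoints have exponent $p$, the middle is an elementary abelian $p$-group of order $p^{s_{2m}+a_{2m}} = p^{s_{2m+1}}$, yielding the claimed direct sum description.

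The main obstacle I anticipate is proving $K^{2m} \cap T^{2m} = 0$. Restriction to the canonical section $P_0$ only detects the inflation summand $E_\infty^{2m,0}$; distinguishing the remaining contributions requires the non-canonical sections $P_u$. The restrictions $\res^\Gamma_{P_u}$ and $\res^\Gamma_{P_0}$ induce identical maps on the associated graded of the spectral sequence, but can differ on $H^{2m}(\Gamma)$ with the difference controlled by $u \in H^1(\Z/p; \Z^n_\rho) \cong (\Z/p)^k$. The delicate step is to show that varying $u$ over all $p^k$ classes produces sufficiently many linearly independent restriction values in $\bigoplus_{(P)} H^{2m}(P)$ to separate the $s_{2m}$ torsion summands, thereby annihilating any non-zero $x \in K^{2m} \cap T^{2m}$.
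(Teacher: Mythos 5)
Your transfer observation is correct and gives the inclusion $\im(N) \subseteq \iota^*(K^{2m})$: by the double coset formula and $\Z^n \cap P = 1$ for any finite $P \subset \Gamma$, the composite $\res^\Gamma_P \circ \trf^\Gamma_{\Z^n}$ vanishes in positive degrees, so $\trf^\Gamma_{\Z^n}$ lands in $K^{2m}$ and $\iota^* \circ \trf^\Gamma_{\Z^n} = N$. But there is a genuine gap at precisely the step you flag as the main obstacle, and it affects the whole argument. You need both $K^{2m} \cap T^{2m} = 0$ (so the first map in the sequence is injective) and the reverse inclusion $\iota^*(K^{2m}) \subseteq \im(N)$ (for exactness at the middle term), and neither is established. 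For the former, your plan is to separate the intermediate summands $E_\infty^{i,2m-i}$, $1 \le i \le 2m-1$, by varying the section $P_u$. This cannot work as stated: as you correctly observe, all the restrictions $\res^\Gamma_{P_u}$ induce the identical map on the associated graded of the Hochschild--Serre filtration (everything above the bottom row $E_\infty^{2m,0}$ dies), so the entire content of the desired detection sits in the filtration-shifting corrections controlled by $u$, and controlling those would require an analysis at least as hard as what you are trying to bypass. The detection statement you want --- injectivity of $H^{2m}(\Gamma) \to H^{2m}(\Z^n_\rho) \oplus \bigoplus_{(P)\in\calp} H^{2m}(P)$ --- is true, but it appears in the paper as Remark~\ref{rem:multiplicative_structure}, which is deduced \emph{from} this lemma, so invoking it here would be circular. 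Your fallback, ``order counting consistent with (ii)'', is circular for the same reason, since (ii) is itself derived from (i).

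The paper avoids both difficulties at once by a cup-product argument. It first shows $K^{2m} = L^{2m} := \ker\bigl(- \cup \pi^*(\beta)^n \colon H^{2m}(\Gamma) \to H^{2m+2n}(\Gamma)\bigr)$, using that $\varphi^{2m+2n}$ is an isomorphism because $\bub{\Gamma}$ has an $n$-dimensional model. The key point is then that $- \cup \pi^*(\beta)^n$ restricts to an isomorphism on the filtration step $F^{1,2m-1}$ (which is exactly your $T^{2m}$), because $- \cup \beta^n$ is an isomorphism on every $E_\infty^{i,j}$ with $i \ge 1$ by the periodicity of $H^*(\Z/p)$. Feeding this into the snake lemma on the filtration diagram produces the full short exact sequence in one stroke, including the identification of the quotient map with the Tate quotient and the injectivity of $\iota^*|_{K^{2m}}$. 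To salvage your route you would need an independent proof of the injectivity of $(\iota^*, \varphi^{2m})$, and the cup-product argument appears to be the natural mechanism for obtaining it.
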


\begin{proof}~\ref{lem:computing_kernel_and_image_of_phi2m:kernel} Let $\beta
  \in H^2(\IZ/p) \cong \IZ/p$ be a generator.  Let $L^{2m}$ be the kernel of
$$-\cup \pi^{*}(\beta)^n \colon H^{2m}(\Gamma) \to H^{2m+2n}(\Gamma).$$
We first claim that $K^{2m} = L^{2m}$.  Indeed, the following diagram commutes
$$\xycomsquareminus
{H^{2m}(\Gamma)} {\varphi^{2m}} {\bigoplus_{(P) \in \calp} H^{2m}(P)} {- \cup
  \pi^{*}(\beta)^n}{- \cup \beta^n} {H^{2m+2n}(\Gamma)} {\varphi^{2m+2n}}
{\bigoplus_{(P) \in \calp} H^{2m+2n}(P)}
$$
Since $\dim(\bub{\Gamma}) \le n$, we have $H^{i+2n}(\bub{\Gamma}) = 0$ for $i
\ge 1$.  Hence the lower horizontal arrow is bijective
by~\eqref{long_exact_cohomology_sequences_for_bub(Gamma)_BGamma)}.  The right
vertical arrow is bijective. Thus $K^{2m} = L^{2m}$.

Recall that we have an descending filtration
$$H^{2m}(\Gamma) = F^{0,2m} \supset F^{1,2m-1} \supset  \cdots \supset F^{2m,0} \supset
F^{2m+1,-1} =0$$ such that $F^{r,2m-1}/F^{r+1,2m-r-1} \cong
E^{r,2m-r}_{\infty}$.  Recall that $E^{2,0}_2 = H^2(\IZ/p;H^0(\IZ^n_{\rho})) =
H^2(\IZ/p)$ so that we can think of $\beta$ as an element in $E^{2,0}_2$.
Recall that $E^{i,j}_2 = E^{i,j}_{\infty}$ by
Lemma~\ref{lem:Hochschild-Serre_ss_forKast(BGamma)}~\ref{lem:Hochschild-Serre_ss_forKast(BGamma):collapse}.
From the multiplicative structure of the spectral sequence we see that the image
of the map
$$- \cup \pi^*(\beta)^{n} \colon H^{2m}(\Gamma)  \to H^{2m+2n}(\Gamma)$$
lies in $F^{2n,2m}$ and the following diagram commutes
\begin{equation}
  \label{filtration_commute}
  \xymatrix@!C=8em{0 \ar[d] 
    &
    0 \ar[d] 
    \\
    F^{1,2m-1} \ar[r]_{-\cup \pi^{*}(\beta)^{n}}^{\cong} \ar[d]
    & 
    F^{2n+1,2m-1} \ar[d]
    \\        
    H^{2m}(\Gamma) \ar[r]_{-\cup \pi^{*}(\beta)^{n}} \ar[d]
    & 
    F^{2n,2m} \ar[d]
    \\
    E_{\infty}^{0,2m} \ar[r]_{-\cup \beta^{n}} \ar[d]
    & 
    E_{\infty}^{2n,2m} \ar[d]
    \\
    0 
    &
    0
  }
\end{equation}
where the columns are exact. The upper horizontal arrow is bijective.  Namely,
one shows by induction over $r = -1, 0, 1, \ldots, 2m-1$ that the map
$$-\cup \pi^{*}(\beta)^{n} \colon F^{2m-r,r} \to F^{2m-r+2n,r}$$
is bijective. The induction beginning $r = -1$ is trivial since then both the
source and the target are trivial, and the induction step from $r-1$ to $r$
follows from the five lemma and the fact that the map
$$- \cup \beta^n \colon E^{2m-r,r}_{\infty} = H^{2m-r}(\IZ/p;H^r(\IZ^n_{\rho})) \to 
E^{2m-r+2n,r}_{\infty} = H^{2m-r+2n}(\IZ/p;H^r(\IZ^n_{\rho}))$$ is bijective.

The bottom horizontal map in diagram \eqref{filtration_commute} can be
identified with the composition of the canonical quotient map
$$H^0(\IZ/p;H^{2m}(\IZ^n_{\rho})) \to \widehat{H}^0(\IZ/p;H^{2m}(\IZ^n_{\rho})).$$
with the isomorphism
$$- \cup \beta^n \colon \widehat{H}^0(\IZ/p;H^{2m}(\IZ^n_{\rho})) \xrightarrow{\cong}
\widehat{H}^{2n}(\IZ/p;H^{2m}(\IZ^n_{\rho})).$$

So what do we know about diagram \eqref{filtration_commute}?  The top horizontal
map is an isomorphism, the kernel of middle horizontal map is $L^{2m}$, and the
bottom horizontal map is onto.  We conclude from the snake lemma that the middle
map is an epimorphism and that we have a short exact sequence
$$
0 \to L^{2m} \to E_{\infty}^{0,2m} \to E_{\infty}^{2n,2m} \to 0.
$$
The first non-trivial map is the composite of the inclusion $K^{2m} =
L^{2m}\subset H^{2m}(\Gamma)$ with the epimorphism
$$
H^{2m}(\Gamma) \to E^{0,2m}_{\infty} = H^{2m}(\Z^n_\rho)^{\Z/p}
$$
induced by the inclusion $\iota \colon \IZ^n \to \Gamma$.  We have already
identified the second non-trivial map (up to isomorphism) with the quotient map
as desired.  Hence the sequence in
assertion~\ref{lem:computing_kernel_and_image_of_phi2m:kernel} is exact.  Since
the middle term is isomorphic to $\Z^{r_m}$ and the right term is finite,
$K^{2m}$ is also isomorphic to $\Z^{r_m}$.  \\[1mm]%
\ref{lem:computing_kernel_and_image_of_phi2m:image} The exact sequence
$$
0 \to \ker\left(H^{2m}(\Gamma) \to H^{2m}(\IZ^n_{\rho})^{\IZ/p}\right) \to
H^{2m}(\Gamma) \xrightarrow{\iota^{2m}} H^{2m}(\IZ^n_{\rho})^{\IZ/p} \to 0
$$
has the property that $\iota^{2m}$ restricted to $K^{2m}$ is injective.  Thus we
can quotient by $K^{2m}$ and $\iota^{2m}(K^{2m})$ in the middle and right hand
term respectively and maintain exactness.  Hence we have the exact sequence
\begin{multline}
  \label{lem:computing_kernel_and_image_of_phi2m:eq0}
  0 \to \ker\left(H^{2m}(\Gamma) \to H^{2m}(\IZ^n_{\rho})^{\IZ/p}\right) \to
  H^{2m}(\Gamma)/K^{2m}
  \\
  \to \widehat{H}^0(\IZ/p;H^{2m}(\IZ^n_{\rho})) \to 0.
\end{multline}
where we used assertion~\ref{lem:computing_kernel_and_image_of_phi2m:kernel}  to
compute the right hand term.  We conclude from
Lemma~\ref{lem:Hochschild-Serre_ss_forKast(BGamma)}
\begin{eqnarray}
  \widehat{H}^0(\IZ/p;H^{2m}(\IZ^n_{\rho})) & \cong & (\IZ/p)^{a_{2m}};
  \label{lem:computing_kernel_of_phi2m:eq1}
  \\
  \ker\left(H^{2m}(\Gamma) \to H^{2m}(\IZ^n_{\rho})^{\IZ/p}\right) 
  & \cong & \bigoplus_{i=1}^{2m} E^{i,2m-i} \cong \bigoplus_{j=0}^{2m-1}
  (\IZ/p)^{a_j}
  \label{lem:computing_kernel_of_phi2m:eq2}
\end{eqnarray}
Since $H^{2m}(\Gamma)/K^{2m}$ is isomorphic to a subgroup of $\bigoplus_{(P) \in
  \calp} \widetilde{H}^{2m}(P)$ by the long exact cohomology
sequence~\eqref{long_exact_cohomology_sequences_for_bub(Gamma)_BGamma)} it is
annihilated by multiplication with $p$. Hence the short exact
sequence~\eqref{lem:computing_kernel_and_image_of_phi2m:eq0} splits and we
conclude from~\eqref{lem:computing_kernel_of_phi2m:eq1}
and~\eqref{lem:computing_kernel_of_phi2m:eq2}
$$H^{2m}(\Gamma)/K^{2m} \cong \bigoplus_{j=0}^{2m} (\IZ/p)^{a_j} \cong
(\IZ/p)^{s_{2m+1}}.$$ This finishes the proof of
Lemma~\ref{lem:computing_kernel_and_image_of_phi2m}.

\end{proof}

We conclude from the exact sequence
\eqref{long_exact_cohomology_sequences_for_bub(Gamma)_BGamma)},
Theorem~\ref{the:Cohomology_of_BGamma_and_bub(Gamma)}%
~\ref{the:Cohomology_of_BGamma_and_bub(Gamma):Hm(BGamma)},
Lemma~\ref{lem:preliminaries_about_Gamma_and_Zn_rho}~%
\ref{lem:preliminaries_about_Gamma_and_Zn_rho:order_of_calp}, and
Lemma~\ref{lem:computing_kernel_and_image_of_phi2m}

\begin{corollary}\label{cor:identifying_long_exact_sequence_for_H(Gamma)}
  For $m \ge 1$ the long exact
  sequence~\eqref{long_exact_cohomology_sequences_for_bub(Gamma)_BGamma)} can be
  identified with
$$0 \to \IZ^{r_{2m}} \to \IZ^{r_{2m}} \oplus (\IZ/p)^{s_{2m}} \to  (\IZ/p)^{p^k} 
\to \IZ^{r_{2m+1}} \oplus (\IZ/p)^{p^k-s_{2m+1}} \to \IZ^{r_{2m+1}} \to 0,$$
\end{corollary}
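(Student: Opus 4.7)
The plan is to read off the indicated identifications by substituting the already-computed quantities into the six-term exact sequence~\eqref{long_exact_cohomology_sequences_for_bub(Gamma)_BGamma)}; no new tools are needed beyond the results listed in the statement of the corollary.

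First I would plug in the computation of the group cohomology of $\Gamma$ from Theorem~\ref{the:Cohomology_of_BGamma_and_bub(Gamma)}~\ref{the:Cohomology_of_BGamma_and_bub(Gamma):Hm(BGamma)}: for $m\ge 1$, the second and fifth positions become $\IZ^{r_{2m}}\oplus(\IZ/p)^{s_{2m}}$ and $\IZ^{r_{2m+1}}$ respectively. For the middle term, Lemma~\ref{lem:preliminaries_about_Gamma_and_Zn_rho}~\ref{lem:preliminaries_about_Gamma_and_Zn_rho:finite_subgroups} tells me every $(P)\in\calp$ is isomorphic to $\IZ/p$, so $\widetilde H^{2m}(P)\cong\IZ/p$ for $m\ge 1$, and Lemma~\ref{lem:preliminaries_about_Gamma_and_Zn_rho}~\ref{lem:preliminaries_about_Gamma_and_Zn_rho:order_of_calp} says $|\calp|=p^k$; together these give $\bigoplus_{(P)\in\calp}\widetilde H^{2m}(P)\cong(\IZ/p)^{p^k}$.

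Next I would extract the two cohomology groups of $\bub{\Gamma}$. Since $H^{2m}(\bub{\Gamma})=\ker(\varphi^{2m})$ by exactness, Lemma~\ref{lem:computing_kernel_and_image_of_phi2m}~\ref{lem:computing_kernel_and_image_of_phi2m:kernel} gives $H^{2m}(\bub{\Gamma})\cong\IZ^{r_{2m}}$. For the odd-degree term, exactness of~\eqref{long_exact_cohomology_sequences_for_bub(Gamma)_BGamma)} yields the short exact sequence
$$0 \to \cok(\varphi^{2m}) \to H^{2m+1}(\bub{\Gamma}) \to H^{2m+1}(\Gamma) \to 0.$$
By Lemma~\ref{lem:computing_kernel_and_image_of_phi2m}~\ref{lem:computing_kernel_and_image_of_phi2m:image} the image of $\varphi^{2m}$ is $(\IZ/p)^{s_{2m+1}}$ inside $(\IZ/p)^{p^k}$, so $\cok(\varphi^{2m})\cong(\IZ/p)^{p^k-s_{2m+1}}$. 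As $H^{2m+1}(\Gamma)\cong\IZ^{r_{2m+1}}$ is free abelian, the sequence splits, producing $H^{2m+1}(\bub{\Gamma})\cong\IZ^{r_{2m+1}}\oplus(\IZ/p)^{p^k-s_{2m+1}}$.

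There is essentially no obstacle here: the whole argument is a bookkeeping assembly of the preceding results, and the only non-automatic point is the split-exactness of the final short exact sequence, which is immediate from the freeness of the quotient $\IZ^{r_{2m+1}}$.
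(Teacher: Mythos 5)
Your proposal is correct and is exactly the bookkeeping the paper intends: the paper's proof of this corollary is a one-line citation of the pushout long exact sequence, Theorem~\ref{the:Cohomology_of_BGamma_and_bub(Gamma)}~\ref{the:Cohomology_of_BGamma_and_bub(Gamma):Hm(BGamma)}, Lemma~\ref{lem:preliminaries_about_Gamma_and_Zn_rho}~\ref{lem:preliminaries_about_Gamma_and_Zn_rho:order_of_calp}, and Lemma~\ref{lem:computing_kernel_and_image_of_phi2m}, and you have spelled out how those facts combine (identifying $H^{2m}(\bub\Gamma)$ with $\ker\varphi^{2m}$, identifying $\cok\varphi^{2m}$ as an $\mathbb{F}_p$-vector space of the right dimension, and splitting the final short exact sequence off the free quotient). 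No further comment needed.
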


\begin{proof}[Proof of assertion~\ref{the:Cohomology_of_BGamma_and_bub(Gamma):bub(Gamma)}
of Theorem~\ref{the:Cohomology_of_BGamma_and_bub(Gamma)}]
Obviously $H^0(\bub{\Gamma}) \cong \IZ$.  Since $(\IZ^n)^{\IZ/p} = 0$ by
assumption, we get $H^1(\Gamma) = 0$ from
assertion~\ref{the:Cohomology_of_BGamma_and_bub(Gamma):Gamma_to_Zn} of
Theorem~\ref{the:Cohomology_of_BGamma_and_bub(Gamma)}.  We conclude
$H^1(\bub{\Gamma}) \cong 0$ from the long exact
sequence~\eqref{long_exact_cohomology_sequences_for_bub(Gamma)_BGamma)}.  The
values of $H^m(\bub{\Gamma})$ for $m \ge 2$ have already been determined in
Corollary~\ref{cor:identifying_long_exact_sequence_for_H(Gamma)}.  Hence
assertion~\ref{the:Cohomology_of_BGamma_and_bub(Gamma):bub(Gamma)} of
Theorem~\ref{the:Cohomology_of_BGamma_and_bub(Gamma)} follows. This finishes the
proof of Theorem~\ref{the:Cohomology_of_BGamma_and_bub(Gamma)}.
\end{proof}


\subsection{On the numbers \texorpdfstring{$r_m$}{rm}}
\label{subsec:On_the_numbers_r_m}

In this subsection we collect some basic information about the numbers $r_m$,
$a_j$ and $s_m$ introduced in~\eqref{\texorpdfstring{r_m}{rm}},\eqref{a_j},
and~\eqref{s_m}.

Since $\IZ^n$ acts freely on $\eub{\Gamma}= \R^n$, we conclude from
Lemma~\ref{lem:preliminaries_about_Gamma_and_Zn_rho}~%
\ref{lem:preliminaries_about_Gamma_and_Zn_rho:ideals} and
Proposition~\ref{prop:quotient_iso}
\begin{eqnarray*}
  r_m  
  & = &  
  \rk_{\IQ}\left(\Lambda^m_{\IQ}\bigl(\IQ(\zeta)^k)^{\IZ/p}\bigr)\right)
  \\
  & = & 
  \rk_{\IQ}\left(H^m\bigl(B\IZ^n_{\rho};\IQ\bigr)^{\IZ/p}\right)
  \\
  & = & 
  \rk_{\IQ}\left(H^m\bigl(\bub{\Gamma};\IQ)\right)
  \\
  & = & 
  \rk_{\IQ} \left(H^m(\Gamma;\IQ)\right)
  \\
\end{eqnarray*}
Since Tate cohomology is rationally trivial, the norm map is a rational
isomorphism, hence also
\begin{eqnarray}
  r_m & = & 
  \rk_{\IQ}\left( \Lambda^m_{\IQ}(\IQ[\zeta]^k) \otimes_{\IQ[\IZ/p]} \IQ\right).
  \label{r_m_in_terms_of_orbifold}
\end{eqnarray}

\begin{lemma} \label{lem:LambdajZ(zeta)_in_R_Q(Z/p)}
  \begin{enumerate}

  \item \label{lem:LambdajZ(zeta)_in_R_Q(Z/p):special_values} We have $r_0 = 1$,
    $r_1 = 0$, $a_0 = 1$, $a_1 = k$, $s_0= 0$, $s_1 = 1$, and $s_2 = k+1$. We
    get $r_m = 0$ for $m \ge n+1$ and $s_m = p^k$ for $m \ge n$.

  \item \label{lem:LambdajZ(zeta)_in_R_Q(Z/p):r_odd_and_r_ev} If $p$ is odd, we
    get
    \begin{eqnarray*}
      \sum_{\substack{m \ge 0\\m \; \text{even}}} r_m
      & = &
      \frac{2^{(p-1)k} + p -1}{2p} + \frac{(p-1) \cdot p^{k-1}}{2};
      \\
      \sum_{\substack{m \ge 0\\m \; \text{odd }}} r_m 
      & = & 
      \frac{2^{(p-1)k} + p -1}{2p} -  \frac{(p-1) \cdot p^{k-1}}{2}.
    \end{eqnarray*}
    If $ p = 2$, we get
    \begin{eqnarray*}
      \sum_{\substack{m \ge 0\\m \; \text{even}}} r_m
      & = & 2^{n-1};
      \\
      \sum_{\substack{m \ge 0\\m \; \text{odd }}} r_m 
      & = & 
      0.
    \end{eqnarray*}

  \item \label{lem:LambdajZ(zeta)_in_R_Q(Z/p):k_is_1} Suppose that $k = 1$. Then
    \begin{eqnarray*}
      r_m 
      & = & 
      \frac{1}{p} \cdot \left(\binom{p-1}{m} + (-1)^m \cdot (p-1)\right)  \quad \text{for } 0 \le m \le (p-1);
      \\
      r_m & = & 0 \quad \text{for } m \ge p;
      \\
      a_m & = & 1 \quad \text{for } 0 \le m \le p-1;
      \\
      a_m & = & 0 \quad \text{for } p \le m;
      \\
      s_m & = & m \quad  \text{for } 0 \le m \le p-1;
      \\
      s_m & = & p \quad \text{for } m \ge p.
    \end{eqnarray*}

  \end{enumerate}

\end{lemma}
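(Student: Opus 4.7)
The plan is to treat the three parts separately: combinatorics for (i), Molien's formula for the even/odd sums in (ii), and a single generating function computation for the closed form in (iii).

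For assertion~\ref{lem:LambdajZ(zeta)_in_R_Q(Z/p):special_values}, the claims about $a_j$ and $s_m$ follow directly from the definitions: $a_0 = 1$ (just the zero vector), $a_1 = k$ (coordinate unit vectors), $a_j = 0$ for $j > n = k(p-1)$ since the maximum possible sum is $n$, and $\sum_j a_j = p^k$ counts the box $\{0,\ldots,p-1\}^k$, which forces $s_m = p^k$ once $m$ exceeds $n$. For $r_m$: it is $0$ for $m > n$ because $\Lambda^m(\IZ[\zeta]^k) = 0$ when $m$ exceeds the $\IZ$-rank $n$ of $\IZ[\zeta]^k$; $r_0 = 1$ is the trivial representation; and $r_1 = 0$ because multiplication by $\zeta$ has no nonzero fixed vectors on $\IZ[\zeta]^k$.

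For assertion~\ref{lem:LambdajZ(zeta)_in_R_Q(Z/p):r_odd_and_r_ev} I pass to $\IQ$-coefficients via~\eqref{r_m_in_terms_of_orbifold} and recover the two partial sums from
\[ S_\pm := \sum_m (\pm 1)^m r_m = \frac{1}{p}\sum_{j=0}^{p-1}\det\bigl(\id \pm t^j \mid \IQ(\zeta)^k\bigr), \]
using Molien's formula $r_m = \frac{1}{p}\sum_j \tr(t^j \mid \Lambda^m_\IQ \IQ(\zeta)^k)$ together with the standard identity $\sum_m (\pm 1)^m \tr(g \mid \Lambda^m V) = \det(\id \pm g \mid V)$; then $\sum_{\text{even}\;m} r_m = (S_+ + S_-)/2$ and $\sum_{\text{odd}\;m} r_m = (S_+ - S_-)/2$. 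Since $t^j$ acts on $\IQ(\zeta)$ with eigenvalues the nontrivial $p$-th roots of unity for $j \ne 0$, the determinants reduce to $\bigl(\prod_{i=1}^{p-1}(1 \pm \zeta^i)\bigr)^k$, which I evaluate by specializing the cyclotomic identity $\prod_{i=1}^{p-1}(x - \zeta^i) = 1+x+\cdots+x^{p-1}$: this gives $p$ at $x=1$ (hence $p^k$ for the minus case) and, for $p$ odd, $1$ at $x=-1$ (hence $1$ for the plus case), while for $p=2$ the factor $1 + \zeta$ vanishes. Plugging these into $S_\pm$ produces the claimed closed forms for both parities of $p$.

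For assertion~\ref{lem:LambdajZ(zeta)_in_R_Q(Z/p):k_is_1}, the $a_j$ and $s_m$ formulas are immediate from the definition with a single summation index. The vanishing $r_m = 0$ for $m \ge p$ follows from $\dim_\IQ \IQ(\zeta) = p-1$. For $0 \le m \le p-1$ I apply Molien once more and bundle the $t^j$-contributions into a generating function
\[ \sum_{m=0}^{p-1} x^m\,\tr\bigl(t^j \mid \Lambda^m \IQ(\zeta)\bigr) = \det\bigl(\id + x\,t^j \mid \IQ(\zeta)\bigr) = \prod_{i=1}^{p-1}\bigl(1 + x\zeta^{ij}\bigr). \]
For $j = 0$ this is $(1+x)^{p-1}$, producing the $\binom{p-1}{m}$ term. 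For $j \ne 0$ the multiset $\{\zeta^{ij}\}_i$ is a permutation of the nontrivial $p$-th roots of unity, so the product collapses to $(1 + x^p)/(1+x) = \sum_{m=0}^{p-1}(-x)^m$, whose $x^m$-coefficient is $(-1)^m$ independently of $j$. Averaging over $j$ yields the stated closed form. The main obstacle throughout is merely the cyclotomic bookkeeping and cleanly separating the $p=2$ case in which $1 + \zeta$ vanishes; once Molien's formula is invoked, everything reduces to specializing $\Phi_p$ at $\pm 1$.
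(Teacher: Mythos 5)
Your proof is correct but takes a genuinely different route. For assertion~(ii) the paper works in the rational representation ring $R_{\IQ}(\IZ/p)$: it rationalizes the exact sequence~\eqref{lem:Hochschild-Serre_ss_forKast(BGamma):Lambda_sequence} to obtain the inductive relation~\eqref{relating_the_classes_of_consecutive_Lambdal}, derives the closed form~\eqref{LambdalQ(zeta)} for $[\Lambda^l\IQ[\zeta]]$, sums over $l$ and applies the homomorphism $\Phi$ to get $\sum_m r_m$, and then obtains the alternating sum $\sum_m (-1)^m r_m$ from a separate geometric input, namely the Riemann--Hurwitz formula $\chi(\overline X) = \frac1p\chi(X) + \frac{p-1}{p}\chi(X^{\IZ/p})$ applied to the $\IZ/p$-torus. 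You instead compute both $S_+=\sum_m r_m$ and $S_-=\sum_m(-1)^m r_m$ directly from the averaging (Molien) formula $r_m = \frac1p\sum_j \tr(t^j\mid\Lambda^m\IQ(\zeta)^k)$ and the identity $\sum_m(\pm1)^m\tr(g\mid\Lambda^m V) = \det(\id\pm g\mid V)$, reducing everything to the cyclotomic evaluations $\Phi_p(1)=p$ and $\Phi_p(-1)=1$ (for $p$ odd, with the degenerate $p=2$ case handled by the vanishing of $1+\zeta$). For assertion~(iii) the paper simply applies $\Phi$ to~\eqref{LambdalQ(zeta)}, while you run Molien once more with the generating function $\prod_{i=1}^{p-1}(1+x\zeta^{ij})$, noting the $j\ne0$ terms collapse to $\sum_{m=0}^{p-1}(-x)^m$. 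Your route is more self-contained (no representation-ring bookkeeping, no appeal to the exact sequence or to Euler characteristics of the quotient torus), whereas the paper's route reuses the module exact sequence already established in Lemma~\ref{lem:Hochschild-Serre_ss_forKast(BGamma)} and so is more economical in the context of the paper as a whole. Both are correct.
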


\begin{proof}
  In the proof below we write $\Lambda^l V$ instead of $\Lambda^l_\Q V$ for a
  $\Q$-vector space $V$.  
  \\[1mm]~\ref{lem:LambdajZ(zeta)_in_R_Q(Z/p):special_values} This follows directly from
  the definitions.  
  \\[1mm]~\ref{lem:LambdajZ(zeta)_in_R_Q(Z/p):r_odd_and_r_ev} Suppose $1 \le l \le p-1$.
  By rationalizing the exact
  sequence~\eqref{lem:Hochschild-Serre_ss_forKast(BGamma):Lambda_sequence} we
  have the short exact sequence of $\IQ[\IZ/p]$-modules
  \begin{eqnarray*}
    0 \to \Lambda^{l-1}\IQ[\zeta] \to \Lambda^{l}\IQ[\IZ/p]
    \to \Lambda^{l}\IQ[\zeta] \to 0.
  \end{eqnarray*}
  Since $\Lambda^{l}\IZ[\IZ/p]$ is finitely generated free as
  $\IZ[\IZ/p]$-module (see proof
  of~Lemma~\ref{lem:Hochschild-Serre_ss_forKast(BGamma)}~%
~\ref{lem:Hochschild-Serre_ss_forKast(BGamma):Tate_cohomology}), the following
  equation holds in the rational representation ring $R_{\IQ}(\IZ/p)$
  \begin{eqnarray}
    \bigr[\Lambda^{l}\IQ[\zeta]\bigr] + \bigl[\Lambda^{l-1}\IQ[\zeta]\bigr] 
    & = & 
    \frac{1}{p} \cdot \binom{p}{l} \cdot \bigl[\IQ[\IZ/p]\bigr].
    \label{relating_the_classes_of_consecutive_Lambdal}
  \end{eqnarray}
  One shows by induction over $l$ for $0 \le l \le p-1$
  \begin{eqnarray}
    \quad \bigl[\Lambda^{l}(\IQ[\zeta])\bigr]
    & = &
    (-1)^{l} \cdot [\IQ] + 
    \frac{1}{p} \left(\binom{p-1}{l} - (-1)^l\right) \cdot
    \bigl[\IQ[\IZ/p]\bigr].
    \label{LambdalQ(zeta)}
  \end{eqnarray}

  Since $\sum_{l=0}^{p-1} \binom{p-1}{l} = 2^{p-1}$, we get
  \begin{eqnarray}
    \sum_{l=0}^{p-1} \bigr[\Lambda^{l}\IQ[\zeta]\bigr]
    & = &
    \begin{cases}
      [\IQ] + \frac{2^{p-1} -1}{p} \cdot \bigl[\IQ[\IZ/p]\bigr] & \text{if}\; p\; \text{is odd};
      \\
      [\IQ[\IZ/2]] & \text{if}\;  p = 2.
    \end{cases}
    \label{sum_all_l_Lambdal}
  \end{eqnarray}
  Since
$$
\Lambda^*\bigl(\bigoplus_k\IQ[\zeta]\bigr) = \bigotimes_k \Lambda^* (\IQ[\zeta])
$$ 
and $\Lambda^l(\IQ[\zeta]) = 0$ for $l \ge p$, we get
\begin{eqnarray}
  \bigl[\Lambda^j(\IQ[\zeta]^k)\bigr] 
  & = &\sum_{\substack{\ell_1 + \dots +\ell_k = j\\0
      \leq \ell_i\leq p-1}} \; \prod_{i=1}^k \bigl[\Lambda^{\ell_i}(\IQ[\zeta])\bigr].
  \label{class_of_LambdajQ(zeta)k)_abstract}
\end{eqnarray}

We conclude from~\eqref{sum_all_l_Lambdal}
and~\eqref{class_of_LambdajQ(zeta)k)_abstract}
\begin{eqnarray*}
  \sum_{j \ge 0} \bigl[\Lambda^j(\IQ[\zeta]^k)\bigr] 
  & = & 
  \sum_{j \ge 0} \left(\sum_{\substack{\ell_1 + \dots +\ell_k = j\\0
        \leq \ell_i\leq p-1}} \; \prod_{i=1}^k \bigl[\Lambda^{\ell_i}(\IQ[\zeta])\bigr]\right)
  \\
  & = & 
  \sum_{\substack{l_1,l_2, \ldots , l_k\\0 \leq \ell_q\leq p-1}} 
  \; \prod_{i=1}^k \bigl[\Lambda^{\ell_i}(\IQ[\zeta])\bigr]
  \\
  & = & 
  \prod_{i=1}^k\; \sum_{0 \leq \ell_i \leq p-1} \;  \bigl[\Lambda^{\ell_i}(\IQ[\zeta])\bigr]
  \\
  & = & 
  \begin{cases}
    \left([\IQ] + \frac{2^{p-1} -1}{p} \cdot \bigl[\IQ[\IZ/p]\bigr]\right)^k &
    \text{if} \; p \; \text{is odd};
    \\
    [\IQ[\IZ/2]]^k & \text{if}\; p = 2.
  \end{cases}
\end{eqnarray*}
Since $[\IQ]$ is the multiplicative unit in $R_{\IQ}(\IZ/p)$, and
$\bigl[\IQ[\IZ/p]\bigr]^i = p^{i-1} \cdot \bigl[\IQ[\IZ/p]\bigr]$, we obtain the
following equality in $R_{\IQ}(\IZ/p)$ if $p$ is odd:
\begin{eqnarray}
  \sum_{j \ge 0} \bigl[\Lambda^j(\IQ[\zeta]^k)\bigr] 
  & = & 
  \sum_{i=0}^k \binom{k}{i} \cdot \frac{(2^{p-1} - 1)^i}{p^i} \cdot 
  \bigl[\IQ[\IZ/p]\bigr]^i \cdot [\IQ]^{k-i}
  \nonumber 
  \\
  & = & 
  [\IQ] + \frac{1}{p} \cdot \left(-1 + \sum_{i=0}^{k}  \binom{k}{i} (2^{p-1} - 1)^i\right) 
  \cdot \bigl[\IQ[\IZ/p]\bigr]
  \nonumber 
  \\
  & = & 
  [\IQ] + \frac{1}{p} \cdot \left(-1 + 2^{(p-1)k}\right) 
  \cdot \bigl[\IQ[\IZ/p]\bigr]
  \nonumber 
  \\
  & = & 
  [\IQ] + \frac{2^{(p-1)k}-1}{p} \cdot \bigl[\IQ[\IZ/p]\bigr].
  \label{sum_j_ge_0_Lambda[zeta]k_p_is_odd}
\end{eqnarray}
If $ p = 2$, we obtain
\begin{eqnarray*}
  \sum_{j \ge 0} \bigl[\Lambda^j(\IQ[\zeta]^k)\bigr] 
  & = & 
  2^{k-1} \cdot [\IQ[\IZ/2]].
  \label{sum_j_ge_0_Lambda[zeta]k_p_is_2}
\end{eqnarray*}

There is a homomorphism of abelian groups
$$\Phi\colon R_{\IQ}(\IZ/p) \to \IZ, \quad [V] \mapsto \rk_{\IQ}\bigl(V \otimes_{\IQ[\IZ/p]} \Q\bigr).$$
By~\eqref{r_m_in_terms_of_orbifold} it sends $\Q$, $\Q[\Z/p]$, 
and $\bigl[\Lambda^m(\IQ[\zeta]^k)\bigr]$ to 1, 1,
and $r_m$ respectively.  Hence we conclude
from~\eqref{sum_j_ge_0_Lambda[zeta]k_p_is_odd}
\begin{eqnarray}
  \sum_{m \ge 0} r_m & = & \frac{2^{(p-1)k} -1}{p}+1 \quad \text{for $p$ odd;}
  \label{r_all_explicit_p_odd}
\end{eqnarray}
\begin{eqnarray}
  \sum_{m \ge 0} r_m & = & 2^{k-1} \quad \text{for $p=2$.}
  \label{r_all_explicit_p_is_2}
\end{eqnarray}

If $X$ is a finite $\Z/p$-CW-complex with orbit space $\overline X$, then the
Riemann-Hurwitz formula states that
$$
\chi(\overline X) = \frac{1}{p} \chi(X) + \frac{p-1}{p}\chi(X^{\Z/p}).
$$
One derives this formula by verifying it for both fixed and freely permuted
cells.  Applying Proposition~\ref{prop:quotient_iso}, the Riemann-Hurwitz
formula, and Lemma~\ref
{lem:preliminaries_about_Gamma_and_Zn_rho}~\ref{lem:preliminaries_about_Gamma_and_Zn_rho:fixed_set}
to the $\Z/p$-action on the torus $T^n$, one sees
\begin{eqnarray}
  \sum_{m \ge 0} (-1)^m r_m = \chi((\Z/p)\backslash T^m) =0+ (p-1)p^{k-1}.
  \label{r_ev-r_odd}
\end{eqnarray}

We conclude from~\eqref{r_all_explicit_p_odd} and~\eqref{r_ev-r_odd} if $p$ is
odd
\begin{eqnarray}
  \sum_{\substack{m \ge 0\\m \; \text{even}}} r_m
  & = &
  \frac{2^{(p-1)k} + p -1}{2p} + \frac{(p-1) \cdot p^{k-1}}{2};
  \label{r_ev_p_odd}
  \\
  \sum_{\substack{m \ge 0\\m \; \text{odd }}} r_m 
  & = & 
  \frac{2^{(p-1)k} + p -1}{2p} -  \frac{(p-1) \cdot p^{k-1}}{2}.
  \label{r_odd_p_odd}
\end{eqnarray}
If $p = 2$, we obtain from~\eqref{r_all_explicit_p_is_2} and~\eqref{r_ev-r_odd}
since $n = k \cdot (p-1)$
\begin{eqnarray}
  \sum_{\substack{m \ge 0\\m \; \text{even}}} r_m
  & = & 2^{n-1};
  \label{r_ev_p_is_2}
  \\
  \sum_{\substack{m \ge 0\\m \; \text{odd }}} r_m 
  & = & 
  0.
  \label{r_odd_p_is_2}
\end{eqnarray}
\\[1mm]~%
\ref{lem:LambdajZ(zeta)_in_R_Q(Z/p):k_is_1} The first formula follows from~\eqref{r_m_in_terms_of_orbifold}
and applying the homomorphism $\Phi$ to \eqref{LambdalQ(zeta)}.  The rest
of~\ref{lem:LambdajZ(zeta)_in_R_Q(Z/p):k_is_1} is clear from the definitions.
\end{proof}


\typeout{------------ Section 2: Group homology ------------}

\section{Group homology}
\label{sec:Group_homology}

Next we determine the group homology of the group $\Gamma$.  Recall that for a
$\Z[G]$-module $M$, the \emph{coinvariants} are $M_G = M \otimes_{\Z[G]} \Z$.

\begin{theorem}[Homology of $B\Gamma$ and $\bub{\Gamma}$]\
  \label{the:Homology_of_BGamma_and_bub(Gamma)}
  \begin{enumerate}

  \item \label{the:Homology_of_BGamma_and_bub(Gamma):Hm(BGamma)} For $m \ge 0$,
$$H_m(\Gamma) \cong 
\begin{cases}
  \IZ^{r_m} \oplus (\IZ/p)^{s_{m+1}} & m \;\text{odd;}
  \\
  \IZ^{r_m} & m \;\text{even.}
\end{cases}
$$

\item \label{the:Homology_of_BGamma_and_bub(Gamma):coinvariants} For $m \geq 0$,
  the inclusion map $\Z^n \to \Gamma$ induces an isomorphism
$$
H_{2m}(\Z^n_{\rho})_{\Z/p} \xrightarrow{\cong} H_{2m}(\Gamma).
$$

\item \label{the:Homology_of_BGamma_and_bub(Gamma):restriction_to-p-subgr}

The map induced
by the various inclusions
$$
 \varphi_{m} \colon \bigoplus_{(P) \in \calp} H_{m}(P) \to {H}_{m}(\Gamma)
$$
is bijective for $m > n$.

\item
  \label{the:Homology_of_BGamma_and_bub(Gamma):bub(Gamma)}
  For $m \ge 0$,
$$H_m(\bub{\Gamma}) \cong 
\begin{cases}
  \IZ^{r_m} & m \;\text{odd;}
  \\
  \IZ^{r_m} \oplus (\IZ/p)^{p^k-s_{m+1}} & m \;\text{even}, m \ge 2;
  \\
  \IZ & m = 0.
\end{cases}
$$

\item \label{the:Homology_of_BGamma_and_bub(Gamma):exact_sequence} For $m \ge 1$
  the long exact homology sequence associated to the
  pushout~\eqref{pushout_for_BGamma_to_bunderbar(Gamma)}
  \begin{multline*}
    0 \to H_{2m}(\Gamma) \to H_{2m}(\bub{\Gamma}) \to \bigoplus_{(P) \in \calp}
    H_{2m-1}(P)
    \\
    \to H_{2m-1}(\Gamma) \to H_{2m-1}(\bub{\Gamma}) \to 0
  \end{multline*}
  can be identified with
  \begin{multline*}
    0 \to \IZ^{r_{2m}} \to \IZ^{r_{2m}} \oplus (\IZ/p)^{p^k - s_{2m+1}} \to
    (\IZ/p)^{p^k}
    \\
    \to \IZ^{r_{2m-1}} \oplus (\IZ/p)^{s_{2m}} \to \IZ^{r_{2m-1}} \to 0.
  \end{multline*}

\end{enumerate}
\end{theorem}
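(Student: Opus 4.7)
The plan is to reduce parts~\ref{the:Homology_of_BGamma_and_bub(Gamma):Hm(BGamma)} and~\ref{the:Homology_of_BGamma_and_bub(Gamma):bub(Gamma)} to the already-proved Theorem~\ref{the:Cohomology_of_BGamma_and_bub(Gamma)} via the universal coefficient theorem, derive parts~\ref{the:Homology_of_BGamma_and_bub(Gamma):restriction_to-p-subgr} and~\ref{the:Homology_of_BGamma_and_bub(Gamma):exact_sequence} from the Mayer--Vietoris long exact sequence associated to the pushout~\eqref{pushout_for_BGamma_to_bunderbar(Gamma)}, and prove part~\ref{the:Homology_of_BGamma_and_bub(Gamma):coinvariants} by a homological Hochschild--Serre spectral sequence argument.

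For part~\ref{the:Homology_of_BGamma_and_bub(Gamma):Hm(BGamma)}, the split universal coefficient short exact sequence
\[
0 \to \Ext(H_{m-1}(\Gamma),\IZ) \to H^m(\Gamma) \to \hom(H_m(\Gamma),\IZ) \to 0
\]
identifies $H^m(\Gamma)$ with the direct sum of the free part of $H_m(\Gamma)$ and the torsion part of $H_{m-1}(\Gamma)$. Since by Theorem~\ref{the:Cohomology_of_BGamma_and_bub(Gamma)}~\ref{the:Cohomology_of_BGamma_and_bub(Gamma):Hm(BGamma)} all torsion in $H^*(\Gamma)$ is $p$-torsion concentrated in even degrees, one reads off $H_m(\Gamma) \cong \IZ^{r_m}$ for $m$ even and $H_m(\Gamma) \cong \IZ^{r_m} \oplus (\IZ/p)^{s_{m+1}}$ for $m$ odd. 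The same UCT applied to $\bub{\Gamma}$, using Theorem~\ref{the:Cohomology_of_BGamma_and_bub(Gamma)}~\ref{the:Cohomology_of_BGamma_and_bub(Gamma):bub(Gamma)}, yields part~\ref{the:Homology_of_BGamma_and_bub(Gamma):bub(Gamma)}.

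Since the pushout~\eqref{pushout_for_BGamma_to_bunderbar(Gamma)} is a homotopy pushout of $CW$-complexes, it produces a Mayer--Vietoris long exact sequence in integral homology. The contributions from $\coprod_{(P)} \pt$ vanish in positive degrees, and since $\bub{\Gamma}$ admits an $n$-dimensional model, $H_m(\bub{\Gamma}) = 0$ for $m > n$. The relevant piece of the sequence then forces the isomorphism in~\ref{the:Homology_of_BGamma_and_bub(Gamma):restriction_to-p-subgr}. Knowing that $\widetilde{H}_*(B\IZ/p)$ vanishes in positive even degrees and is $\IZ/p$ in positive odd degrees, and that $|\calp| = p^k$ by Lemma~\ref{lem:preliminaries_about_Gamma_and_Zn_rho}~\ref{lem:preliminaries_about_Gamma_and_Zn_rho:order_of_calp}, the same long exact sequence breaks into the $5$-term exact sequence of~\ref{the:Homology_of_BGamma_and_bub(Gamma):exact_sequence}; substituting the groups from parts~\ref{the:Homology_of_BGamma_and_bub(Gamma):Hm(BGamma)} and~\ref{the:Homology_of_BGamma_and_bub(Gamma):bub(Gamma)} identifies all terms.

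The heart of the proof is part~\ref{the:Homology_of_BGamma_and_bub(Gamma):coinvariants}. The homological Hochschild--Serre spectral sequence of the extension~\eqref{Gamma_as_extension} has $E^2_{i,j} = H_i(\IZ/p; H_j(\IZ^n_\rho)) \Rightarrow H_{i+j}(\Gamma)$. Because $H_j(\IZ^n_\rho) \cong \Lambda^j \IZ^n_\rho$ and $\IZ^n_\rho$ is a sum of ideals of $\IZ[\zeta]$ by Lemma~\ref{lem:preliminaries_about_Gamma_and_Zn_rho}~\ref{lem:preliminaries_about_Gamma_and_Zn_rho:ideals}, the Dedekind/exterior-algebra argument of Lemma~\ref{lem:Hochschild-Serre_ss_forKast(BGamma)}~\ref{lem:Hochschild-Serre_ss_forKast(BGamma):Tate_cohomology} applies verbatim to produce $\widehat{H}^i(\IZ/p; H_j(\IZ^n_\rho)) \cong (\IZ/p)^{a_j}$ when $i+j$ is even and zero otherwise. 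By Tate $2$-periodicity this translates to $H_i(\IZ/p; H_j(\IZ^n_\rho)) = 0$ whenever $i \ge 1$ and $i+j$ is even; in particular $E^2_{i, 2m-i} = 0$ for every $i \ge 1$. Hence $H_{2m}(\Gamma) = E^\infty_{0, 2m}$ and the edge map $H_{2m}(\IZ^n_\rho)_{\IZ/p} = E^2_{0,2m} \twoheadrightarrow H_{2m}(\Gamma)$ is surjective. Applying the Tate four-term exact sequence $0 \to \widehat{H}^{-1}(\IZ/p; M) \to M_{\IZ/p} \to M^{\IZ/p} \to \widehat{H}^0(\IZ/p; M) \to 0$ with $M = \Lambda^{2m}\IZ^n_\rho$ (whose $\widehat{H}^{-1}$ vanishes because $-1+2m$ is odd) embeds $M_{\IZ/p}$ into the torsion-free group $M^{\IZ/p}$ of $\IZ$-rank $r_{2m}$, so $H_{2m}(\IZ^n_\rho)_{\IZ/p} \cong \IZ^{r_{2m}}$. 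By part~\ref{the:Homology_of_BGamma_and_bub(Gamma):Hm(BGamma)} the target is also $\IZ^{r_{2m}}$, and a surjection between two copies of $\IZ^{r_{2m}}$ is automatically an isomorphism. The main obstacle one might anticipate---controlling higher $d^r$-differentials into the edge term $E^r_{0, 2m}$---is sidestepped by matching ranks rather than chasing differentials.
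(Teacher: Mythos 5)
Your proposal is correct and largely follows the paper's route: parts~\ref{the:Homology_of_BGamma_and_bub(Gamma):Hm(BGamma)}, \ref{the:Homology_of_BGamma_and_bub(Gamma):bub(Gamma)} and \ref{the:Homology_of_BGamma_and_bub(Gamma):exact_sequence} via universal coefficients from the already-established cohomology computation, part~\ref{the:Homology_of_BGamma_and_bub(Gamma):restriction_to-p-subgr} from the dimension bound $\dim \bub\Gamma \le n$, and part~\ref{the:Homology_of_BGamma_and_bub(Gamma):coinvariants} via the homological Hochschild--Serre spectral sequence. The one place you genuinely diverge is the finishing move in part~\ref{the:Homology_of_BGamma_and_bub(Gamma):coinvariants}: having shown $E^2_{i,2m-i}=0$ for $i\ge 1$, you observe the edge surjection $E^2_{0,2m}\twoheadrightarrow H_{2m}(\Gamma)$ and then compare ranks (using part~\ref{the:Homology_of_BGamma_and_bub(Gamma):Hm(BGamma)} and the fact that $H_{2m}(\IZ^n_\rho)_{\IZ/p}\cong \IZ^{r_{2m}}$ because the norm map is injective there). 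The paper instead observes directly that $E^2_{0,2m}$ is torsion-free while every $E^2_{i,j}$ with $i>0$ is torsion, so all differentials into the $0$-column vanish and $E^2_{0,2m}=E^\infty_{0,2m}$ on the nose. Your version is logically fine but makes part~\ref{the:Homology_of_BGamma_and_bub(Gamma):coinvariants} depend on part~\ref{the:Homology_of_BGamma_and_bub(Gamma):Hm(BGamma)}, whereas the paper's argument is self-contained and yields the isomorphism without knowing the abstract structure of $H_{2m}(\Gamma)$ in advance; the paper's route also gives directly that the edge map is injective rather than inferring it post hoc. A smaller stylistic difference: you use the standard UCT (expressing $H^m$ in terms of $H_m$ and $H_{m-1}$) and invert it, while the paper uses the dual-form exact sequence $0\to \Ext^1(H^{m+1},\IZ)\to H_m\to \hom(H^m,\IZ)\to 0$, which reads homology off cohomology in one step; you should state explicitly that $H_*(\Gamma)$ is finitely generated in each degree (true since $\Gamma$ is type $FP_\infty$) for your inversion argument to be valid. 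For part~\ref{the:Homology_of_BGamma_and_bub(Gamma):restriction_to-p-subgr} you argue directly in homology via Mayer--Vietoris rather than dualizing the cohomology statement; either is fine.
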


\begin{proof}\ref{the:Homology_of_BGamma_and_bub(Gamma):Hm(BGamma)}%
~\ref{the:Cohomology_of_BGamma_and_bub(Gamma):restriction_to-p-subgr}%
~\ref{the:Homology_of_BGamma_and_bub(Gamma):bub(Gamma)}
and~\ref{the:Homology_of_BGamma_and_bub(Gamma):exact_sequence}
  Recall there is a exact sequence
  \begin{multline}
    0 \to \Ext^1_{\IZ}(H^{n+1}(X),\IZ) \to H_n(X) \to \hom_{\IZ}(H^n(X),\IZ) \to
    0
    \label{universal_coeff_theorem_for_cohomology}
  \end{multline}
  for every $CW$-complex $X$ with finite skeleta, natural in $X$.  This,
  Theorem~\ref{the:Cohomology_of_BGamma_and_bub(Gamma)} and
  Corollary~\ref{cor:identifying_long_exact_sequence_for_H(Gamma)}
  imply~\ref{the:Homology_of_BGamma_and_bub(Gamma):Hm(BGamma)},%
~\ref{the:Homology_of_BGamma_and_bub(Gamma):bub(Gamma)},
  and~\ref{the:Homology_of_BGamma_and_bub(Gamma):exact_sequence}.  
\\[1mm]~\ref{the:Homology_of_BGamma_and_bub(Gamma):coinvariants} Here again we use the
  Hochschild-Serre spectral sequence
$$
E^2_{i,j} = H_i(\Z/p; H_j(\Z^n_{\rho})) \Longrightarrow H_{i+j}(\Gamma).
$$
Then the Universal Coefficient Theorem, Lemma~\ref{Tate_duality}, and
Lemma~\ref{lem:Hochschild-Serre_ss_forKast(BGamma)}~\ref{lem:Hochschild-Serre_ss_forKast(BGamma):Tate_cohomology}
imply that for $i+j$ even,
$$
\widehat{H}^{i+1}(\Z/p; H_j(\Z^n_\rho)) \cong \widehat{H}^{i+1}(\Z/p;
H^j(\Z^n_\rho)^*) \cong \widehat{H}^{-i-1}(\Z/p; H^j(\Z^n_\rho)) = 0.
$$
Hence $E^2_{i,j} = 0$ when $i+j$ is even and $i > 0$.  Since
$\widehat{H}^{-1}(\Z/p; H_{2m}(\Z^n_\rho)) = 0$, the norm map
$$
H_{2m}(\Z^n_\rho)_{\Z/p} \to H_{2m}(\Z^n_\rho)^{\Z/p}
$$
is injective.  Thus $E^2_{0,2m} = H_{2m}(\Z^n_\rho)_{\Z/p}$ is torsion-free.
Since for $i >0$, $E^2_{i,j}$ is torsion,
$$
H_{2m}(\Z^n_\rho)_{\Z/p} = E^2_{0,2m} = E^\infty_{0,2m} \xrightarrow{\cong}
H_{2m}(\Gamma).
$$
\end{proof}


\typeout{------------   Section 3: K-cohomology ------------}

\section{\texorpdfstring{$K$}{K}-cohomology}
\label{sec:K-cohomology}

Next we analyze the values of complex $K$-theory $K^*$ on $B\Gamma$ and
$\bub{\Gamma}$.  Recall that by Bott periodicity $K^*$ is $2$-periodic,
$K^0(\pt) = \IZ$, and $K^1(\pt) = 0$.

A rational computation of $K^*(BG) \otimes \IQ$ has been given for groups $G$
with a cocompact $G$-$CW$-model for $\eub{G}$
in~\cite[Theorem~0.1]{Lueck(2007)}, namely
\begin{multline*}
  K^m(BG) \otimes \IQ \xrightarrow{\cong}
  \\
  \left(\prod_{l \in \IZ} H^{2l+m}(BG;\IQ)\right) \times \left(\prod_{q
      \;\text{prime}} \prod_{(g) \in \con_q(G)} \prod_{l \in \IZ}
    H^{2l+m}(BC_G\langle g \rangle;\IQ\widehat{_q})\right),
\end{multline*}
where $\con_q(G)$ is the set of conjugacy classes (g) of elements $g \in G$ of
order $q^d$ for some integer $d \ge 1$ and $C_G\langle g \rangle$ is the
centralizer of the cyclic subgroup $\langle g \rangle$.

It gives in particular for $G = \Gamma$ because of
Theorem~\ref{the:Cohomology_of_BGamma_and_bub(Gamma)}~%
\ref{lem:preliminaries_about_Gamma_and_Zn_rho:finite_subgroups}
and~\ref{the:Cohomology_of_BGamma_and_bub(Gamma):Hm(BGamma)} and
Lemma~\ref{lem:preliminaries_about_Gamma_and_Zn_rho}
\begin{eqnarray}
  K^0(B\Gamma)  \otimes \IQ 
  & \cong & 
  \IQ^{\sum_{l \in \IZ} r_{2l}} \oplus (\IQ\widehat{_p})^{(p-1)p^k};
  \label{K0(BGamma)_rationally}
  \\
  K^1(B\Gamma)  \otimes \IQ 
  & \cong & 
  \IQ^{\sum_{l \in \IZ} r_{2l+1}}.
  \label{K1(BGamma)_rationally}
\end{eqnarray}

Recall that we have computed $\sum_{l \in \IZ} r_{2l}$ and $\sum_{l \in \IZ}
r_{2l+1}$ in Lemma~\ref{lem:LambdajZ(zeta)_in_R_Q(Z/p)}~%
\ref{lem:LambdajZ(zeta)_in_R_Q(Z/p):r_odd_and_r_ev}.

We are interested in determining the integral structure, namely, we want to show

\begin{theorem}[$K$-cohomology of $B\Gamma$ and $\bub{\Gamma}$]\
  \label{the:K-cohomology_of_BGamma_and_bub(Gamma)}
  \begin{enumerate}

  \item \label{the:K-cohomology_of_BGamma_and_bub(Gamma):K(BGamma)} For $m \in \IZ$,
$$
K^m(B\Gamma) \cong
\begin{cases}
  \IZ^{\sum_{l \in \IZ} r_{2l}} \oplus (\IZ\widehat{_p})^{(p-1)p^k} & m \: \text{even;}\\
  \IZ^{\sum_{l \in \IZ} r_{2l+1}} & m \; \text{odd;}
\end{cases}
$$
Here $\IZ\widehat{_p}$ is the $p$-adic integers.

\item \label{the:K-cohomology_of_BGamma_and_bub(Gamma):K0(BGamma)} There is a
  split exact sequence of abelian groups
$$0 \to  (\IZ\widehat{_p})^{(p-1)p^k} \to K^0(B\Gamma) 
\to K^0(B\IZ^n_{\rho})^{\IZ/p} \to 0$$ and $K^0(B\IZ^n_{\rho})^{\IZ/p} \cong
\IZ^{\sum_{l \in \IZ} r_{2l}}$.

\item \label{the:K-cohomology_of_BGamma_and_bub(Gamma):K1(BGamma)} Restricting
  to the subgroup $\Z^n$ of $\Gamma$ induces an isomorphism
  \begin{eqnarray*}
    K^1(B\Gamma) & \xrightarrow{\cong} & K^1(B\IZ^n_{\rho})^{\IZ/p}
  \end{eqnarray*}
  and $K^1(B\IZ^n_{\rho})^{\IZ/p} \cong \IZ^{\sum_{l \in \IZ} r_{2l+1}}$.

\item \label{the:K-cohomology_of_BGamma_and_bub(Gamma):K0(bub(Gamma)} We
  have $$K^0(\bub{\Gamma}) \cong \IZ^{\sum_{l \in \IZ} r_{2l}}.$$

\item \label{the:K-cohomology_of_BGamma_and_bub(Gamma):K1(bub(Gamma)} We have
$$K^1(\bub{\Gamma}) \cong \IZ^{\sum_{l \in \IZ} r_{2l+1}} \oplus T^1$$
for a finite abelian $p$-group $T^1$ for which there exists a filtration
$$T^1 = T^1_1 \supset T^1_2 \supset \cdots \supset T^1_{[(n/2)+1]} = 0$$
such that
$$T^1_i/T^1_{i+1} = (\IZ/p)^{t_i} \;\text{for}\; i = 1,2, \ldots , [(n/2)+1]$$
for integers $t_i$ which satisfy $0 \le t_i \le p^k - s_{2i+1}$.

\item \label{the:K-cohomology_of_BGamma_and_bub(Gamma):K1(bub(Gamma)_to_K1(BGamma)}
  The map $K^1(\bub{\Gamma}) \to K^1(B\Gamma)$ induces an isomorphism
$$K^1(\bub{\Gamma})/p\text{-} \tors \xrightarrow{\cong} K^1(B\Gamma)$$
Its kernel is isomorphic to $T^1$ and is isomorphic to the cokernel of the map
$$K^0(B\Gamma) 
\xrightarrow{\varphi^{0}} \bigoplus_{(P) \in \calp} \widetilde{K}^0(BP).$$
\end{enumerate}
\end{theorem}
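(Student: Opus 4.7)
The plan is to prove the theorem in two stages: first compute $K^*(B\Gamma)$ via a Serre-type spectral sequence for the fibration $B\IZ^n \to B\Gamma \to B\IZ/p$ induced by~\eqref{Gamma_as_extension}, and then deduce $K^*(\bub\Gamma)$ from the Mayer-Vietoris sequence of the pushout~\eqref{pushout_for_BGamma_to_bunderbar(Gamma)}. The starting input is the Künneth computation $K^*(B\IZ^n)\cong\Lambda^*(\IZ^n_{\rho^*})$, split into even- and odd-degree exterior powers, with $\IZ/p$-action induced by the dual $\rho^*$ of $\rho$.

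For assertions~\ref{the:K-cohomology_of_BGamma_and_bub(Gamma):K(BGamma)}--\ref{the:K-cohomology_of_BGamma_and_bub(Gamma):K1(BGamma)}, I would run the spectral sequence
$$E_2^{i,j} = H^i(\IZ/p;K^j(B\IZ^n)) \Longrightarrow K^{i+j}(B\Gamma).$$
For $i>0$ the group $E_2^{i,j}$ equals the Tate cohomology $\widehat{H}^i(\IZ/p;\Lambda^j\IZ^n_{\rho^*})$, which by the exterior-power argument of Lemma~\ref{lem:Hochschild-Serre_ss_forKast(BGamma)}~\ref{lem:Hochschild-Serre_ss_forKast(BGamma):Tate_cohomology} (applied to $\IZ^n_{\rho^*}$ in place of $\IZ^n_\rho$) vanishes unless $i+j$ is even. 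Combined with the edge-homomorphism/transfer argument of Lemma~\ref{lem:Hochschild-Serre_ss_forKast(BGamma)}~\ref{lem:Hochschild-Serre_ss_forKast(BGamma):collapse}, promoted to $K$-theory by working one characteristic at a time (away from $p$ the $\IZ/p$-action is semisimple, at $p$ one uses the transfer), all differentials are forced to vanish and the spectral sequence collapses at $E_2$. The zeroth column identifies $E_\infty^{0,*}$ with $K^*(B\IZ^n)^{\IZ/p}$, which is free abelian, giving~\ref{the:K-cohomology_of_BGamma_and_bub(Gamma):K1(BGamma)} directly together with the quotient map in~\ref{the:K-cohomology_of_BGamma_and_bub(Gamma):K0(BGamma)}; the splitting in~\ref{the:K-cohomology_of_BGamma_and_bub(Gamma):K0(BGamma)} is obtained by lifting these invariants via restriction to $\IZ^n$. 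The infinite accumulation of $(\IZ/p)$-groups in positive columns pro-assembles, under the inverse limit over skeleta of $B\Gamma$, into the $p$-adic summand $\IZ\widehat{_p}^{(p-1)p^k}$ in~\ref{the:K-cohomology_of_BGamma_and_bub(Gamma):K(BGamma)}, whose rank is pinned down by comparison with the rational computation~\eqref{K0(BGamma)_rationally} and by $|\calp|=p^k$.

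For assertions~\ref{the:K-cohomology_of_BGamma_and_bub(Gamma):K0(bub(Gamma)}--\ref{the:K-cohomology_of_BGamma_and_bub(Gamma):K1(bub(Gamma)_to_K1(BGamma)}, applying $K^*$ to the pushout~\eqref{pushout_for_BGamma_to_bunderbar(Gamma)} and using $\widetilde K^0(B\IZ/p)\cong\IZ\widehat{_p}^{p-1}$ and $\widetilde K^1(B\IZ/p)=0$ produces the six-term Mayer-Vietoris sequence
$$0 \to K^0(\bub\Gamma) \to K^0(B\Gamma) \xrightarrow{\varphi^0} \bigoplus_{(P)\in\calp}\widetilde K^0(BP) \to K^1(\bub\Gamma) \to K^1(B\Gamma) \to 0,$$
where the target of $\varphi^0$ is $\IZ\widehat{_p}^{(p-1)p^k}$ of the same rank as the $p$-adic part of $K^0(B\Gamma)$. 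Since the target is pro-$p$, the free summand $\IZ^{\sum r_{2l}}$ of $K^0(B\Gamma)$ lies in $\ker\varphi^0$; an Atiyah-Segal-style analysis of restrictions on the completed representation rings for the finite subgroups then shows that $\varphi^0$ is injective on the $p$-adic summand with finite cokernel. This gives $\ker\varphi^0\cong\IZ^{\sum r_{2l}}=K^0(\bub\Gamma)$, proving~\ref{the:K-cohomology_of_BGamma_and_bub(Gamma):K0(bub(Gamma)}, and identifies $T^1=\coker\varphi^0$ as a finite abelian $p$-group that maps injectively into $K^1(\bub\Gamma)$ with cokernel $K^1(B\Gamma)$, proving~\ref{the:K-cohomology_of_BGamma_and_bub(Gamma):K1(bub(Gamma)_to_K1(BGamma)}. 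The filtration in~\ref{the:K-cohomology_of_BGamma_and_bub(Gamma):K1(bub(Gamma)} is extracted from the Atiyah-Hirzebruch spectral sequence for $K^*(\bub\Gamma)$ (finite, since $\dim\bub\Gamma=n$), whose odd-total-degree $E_2$-entries are $H^{2i+1}(\bub\Gamma)\cong\IZ^{r_{2i+1}}\oplus(\IZ/p)^{p^k-s_{2i+1}}$ from Theorem~\ref{the:Cohomology_of_BGamma_and_bub(Gamma)}~\ref{the:Cohomology_of_BGamma_and_bub(Gamma):bub(Gamma)}; the induced AHSS filtration restricted to $T^1$ yields the desired one with quotients bounded by $(\IZ/p)^{p^k-s_{2i+1}}$.

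The main obstacle will be gaining enough control over the $p$-adic part of $\varphi^0$: one needs to force its kernel to be free of rank exactly $\sum r_{2l}$ (no more, no less) and its cokernel $T^1$ to be finite, but the methods available cannot pin $T^1$ down exactly. This is the same reason~\ref{the:K-cohomology_of_BGamma_and_bub(Gamma):K1(bub(Gamma)} only gives inequalities $t_i\le p^k-s_{2i+1}$ rather than equalities: the AHSS differentials for $\bub\Gamma$ between $p$-torsion groups in different degrees are not forced to vanish by parity, and determining them would require explicit computation of Milnor primitives or analogous secondary operations on $H^*(\bub\Gamma;\IF_p)$.
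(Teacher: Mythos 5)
Your overall framework matches the paper's: both use the Leray--Serre spectral sequence of $B\IZ^n\to B\Gamma\to B\IZ/p$, the Atiyah--Hirzebruch spectral sequence for $\bub\Gamma$, and the Mayer--Vietoris sequence from the pushout~\eqref{pushout_for_BGamma_to_bunderbar(Gamma)} with the Atiyah--Segal identification $\widetilde K^0(BP)\cong(\IZ\widehat{_p})^{p-1}$. The fact that the Leray--Serre differentials vanish (via the Chern character identification of $K^*(B\IZ^n_\rho)$ as a $\IZ[\IZ/p]$-module and the transfer/norm argument) and that $K^1(B\Gamma)\cong K^1(B\IZ^n_\rho)^{\IZ/p}$ are all established in essentially the same way.

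However, there is a genuine gap in your proposed order of argument. You try to extract the $(\IZ\widehat{_p})^{(p-1)p^k}$-summand of $K^0(B\Gamma)$ directly from the Leray--Serre spectral sequence, claiming the infinitely many positive-column entries $(\IZ/p)^{\sum_l a_{-i+2l}}$ ``pro-assemble'' into $(\IZ\widehat{_p})^{(p-1)p^k}$ after passing to the limit over skeleta. This does not follow from the rational computation: rationalizing only identifies the $\IQ\widehat{_p}$-rank, and even granting that the filtration term $F^1$ is a compact pro-$p$ module, knowing its rank does not determine its isomorphism type as an abelian group (one must still solve an infinite extension problem in the spectral sequence filtration, where each graded quotient is $(\IZ/p)^{\sum a_{2l}}$ or $(\IZ/p)^{\sum a_{2l+1}}$, and none of these equals $(p-1)p^k$). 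Moreover, your derivation of~\ref{the:K-cohomology_of_BGamma_and_bub(Gamma):K0(bub(Gamma)} from $\ker\varphi^0$ is circular, since you need to know $K^0(\bub\Gamma)$ is torsion-free before you can assert the free summand of $K^0(B\Gamma)$ lies in $\ker\varphi^0$, yet~\ref{the:K-cohomology_of_BGamma_and_bub(Gamma):K0(bub(Gamma)} is exactly what you are trying to prove.

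The paper's proof avoids both issues by reversing the order. It first uses the Atiyah--Hirzebruch spectral sequence for $\bub\Gamma$ (Lemma~\ref{lem:differentials_Atiyah-Hirzebruch_ss_for_K(BGamma)}), entirely independently of $K^*(B\Gamma)$, to establish that $K^0(\bub\Gamma)\cong\IZ^{\sum r_{2l}}$ and that $K^1(\bub\Gamma)$ has only finite $p$-torsion $T^1$ (assertions~\ref{the:K-cohomology_of_BGamma_and_bub(Gamma):K0(bub(Gamma)} and~\ref{the:K-cohomology_of_BGamma_and_bub(Gamma):K1(bub(Gamma)}). Only then does it analyze the Mayer--Vietoris sequence: the finiteness of $T^1$ shows $\operatorname{im}\varphi^0$ is a finite-index submodule of the free $\IZ\widehat{_p}$-module $\bigoplus_{(P)}\widetilde K^0(BP)\cong(\IZ\widehat{_p})^{(p-1)p^k}$, hence itself free of the same rank by the PID structure of $\IZ\widehat{_p}$, and the Snake Lemma applied to the split surjection $\iota^0\colon K^0(B\Gamma)\twoheadrightarrow K^0(B\IZ^n_\rho)^{\IZ/p}$ then identifies $\ker\iota^0\cong(\IZ\widehat{_p})^{(p-1)p^k}$. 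This is the step your proposal cannot supply. Your instinct that $T^1$ cannot be computed exactly is correct and matches the paper, but that is a separate and less serious caveat than the unjustified ``pro-assembly.''
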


The proof of Theorem~\ref{the:K-cohomology_of_BGamma_and_bub(Gamma)} needs some
preparation.  We will use two spectral sequences.  The \emph{Atiyah-Hirzebruch
  spectral sequence} (see~\cite[Chapter~15]{Switzer(1975)}) for topological
$K$-theory
$$
E^{i,j}_2 = H^{i}(\bub \Gamma ;K^j(\pt)) \Rightarrow K^{i+j}(\bub \Gamma)
$$
converges since $\bub \Gamma$ has a model which is a finite dimensional
$CW$-complex.  We also use the \emph{Leray-Serre spectral sequence}
(see~\cite[Chapter~15]{Switzer(1975)}) of the fibration $B\IZ^n \to B\Gamma \to B\IZ/p $.  
Recall that its $E_2$-term is $E^{i,j}_2 = H^i(\IZ/p;K^j(B\IZ^n_\rho))$ 
and it converges to $K^{i+j}(B\Gamma)$.  The Leray-Serre
spectral sequence converges (with no $\text{lim}^1$-term)
by~\cite[Theorem~6.5]{Lueck-Oliver(2001b)}.

\begin{lemma}

  \label{lem:differentials_Atiyah-Hirzebruch_ss_for_K(BGamma)}

  In the Atiyah-Hirzebruch spectral sequence converging to $K^*(\bub{\Gamma})$,
$$E^{i,j}_{\infty} \cong
\begin{cases}
  \IZ^{r_i} & i \; \text{even}, j\; \text{even};
  \\
  \IZ^{r_i} \oplus (\IZ/p)^{t_i'} & i \; \text{odd}, i \ge 3, j\; \text{even};
  \\
  0 & i=1, j \text{even};
  \\
  0 & j\; \text{odd}.
\end{cases}
$$
where $0 \le t_i' \le p^k-s_i$.
\end{lemma}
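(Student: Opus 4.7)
The plan is to read off $E^{i,j}_2$ from Theorem~\ref{the:Cohomology_of_BGamma_and_bub(Gamma)}~\ref{the:Cohomology_of_BGamma_and_bub(Gamma):bub(Gamma)} and then pin down $E^{i,j}_\infty$ up to isomorphism by combining two pieces of information about later pages: the rational collapse of the AHSS, which fixes the rank, and the fact that the only torsion appearing on $E_2$ is elementary abelian $p$-torsion lying in odd columns.

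First, since $K^j(\pt) = \IZ$ for $j$ even and $0$ for $j$ odd, $E^{i,j}_2 = H^i(\bub{\Gamma})$ when $j$ is even and $E^{i,j}_2 = 0$ when $j$ is odd. Theorem~\ref{the:Cohomology_of_BGamma_and_bub(Gamma)}~\ref{the:Cohomology_of_BGamma_and_bub(Gamma):bub(Gamma)} then gives the four cases of $E_2$. The cases of odd $j$ and $i = 1$ are immediate, since in both $E_2$, and hence $E_\infty$, already vanishes.

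For the remaining cases I would invoke that the Chern character is an isomorphism $K^*(X) \otimes \IQ \xrightarrow{\cong} \prod_k H^{*+2k}(X;\IQ)$ for a finite $CW$-complex $X$ (recall $\bub{\Gamma}$ admits an $n$-dimensional model), so the rational AHSS collapses and $\rk_\IZ E^{i,j}_\infty = \rk_\IZ E^{i,j}_2 = r_i$. Each $E^{i,j}_\infty$ is realised as a subquotient $A/B$ of $E^{i,j}_2$. For even $i$, $E^{i,j}_2 \cong \IZ^{r_i}$ is torsion-free, so both $A$ and $B$ are torsion-free; the identity $\rk A - \rk B = r_i$ combined with $\rk A \le r_i$ forces $B = 0$ and $A \cong \IZ^{r_i}$, yielding $E^{i,j}_\infty \cong \IZ^{r_i}$.

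For odd $i \ge 3$, $E^{i,j}_2 \cong \IZ^{r_i} \oplus (\IZ/p)^{p^k - s_i}$. A subgroup $A$ of such a group splits (project to the free summand and lift) as $A \cong \IZ^{r_A} \oplus (\IZ/p)^a$ with $a \le p^k - s_i$, and the rank constraint gives $r_A = r_i$. A subgroup $B$ of rank zero is purely torsion, hence $B \cong (\IZ/p)^b$ with $b \le a$; consequently $E^{i,j}_\infty \cong \IZ^{r_i} \oplus (\IZ/p)^{t'_i}$ with $t'_i := a - b$ satisfying $0 \le t'_i \le p^k - s_i$. No step presents a serious obstacle; the argument is a clean combination of Theorem~\ref{the:Cohomology_of_BGamma_and_bub(Gamma)} with rational collapse and elementary abelian group theory.
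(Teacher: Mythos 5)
Your proof is correct, and it reaches the lemma by a somewhat cleaner route than the paper's. The paper proceeds by induction over the page number $r$: at positions where $i$ and $j$ are both even it shows $E_r^{i,j}\cong\IZ^{r_i}$ stays torsion-free on every page (outgoing differentials with torsion-free target are rationally trivial, hence zero, and incoming differentials from the exponent-$p$ torsion groups in the odd columns must then vanish); it then uses this torsion-freeness to control the odd-$i$ positions, showing the outgoing differential there is zero and the incoming one lands inside the exponent-$p$ torsion, which can only shrink. You sidestep this page-by-page bookkeeping by invoking the blanket fact that $E_\infty^{i,j}$ is a subquotient of $E_2^{i,j}$ (a subquotient of a subquotient is a subquotient, and the spectral sequence degenerates at a finite page since $\bub\Gamma$ is finite-dimensional) and then classifying the rank-$r_i$ subquotients of $\IZ^{r_i}\oplus(\IZ/p)^{p^k-s_i}$ directly: the rank constraint forces $B$ to be torsion, hence contained in the elementary-abelian torsion of $A$, and everything splits. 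This is shorter and avoids analyzing individual differentials entirely; what it gives up is the extra structural information the paper's induction establishes along the way (for instance that the outgoing differentials from odd-$i$ positions actually vanish, so $E_\infty^{i,j}$ is there an honest quotient of $E_2^{i,j}$, not merely a subquotient). For the stated bound $0\le t_i'\le p^k-s_i$ the two arguments are equivalent.
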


\begin{proof}~%
  Since $\bub{\Gamma}$ has a finite $CW$-model, all differentials in the
  Atiyah-Hirzebruch spectral sequence converging to $K^*(\bub{\Gamma})$ are
  rationally trivial and there exists an $N$ so that for all $i,j$, $E^{i,j}_N =
  E^{i,j}_{\infty}$.  The $E_2$-term of the Atiyah-Hirzebruch spectral sequence
  converging to $K^*(\bub{\Gamma})$ is given by
  Theorem~\ref{the:Cohomology_of_BGamma_and_bub(Gamma)}~%
~\ref{the:Cohomology_of_BGamma_and_bub(Gamma):Hm(BGamma)}
$$E^{i,j}_2 = H^i(\bub{\Gamma};K^j(\pt)) \cong
\begin{cases}
  \IZ^{r_i} & i \; \text{even}, j\; \text{even};
  \\
  \IZ^{r_i} \oplus (\IZ/p)^{p^k-s_i} & i \; \text{odd}, i \ge 3, j\;
  \text{even};
  \\
  0 & i=1, j \; \text{even};
  \\
  0 & j\; \text{odd}.
\end{cases}
$$
A map with a torsion free abelian group as target is already trivial, if it
vanishes rationally. Now consider $(i,j)$ such that it is not true that $i$ is
odd and $j$ is even. Then one shows by induction over $r\ge 2$ that
$E^{i,j}_{r}$ is zero for odd $j$ and $\IZ^{r_i}$ for even $j$, the differential
ending at $(i,j)$ in the $E_r$-term is trivial and the image of the differential
starting at $(i,j)$ is finite, and $E^{i,j}_{r}$ is an abelian subgroup of
$E^{i,j}_{r+1}$ of finite index.  Next consider $(i,j)$ such that $i$ is odd and
$j$ is even.  Then one shows by induction over $r\ge 2$ that the image of the
differential ending at $(i,j)$ in the $E^r$-term lies in the torsion subgroup of
$E^{i,j}_{r+1}$, the differential starting at $(i,j)$ is trivial, the rank of
$E^{i,j}_{r+1}$ is $r_i$ and its torsion subgroup is isomorphic to $\IZ/p^t$ for
some $t$ with $t \le p^k - s_i$.

This finishes the proof of
Lemma~\ref{lem:differentials_Atiyah-Hirzebruch_ss_for_K(BGamma)}.
\end{proof}

\begin{lemma}
  \label{lem:Leray-Serre_ss_for_K(BGamma)}\
  \begin{enumerate}
  \item \label{lem:Leray-Serre_ss_for_K(BGamma):KOast(BZn)_over_Z[Z/p]} For
    every $m \in \IZ$, there is an isomorphism of $\IZ[\IZ/p]$-modules
$$K^m(B\IZ^n_{\rho}) \cong \bigoplus_{l} H^{m+2l}(\IZ^n_{\rho});$$
In particular we get
$$K^m(B\IZ^n_{\rho})^{\IZ/p}  \cong \IZ^{\sum_{l} r_{m+2l}}.$$ 

\item \label{lem:Leray-Serre_ss_for_K(BGamma):Tate_cohomology}
$$\widehat{H}^i(\IZ/p;K^j(B\IZ^n_{\rho})) \cong
\bigoplus_{l \in \IZ} \widehat{H}^i(\IZ/p;H^{j+2l}(\IZ^n_{\rho})) \cong
\begin{cases} (\IZ/p)^{\sum_{l \in \IZ} a_{j+2l}} & i+j \;\text{even}, \\
  0 & $i+j$\;\text{odd}.
\end{cases}
$$

\item \label{lem:Leray-Serre_ss_for_K(BGamma):trivial_differentials} All
  differentials in the Leray-Serre spectral sequence are trivial.

\end{enumerate}
\end{lemma}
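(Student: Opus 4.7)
The plan is to establish~\ref{lem:Leray-Serre_ss_for_K(BGamma):KOast(BZn)_over_Z[Z/p]} first, derive~\ref{lem:Leray-Serre_ss_for_K(BGamma):Tate_cohomology} as an immediate consequence, and finally use~\ref{lem:Leray-Serre_ss_for_K(BGamma):Tate_cohomology} together with a transfer argument to kill all differentials in the Leray--Serre spectral sequence.

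For~\ref{lem:Leray-Serre_ss_for_K(BGamma):KOast(BZn)_over_Z[Z/p]}, the plan is to use that on the torus $T^n = B\IZ^n$ the Chern character gives a natural integral isomorphism
\[
\ch \colon K^*(T^n) \xrightarrow{\cong} \bigoplus_l H^{*+2l}(T^n;\IZ).
\]
This is classical: via the K\"unneth formulas for $K$-theory and cohomology, both of which apply since everything in sight is torsion-free and finitely generated, the statement reduces to the trivial case $n = 1$. Because the Chern character is natural, the induced $\IZ/p$-action on $T^n$ promotes it to an isomorphism of $\IZ[\IZ/p]$-modules in each degree. Taking $\IZ/p$-invariants and invoking the definition~\eqref{r_m} of $r_m$ yields the rank formula.

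For~\ref{lem:Leray-Serre_ss_for_K(BGamma):Tate_cohomology}, apply $\widehat{H}^i(\IZ/p;-)$ to the isomorphism from~\ref{lem:Leray-Serre_ss_for_K(BGamma):KOast(BZn)_over_Z[Z/p]}. Since only finitely many summands contribute (as $H^{j+2l}(\IZ^n_{\rho}) = 0$ for $j+2l > n$), Tate cohomology commutes with the direct sum, and Lemma~\ref{lem:Hochschild-Serre_ss_forKast(BGamma)}~\ref{lem:Hochschild-Serre_ss_forKast(BGamma):Tate_cohomology} delivers the explicit value.

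For~\ref{lem:Leray-Serre_ss_for_K(BGamma):trivial_differentials}, I would split the analysis of $d_r \colon E_r^{i,j} \to E_r^{i+r, j-r+1}$ into cases. When $i \ge 1$, by~\ref{lem:Leray-Serre_ss_for_K(BGamma):Tate_cohomology} the source $E_2^{i,j}$ is non-zero only if $i+j$ is even, while the target has first coordinate $\ge 2$ and total degree $i+j+1$, hence is non-zero only if $i+j$ is odd; one side must vanish, so $d_r = 0$. When $i = 0$ and $j$ is even, the target again has odd total degree with first coordinate $\ge 2$, so is zero. The only remaining case, $i = 0$ with $j$ odd, will be settled by showing the edge homomorphism $K^j(B\Gamma) \to K^j(B\IZ^n_{\rho})^{\IZ/p}$ is surjective, which forces all outgoing differentials from $E_r^{0,j}$ to vanish. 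Surjectivity follows by the same transfer argument that proved the collapse in Lemma~\ref{lem:Hochschild-Serre_ss_forKast(BGamma)}~\ref{lem:Hochschild-Serre_ss_forKast(BGamma):collapse}: the composite $\res \circ \trf$ realizes the norm map $K^j(B\IZ^n_{\rho}) \to K^j(B\IZ^n_{\rho})^{\IZ/p}$, whose cokernel is $\widehat{H}^0(\IZ/p; K^j(B\IZ^n_{\rho}))$, and this vanishes for odd $j$ by~\ref{lem:Leray-Serre_ss_for_K(BGamma):Tate_cohomology}. The genuinely delicate point is the equivariant integral isomorphism in~\ref{lem:Leray-Serre_ss_for_K(BGamma):KOast(BZn)_over_Z[Z/p]}; after that, the rest is formal.
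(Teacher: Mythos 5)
Your proposal is correct and follows essentially the same route as the paper: the integral Chern character on $T^n$ (which the paper packages as Lemma~\ref{lem:chern_integral}, proved by the same product-with-$S^1$/K\"unneth induction you invoke) gives the $\IZ[\IZ/p]$-isomorphism by naturality, part~\ref{lem:Leray-Serre_ss_for_K(BGamma):Tate_cohomology} then follows by applying Tate cohomology and citing Lemma~\ref{lem:Hochschild-Serre_ss_forKast(BGamma)}~\ref{lem:Hochschild-Serre_ss_forKast(BGamma):Tate_cohomology}, and the collapse is obtained from the checkerboard pattern plus the transfer/norm-map argument identifying surjectivity of the edge homomorphism $\iota^j$ with the vanishing of $\widehat{H}^0(\IZ/p;K^j(B\IZ^n_\rho))$ for odd $j$. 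The only cosmetic difference is that you spell out the checkerboard case analysis in slightly more detail than the paper does.
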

\begin{proof}~\ref{lem:Leray-Serre_ss_for_K(BGamma):KOast(BZn)_over_Z[Z/p]} Since $K^*(\pt)$
  is torsion free, Lemma~\ref{lem:chern_integral} below shows that the Chern
  character gives an isomorphism
$$
ch^m \colon K^m(T^n) \xrightarrow{\cong} \bigoplus_{i+j = m} H^i(T^n; K^j(\pt)) =
\bigoplus_l H^{m+2l}(T^n)
$$
Since $T^n$ is a model for the $\Z/p$-space $B\Z^n_\rho$ and $ch^m$ is natural
with respect to self-maps of the torus, $ch^m$ is an isomorphism of
$\Z[\Z/p]$-modules.

Since $H^{m+2l}(\IZ^n_{\rho})^{\IZ/p} \cong \IZ^{r_{m + 2l}}$ by
Theorem~\ref{the:Cohomology_of_BGamma_and_bub(Gamma)}%
~\ref{the:Cohomology_of_BGamma_and_bub(Gamma):Gamma_to_Zn}
and~\ref{the:Cohomology_of_BGamma_and_bub(Gamma):Hm(BGamma)},
assertion~\ref{lem:Leray-Serre_ss_for_K(BGamma):KOast(BZn)_over_Z[Z/p]} follows.
\\[1mm]%
\ref{lem:Leray-Serre_ss_for_K(BGamma):Tate_cohomology} This follows from
Lemma~\ref{lem:Hochschild-Serre_ss_forKast(BGamma)}~%
\ref{lem:Hochschild-Serre_ss_forKast(BGamma):Tate_cohomology} and
assertion~\ref{lem:Leray-Serre_ss_for_K(BGamma):KOast(BZn)_over_Z[Z/p]}.
\\[1mm]%
\ref{lem:Leray-Serre_ss_for_K(BGamma):trivial_differentials} Next we want to
show that the differentials $d^{i,j}_r$ are zero for all $r \ge 2$ and $i,j$. By
the checkerboard pattern of the $E_2$-term it suffices to show for $r \ge 2$
that the differentials $d_r^{0,j}$ are trivial for $r \ge 2$ and all odd $j \ge
1$. This is equivalent to showing that for every odd $j \ge 1$ the edge
homomorphism (see Proposition~\ref{prop:edge_homomorphisms})
$$\iota^j \colon K^j(B\Gamma) \to K^j(B\IZ^n_{\rho})^{\IZ/p} = E^{0,j}_2$$
is surjective.  To show this we use the transfer, whose properties are reviewed
in Appendix~\ref{tate_and_transfer}.  For $j$ odd,
$\widehat{H^0}(\IZ/p,K^j(\IZ^n_{\rho})) = 0$ by
assertion~\ref{lem:Leray-Serre_ss_for_K(BGamma):Tate_cohomology}.  Thus the norm
map $N = \iota^j \circ \trf^j$ is surjective, and so $\iota^j$ is surjective as
desired.
\end{proof}

Let $\calh_*$ be a generalized homology theory and $\calh^*$ a generalized
cohomology theory.  Dold defined (see~\cite{Dold(1962)} and~\cite[Example 4.1]
{Lueck(2002b)}) Chern characters
\begin{align*}
  &ch_m \colon \bigoplus_{i+j = m}H_i(X,Y; \calh_j(\pt)) \to   \calh_m(X,Y) \otimes \Q \\
  & ch^m \colon \calh^m(X,Y) \to \bigoplus_{i+j = m}H^i(X,Y; \calh^j(\pt)) \otimes
  \Q.
\end{align*}
The homological Chern character is a natural transformations of homology
theories defined on the category of $CW$-pairs.  When $X = \pt$, then $ch_m =
i_{\Q} \colon \calh_m(\pt) = \calh_m(\pt) \otimes \Z \to \calh_m(\pt) \otimes \Q$,
after the obvious identification of the targets.  Hence these Chern characters
are rational isomorphisms for any $CW$-pair.  In cohomology there are parallel
statements after restricting oneself to the category of finite $CW$-pairs.
(If the disjoint union axiom is fulfilled, finite dimensional suffices).

A $CW$-pair $(X,Y)$ is \emph{$\calh_*$-Chern integral} if for all $m$,
$$
i_{\Q} \colon \calh_m(X,Y) \to \calh_m(X,Y) \otimes \Q
$$
is a monomorphism, and $ch_m$ gives an isomorphism onto the image of $i_{\Q}$.
There is a similar definition of \emph{$\calh^*$-Chern integral} for finite
$CW$-pairs. 
 
\begin{lemma} [Chern character] \label{lem:chern_integral}\ 
  \begin{enumerate}

  \item \label{lem:chern_integral:point} A point is $\calh_*$-Chern integral if
    and only if $\calh_*(\pt)$ is torsion free.

  \item \label{lem:chern_integral:product_with_circle} If $X$ is $\calh_*$-Chern
    integral, then so is $X \times S^1$.

\end{enumerate}
Similar statements hold in cohomology.
\end{lemma}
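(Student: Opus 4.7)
The plan is to prove each of the two assertions separately, with the first being a direct computation and the second following from a standard Künneth-type splitting that the Chern character respects.

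For (i), observe that when $X = \pt$ and $Y = \emptyset$, the only surviving term in $\bigoplus_{i+j=m} H_i(\pt;\calh_j(\pt))$ is the $i=0$ summand, which is simply $\calh_m(\pt)$. Under this identification the definition of $ch_m$ reduces exactly to the rationalization map $i_{\Q}$ (as already noted after the definition of the Chern character in the excerpt). Consequently the image of $ch_m$ coincides automatically with the image of $i_\Q$, and $ch_m$ is an isomorphism onto this image precisely when $i_\Q$ is injective. Injectivity of $i_\Q \colon \calh_m(\pt) \to \calh_m(\pt) \otimes \Q$ is in turn equivalent to $\calh_m(\pt)$ being torsion-free. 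This handles assertion (i), and the cohomology version is identical.

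For (ii), the key ingredient is the natural splitting
\[
\calh_m(X \times S^1) \;\cong\; \calh_m(X) \oplus \calh_{m-1}(X).
\]
One summand arises from the retraction given by the projection $p \colon X \times S^1 \to X$ together with the basepoint inclusion $X \cong X \times \{s_0\} \hookrightarrow X \times S^1$; the other is obtained from the cofiber sequence of this inclusion, whose cofiber is $X_+ \wedge S^1$, combined with the suspension isomorphism $\widetilde\calh_m(X_+ \wedge S^1) \cong \widetilde\calh_{m-1}(X_+) = \calh_{m-1}(X)$. Exactly the same reasoning gives the analogous splitting $H_m(X \times S^1;A) \cong H_m(X;A) \oplus H_{m-1}(X;A)$ for singular homology with any coefficient module $A$, and hence for $\bigoplus_{i+j=m} H_i(X\times S^1;\calh_j(\pt))$ after grouping degrees.

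Naturality of the Chern character with respect to $p$ and the basepoint inclusion, combined with its compatibility with the suspension isomorphism, then identifies $ch_m$ for $X \times S^1$ with the direct sum of $ch_m$ and $ch_{m-1}$ for $X$; similarly $i_\Q$ for $X\times S^1$ splits as the direct sum of $i_\Q$ in degrees $m$ and $m-1$ for $X$. Chern integrality of $X$ in degrees $m$ and $m-1$ therefore implies Chern integrality of $X \times S^1$ in degree $m$. The cohomological statement is proved in the same way, using the dual splitting $\calh^m(X\times S^1)\cong \calh^m(X)\oplus \calh^{m-1}(X)$, which is legitimate for finite $CW$-complexes. The only mildly delicate point is the compatibility of the Chern character with the suspension isomorphism used in the splitting; this is standard but is the one step where some care is required.
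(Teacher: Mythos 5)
Your proof of (i) coincides with the paper's. For (ii) you argue via the stable splitting $X\times S^1 \simeq X \vee S^1 \vee \Sigma X$ (equivalently, the cofiber sequence of the basepoint inclusion $X \hookrightarrow X \times S^1$ with cofiber $X_+ \wedge S^1$), while the paper's primary argument instead runs the long exact sequence of the pair $(X\times S^1, X\times D^1)$, using excision to identify the relative term with the degree-$(m-1)$ Chern character for $X$; the paper then explicitly mentions your stable-splitting route as an alternative. The two are nearly identical in substance, but the pair argument has the minor advantage that the compatibility of $ch$ with the relevant maps is completely automatic: $ch$ is by construction a natural transformation of homology theories on $CW$-pairs, so it commutes with the boundary map of any pair, and the diagram one needs simply commutes with no further verification. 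The point you flag as ``mildly delicate''---compatibility of $ch$ with the suspension isomorphism---is in fact likewise automatic, since the suspension isomorphism is built out of a boundary map and an excision, both of which $ch$ respects as a transformation of homology theories; so there is no gap, but the pair formulation makes this transparent without having to invoke suspensions at all.
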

 
\begin{proof}~\ref{lem:chern_integral:point} If a point is $\calh_*(\pt)$-Chern
  integral, then $\calh_*(\pt) \to \calh_*(\pt) \otimes \Q$ is injective, hence
  $\calh_*(\pt)$ is torsion free.  If $\calh_*(\pt)$ is torsion free, then
  $i_{\Q}$ is injective.  Since $ch_m = i_{\Q}$, a point is $\calh_*(\pt)$-Chern
  integral.

  \ref{lem:chern_integral:product_with_circle} Consider the following
  commutative diagram with split exact columns.  

$$
\xymatrix{0 \ar[d] & 0 \ar[d] & 0 \ar[d] 
\\
  \bigoplus\limits_{i+j=m} H_i(X \times D^1; \calh_j(\pt))    
\ar[d] \ar[r]^-{ch_m} 
& \calh_m(X \times D^1)  \otimes \Q  \ar[d] 
&  \calh_m(X \times D^1) \ar[d] \ar[l]_-{i_{\Q}} 
\\
  \bigoplus\limits_{i+j=m} H_i(X \times S^1; \calh_j(\pt))   
\ar@<-1ex>@{-->}[u] \ar[d] \ar[r]^-{ch_m}   
&  \calh_m(X \times S^1)  \otimes \Q \ar[d] \ar@<-1ex>@{-->}[u]  
& \calh_m(X \times S^1) \ar[d] \ar[l]_-{i_{\Q}}  \ar@<-1ex>@{-->}[u] 
\\
\bigoplus\limits_{i+j=m} H_i(X \times (S^1,D^1); \calh_j(\pt))  \ar[d] \ar[r]^-{ch_m}  
&  \calh_m(X \times (S^1,D^1)) \otimes \Q \ar[d] 
&   \calh_m(X \times (S^1,D^1))  \ar[d] \ar[l]_-{i_{\Q}}  
\\
0 & 0 & 0 }
$$

The columns come from the long exact sequence of a pair where $D^1$ is included
in $S^1$ as the upper semicircle. The splitting maps are given by a constant map
$S^1 \to D^1$.  It is elementary to see that the bottom row is isomorphic to
$$
\bigoplus_{i+j=m-1} H_i(X; \calh_j(\pt) ) \xrightarrow{ch_{m-1}} \calh_{m-1}(X)
\otimes \Q \xleftarrow{i_{\Q}} \calh_{m-1}(X).
$$
Since $X$ is $\calh_*$-Chern integral, so are $X \times (D^1,S^1)$ and $X \times
D^1$.  It follows that $X \times S^1$ is $\calh_*$-Chern integral as desired.

One may also argue by using the fact that stably $X \times S^1$ agrees with $X \vee S^1 \vee \Sigma X$ and
the property $\calh_*$-Chern integral is inherited by suspensions and wedges.
\end{proof}

\begin{proof}[Proof of Theorem~\ref{the:K-cohomology_of_BGamma_and_bub(Gamma)}]%
~\ref{the:K-cohomology_of_BGamma_and_bub(Gamma):K0(bub(Gamma)}%
~\ref{the:K-cohomology_of_BGamma_and_bub(Gamma):K1(bub(Gamma)} These
assertions follow from the Atiyah-Hirzebruch spectral sequence converging to $K^*(\bub{\Gamma})$ using
Lemma~\ref{lem:differentials_Atiyah-Hirzebruch_ss_for_K(BGamma)}.  
\\[1mm]~\ref{the:K-cohomology_of_BGamma_and_bub(Gamma):K0(BGamma)}%
~\ref{the:K-cohomology_of_BGamma_and_bub(Gamma):K1(BGamma)}%
~\ref{the:K-cohomology_of_BGamma_and_bub(Gamma):K1(bub(Gamma)_to_K1(BGamma)}
We first claim that for all $m \in \Z$, the inclusion 
$\iota \colon\Z^n \to \Gamma$ induces an epimorphism
$$\iota^m \colon K^m(B\Gamma) \to K^m(B\IZ^n_{\rho})^{\IZ/p}$$
and $K^m(B\IZ^n_{\rho})^{\IZ/p} \cong \Z^{\sum_{l \in \Z}r_m + 2l}$.  We will
also show that for $m$ odd, the map $\iota^m$ is an isomorphism.  By
Lemma~\ref{lem:Leray-Serre_ss_for_K(BGamma)}~\ref{lem:Leray-Serre_ss_for_K(BGamma):trivial_differentials},
the Leray-Serre spectral sequence collapses, so $E^{0,m}_2 = E^{0,m}_{\infty}$.
Hence the edge homomorphism $\iota^m$ is onto (see
Proposition~\ref{prop:edge_homomorphisms}).  The computation of
$K^m(B\Z^n_\rho)$ is given in
Lemma~\ref{lem:Leray-Serre_ss_for_K(BGamma)}%
~\ref{lem:Leray-Serre_ss_for_K(BGamma):KOast(BZn)_over_Z[Z/p]}.
Now assume $m$ is odd.  For any $i > 0$, $E_2^{i,m-i} = 0$ by
Lemma~\ref{lem:Leray-Serre_ss_for_K(BGamma)}%
~\ref{lem:Leray-Serre_ss_for_K(BGamma):Tate_cohomology}.
Hence $H^m(B\Gamma) = E_{\infty}^{0,m}$, so the edge homomorphism is injective.
We have now proved
assertion~\ref{the:K-cohomology_of_BGamma_and_bub(Gamma):K1(BGamma)} of our
theorem.

We next note that for all $m \in \Z$, the kernel and cokernel of the composite
$$K^m(\bub{\Gamma}) \to K^m(B\Gamma) \to K^m(B\IZ^n_{\rho})^{\IZ/p}$$
are finitely generated abelian $p$-groups.  This follows from
Proposition~\ref{prop:quotient_iso} and the following commutative diagram
$$
\xymatrix{
  B\Z^n_\rho = T^n \times S^\infty  \ar[r]^-{\simeq} \ar[d]^-{\pi}  & T^n = \R^n/\Z^n  \ar[d]^-{\underline{\pi}}   \\
  B\Gamma = T^n \times_{\Z/p} S^\infty \ar[r] & \bub\Gamma = \R^n/\Gamma   \\
}
$$

By
Lemma~\ref{lem:preliminaries_about_Gamma_and_Zn_rho}%
~\ref{lem:preliminaries_about_Gamma_and_Zn_rho:order_of_calp},
the number of conjugacy classes of order $p$ subgroups of $\Gamma$ is $p^k$.  By
the Atiyah-Segal Completion Theorem (see~\cite{Atiyah-Segal(1969)})
\begin{eqnarray}
  \widetilde{K}^m(B\IZ/p) & \cong &
  \begin{cases} 
    \II_{\IC}(\IZ/p) \otimes \IZ\widehat{_p} \cong (\IZ\widehat{_p})^{p-1} &
    \text{if} \;m \; \text{even;}
    \\
    0 & \text{if} \;m \; \text{odd}.
  \end{cases}
  \label{widetilde(Km)(BZ/p)}
\end{eqnarray}
where $\II_{\IC}(\IZ/p) \subset R_{\IC}(\IZ/p)$ is the augmentation ideal.
Hence
$$\bigoplus_{(P) \in \calp} \widetilde{K}^0(BP) \cong (\IZ\widehat{_p})^{(p-1)p^k}.$$

We are now in a position to analyze the long exact sequence
 
\begin{equation}
  0 \to K^{0}(\bub{\Gamma}) 
  \xrightarrow{\overline{f}^0} 
  K^{0}(B\Gamma) 
  \xrightarrow{\varphi^{0}} 
  \bigoplus_{(P) \in \calp} \widetilde{K}^{0}(BP)
  \xrightarrow{\delta^{0}}
  K^{1}(\bub{\Gamma}) 
  \xrightarrow{\overline{f}^1} 
  K^{1}(B\Gamma) 
  \to 0
  \label{long_exact_K-cohomology_sequences_for_bub(Gamma)_BGamma)}
\end{equation}
associated to the cellular
pushout~\eqref{pushout_for_BGamma_to_bunderbar(Gamma)}.  We will work from right
to left.
 
Since $K^1(B\Gamma) \cong K^1(B\Z^n_\rho)^{\Z/p}$ is torsion free, it follows
that the kernel of $\overline{f}^1$ equals $T^1$, the $p$-torsion subgroup of
$K^1(\bub\Gamma)$.  By exactness of
\eqref{long_exact_K-cohomology_sequences_for_bub(Gamma)_BGamma)}, $T^1$ also
equals the cokernel of $\varphi^{0}$.  This completes the proof of
assertion~\ref{the:K-cohomology_of_BGamma_and_bub(Gamma):K1(bub(Gamma)_to_K1(BGamma)}.

We showed above that $\ker \overline{f}^1 = \text{im }\delta^0$ is a finite
abelian $p$-group.  It follows that $\ker \delta^0 = \text{im } \varphi^{0}$ is
also isomorphic to $(\IZ\widehat{_p})^{(p-1)p^k}$ since any finite abelian
$p$-group $A$ is $p$-adically complete, and hence a $\Z\widehat{_p}$-module, a
$\Z$-homomorphism from $(\IZ\widehat{_p})^{(p-1)p^k} \to A$ is automatically a
$\Z\widehat{_p}$-homomorphism and $\Z\widehat{_p}$ is a principal ideal domain.

Consider the commutative diagram with exact rows
$$
\xymatrix{0 \ar[r] & K^0(\bub \Gamma) \ar[r] \ar[d]^-{\underline{\iota}^0}   
& K^0(B\Gamma) \ar[r] \ar[d]^-{\iota^0} &  \text{im }\varphi^{0}  \ar[r] \ar[d] & 0\\
  0 \ar[r] & K^0(B\Z^n_\rho)^{\Z/p} \ar[r] & K^0(B\Z^n_\rho)^{\Z/p} \ar[r] & 0 \ar[r] & 0. \\
}
$$
We have already seen that the middle vertical map is surjective with free
abelian target, hence split surjective.  Let $K$ be the kernel of $\iota^0$.
Then by the Snake Lemma, there is a short exact sequence
$$
0 \to K \to \text{im }\varphi^0 \to \text{coker}(\underline{\iota}^0) \to 0.
$$
As we noted above, $\text{im }\varphi^0 \cong (\IZ\widehat{_p})^{(p-1)p^k}$ and
$\text{coker }(\underline{\iota}^0)$ is a finite abelian $p$-group.  Thus $K$ is
also isomorphic to $(\IZ\widehat{_p})^{(p-1)p^k}$.  This completes the proof of
assertion~\ref{the:K-cohomology_of_BGamma_and_bub(Gamma):K0(BGamma)}.  \\[1mm]~%
\ref{the:K-cohomology_of_BGamma_and_bub(Gamma):K(BGamma)} This follows from
assertions~\ref{the:K-cohomology_of_BGamma_and_bub(Gamma):K0(BGamma)}
and~\ref{the:K-cohomology_of_BGamma_and_bub(Gamma):K1(BGamma)}.

This finishes the proof of
Theorem~\ref{the:K-cohomology_of_BGamma_and_bub(Gamma)}.
\end{proof}


\typeout{------------   Section 4: K-homology  ------------}

\section{\texorpdfstring{$K$}{K}-homology}
\label{sec:K-homology}

In this section we compute complex $K$-homology of $B\Gamma$ and $\bub\Gamma$.
Rationally this can be done using the Chern character of Dold~\cite{Dold(1962)}
which gives for every $CW$-complex a natural isomorphism
$$\bigoplus_{l \in \IZ} H_{m+2l}(X) \otimes \IQ \xrightarrow{\cong} 
KO_m(X) \otimes \IQ.$$ In particular we get from
Theorem~\ref{the:Homology_of_BGamma_and_bub(Gamma)}~%
\ref{the:Homology_of_BGamma_and_bub(Gamma):Hm(BGamma)}
and~\ref{the:Homology_of_BGamma_and_bub(Gamma):bub(Gamma)}
\begin{eqnarray}
  K_m(B\Gamma) \otimes \IQ & \cong & \IQ^{\sum_{l \in \IZ} r_{m+2l}};
  \label{K_m(BGamma)_rationally}
  \\
  K_m(\bub{\Gamma}) \otimes \IQ & \cong & \IQ^{\sum_{l \in \IZ} r_{m+2l}}
  \label{K_m(bub(Gamma))_rationally}
\end{eqnarray}

We are interested in determining the integral structure, namely, we want to show

\begin{theorem}[$K$-homology of $B\Gamma$ and
  $\bub{\Gamma}$]\label{the:K-homology_of_BGamma_and_bub(Gamma)}\

  \begin{enumerate}

  \item \label{the:K-homology_of_BGamma_and_bub(Gamma):K(BGamma)}

    For $m \in \Z$,
$$
K_m(B\Gamma) \cong
\begin{cases}
  \IZ^{\sum_{l \in \IZ} r_{2l}}   & m \: \text{even;}\\
  \IZ^{\sum_{l \in \IZ} r_{2l+1}} \oplus (\IZ/p^\infty)^{(p-1)p^k} & m \;
  \text{odd;}
\end{cases}
$$
Here $\Z/p^\infty = \colim_{n \to \infty} \Z/p^n \cong \Z[1/p]/\Z$.

\item \label{the:K-homology_of_BGamma_and_bub(Gamma):K0(BGamma)} The inclusion
  map $\Z^n \to\Gamma$ induces an isomorphism
$$K_0(B\IZ^n_{\rho})_{\IZ/p} \xrightarrow{\cong} K_0(B\Gamma)$$
and $K_0(B\IZ^n_{\rho})_{\IZ/p} \cong \IZ^{\sum_{l \in \IZ} r_{2l}} $.

\item \label{the:K-homology_of_BGamma_and_bub(Gamma):K1(BGamma)}

  There is a split short exact sequence of abelian groups
$$0 \to (\IZ/p^{\infty})^{(p-1)p^k} \to K_1(B\Gamma) \to K_{1}(\bub{\Gamma}) \to 0.$$

\item \label{the:K-homology_of_BGamma_and_bub(Gamma):K0(bub(Gamma)} We have
$$K_0(\bub{\Gamma}) \cong \IZ^{\sum_{l \in \IZ} r_{2l}} \oplus T^1,$$
where $T^1$ is the finite abelian $p$-group appearing in
Theorem~\ref{the:K-cohomology_of_BGamma_and_bub(Gamma)}~%
\ref{the:K-cohomology_of_BGamma_and_bub(Gamma):K1(bub(Gamma)}.

\item \label{the:K-homology_of_BGamma_and_bub(Gamma):K1(bub(Gamma)} We have
$$K_1(\bub{\Gamma}) \cong \IZ^{\sum_{l \in \IZ} r_{2l+1}}.$$

\item \label{the:K-homology_of_BGamma_and_bub(Gamma):K_0(BGamma)_to_K_0(bub(Gamma)}
  The group $T^1$ is isomorphic to a subgroup of the kernel of
$$\bigoplus_{(P) \in \calp} K_{1}(BP) 
\to K_{1}(B\Gamma).$$

\end{enumerate}
\end{theorem}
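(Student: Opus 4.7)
The plan is to deduce Theorem~\ref{the:K-homology_of_BGamma_and_bub(Gamma)} from the already-established $K$-cohomology result (Theorem~\ref{the:K-cohomology_of_BGamma_and_bub(Gamma)}) using three ingredients: the Universal Coefficient Theorem for topological $K$-theory on finite $CW$-complexes, a homological Leray-Serre spectral sequence analysis for the fibration $B\IZ^n \to B\Gamma \to B\IZ/p$, and the Mayer-Vietoris long exact sequence in $K$-homology associated to the cellular pushout~\eqref{pushout_for_BGamma_to_bunderbar(Gamma)}.

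First I would dispose of assertions~\ref{the:K-homology_of_BGamma_and_bub(Gamma):K0(bub(Gamma)} and~\ref{the:K-homology_of_BGamma_and_bub(Gamma):K1(bub(Gamma)} by applying the Universal Coefficient Theorem for topological $K$-theory to the finite $CW$-complex $\bub{\Gamma}$, which yields the short exact sequence
$$
0 \to \Ext^1_{\IZ}(K^{m+1}(\bub{\Gamma}),\IZ) \to K_m(\bub{\Gamma}) \to \hom_{\IZ}(K^m(\bub{\Gamma}),\IZ) \to 0.
$$
Substituting $K^0(\bub{\Gamma}) \cong \IZ^{\sum_l r_{2l}}$ and $K^1(\bub{\Gamma}) \cong \IZ^{\sum_l r_{2l+1}} \oplus T^1$ gives $K_0(\bub{\Gamma}) \cong T^1 \oplus \IZ^{\sum_l r_{2l}}$ (with $T^1$ arising from $\Ext^1$) and $K_1(\bub{\Gamma}) \cong \IZ^{\sum_l r_{2l+1}}$ (the $\Ext^1$ term vanishes since $K^0$ is free). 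Both sequences split abstractly because the $\hom$ contributions are free abelian.

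Next, for assertion~\ref{the:K-homology_of_BGamma_and_bub(Gamma):K0(BGamma)} I would run the $K$-homology Leray-Serre spectral sequence $E^2_{i,j} = H_i(\IZ/p; K_j(B\IZ^n_\rho)) \Rightarrow K_{i+j}(B\Gamma)$ for the fibration $B\IZ^n \to B\Gamma \to B\IZ/p$. Iterating the Chern integrality of $S^1$ (Lemma~\ref{lem:chern_integral}) together with naturality under the $\IZ/p$-action yields a $\IZ[\IZ/p]$-module isomorphism $K_j(B\IZ^n_\rho) \cong \bigoplus_{l \in \IZ} H_{j+2l}(\IZ^n_\rho)$. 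By Tate duality (Lemma~\ref{Tate_duality}) applied to the Tate cohomology vanishing of Lemma~\ref{lem:Hochschild-Serre_ss_forKast(BGamma)}~\ref{lem:Hochschild-Serre_ss_forKast(BGamma):Tate_cohomology}, one gets $H_i(\IZ/p; K_j(B\IZ^n_\rho)) = 0$ for $i \ge 1$ whenever $i+j$ is even, so $E^\infty_{p,-p}=0$ for $p \ge 1$. Moreover $E^2_{0,0} = K_0(B\IZ^n_\rho)_{\IZ/p}$ is torsion-free (by the argument of Theorem~\ref{the:Homology_of_BGamma_and_bub(Gamma)}~\ref{the:Homology_of_BGamma_and_bub(Gamma):coinvariants} applied to each exterior power summand), while all potential differentials $d^r_{r,1-r} \to d^r_{0,0}$ have $p$-torsion source and therefore vanish. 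This forces $K_0(B\IZ^n_\rho)_{\IZ/p} = E^\infty_{0,0} = K_0(B\Gamma)$.

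Finally, for assertions~\ref{the:K-homology_of_BGamma_and_bub(Gamma):K(BGamma)}, \ref{the:K-homology_of_BGamma_and_bub(Gamma):K1(BGamma)}, and \ref{the:K-homology_of_BGamma_and_bub(Gamma):K_0(BGamma)_to_K_0(bub(Gamma)} I would analyze the Mayer-Vietoris long exact sequence from~\eqref{pushout_for_BGamma_to_bunderbar(Gamma)}. Using $\widetilde{K}_0(B\IZ/p) = 0$ and the standard identification $\widetilde{K}_1(B\IZ/p) \cong (\IZ/p^\infty)^{p-1}$ (dual to~\eqref{widetilde(Km)(BZ/p)} via UCT), this becomes
$$
0 \to K_0(B\Gamma) \to K_0(\bub{\Gamma}) \xrightarrow{\delta} (\IZ/p^\infty)^{(p-1)p^k} \to K_1(B\Gamma) \to K_1(\bub{\Gamma}) \to 0.
$$
Since $K_0(B\Gamma) \cong \IZ^{\sum_l r_{2l}}$ is torsion-free of the same rank as $K_0(\bub{\Gamma}) \cong \IZ^{\sum_l r_{2l}} \oplus T^1$, the inclusion $T^1 \hookrightarrow K_0(\bub{\Gamma})$ composed with the quotient to $K_0(\bub{\Gamma})/K_0(B\Gamma)$ has kernel $T^1 \cap K_0(B\Gamma) = 0$, so $T^1$ injects into $\im(\delta) = \ker\bigl(\bigoplus_{(P)} \widetilde{K}_1(BP) \to K_1(B\Gamma)\bigr)$, proving~\ref{the:K-homology_of_BGamma_and_bub(Gamma):K_0(BGamma)_to_K_0(bub(Gamma)}. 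Since $\im(\delta)$ is finite and any quotient of the divisible $p$-group $(\IZ/p^\infty)^{(p-1)p^k}$ by a finite subgroup is again $(\IZ/p^\infty)^{(p-1)p^k}$, one obtains the short exact sequence of~\ref{the:K-homology_of_BGamma_and_bub(Gamma):K1(BGamma)}, which splits because $K_1(\bub{\Gamma})$ is free and $\Ext^1$ into a divisible group vanishes. Assertion~\ref{the:K-homology_of_BGamma_and_bub(Gamma):K(BGamma)} is then a direct tally.

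The main obstacle will be assertion~\ref{the:K-homology_of_BGamma_and_bub(Gamma):K0(BGamma)}: transporting the Tate cohomology calculation faithfully across Tate duality to control the integral structure of the $E^\infty$-page of the homological Leray-Serre spectral sequence, and securing its convergence for the infinite-dimensional space $B\Gamma$ (where $\varprojlim^1$ contributions could in principle appear). A subsidiary technical point is that UCT splittings are non-natural, so the structural statements in~\ref{the:K-homology_of_BGamma_and_bub(Gamma):K0(bub(Gamma)} must be read at the level of abstract isomorphism classes.
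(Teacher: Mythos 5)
Your proposal follows essentially the same route as the paper: UCT for the finite complex $\bub\Gamma$ (parts (iv), (v)), the homological Leray-Serre spectral sequence with coinvariants torsion-free and higher $E^2$-terms controlled by Tate vanishing (part (ii)), and the Mayer-Vietoris sequence from the pushout~\eqref{pushout_for_BGamma_to_bunderbar(Gamma)} (parts (i), (iii), (vi)).

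Two small points of divergence. First, you derive $\widetilde K_1(B\Z/p)\cong(\Z/p^\infty)^{p-1}$ as ``dual to~\eqref{widetilde(Km)(BZ/p)} via UCT,'' but the $K$-homological UCT of Theorem~\ref{the:Universal_coefficient_theorem_for_K-theory} requires a \emph{finite} complex, and $B\Z/p$ is not finite; the paper instead quotes Vick's Pontryagin duality result $K_1(BG)\cong\widehat{\,\widetilde K_0(BG)\,}$ for finite $G$. Your UCT argument only pins down $\Ext_\Z(K_1(B\Z/p),\Z)\cong(\Z\widehat{_p})^{p-1}$, which does not determine $K_1(B\Z/p)$ up to isomorphism (e.g.\ $\Z\widehat{_p}$ and $\Z/p^\infty$ have isomorphic $\Ext$ into $\Z$), so the Vick-type input is genuinely needed. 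Second, your worry about a $\varprojlim^1$-term for the homological Leray-Serre spectral sequence over the infinite-dimensional $B\Gamma$ is misplaced: $\varprojlim^1$-issues arise in generalized \emph{cohomology}, not homology, since generalized homology of a colimit is the colimit of the homologies; the paper correspondingly does not address this. Beyond those, your handling of $\im\varphi_0$ via divisible-group structure theory (a quotient of $(\Z/p^\infty)^n$ by a finite subgroup is again $(\Z/p^\infty)^n$) is a perfectly valid replacement for the paper's double-dualization argument through $\Z\widehat{_p}$-modules.
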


Its proof needs some preparation.

\begin{theorem}[Universal Coefficient Theorem for $K$-theory]
  \label{the:Universal_coefficient_theorem_for_K-theory}
  For any $CW$-complex $X$ there is a short exact sequence
$$0 \to \Ext_{\IZ}(K_{*-1}(X),\IZ) \to K^*(X) \to \hom_{\IZ}(K_*(X),\IZ) \to 0.$$
If $X$ is a finite $CW$-complex, there is also the $K$-homological version
$$0 \to \Ext_{\IZ}(K^{*+1}(X),\IZ) \to K_*(X) \to \hom_{\IZ}(K^*(X),\IZ) \to 0.$$
\end{theorem}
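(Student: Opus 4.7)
The plan is to recognize this as the universal coefficient theorem for the complex $K$-theory spectrum $KU$, following Yosimura and Anderson. The essential algebraic fact that makes it work is that the coefficient ring $KU_\ast = \IZ[\beta^{\pm 1}]$ is a graded principal ideal domain (indeed, up to Bott periodicity, just $\IZ$), so that $\Ext^i_\IZ$ vanishes for $i \ge 2$ and the usual Cartan-Eilenberg UCT machinery applies.

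First I would prove the cohomological statement for finite $CW$-complexes $X$ by cellular induction. The natural map $K^\ast(X) \to \hom_\IZ(K_\ast(X),\IZ)$ is the adjoint of the Kronecker pairing $K^n(X) \otimes K_n(X) \to K_0(\pt) = \IZ$. The base case $X = S^n$ reduces, via Bott periodicity, to the tautology that this map is an isomorphism while both $\Ext$-terms vanish. For the inductive step, attach a cell to obtain a cofiber sequence $X' \hookrightarrow X \to X/X'$; the induced long exact sequences in $K^\ast$ and in $\hom_\IZ(K_\ast(-),\IZ) \oplus \Ext^1_\IZ(K_\ast(-),\IZ)$ (exact because $\IZ$ is a PID, so $\Ext^{\ge 2}$ vanishes) fit into a ladder, and a straightforward five-lemma argument propagates the UCT from $X'$ and $X/X'$ to $X$.

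To pass from finite $CW$-complexes to arbitrary ones in the cohomological version, I would use the Milnor $\lim^1$ exact sequence
$$0 \to \sideset{}{^1}{\lim_\alpha} K^{\ast-1}(X_\alpha) \to K^\ast(X) \to \lim_\alpha K^\ast(X_\alpha) \to 0$$
along the filtration by finite subcomplexes $X_\alpha$, together with Jensen's identification of $\Ext^1_\IZ(\colim_\alpha A_\alpha,\IZ)$ in terms of $\lim^1_\alpha \hom_\IZ(A_\alpha,\IZ)$ and $\lim_\alpha \Ext^1_\IZ(A_\alpha,\IZ)$. Since $K_\ast(X) = \colim_\alpha K_\ast(X_\alpha)$, splicing the UCT on each $X_\alpha$ with these inverse-limit identifications produces the asserted sequence for $X$. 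For the $K$-homological statement (asserted only for finite $X$), the cleanest route is Spanier-Whitehead duality: choose an embedding $X \hookrightarrow S^N$ and let $DX$ denote the $S$-dual finite spectrum. Then $K_n(X) \cong K^{N-n}(DX)$ and $K^n(X) \cong K_{N-n}(DX)$ naturally, so applying the already-proved cohomological UCT to $DX$ and substituting yields the homological UCT for $X$ after a degree shift; alternatively, cellular induction as above works directly.

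The main obstacle is the $\lim^1$ bookkeeping when extending the cohomological sequence to $CW$-complexes of infinite type: a priori the kernel of $K^\ast(X) \to \hom_\IZ(K_\ast(X),\IZ)$ is an extension of a $\lim$ of $\Ext^1$-terms by a $\lim^1$ of $\hom$-terms, and one must show this agrees with the single group $\Ext^1_\IZ(K_{\ast-1}(X),\IZ)$. In every application in the present paper the relevant $K$-groups are finitely generated over $\IZ$ or over $\IZ\widehat{_p}$, so the Mittag-Leffler condition holds and the $\lim^1$ terms vanish, which is why no difficulty actually arises here.
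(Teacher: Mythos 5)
The paper's own proof is a bare citation: Anderson's notes and Yosimura for the cohomological sequence, and Adams for the homological one. Your treatment of the finite case --- cellular induction on cofiber ladders plus the five lemma, and then Spanier--Whitehead duality to derive the homological statement from the cohomological one --- is sound and is essentially the route Adams (the cited reference for the second sequence) takes.

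The genuine gap is the passage to arbitrary CW-complexes. Anderson and Yosimura do \emph{not} take a colimit over finite subcomplexes; they argue at the level of spectra, constructing an Anderson dual $I_{\IZ}KU$ and using the fact that $KU$ is Anderson self-dual up to suspension, which yields the short exact sequence for every $X$ at once. Your $\lim^1$/Jensen splicing is a different route, and, as you yourself note, it only shows a priori that the kernel of $K^*(X)\to\hom_{\IZ}(K_*(X),\IZ)$ is an extension of a $\lim$ of $\Ext^1$-terms by a $\lim^1$ of $\hom$-terms; to finish you must match this extension with the one Jensen's spectral sequence produces for $\Ext^1_{\IZ}(\colim K_*(X_\alpha),\IZ)$ \emph{as extensions}, not merely identify the subquotients, and for uncountable complexes the Milnor-type sequence over the directed poset of finite subcomplexes acquires higher derived limits that you do not control. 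The closing appeal to Mittag-Leffler does not repair this: finite generation of $K_*(X)$ is a statement about the colimit, not about the inverse system $\{K^*(X_\alpha)\}_\alpha$, and in any case $K^0(B\Gamma)$ in this paper is \emph{not} finitely generated (it has $\IZ\widehat{_p}$ summands by Theorem~\ref{the:K-cohomology_of_BGamma_and_bub(Gamma)}). Since the paper only ever applies the cohomological UCT to finite complexes such as $T^n$ and $\bub\Gamma$, you could legitimately restrict to the finite case and your argument would then be complete; but as a proof of the theorem as stated it has a hole that the cited references close by a fundamentally different, spectrum-level method.
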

\begin{proof}
  A proof for the first short exact sequence can be found
  in~\cite{Anderson(1964)} and~\cite[(3.1)]{Yosimura(1975)}, the second sequence
  follows then from~\cite[Note~9 and~15]{Adams(1969b)}. 
\end{proof}

\begin{proof}[Proof of Theorem~\ref{the:K-homology_of_BGamma_and_bub(Gamma)}]%
~\ref{the:K-homology_of_BGamma_and_bub(Gamma):K0(bub(Gamma)}%
~\ref{the:K-homology_of_BGamma_and_bub(Gamma):K1(bub(Gamma)} These assertions
  follow from Theorem~\ref{the:K-cohomology_of_BGamma_and_bub(Gamma)}%
~\ref{the:K-cohomology_of_BGamma_and_bub(Gamma):K0(bub(Gamma)}
  and~\ref{the:K-cohomology_of_BGamma_and_bub(Gamma):K1(bub(Gamma)} and
  Theorem~\ref{the:Universal_coefficient_theorem_for_K-theory} since there is a
  finite $CW$-model for $\bub{\Gamma}$, namely $\Gamma \backslash \R^n$.
  \\[1mm]~\ref{the:K-homology_of_BGamma_and_bub(Gamma):K1(BGamma)} 
  We will use
  Pontryagin duality for locally compact abelian groups.  For such a group $G$,
  the \emph{Pontryagin dual} $\widehat G$ is $\hom(G,S^1)$, given the
  compact-open topology. A reference for the basic properties is
  \cite{Hewitt-Ross(1979)}.  These include: $\widehat G$ is also a locally
  compact abelian group.  The natural map from $G$ to its double dual is a
  isomorphism.  $G$ is discrete if and only if $\widehat G$ is compact.  If 
  $0   \to A \to B \to C \to 0$ is exact, then so is 
  $0 \to \widehat C \to \widehat B \to \widehat A \to 0$.  Our primary example of duality is
 $$
 \widehat{\Z/{p^\infty}} \cong \Z\widehat{_p}.
$$
Here $\Z/{p^\infty}$ is given the discrete topology and the $p$-adic integers
$\Z\widehat{_p}$ are given the $p$-adic topology.  This statement is included in
\cite[paragraph 25.2]{Hewitt-Ross(1979)}, but also follows from the following
assertion proved in \cite[20.8]{Lefschetz(1942)} if $H_1 \to H_2 \to H_3 \to
\cdots$ is a sequence of maps of locally compact abelian groups, then
$$
\widehat{\colim_{n \to \infty} H_n} \cong \lim_{n \to \infty} \widehat{H_n}
$$








We will now give the computation of $K_*(B \Z/p)$.  
The Atiyah-Hirzebruch Spectral Sequence shows that $\widetilde K_0(B \Z/p)= 0$.  
Vick \cite{Vick(1970)} shows that $K_1(BG)$ is the Pontryagin dual  
of $\widetilde{K}_0(BG)$ for any finite group $G$.  Applying 
these facts to $G = \Z/p$ we get (see also Knapp~\cite[Proposition~2.11]{Knapp(1978)})
\begin{eqnarray}
  K_m(B\IZ/p) \cong
  \begin{cases}
    (\IZ/p^{\infty})^{p-1} & \text{if} \; m \; \text{is odd};
    \\
    \IZ & \text{if} \; m \; \text{is even}.
  \end{cases}
  \label{K_ast(BZ/p)}
\end{eqnarray}

Thus the long exact $K$-homology sequence associated to the cellular
pushout~\eqref{pushout_for_BGamma_to_bunderbar(Gamma)} reduces to the exact
sequence
\begin{eqnarray}
  & &
  0 \to K_{0}(B\Gamma) \xrightarrow{\overline{f}_0} K_{0}(\bub{\Gamma}) \xrightarrow{\partial_0}
  \bigoplus_{(P) \in \calp} K_{1}(BP) 
  \xrightarrow{\varphi_0} K_{1}(B\Gamma) \xrightarrow{\overline{f}_1} K_{1}(\bub{\Gamma}) \to 0.
  \label{long_exact_complex_K-homology_sequence}
\end{eqnarray}
Note that $\im \partial_0$ is a finite abelian $p$-group, since it is a finitely
generated subgroup of the $p$-torsion group
$$\bigoplus_{(P) \in \calp} K_{1}(BP) \cong
(\IZ/p^{\infty})^{(p-1)p^k}.$$

Dualizing the exact sequence
$$
0 \to \im \partial_0 \to (\IZ/p^{\infty})^{(p-1)p^k} \to \im \varphi_0 \to 0,
$$
we see that $\widehat{\im \varphi_0}$ has finite $p$-power index in
$(\IZ\widehat{_p})^{(p-1)p^k}$ hence is itself isomorphic to
$(\IZ\widehat{_p})^{(p-1)p^k}$ (compare the proof of
Theorem~\ref{the:K-cohomology_of_BGamma_and_bub(Gamma)}~%
\ref{the:K-cohomology_of_BGamma_and_bub(Gamma):K0(bub(Gamma)}
and~\ref{the:K-cohomology_of_BGamma_and_bub(Gamma):K1(bub(Gamma)}).  Dualizing
again, we see $\im \varphi_0 \cong (\IZ/p^{\infty})^{(p-1)p^k}$.

The map $\overline{f}_1$ is split surjective since its target is free abelian by
assertion~\ref{the:K-homology_of_BGamma_and_bub(Gamma):K1(bub(Gamma)}.
\\[1mm]~%
\ref{the:K-homology_of_BGamma_and_bub(Gamma):K0(BGamma)} The Universal
Coefficient Theorem in $K$-theory shows that $K^0(B\Z^n_\rho) \cong
K_0(B\Z^n_\rho)^*$.  In Lemma~\ref{lem:Leray-Serre_ss_for_K(BGamma)} we showed
there is an isomorphism of $\Z[\Z/p]$-modules $K^0(B\Z^n_\rho) \cong
\oplus_{\ell} H^{2\ell}(\Z^n_\rho)$.  Now we proceed exactly as in the proof of
Theorem~\ref{the:Homology_of_BGamma_and_bub(Gamma)}~%
\ref{the:Homology_of_BGamma_and_bub(Gamma):coinvariants}, using the Leray-Serre
spectral sequence
$$
E^2_{i,j} = H_i(\Z/p; K_j(B\Z^n_{\rho})) \Longrightarrow K_{i+j}(B\Gamma).
$$
One shows $E^2_{0,2m} = K_{2m}(B\Z^n_\rho)_{\Z/p}$ is torsion-free, and for $i
>0$, $E^2_{i,j}$ has exponent $p$ and vanishes if $i + j$ is even.  Thus
$$
K_{2m}(B\Z^n_\rho)_{\Z/p} = E^2_{0,2m} = E^\infty_{0,2m} \xrightarrow{\cong}
K_{2m}(B\Gamma).
$$
By Remark~\ref{invariants_and_coinvariants_are_dual} and the Universal
Coefficient Theorem, $ (K_{2m}(B\Z^n_\rho)_{\Z/p})^* \cong \linebreak
K^{2m}(B\Z^n_\rho)^{\Z/p}$ which is isomorphic to $\Z^{\sum_{l\in\Z}r_{2l}}$ by
Lemma~\ref{lem:Leray-Serre_ss_for_K(BGamma)}~\ref{lem:Leray-Serre_ss_for_K(BGamma):KOast(BZn)_over_Z[Z/p]}.
\\[1mm]~%
\ref{the:K-homology_of_BGamma_and_bub(Gamma):K(BGamma)} This follows from
assertions~\ref{the:K-homology_of_BGamma_and_bub(Gamma):K0(BGamma)},%
~\ref{the:K-homology_of_BGamma_and_bub(Gamma):K1(BGamma)},
and~\ref{the:K-homology_of_BGamma_and_bub(Gamma):K1(bub(Gamma)}.  \\[1mm]~%
\ref{the:K-homology_of_BGamma_and_bub(Gamma):K_0(BGamma)_to_K_0(bub(Gamma)} This
follows from
assertion~\ref{the:K-homology_of_BGamma_and_bub(Gamma):K0(bub(Gamma)} and the
exact sequence~\eqref{long_exact_complex_K-homology_sequence}. This finishes the
proof of Theorem~\ref{the:K-homology_of_BGamma_and_bub(Gamma)}.
\end{proof}


\typeout{------------   Section 5: KO-cohomology  ------------}

\section{\texorpdfstring{$KO$}{KO}-cohomology}
\label{sec:KO-cohomology}

In this section we compute real $K$-cohomology $KO^*$ of $B\Gamma$.

Recall that by Bott periodicity $KO^*$ is $8$-periodic, i.e., there is a natural
isomorphism $KO^m(X) \cong KO^{m+8}(X)$ for every $m \in \IZ$ and $CW$-complex
$X$, and $KO^{-m}(\pt)$ is given for $m = 0,1,2 ,\ldots 7$ by $\IZ$, $\IZ/2$,
$\IZ/2$, $0$, $\IZ$, $0$, $0$, $0$.  We will assume from now on that $p$ is odd
in order to avoid the extra difficulties arising from the fact that $KO^m(\pt)
\cong \IZ/2$ for $m = 1,2$.

\begin{theorem}[$KO$-cohomology of $B\Gamma$ and $\bub{\Gamma}$]
  \label{the:KO-cohomology_of_BGamma_and_bub(Gamma)}
  Let $p$ be an odd prime and let $m$ be any integer.

  \begin{enumerate}

  \item \label{the:KO-cohomology_of_BGamma_and_bub(Gamma):K(BGamma)}
$$
KO^m(B\Gamma) \cong
\begin{cases}
  \left(\oplus_{l \in \IZ}  KO^{m-l}(\pt)^{r_l} \right)  \oplus (\IZ\widehat{_p})^{p^k(p-1)/2} & m \: \text{even;}\\
  \oplus_{l \in \IZ} KO^{m-l}(\pt)^{r_l} & m \; \text{odd.}
\end{cases}
$$

\item \label{the:KO-cohomology_of_BGamma_and_bub(Gamma):KO2m(BGamma)} There is a
  split exact sequence of abelian groups
$$ 0 \to (\IZ\widehat{_p})^{p^k(p-1)/2} \to KO^{2m}(B\Gamma)
\to KO^{2m}(B\IZ^n_{\rho})^{\IZ/p} \to 0,$$ and $KO^{2m}(B\IZ^n_{\rho})^{\IZ/p}
\cong \bigoplus_{l \in \IZ} KO^{2m-l}(\pt)^{r_l}$.

\item \label{the:KO-cohomology_of_BGamma_and_bub(Gamma):KO2m_plus_1(BGamma)}
  Restricting to the subgroup $\Z^n$ of $ \Gamma$ induces an isomorphism
$$
KO^{2m+1}(B\Gamma) \xrightarrow{\cong} KO^{2m+1}(B\IZ^n_{\rho})^{\IZ/p}.
$$
and $KO^{2m+1}(B\IZ^n_{\rho})^{\IZ/p} \cong \bigoplus_{l \in \IZ}
KO^{2m+1-l}(\pt)^{r_l}$.

\item \label{the:KO-cohomology_of_BGamma_and_bub(Gamma):KO2m(bub(Gamma)} We have
$$KO^{2m}(\bub{\Gamma}) \cong 
\oplus_{l \in \IZ} KO^{2m-l}(\pt)^{r_l}.$$

\item \label{the:KO-cohomology_of_BGamma_and_bub(Gamma):K12m_plus_1(bub(Gamma)}
  We have
$$KO^{2m+1}(\bub{\Gamma}) \cong  
\left(\bigoplus_{l \in \IZ} KO^{2m+1-l}(\pt)^{r_l}\right) \oplus TO^{2m+1},$$
where $TO^{2m+1}$ is a finite abelian $p$-group for which there exists a
filtration
$$TO^{2m+1} = TO^{2m+1}_1 \supset TO^{2m+1}_2 
\supset \cdots \supset TO^{2m+1}_{[(n+4-(-1)^m)/4]} = 0$$ such that
$TO^{2m+1}_i/TO^{2m+1}_{i+1} = (\IZ/p)^{to_i}$ holds for integers $to_i$ which
satisfy $0 \le to_i \le p^k - s_{4i+(-1)^m}$.

\item \label{the:KO-cohomology_of_BGamma_and_bub(Gamma):KO_odd(BGamma)_to_KO_odd(bub(Gamma))}
  The map $KO^{2m+1}(\bub{\Gamma}) \to KO^{2m+1}(B\Gamma)$ induces an
  isomorphism
$$KO^{2m+1}(\bub{\Gamma})/\ptors \xrightarrow{\cong} KO^{2m+1}(B\Gamma).$$
Its kernel is isomorphic to $TO^{2m+1}$ and is isomorphic to the cokernel of the
map
$$KO^{2m}(B\Gamma) 
\to \bigoplus_{(P) \in \calp} \widetilde{KO}^{2m}(BP).$$
\end{enumerate}
\end{theorem}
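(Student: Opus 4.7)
The plan is to adapt the proof of Theorem~\ref{the:K-cohomology_of_BGamma_and_bub(Gamma)} step by step, replacing complex by real $K$-theory throughout and keeping track of the extra $2$-primary summands in $KO^*(\pt)$. The assumption that $p$ is odd is crucial at every step: it makes the $2$-primary parts of $KO^*(\pt)$ orthogonal to the $p$-primary contributions coming from the finite subgroups of $\Gamma$. Concretely, since $\gcd(p,2)=1$, any $\IZ/p$-Tate cohomology of a $2$-torsion module vanishes, and the $\IZ/p$-action on any $2$-local abelian group is semisimple.

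First I would establish the $KO$-analogue of Lemma~\ref{lem:Leray-Serre_ss_for_K(BGamma)}. Working $p$-locally, $KO$ is a direct summand of $KU$ (via complex-conjugation averaging, possible because $2$ is a unit in $\IZ_{(p)}$), so the $K$-theoretic computation of that lemma transfers; working $2$-locally, the $\IZ[\IZ/p]$-structure is determined rationally and coincides with the direct-sum decomposition coming from the Atiyah--Hirzebruch spectral sequence of $T^n$ (which collapses because $Sq^2$ and $Sq^3$ vanish on the exterior algebra $H^*(T^n;\IZ/2)$). Together these give an isomorphism of $\IZ[\IZ/p]$-modules $KO^m(B\IZ^n_\rho) \cong \bigoplus_l \Lambda^l(\IZ^n_\rho) \otimes KO^{m-l}(\pt)$, and Theorem~\ref{the:Cohomology_of_BGamma_and_bub(Gamma)}~\ref{the:Cohomology_of_BGamma_and_bub(Gamma):Gamma_to_Zn} yields $KO^m(B\IZ^n_\rho)^{\IZ/p} \cong \bigoplus_l KO^{m-l}(\pt)^{r_l}$. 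The Tate cohomology vanishing of Lemma~\ref{lem:Hochschild-Serre_ss_forKast(BGamma)}~\ref{lem:Hochschild-Serre_ss_forKast(BGamma):Tate_cohomology} extends to give $\widehat{H}^i(\IZ/p;KO^j(B\IZ^n_\rho))=0$ for $i+j$ odd, and the norm/transfer surjectivity argument forces the Leray--Serre spectral sequence for $B\IZ^n_\rho \to B\Gamma \to B\IZ/p$ to collapse with trivial odd-degree filtration, establishing assertion~\ref{the:KO-cohomology_of_BGamma_and_bub(Gamma):KO2m_plus_1(BGamma)}.

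Next I would run the Atiyah--Hirzebruch spectral sequence for $KO^*(\bub{\Gamma})$ from Theorem~\ref{the:Cohomology_of_BGamma_and_bub(Gamma)}~\ref{the:Cohomology_of_BGamma_and_bub(Gamma):bub(Gamma)}, mimicking the page-by-page induction of Lemma~\ref{lem:differentials_Atiyah-Hirzebruch_ss_for_K(BGamma)} to show that the free summands of the $E_2$-page survive unchanged and the $p$-torsion is bounded by the filtration stated in assertion~\ref{the:KO-cohomology_of_BGamma_and_bub(Gamma):K12m_plus_1(bub(Gamma)}; this yields assertions~\ref{the:KO-cohomology_of_BGamma_and_bub(Gamma):KO2m(bub(Gamma)} and~\ref{the:KO-cohomology_of_BGamma_and_bub(Gamma):K12m_plus_1(bub(Gamma)}. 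Finally I would invoke the Atiyah--Segal Completion Theorem in the real setting: for $p$ odd, $\widetilde{KO}^{2m}(B\IZ/p)\cong (\IZ\widehat{_p})^{(p-1)/2}$ and $\widetilde{KO}^{2m+1}(B\IZ/p)=0$, giving $\bigoplus_{(P)\in\calp}\widetilde{KO}^{2m}(BP) \cong (\IZ\widehat{_p})^{p^k(p-1)/2}$. Substituting this into the long exact $KO$-cohomology sequence of the cellular pushout~\eqref{pushout_for_BGamma_to_bunderbar(Gamma)} and running the same diagram chase as in Theorem~\ref{the:K-cohomology_of_BGamma_and_bub(Gamma)} produces assertions~\ref{the:KO-cohomology_of_BGamma_and_bub(Gamma):KO2m(BGamma)} and~\ref{the:KO-cohomology_of_BGamma_and_bub(Gamma):KO_odd(BGamma)_to_KO_odd(bub(Gamma))}; assembling the pieces gives assertion~\ref{the:KO-cohomology_of_BGamma_and_bub(Gamma):K(BGamma)}.

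The main obstacle will be controlling the Atiyah--Hirzebruch differentials for $KO^*(\bub{\Gamma})$: for real $K$-theory $d_2$ and $d_3$ are built from $Sq^2$ and $Sq^3$, and in principle could couple the $p$-torsion in $H^{2m+1}(\bub{\Gamma};\IZ)$ to the $\IZ/2$-coefficient rows of the $E_2$-page. The saving observation is that a mod-$p$ class paired with a mod-$2$ operation vanishes whenever $p$ is odd, so the $2$-primary rows and the $p$-primary columns occupy disjoint parts of the $E_2$-page. The $p$-torsion analysis then reduces almost word-for-word to the complex-case argument of Lemma~\ref{lem:differentials_Atiyah-Hirzebruch_ss_for_K(BGamma)}, while the $2$-primary contribution is read off integrally from Bott periodicity of $KO^*(\pt)$.
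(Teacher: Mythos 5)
Your overall strategy matches the paper's: work at $p$ and away from $p$ separately, use the Leray--Serre and Atiyah--Hirzebruch spectral sequences, and close via the Atiyah--Segal Completion Theorem and the long exact sequence of the pushout. The claim that $KO$ is a $p$-local retract of $KU$ is a legitimate alternative to the paper's explicit inductive splitting over $\Z[1/p][\Z/p]$. However, there are two issues, one cosmetic and one a genuine gap.

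The cosmetic issue: you assert an \emph{integral} $\Z[\Z/p]$-module isomorphism $KO^m(B\Z^n_\rho) \cong \bigoplus_l \Lambda^l(\Z^n_\rho) \otimes KO^{m-l}(\pt)$. The paper is careful never to claim this; it establishes such an isomorphism only after inverting $2$ (via the Chern character) and, separately, after inverting $p$ (via a norm-averaging splitting $\tilde s$). What you actually need for the fixed points and Tate cohomology is obtainable by patching these two local statements using exactness of localization, so this overclaim is not fatal, but as stated it is unjustified.

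The genuine gap is in assertion (iv). You propose to read $KO^{2m}(\bub\Gamma) \cong \bigoplus_l KO^{2m-l}(\pt)^{r_l}$ directly from the Atiyah--Hirzebruch spectral sequence for $\bub\Gamma$. Your observation that mod-$p$ torsion cannot interact with mod-$2$ rows handles the \emph{mixed} differentials, but it does nothing to control differentials \emph{within} the $2$-primary part, i.e.\ between the $\Z/2$-coefficient rows of the $E_2$-page. For the torus $T^n$ these vanish because the mod-$2$ cohomology is an exterior algebra on degree-one generators; but $\bub\Gamma = (\Z/p)\backslash T^n$ is an orbifold quotient, not a product of circles, and there is no reason \emph{a priori} for $Sq^2$ or the secondary operations governing the higher pages to vanish on $H^*(\bub\Gamma;\Z/2)$. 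You never supply an argument here, and ``read off integrally from Bott periodicity'' is not one. The paper sidesteps this by never running the $\bub\Gamma$ spectral sequence integrally: it runs it only after localizing at $p$ (where all $\Z/2$'s disappear since $p$ is odd, Lemma~\ref{lem:differentials_Atiyah-Hirzebruch_ss_for_KO-cohomology_localized_at_p}), and controls the $2$-primary structure of $KO^{2m}(\bub\Gamma)$ instead via the composite
$$KO^m(\bub\Gamma) \to KO^m(B\Gamma) \to KO^m(B\Z^n_\rho)^{\Z/p},$$
which by Proposition~\ref{prop:quotient_iso} and Lemma~\ref{lem:Leray-Serre_ss_for_KO(BGamma)}~\ref{lem:Leray-Serre_ss_for_KO(BGamma):fixed_set} is an isomorphism after inverting $p$, and whose target is already known to be $\bigoplus_l KO^{m-l}(\pt)^{r_l}$. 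You need to include this ``away from $p$'' comparison; the spectral-sequence argument alone does not deliver (iv).
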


\begin{lemma}
  \label{lem:differentials_Atiyah-Hirzebruch_ss_for_KO-cohomology_localized_at_p}
  Let $p$ be an odd prime.  In the Atiyah-Hirzebruch spectral sequence
  converging to $K^*(\bub{\Gamma})$ after localizing at $p$
$$\left(E^{i,j}_{\infty}\right)_{(p)}  \cong
\begin{cases}
  \IZ_{(p)}^{r_i} & i \; \text{even}, j\equiv 0 \pmod 4;
  \\
  \IZ_{(p)}^{r_i} \oplus (\IZ/p)^{t_i'} & i \; \text{odd}, i \ge 3, j\equiv 0
  \pmod 4;
  \\
  0 & i=1, j \equiv 0 \pmod 4;
  \\
  0 & j \not \equiv 0 \pmod 4,
\end{cases}
$$
where $0 \le t_i' \le p^k-s_i$.

\end{lemma}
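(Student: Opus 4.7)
The plan is to mirror, step for step, the proof of Lemma~\ref{lem:differentials_Atiyah-Hirzebruch_ss_for_K(BGamma)} (the analogous statement for complex $K$-theory), exploiting the fact that at an odd prime $p$ the theory $KO$ looks structurally similar to $K$ because the $\IZ/2$'s sitting in $KO^{-1}(\pt)$ and $KO^{-2}(\pt)$ vanish upon localization. Concretely, $KO^j(\pt)_{(p)}$ is $\IZ_{(p)}$ when $j\equiv 0\pmod 4$ and $0$ otherwise, so the $E_2$-page of the $p$-localized Atiyah-Hirzebruch spectral sequence for $KO^*(\bub{\Gamma})$ has only rows $j\equiv 0\pmod 4$ nonzero, and on those rows it reproduces (tensored with $\IZ_{(p)}$) the column pattern of Theorem~\ref{the:Cohomology_of_BGamma_and_bub(Gamma)}~\ref{the:Cohomology_of_BGamma_and_bub(Gamma):bub(Gamma)}. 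Thus $(E^{i,j}_2)_{(p)}$ already matches the claimed $(E^{i,j}_\infty)_{(p)}$ provided we show that no cancellation actually occurs in the ranks and that the $p$-torsion can only shrink.

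First I would record convergence: since $\bub{\Gamma}$ admits a finite-dimensional $CW$-model, the spectral sequence converges strongly and stabilizes at a finite page, and the Dold Chern character forces all differentials to be rationally trivial. Next I note the degree constraint: $d_r$ has bidegree $(r,1-r)$, so for a $d_r$ between two nonzero rows we need $r\equiv 1\pmod 4$; in particular $d_2,d_3,d_4$ vanish for degree reasons, and only finitely many $d_r$ need serious attention.

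Then I run a double induction on $r\ge 2$, splitting by the position $(i,j)$:
\begin{enumerate}
\item If $j\not\equiv 0\pmod 4$, or if $i=1$ and $j\equiv 0\pmod 4$, then $E^{i,j}_2=0$ and there is nothing to do.
\item If $j\equiv 0\pmod 4$ and $i$ is even, the $E_2$-entry is the torsion-free group $\IZ_{(p)}^{r_i}$; any incoming $d_r$ is rationally trivial into a torsion-free target, hence zero, and any outgoing $d_r$ has finite image, so the rank is preserved and $E^{i,j}_\infty=\IZ_{(p)}^{r_i}$.
\item If $j\equiv 0\pmod 4$ and $i$ is odd with $i\ge 3$, the $E_2$-entry is $\IZ_{(p)}^{r_i}\oplus(\IZ/p)^{p^k-s_i}$. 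Here the potentially nontrivial $d_r$ (with $r\equiv 1\pmod 4$, hence $r$ odd) land in columns of opposite parity: outgoing differentials hit entries with $i+r$ even, which are torsion-free by case (ii) at page $r$ by induction, so rational triviality forces them to vanish; incoming differentials come from entries with $i-r$ even, which are torsion-free, so their image in $E^{i,j}_r$ must lie in the torsion subgroup. Consequently the rank $r_i$ summand survives untouched and the torsion subgroup at $E_\infty$ is a quotient of $(\IZ/p)^{p^k-s_i}$, i.e., of the form $(\IZ/p)^{t_i'}$ with $0\le t_i'\le p^k-s_i$.
\end{enumerate}

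The only delicate point is the interlocking of cases (ii) and (iii) along the induction: each time I use that the target (respectively the source) of a $d_r$ at a Case~(iii) point is torsion-free, which holds only because we have already established Case~(ii) at page $r$ for the partner position. This is the same bootstrap that powers Lemma~\ref{lem:differentials_Atiyah-Hirzebruch_ss_for_K(BGamma)}, and works here verbatim because after $p$-localization $KO_{(p)}^*(\pt)$ is concentrated in a single residue class mod $4$, just as $K^*(\pt)$ is concentrated in even degrees.
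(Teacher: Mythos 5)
Your proposal is correct and follows essentially the same route as the paper: the paper computes the $p$-localized $E_2$-page from Theorem~\ref{the:Cohomology_of_BGamma_and_bub(Gamma)} and then says the rest is "analogous to the proof of Lemma~\ref{lem:differentials_Atiyah-Hirzebruch_ss_for_K(BGamma)}," which is precisely the rational-triviality-plus-induction bootstrap you spell out. Your extra observation that degree reasons force $r\equiv 1\pmod 4$ is a harmless refinement not made explicit in the paper, but the substance of the argument is identical.
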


\begin{proof}~%
  Because of Theorem~\ref{the:Cohomology_of_BGamma_and_bub(Gamma)}%
~\ref{the:Cohomology_of_BGamma_and_bub(Gamma):Hm(BGamma)} the $E_2$-term of the
  spectral sequence converging to $K^*(B\Gamma)_{(p)}$ is given after
  localization at $p$ by
  \begin{multline*}
    \bigl(E^{i,j}_2\bigr)_{(p)}   = H^i(\bub{\Gamma};KO^j(\pt))_{(p)} \\
    \cong
    \begin{cases} \IZ_{(p)}^{r_i} & i \; \text{even, } j\equiv 0 \pmod 4;
      \\
      \IZ_{(p)}^{r_i} \oplus (\IZ/p)^{p^k -s_i} & i \; \text{odd, } i \ge 3,
      j\equiv 0 \pmod 4;
      \\
      0 & i=1, j \equiv 0 \pmod 4;
      \\
      0 & j\not\equiv 0 \pmod 4.
    \end{cases}
  \end{multline*}

  The rest of the proof is analogous to the proof of
  Lemma~\ref{lem:differentials_Atiyah-Hirzebruch_ss_for_K(BGamma)}.
\end{proof}

\begin{lemma}
  \label{lem_KO(BZn)_over_Z[Z/p]}
  Let $p$ be an odd prime.  For every $m \in \Z$, there are isomorphisms of
  $\IZ[\IZ/p]$-modules
  \begin{align*}
    KO^m(B\IZ^n_{\rho}) \otimes \Z[1/2] & \cong
    \bigoplus_{i \in \IZ} H^i(B\IZ^n_{\rho}) \otimes KO^{m-i}(\pt) \otimes \Z[1/2] \\
    KO^m(B\IZ^n_{\rho}) \otimes \Z[1/p] & \cong \bigoplus_{i \in \IZ}
    H^i(B\IZ^n_{\rho}) \otimes KO^{m-i}(\pt) \otimes \Z[1/p]
  \end{align*}
\end{lemma}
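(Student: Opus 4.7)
This lemma is the $KO$-analogue of Lemma~\ref{lem:Leray-Serre_ss_for_K(BGamma)}(i), which used the Chern character to give a $\IZ[\IZ/p]$-equivariant decomposition of $K^m(B\IZ^n_\rho)$ exploiting torsion-freeness of $K^*(\pt)$. Since $KO^*(\pt)$ has $2$-torsion in degrees $\equiv 1,2 \pmod 8$, that argument only goes through after inverting $2$; a separate argument is needed after inverting $p$. In both cases the plan is to analyze the Atiyah-Hirzebruch spectral sequence
$$E_2^{i,j} = H^i(T^n; KO^j(\pt)) \Longrightarrow KO^{i+j}(T^n),$$
which is a spectral sequence of $\IZ[\IZ/p]$-modules (the action on $T^n \simeq B\IZ^n_\rho$ induces a compatible action on every page), and show that after inverting $2$ (resp.\ $p$) it collapses naturally with split extensions.

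For the $\IZ[1/2]$ statement I would exploit the splitting $KU[1/2] \simeq KO[1/2] \vee \Sigma^4 KO[1/2]$, which follows from the identities $rc = 2$ and $cr = 1 + \tau$ between complexification $c$, realification $r$, and complex conjugation $\tau$, since $(1+\tau)/2$ becomes an honest idempotent after inverting $2$. Composing the classical Chern character for $KU$ with the projection onto the $KO[1/2]$-summand produces a natural transformation $KO^m(-)[1/2] \to \bigoplus_i H^i(-; KO^{m-i}(\pt)[1/2])$; on $T^n$, whose integral cohomology is torsion-free, this is an isomorphism because the AHSS collapses (the coefficients $KO^*(\pt)[1/2]$ are torsion-free and concentrated in degrees divisible by $4$, leaving no room for nontrivial differentials on a primitively-generated exterior cohomology ring) and the extensions split automatically. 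Naturality of the Chern character then delivers the required $\IZ[\IZ/p]$-equivariance.

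For the $\IZ[1/p]$ statement (with $p$ odd) the situation is harder since $KO^*(\pt)[1/p]$ retains its $2$-torsion and no direct Chern-character argument is available. I would verify collapse of the localized AHSS directly: every differential $d_r$ ($r \ge 2$) is a stable cohomology operation on $H^*(T^n)$ with $KO^*(\pt)$-coefficients, and since $H^*(T^n) = \Lambda^*(\IZ^n)$ is generated as an algebra by its primitive degree-one classes while every stable positive-degree operation is a derivation that vanishes on degree-one classes, all such $d_r$ vanish. For the extension problem, invertibility of $p$ in $\IZ[\IZ/p]$ forces the module to split into its trivial and $\IZ[\zeta]$-isotypic components; combined with the K\"unneth computation $KO^*(T^n) \cong KO^*(\pt) \otimes_{\IZ} \Lambda^*(\IZ^n)$ as $KO^*(\pt)$-modules, one then produces a section of the filtration that is $\IZ/p$-equivariant after inverting $p$.

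The delicate point — and the principal obstacle — is precisely the $\IZ[1/p]$ case: although the K\"unneth isomorphism $KO^*(T^n) \cong KO^*(\pt) \otimes \Lambda^*(\IZ^n)$ is cheap to obtain, it is only natural under product self-maps of $T^n = (S^1)^n$, whereas $\rho \in GL(n,\IZ)$ is in general a non-product linear automorphism. Promoting the abstract splitting to one of $\IZ[\IZ/p]$-modules requires either establishing naturality of the AHSS splitting under arbitrary self-maps of $T^n$, or directly constructing a natural $KO[1/p]$-valued transformation into $\bigoplus_i H^i(-; KO^{m-i}(\pt)[1/p])$ (for instance by descending from an Adams-splitting of $KO_{(p)}$ at odd primes). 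This is the step that I expect to demand the most care.
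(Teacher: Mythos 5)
Your treatment of the $\Z[1/2]$ case is fine, if somewhat roundabout: the paper argues more directly, noting that $KO^*(\pt)\otimes\Z[1/2]$ is torsion-free in every degree, so the Dold Chern character for the localized theory is already a $\Z[\Z/p]$-equivariant isomorphism by Lemma~\ref{lem:chern_integral} and induction on $n$ via $B\Z^n = S^1 \times B\Z^{n-1}$; the detour through the Anderson splitting $KU[1/2]\simeq KO[1/2]\vee\Sigma^4 KO[1/2]$ is unnecessary, though not wrong.

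The $\Z[1/p]$ case is where the real content lies, and your sketch has two genuine gaps. First, your collapse argument is incorrectly reasoned: it is false that every stable positive-degree cohomology operation is a derivation, and equally false that such operations vanish on degree-one classes ($Sq^1$ is a counterexample to both claims). What \emph{is} true is that the differentials in the multiplicative AHSS are derivations by the Leibniz rule, and that they vanish on $E_r^{1,*}$ because every class in $H^1(T^n)$ is pulled back along a coordinate projection $T^n\to S^1$ whose AHSS trivially collapses. The paper sidesteps this altogether: it first establishes the abstract abelian-group splitting $KO^m(B\Z^n)\cong\bigoplus_i H^i(B\Z^n)\otimes KO^{m-i}(\pt)$ by induction on $n$ using excision and $B\Z^n = S^1\times B\Z^{n-1}$, and then observes that this \emph{forces} the localized AHSS to collapse by a rank comparison.

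Second, and more seriously, you correctly identify that promoting the abstract splitting to a $\Z[1/p][\Z/p]$-module splitting is the crux, but neither of your two proposed routes actually works. The AHSS \emph{splitting} is not natural, and trying to make it so is hopeless. Descending from an Adams splitting of $KO_{(p)}$ would not produce a GEM either, since $KO[1/p]$ retains its $\Z$- and $\Z/2$-coefficients. The paper resolves the issue differently: it uses naturality of the AHSS \emph{filtration} only (which is automatic), yielding a finite filtration of $KO^m(B\Z^n_\rho)[1/p]$ by $\Z[1/p][\Z/p]$-submodules with graded pieces $H^i(\Z^n_\rho)\otimes KO^{m-i}(\pt)\otimes\Z[1/p]$, and then splits each extension equivariantly by a threefold case analysis on $KO^{m-i}(\pt)$: when it vanishes there is nothing to do; when it is $\Z$ the quotient is a $\Z[1/p]$-torsion-free finitely generated module over $\Z[1/p][\Z/p]\cong\Z[1/p]\times\Z[1/p][\zeta]$, hence projective; and when it is $\Z/2$ one takes an abelian-group section $s$, averages it to $\tilde s(x)=\sum_{g\in\Z/p}g\cdot s(g^{-1}x)$, and uses that $\pi\circ\tilde s$ is multiplication by $p$, which equals the identity on a module of exponent $2$ because $p$ is odd. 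This averaging trick on the filtration quotients, not any naturality of the splitting, is the missing ingredient.
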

\begin{proof}

  Since $KO^*(X) \otimes \Z[1/2]$ is a generalized cohomology theory with
  torsion-free coefficients, the Chern character and
  Lemma~\ref{lem:chern_integral} give the first isomorphism.

  One proves that there are isomorphisms of abelian groups
$$
KO^m(B\IZ^n_{\rho}) \cong \bigoplus_{i \in \IZ} H^i(B\IZ^n_{\rho}) \otimes
KO^{m-i}(\pt)
$$
by induction on $n$ using excision and the fact that $B\Z^n = S^1 \times
B\Z^{n-1}$.  It follows that the Atiyah-Hirzebruch spectral sequence $E^{i,j}_2
= H^i(B\Z^n;K^j(\pt)[1/p]) \Rightarrow KO^{i+j}(B\IZ^n)[1/p] $ collapses.
This spectral sequence is natural with respect to automorphisms of
$\IZ^n$. Hence we obtain a descending filtration by $\IZ[1/p] [\IZ/p]$-modules
$$KO^m(B\IZ^n_{\rho})[1/p]  = F^{0,m} \supset F^{1,m-1} \supset F^{2,m-2} \supset \cdots 
\supset F^{m,0} \supset F^{m+1,-1} = 0$$ and exact sequences
$$
0 \to F^{i+1,m-i-1} \to F^{i,m-i} \xrightarrow{\pi} H^i(\IZ^n_{\rho}) \otimes
K^{m-i}(\pt) \otimes {\Z[1/p]} \to 0.
$$
It thus suffices to show that these exact sequences split over
$\IZ[1/p][\IZ/p]$ for all $i$.  If $m-i \equiv 3,5,6,7 \pmod 8$, this follows
from the fact that $KO^{m-i}(\pt) = 0$.  If $m-i \equiv 0,4 \pmod 8$, then
$K^{m-i}(\pt) \cong \Z$ and $H^i(\IZ^n_{\rho}) \otimes K^{m-i}(\pt) \otimes
{\Z[1/p]}$ is a finitely generated $\Z[1/p]$-torsion free module over the ring
$\Z[1/p][\Z/p] \xrightarrow{\cong} \Z[1/p] \times \Z[1/p][\zeta]$, hence is
projective.  Finally, suppose $m-i \equiv 1,2 \pmod 8$.  Since the
Atiyah-Hirzebruch spectral sequence collapses, there is a homomorphism of
abelian groups $s\colon H^i(\IZ^n_{\rho}) \otimes K^{m-i}(\pt) \otimes {\Z[1/p]} \to
F^{i,m-i}$ so that $\pi \circ s = \id$.  Define
$$
\tilde s \colon H^i(\IZ^n_{\rho}) \otimes K^{m-i}(\pt) \otimes {\Z[1/p]} \to
F^{i,m-i}, \qquad x \mapsto \sum_{g \in \Z/p} g \cdot s(g^{-1}x).
$$
Then $\tilde s$ is a homomorphism of $\Z[\Z/p]$-modules and $\pi \circ \tilde s$
is multiplication by $p$ and hence is the identity since $K^{m-i}(\pt) \cong \Z/2$.
\end{proof}

\begin{lemma}
  \label{lem:Leray-Serre_ss_for_KO(BGamma)}\ Let $p$ be an odd prime.
  \begin{enumerate}

  \item \label{lem:Leray-Serre_ss_for_KO(BGamma):fixed_set} 
  For every $m \in \Z$, there is an isomorphism of abelian groups
$$
KO^m(B\Z^n_\rho)^{\Z/p} \cong \bigoplus_{l \in \Z} KO^{m-l}(\pt)^{r_l}.
$$

\item \label{lem:Leray-Serre_ss_for_KO(BGamma):Tate_cohomology}
$$\widehat{H}^i(\IZ/p;KO^j(B\IZ^n_{\rho})) \cong
\bigoplus_{l \in \IZ} \widehat{H}^i(\IZ/p;H^{j+4l}(\IZ^n_{\rho})) \cong
\begin{cases} (\IZ/p)^{\sum_{l \in \IZ} a_{j+4l}} & i+j \;\text{even}, \\
  0 & $i+j$\;\text{odd.}
\end{cases}
$$

\item \label{lem:Leray-Serre_ss_for_KO(BGamma):differentials} All differentials
  in the Leray-Serre spectral sequence associated to the
  extension~\eqref{Gamma_as_extension} converging to $KO^*(B\Gamma)$ vanish.

\end{enumerate}

\end{lemma}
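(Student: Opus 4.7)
The plan is to follow the pattern of the proof of Lemma~\ref{lem:Leray-Serre_ss_for_K(BGamma)} for complex $K$-theory, with additional bookkeeping to handle the $2$-torsion in $KO^*(\pt)$. The hypothesis that $p$ is odd keeps this bookkeeping under control, since $\widehat{H}^*(\IZ/p;-)$ is $p$-torsion and hence annihilates $2$-torsion modules.

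For assertion~\ref{lem:Leray-Serre_ss_for_KO(BGamma):fixed_set}, Lemma~\ref{lem_KO(BZn)_over_Z[Z/p]} gives $\IZ[\IZ/p]$-module isomorphisms
$$KO^m(B\IZ^n_\rho) \otimes R \cong \bigoplus_i H^i(B\IZ^n_\rho) \otimes KO^{m-i}(\pt) \otimes R$$
for both $R = \IZ[1/2]$ and $R = \IZ[1/p]$, and $\IZ/p$-invariants commute with these flat localizations. I compute the invariants summand by summand: on a $\IZ$-coefficient summand $H^i(B\IZ^n_\rho)$ they give $\IZ^{r_i}$ by Theorem~\ref{the:Cohomology_of_BGamma_and_bub(Gamma)}~\ref{the:Cohomology_of_BGamma_and_bub(Gamma):Gamma_to_Zn}, while on a $\IZ/2$-coefficient summand $H^i(B\IZ^n_\rho) \otimes \IZ/2$ (where the $\IZ/p$-action on $\IZ/2$ is forced to be trivial because $\aut(\IZ/2) = 1$) I combine the short exact sequence $0 \to H^i(B\IZ^n_\rho) \xrightarrow{2} H^i(B\IZ^n_\rho) \to H^i(B\IZ^n_\rho) \otimes \IZ/2 \to 0$ of $\IZ[\IZ/p]$-modules with the observation that $H^1(\IZ/p; H^i(B\IZ^n_\rho))$ is $p$-torsion and so has vanishing $2$-torsion to conclude $(H^i(B\IZ^n_\rho) \otimes \IZ/2)^{\IZ/p} \cong H^i(B\IZ^n_\rho)^{\IZ/p}/2H^i(B\IZ^n_\rho)^{\IZ/p} \cong (\IZ/2)^{r_i}$. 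The two localized answers both match the corresponding localizations of $\bigoplus_l KO^{m-l}(\pt)^{r_l}$, and since both source and target are finitely generated abelian groups whose only torsion is $2$-torsion, the integral isomorphism follows.

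For assertion~\ref{lem:Leray-Serre_ss_for_KO(BGamma):Tate_cohomology}, since Tate cohomology kills $2$-torsion for $p$ odd, only the $\IZ$-coefficient summands of $KO^j(B\IZ^n_\rho)$ contribute. Using the $\IZ[1/2][\IZ/p]$-isomorphism of Lemma~\ref{lem_KO(BZn)_over_Z[Z/p]}, I identify
$$\widehat{H}^i(\IZ/p; KO^j(B\IZ^n_\rho)) \cong \bigoplus_{l \in \IZ} \widehat{H}^i(\IZ/p; H^{j+4l}(B\IZ^n_\rho)),$$
indexed by those $l \in \IZ$ for which $KO^{-4l}(\pt) = \IZ$ (namely all $l$). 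The explicit values then follow from Lemma~\ref{lem:Hochschild-Serre_ss_forKast(BGamma)}~\ref{lem:Hochschild-Serre_ss_forKast(BGamma):Tate_cohomology}, with the parity condition $i+j$ even surviving because $4l$ is even.

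For assertion~\ref{lem:Leray-Serre_ss_for_KO(BGamma):differentials}, the argument parallels Lemma~\ref{lem:Leray-Serre_ss_for_K(BGamma)}~\ref{lem:Leray-Serre_ss_for_K(BGamma):trivial_differentials}. By~\ref{lem:Leray-Serre_ss_for_KO(BGamma):Tate_cohomology}, the $E_2$-term has a checkerboard pattern away from the edge $i=0$, which reduces the vanishing of all higher differentials to showing that $d_r^{0,j} = 0$ for $j$ odd, equivalently that the edge homomorphism $\iota^j \colon KO^j(B\Gamma) \to KO^j(B\IZ^n_\rho)^{\IZ/p}$ is surjective for such $j$. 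For $j$ odd, assertion~\ref{lem:Leray-Serre_ss_for_KO(BGamma):Tate_cohomology} gives $\widehat{H}^0(\IZ/p; KO^j(B\IZ^n_\rho)) = 0$, so the norm map is surjective, and since it factors as $\iota^j \circ \trf^j$ (Theorem~\ref{norm_is_composite}), so is $\iota^j$. The main delicate point of the proof is the $2$-torsion bookkeeping in~\ref{lem:Leray-Serre_ss_for_KO(BGamma):fixed_set}; the other two assertions follow routinely once that is in hand.
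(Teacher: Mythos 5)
Your argument is correct and follows essentially the same plan as the paper: pass to localizations where Lemma~\ref{lem_KO(BZn)_over_Z[Z/p]} identifies the $\IZ[\IZ/p]$-module structure of $KO^j(B\IZ^n_\rho)$, compute invariants and Tate cohomology there, and kill the differentials via the transfer. In assertion~\ref{lem:Leray-Serre_ss_for_KO(BGamma):fixed_set} you in fact spell out the step the paper's proof leaves implicit, namely the isomorphism $(H^i(B\IZ^n_\rho)\otimes\IZ/2)^{\IZ/p}\cong(\IZ/2)^{r_i}$ obtained from the short exact sequence for multiplication by $2$ together with the absence of $2$-torsion in $H^1(\IZ/p;H^i(B\IZ^n_\rho))$; the paper only writes that the assertion ``follows from Lemma~\ref{lem_KO(BZn)_over_Z[Z/p]} and the definition of $r_l$.'' For assertion~\ref{lem:Leray-Serre_ss_for_KO(BGamma):differentials} the paper formally reduces to the two localizations at and away from $p$ and then quotes the complex $K$-theory case; your integral norm/transfer argument is a harmless simplification, since assertion~\ref{lem:Leray-Serre_ss_for_KO(BGamma):Tate_cohomology} already gives $\widehat{H}^0(\IZ/p;KO^j(B\IZ^n_\rho))=0$ integrally for $j$ odd. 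The one point you should not omit: passing from surjectivity of the edge homomorphism $\iota^j$ to the vanishing of every $d^{0,j}_r$ uses that the Leray--Serre spectral sequence converges with no $\lim^1$-term, a fact the paper explicitly re-cites from~\cite{Lueck-Oliver(2001b)} at exactly this point.
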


\begin{proof}~\ref{lem:Leray-Serre_ss_for_KO(BGamma):fixed_set} 
  It suffices to show the
  isomorphism exists after inverting 2 and after localizing at 2.  Furthermore,
  if $M$ is a $\Z[\Z/p]$-module, then $M^{\Z/p} \otimes \Z[1/2] \cong (M \otimes
  \Z[1/2])^{\Z/p}$ and $M^{\Z/p} \otimes \Z_{(2)} \cong (M \otimes \Z_{(2)}
  )^{\Z/p}$ since localization is an exact functor.  The assertion then follows
  from Lemma~\ref{lem_KO(BZn)_over_Z[Z/p]} and the definition of the numbers
  $r_l$.  
\\[1mm]~\ref{lem:Leray-Serre_ss_for_KO(BGamma):Tate_cohomology} 
  Since $\Z[1/2] \subset
  \Z_{(p)}$, Lemma~\ref{lem_KO(BZn)_over_Z[Z/p]} implies that
$$KO^j(B\Z^n_\rho) \otimes \Z_{(p)} \cong \bigoplus_{l \in \IZ} H^{j+4l}(B\IZ^n_{\rho}) \otimes \Z_{(p)}.$$
The first isomorphism in
assertion~\ref{lem:Leray-Serre_ss_for_KO(BGamma):Tate_cohomology} then follows
since localization is an exact functor and the Tate cohomology groups are
$p$-torsion.  The second isomorphism follows from
Lemma~\ref{lem:Hochschild-Serre_ss_forKast(BGamma)}%
~\ref{lem:Hochschild-Serre_ss_forKast(BGamma):Tate_cohomology}.
\\[1mm]~%
\ref{lem:Leray-Serre_ss_for_KO(BGamma):differentials} First note that the
Leray-Serre spectral sequence converges with no $\text{lim}^1$-term, see
\cite[Theorem~6.5]{Lueck-Oliver(2001b)}.

It suffices to prove the differentials vanish after inverting p and after
localizing at $p$.  If we invert $p$, the claim follows from
$$E^{i,j}_2[1/p] = H^i(\IZ/p;KO^j(B\IZ^n_{\rho}))[1/p] = 0 \quad \text{for}\; i \ge 1.$$ 
If we localize at $p$, the proof that the differentials vanish is identical to
the proof of
Lemma~\ref{lem:Leray-Serre_ss_for_K(BGamma)}~\ref{lem:Leray-Serre_ss_for_K(BGamma):trivial_differentials}.
\end{proof}

\begin{proof}[Proof of
  Theorem~\ref{the:KO-cohomology_of_BGamma_and_bub(Gamma)}]
\ref{the:KO-cohomology_of_BGamma_and_bub(Gamma):KO2m(bub(Gamma)} 
We first note that Proposition~\ref{prop:quotient_iso} and 
Lemma~\ref{lem:Leray-Serre_ss_for_KO(BGamma)}~\ref{lem:Leray-Serre_ss_for_KO(BGamma):fixed_set} 
imply that for all $m \in \Z$, the kernel and cokernel of the composite
  \begin{equation} \label{isomorphism_away_from_p} KO^m(\bub\Gamma) \to
    KO^m(B\Gamma) \to KO^m(B\Z^n_\rho)^{\Z/p} \cong \oplus_{l\in \Z}
    KO^{m-l}(\pt)^{r_l}
  \end{equation}
  are finitely generated $p$-groups.  This implies that the desired isomorphism
  holds after inverting $p$.  It holds at $p$ by
  Lemma~\ref{lem:differentials_Atiyah-Hirzebruch_ss_for_KO-cohomology_localized_at_p}.
 \\~\ref{the:KO-cohomology_of_BGamma_and_bub(Gamma):KO2m_plus_1(BGamma)} 
As in the proof of Theorem~\ref{the:K-cohomology_of_BGamma_and_bub(Gamma)}, 
one shows that the map
$$
\iota^m \colon KO^m(B\Gamma) \to KO^m(B\Z^n_\rho)^{\Z/p}
$$
is an isomorphism for $m$ odd and an epimorphism for $m$ even.
\\~\ref{the:KO-cohomology_of_BGamma_and_bub(Gamma):K12m_plus_1(bub(Gamma)}%
~\ref{the:KO-cohomology_of_BGamma_and_bub(Gamma):KO_odd(BGamma)_to_KO_odd(bub(Gamma))} 
Since $p$ is odd, every non-trivial irreducible $\IZ/p$-representation is of complex type. 
Hence we get from~\cite[Remark on page 133 after Proposition~2.2]{Segal(1968c)} that 
$KO^m_{\IZ/p}(\pt) \cong KO^m(\pt) \oplus K^m(\pt) \otimes \II_{\IR}(\IZ/p)$. The Atiyah-Segal Completion Theorem for 
$KO^*$ (see~\cite{Atiyah-Segal(1969)}) implies
\begin{eqnarray}
  \widetilde{KO}^m(B\IZ/p) \cong
  \begin{cases}
    \II_{\IR}(\IZ/p) \otimes \IZ\widehat{_p} \cong (\IZ\widehat{_p})^{(p-1)/2} &
    m \;\text{even};
    \\
    0 & \text{otherwise}.
  \end{cases}
  \label{reduced_KO-cohomology_of_BP}
\end{eqnarray}
The cellular pushout~\eqref{pushout_for_BGamma_to_bunderbar(Gamma)} yields for
$m \in \IZ$ a long exact sequence
\begin{multline}
  0 \to KO^{2m}(\bub{\Gamma}) \xrightarrow{\overline{f}^{2m}} KO^{2m}(B\Gamma)
  \xrightarrow{\varphi^{2m}} \bigoplus_{(P) \in \calp} \widetilde{KO}^{2m}(BP)
  \\
  \xrightarrow{\delta^{2m}} KO^{2m+1}(\bub{\Gamma})
  \xrightarrow{\overline{f}^{2m+1}} KO^{2m+1}(B\Gamma) \to 0.
  \label{long_exact_KO-cohomology_sequences_for_bub(Gamma)_BGamma)}
\end{multline}
Define $TO^{2m+1}$ to be the kernel of the surjection $\overline{f}^{2m+1}$.
Since $\overline{f}^{2m+1}$ is an isomorphism after inverting $p$
by~\eqref{isomorphism_away_from_p} and
assertion~\ref{the:KO-cohomology_of_BGamma_and_bub(Gamma):KO2m_plus_1(BGamma)},
$TO^{2m+1}$ is $p$-torsion.  We next claim $\overline{f}^{2m+1}$ is split.  We
only need verify this after localizing at $p$ in which case it follows since
$K^{2m+1}(B\Gamma) \otimes \Z_{(p)}$ is free over $\Z_{(p)}$ by
assertion~\ref{the:KO-cohomology_of_BGamma_and_bub(Gamma):KO2m_plus_1(BGamma)}
and
Lemma~\ref{lem:Leray-Serre_ss_for_KO(BGamma)}~\ref{lem:Leray-Serre_ss_for_KO(BGamma):fixed_set}.
Finally, the stated filtration of $TO^{2m+1}$ is a consequence of
Lemma~\ref{lem:differentials_Atiyah-Hirzebruch_ss_for_KO-cohomology_localized_at_p}.
The completes the proof of
assertion~\ref{the:KO-cohomology_of_BGamma_and_bub(Gamma):K12m_plus_1(bub(Gamma)}.
Assertion~\ref{the:KO-cohomology_of_BGamma_and_bub(Gamma):KO_odd(BGamma)_to_KO_odd(bub(Gamma))}
is a consequence.  \\[1mm]~%
\ref{the:KO-cohomology_of_BGamma_and_bub(Gamma):KO2m(BGamma)} The proof of this
is identical to that of
Theorem~\ref{the:K-cohomology_of_BGamma_and_bub(Gamma)}%
~\ref{the:K-cohomology_of_BGamma_and_bub(Gamma):K0(BGamma)};
the only missing part is to show the epimorphism
$$
\iota^{2m} \colon KO^{2m}(B\Gamma) \to KO^{2m}(B\Z^n_\rho)^{\Z/p}
$$
is split.  At $p$, this follows since $KO^{2m}(B\Z^n_\rho)^{\Z/p} \otimes
\Z_{(p)}$ is free over $\Z_{(p)}$.  After inverting $p$, the splitting is
provided by composing the inverse of the composite
\eqref{isomorphism_away_from_p} with the map $KO^{2m}(\bub\Gamma)[1/p] \to
KO^{2m}(B\Gamma)[1/p]$.
\end{proof}


\typeout{------------   Section 6: KO-homology ------------}

\section{\texorpdfstring{$KO$}{KO}-homology}
\label{sec:KO-homology}

\

In this section we want to compute the real $K$-homology $KO_*$ of $B\Gamma$ and
$\bub{\Gamma}$. Rationally this can be done using the Chern character of
Dold~\cite{Dold(1962)}: for every $CW$-complex there is a natural isomorphism
$$\bigoplus_{l \in \IZ} H_{m+4l}(X) \otimes \IQ \xrightarrow{\cong} 
KO_m(X) \otimes \IQ.$$ In particular we get from
Theorem~\ref{the:Homology_of_BGamma_and_bub(Gamma)}~%
\ref{the:Homology_of_BGamma_and_bub(Gamma):Hm(BGamma)}
and~\ref{the:Homology_of_BGamma_and_bub(Gamma):bub(Gamma)}
\begin{eqnarray}
  KO_m(B\Gamma) \otimes \IQ & \cong & \IQ^{\sum_{l \in \IZ} r_{m+4l}};
  \label{KO_m(BGamma)_rationally}
  \\
  KO_m(\bub{\Gamma}) \otimes \IQ & \cong & \IQ^{\sum_{l \in \IZ} r_{m+4l}}
  \label{KO_m(bub(Gamma))_rationally}
\end{eqnarray}

We are interested in determining the integral structure.

\begin{theorem}[$KO$-homology of $B\Gamma$ and $\bub{\Gamma}$]
  \label{the:KO-homology_of_BGamma_and_bub(Gamma)}
  Let $p$ be an odd prime and $m$ be any integer.

  \begin{enumerate}

  \item \label{the:KO-homology_of_BGamma_and_bub(Gamma):KO(BGamma)}
$$
KO_m(B\Gamma) \cong
\begin{cases}
  \IZ^{\sum_{l \in \IZ} r_{2l}}   & m \: \text{even;}\\
  \IZ^{\sum_{l \in \IZ} r_{2l+1}} \oplus (\IZ/p^\infty)^{p^k(p-1)/2} & m \;
  \text{odd;}
\end{cases}
$$

\item \label{the:KO-homology_of_BGamma_and_bub(Gamma):KO_2m(BGamma)} The
  inclusion map $\Z^n \to\Gamma$ induces an isomorphism
$$KO_{2m}(B\IZ^n_{\rho})_{\Z/p}  \xrightarrow{\cong} KO_{2m}(B\Gamma)$$
and $KO_{2m}(B\IZ^n_{\rho})_{\Z/p} \cong \bigoplus_{l \in \Z}
KO_{2m-l}(\pt)^{r_{l}}.  $

\item \label{the:KO-homology_of_BGamma_and_bub(Gamma):KO_2m_plus_1(BGamma)}
  There is a split short exact sequence of abelian groups
$$0 \to (\IZ/p^{\infty})^{p^k(p-1)/2} \to KO_{2m+1}(B\Gamma)
\to KO_{2m+1}(\bub{\Gamma}) \to 0.$$

\item \label{the:KO-homology_of_BGamma_and_bub(Gamma):KO2m(bub(Gamma)} We have
$$KO_{2m}(\bub{\Gamma}) \cong 
\left(\bigoplus_{l\in\Z} KO_{2m-l}(\pt)^{r_{l}}\right) \oplus TO^{2m+5}$$ where
$TO^{2m+5} $ is the finite abelian $p$-group appearing in
Theorem~\ref{the:KO-cohomology_of_BGamma_and_bub(Gamma)}~%
\ref{the:KO-cohomology_of_BGamma_and_bub(Gamma):K12m_plus_1(bub(Gamma)}.

\item \label{the:KO-homology_of_BGamma_and_bub(Gamma):KO2m_plus_1(bub(Gamma)} We
  have
$$KO_{2m+1}(\bub{\Gamma}) \cong 
\bigoplus_{l\in\Z} KO_{2m+1-l}(\pt)^{r_{l}}.
$$

\item\label{the:KO-homology_of_BGamma_and_bub(Gamma):KO(bub(Gamma):K_ev(BGamma)_to_K_ev(bub(Gamma)}
  The group $TO^{2m+5}$ is isomorphic to a subgroup of the kernel of
$$\bigoplus_{(P) \in \calp} KO_{2m+1}(BP) 
\to KO_{2m+1}(B\Gamma).$$

\end{enumerate}

\end{theorem}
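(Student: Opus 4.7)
The strategy is to mirror the complex $K$-homology calculation of Theorem~\ref{the:K-homology_of_BGamma_and_bub(Gamma)}, with the extra ingredients being the Atiyah--Hirzebruch splitting of Lemma~\ref{lem_KO(BZn)_over_Z[Z/p]}, the Leray--Serre vanishing of Lemma~\ref{lem:Leray-Serre_ss_for_KO(BGamma)}, and the $KO$-cohomology computation of Theorem~\ref{the:KO-cohomology_of_BGamma_and_bub(Gamma)}. I would treat the assertions in the order (v), (iv), (ii), (iii), (vi), (i).

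For (v) and (iv), which compute $KO_*(\bub{\Gamma})$: since $\bub{\Gamma} = \Gamma\backslash\R^n$ is a finite $CW$-complex, Anderson duality for $KO$ yields a universal-coefficient type short exact sequence expressing $KO_*(\bub{\Gamma})$ in terms of $\hom_{\IZ}$ and $\Ext_{\IZ}$ of the $KO^*(\bub{\Gamma})$-groups, with a $4$-fold degree shift reflecting the Anderson period of $KO$. The free parts of $KO^{2m}(\bub{\Gamma})$ and $KO^{2m+1}(\bub{\Gamma})$ from Theorem~\ref{the:KO-cohomology_of_BGamma_and_bub(Gamma)}~\ref{the:KO-cohomology_of_BGamma_and_bub(Gamma):KO2m(bub(Gamma)} and~\ref{the:KO-cohomology_of_BGamma_and_bub(Gamma):K12m_plus_1(bub(Gamma)} dualize to the free summands claimed in (v) and (iv). The $p$-primary torsion $TO^{2m+1}$ living in odd-degree $KO$-cohomology reappears via the $\Ext$-term as the torsion summand of the even-degree $KO$-homology; combining the $4$-fold shift with the $8$-periodicity of $KO$ identifies this torsion with $TO^{2m+5}$. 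In particular, $KO_{2m+1}(\bub{\Gamma})$ is torsion-free since the relevant $\Ext$-contributions vanish.

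For (ii) I would run the Leray--Serre spectral sequence
$$E^2_{i,j} = H_i(\IZ/p; KO_j(B\IZ^n_\rho)) \Longrightarrow KO_{i+j}(B\Gamma)$$
of the extension~\eqref{Gamma_as_extension}. Lemma~\ref{lem_KO(BZn)_over_Z[Z/p]} describes $KO_*(B\IZ^n_\rho)$ as a $\IZ[\IZ/p]$-module away from the prime~$2$, and Lemma~\ref{lem:Leray-Serre_ss_for_KO(BGamma)}~\ref{lem:Leray-Serre_ss_for_KO(BGamma):Tate_cohomology} together with Tate duality (Lemma~\ref{Tate_duality}) gives vanishing of the relevant Tate cohomology groups. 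The argument of Theorem~\ref{the:Homology_of_BGamma_and_bub(Gamma)}~\ref{the:Homology_of_BGamma_and_bub(Gamma):coinvariants} then applies verbatim to show that $E^2_{0,2m} = KO_{2m}(B\IZ^n_\rho)_{\IZ/p}$ is torsion-free, that all $E^2_{i,2m-i}$ with $i \ge 1$ vanish, and that the edge map gives the desired isomorphism. For (iii) I would apply the long exact $KO$-homology sequence associated to the cellular pushout~\eqref{pushout_for_BGamma_to_bunderbar(Gamma)}; the new input is $KO_*(B\IZ/p)$ for $p$ odd. Since every non-trivial irreducible $\IZ/p$-representation is of complex type, combining~\eqref{reduced_KO-cohomology_of_BP} with Anderson duality (or Vick's duality as used for the complex case in~\cite{Vick(1970)}) gives $\widetilde{KO}_m(B\IZ/p) \cong (\IZ/p^\infty)^{(p-1)/2}$ in the appropriate odd degrees and zero otherwise. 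Feeding this into the six-term sequence, together with (v), which makes $KO_{2m+1}(\bub{\Gamma})$ torsion-free so that the surjection onto it splits, produces the claimed split short exact sequence. Assertion (vi) falls out of the same long exact sequence, and (i) simply assembles (ii) and (iii).

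The main obstacle I expect is the bookkeeping for the Anderson-duality step in (v) and (iv): unlike for $KU$, the Anderson dual of $KO$ involves a non-trivial $4$-fold suspension, and pinning down the shift that sends $TO^{2m+1} \subset KO^{2m+1}(\bub{\Gamma})$ to $TO^{2m+5} \subset KO_{2m}(\bub{\Gamma})$ requires care. Everything else is a direct translation of the complex $K$-theory argument; the assumption that $p$ is odd ensures that the $2$-primary subtleties of $KO$-theory do not interact with the $p$-primary torsion where the interesting phenomena live.
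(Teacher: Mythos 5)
Your proposal is essentially correct and, for assertions (iv), (v), (iii), (vi), (i), follows the same route as the paper: parts (iv) and (v) come from dualizing the $KO$-cohomology of $\bub{\Gamma}$ via the Universal Coefficient Theorem for $KO$-theory (Theorem~\ref{lem:Universal_Coefficient_Theorem_for_KO-theory}; this UCT with its degree-$4$ shift is precisely the manifestation of Anderson self-duality $I_{\IZ}KO \simeq \Sigma^4 KO$ that you invoke); part (iii) uses the long exact sequence of the pushout~\eqref{pushout_for_BGamma_to_bunderbar(Gamma)} together with Pontryagin/Vick-type duality to compute $\widetilde{KO}_*(B\IZ/p)$; (vi) and (i) are assembled the same way.

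For assertion (ii), however, you take a genuinely different route from the paper. You propose running the homological Leray--Serre spectral sequence $E^2_{i,j}=H_i(\IZ/p;KO_j(B\IZ^n_\rho))\Rightarrow KO_{i+j}(B\Gamma)$ and mimicking Theorem~\ref{the:Homology_of_BGamma_and_bub(Gamma)}~\ref{the:Homology_of_BGamma_and_bub(Gamma):coinvariants}; this is in fact the strategy the paper uses for the harder $ko$-homology case (Theorem~\ref{the:ko-homology}~\ref{the:ko-homology:ko_2m(BGamma)}), so it is sound. The paper instead proves (ii) by a shorter device: the complexification/realification transformations $i^*\colon KO_*\to K_*$ and $r^*\colon K_*\to KO_*$, whose composite is multiplication by $2$, reduce bijectivity of $KO_{2m}(B\IZ^n_\rho)_{\IZ/p}\to KO_{2m}(B\Gamma)$ after inverting $2$ to the already-known complex statement (Theorem~\ref{the:K-homology_of_BGamma_and_bub(Gamma)}~\ref{the:K-homology_of_BGamma_and_bub(Gamma):K0(BGamma)}), while bijectivity after inverting $p$ follows from Proposition~\ref{prop:quotient_iso}; the explicit value of $KO_{2m}(B\Gamma)$ is then read off the $\IZ_{(p)}$-local Atiyah--Hirzebruch spectral sequence and matched away from $p$ via Proposition~\ref{prop:quotient_iso} and (iv). Your route requires more bookkeeping (you must first dualize Lemma~\ref{lem_KO(BZn)_over_Z[Z/p]} to a $\IZ[\IZ/p]$-module description of $KO_j(B\IZ^n_\rho)$ via the shifted UCT), but it is more self-contained and doesn't lean on the complex result.

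One point that should be corrected: you claim the argument of Theorem~\ref{the:Homology_of_BGamma_and_bub(Gamma)}~\ref{the:Homology_of_BGamma_and_bub(Gamma):coinvariants} "applies verbatim" to show that $E^2_{0,2m}=KO_{2m}(B\IZ^n_\rho)_{\IZ/p}$ is \emph{torsion-free}. That cannot be true literally, since $KO_*(B\IZ^n)$ carries $2$-torsion coming from $KO_*(\pt)$. What the argument actually yields is that $E^2_{0,2m}$ is $p$-torsion-free (the norm map is injective after localizing at $p$, where $KO_{2m}(B\IZ^n_\rho)$ is identified with a sum of $H_*(\IZ^n_\rho)$'s). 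That suffices: the potential differentials $d_r\colon E^r_{r,2m-r+1}\to E^r_{0,2m}$ have $p$-torsion source (higher $\IZ/p$-group homology) and $p$-torsion-free target, so they vanish. This is exactly the phrasing the paper uses in its proof of Theorem~\ref{the:ko-homology}~\ref{the:ko-homology:ko_2m(BGamma)}, and you should use "$p$-torsion-free" there as well.
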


\begin{theorem}[Universal Coefficient Theorem for $KO$-theory]
  \label{lem:Universal_Coefficient_Theorem_for_KO-theory}
  For any $CW$-complex $X$ there is a short exact sequence
$$0 \to \Ext_{\IZ}(KO_{n+3}(X),\IZ) \to KO^n(X) \to \hom(KO_{n+4}(X),\IZ) \to 0.$$
If $X$ is a finite $CW$-complex, there is a short exact sequence
$$0 \to \Ext_{\IZ}(KO^{n+5}(X),\IZ) \to KO_n(X) \to \hom_{\IZ}(KO^{n+4}(X),\IZ) \to 0.$$
\end{theorem}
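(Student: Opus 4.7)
The plan is to deduce both short exact sequences from Anderson duality applied to the spectrum $KO$. The starting point is the general universal coefficient theorem of Anderson, as refined by Yosimura and summarised by Adams (the same references used for the complex case in Theorem~\ref{the:Universal_coefficient_theorem_for_K-theory}): for any spectrum $E$ with finitely generated homotopy groups there is a dual spectrum $I_{\IZ}E$ equipped, for every CW-complex $X$, with a natural short exact sequence
$$0 \to \Ext_{\IZ}(E_{n-1}(X),\IZ) \to (I_{\IZ}E)^n(X) \to \hom_{\IZ}(E_n(X),\IZ) \to 0.$$

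The decisive ingredient specific to $KO$ is \emph{Anderson self-duality}, which identifies $I_{\IZ}KO$ with a $4$-fold suspension of $KO$. Granting this, the middle term of the general sequence becomes $KO^{n\pm 4}(X)$, and after reindexing the degree and invoking Bott $8$-periodicity of $KO^*$ and $KO_*$ one recovers the first claim
$$0 \to \Ext_{\IZ}(KO_{n+3}(X),\IZ) \to KO^n(X) \to \hom_{\IZ}(KO_{n+4}(X),\IZ) \to 0;$$
observe that this argument uses no finiteness hypothesis on $X$.

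For the second sequence I would restrict to finite CW-complexes $X$ and pass to the Spanier--Whitehead dual $DX$, which then exists as a finite spectrum. S-duality supplies natural isomorphisms $KO^j(DX)\cong KO_{-j}(X)$ and $KO_j(DX)\cong KO^{-j}(X)$. Applying the first, already-established sequence to $DX$ and translating the $\Ext$ and $\hom$ terms back to $X$ via these isomorphisms, then reindexing and using Bott periodicity a second time, produces the homological sequence
$$0 \to \Ext_{\IZ}(KO^{n+5}(X),\IZ) \to KO_n(X) \to \hom_{\IZ}(KO^{n+4}(X),\IZ) \to 0.$$

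The only substantive input is the self-duality $I_{\IZ}KO\simeq \Sigma^{\pm 4}KO$; the shift by $4$ is forced by the structure of $\pi_*(KO)$, in which the two $\IZ/2$'s in degrees $1,2$ pair through $\Ext$ with the $\IZ$'s in degrees $4,0$ under the duality, everything being $8$-periodic. All subsequent steps are purely formal manipulations involving Anderson's sequence, Bott periodicity, and Spanier--Whitehead duality, so the main obstacle is the initial homotopy-theoretic identification of $I_{\IZ}KO$, which is precisely the content of the work of Anderson and Yosimura that we would invoke.
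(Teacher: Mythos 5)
Your proposal is correct and follows essentially the same route as the paper, whose proof consists of citing Anderson's universal coefficient theorem (which is exactly the Anderson self-duality $I_{\IZ}KO \simeq \Sigma^4 KO$ you invoke, as refined by Yosimura) for the cohomological sequence, and then Adams's notes on S-duality to pass to the homological sequence for finite complexes. The degree bookkeeping in both sequences checks out, so the only difference is that you have spelled out the content of the references the paper merely cites.
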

\begin{proof}
  A proof for the first short exact sequence can be found
  in~\cite{Anderson(1964)} and~\cite[(3.1)]{Yosimura(1975)}, the second sequence
  follows then from~\cite[Note~9 and~15]{Adams(1969b)}.  

\end{proof}

\begin{proof}[Proof of Theorem~\ref{the:KO-homology_of_BGamma_and_bub(Gamma)}]%
~\ref{the:KO-homology_of_BGamma_and_bub(Gamma):KO2m(bub(Gamma)}%
~\ref{the:KO-homology_of_BGamma_and_bub(Gamma):KO2m_plus_1(bub(Gamma)}
  These assertions follow from
  Theorem~\ref{the:KO-cohomology_of_BGamma_and_bub(Gamma)}%
~\ref{the:KO-cohomology_of_BGamma_and_bub(Gamma):KO2m(bub(Gamma)}
  and~\ref{the:KO-cohomology_of_BGamma_and_bub(Gamma):K12m_plus_1(bub(Gamma)}
  and the Universal Coefficient Theorem for
  $KO$-theory~\ref{lem:Universal_Coefficient_Theorem_for_KO-theory}.  
  \\[1mm]~\ref{the:KO-homology_of_BGamma_and_bub(Gamma):KO_2m(BGamma)} 
  There are natural
  transformations of cohomology theories $i^* \colon KO^* \to K^*$ and $r^*\colon K^* \to
  KO^*$, induced by sending a real representation $V$ to its complexification
  $\C \otimes_{\R} V$ and a complex representation to its restriction as a real
  representation.  The composite $r^* \circ i^*\colon KO^* \to KO^*$ is
  multiplication by two.  Since the map
$$K_0(B\IZ^n_{\rho})_{\IZ/p}  \xrightarrow{\cong} K_0(B\Gamma).$$
is bijective by Theorem~\ref{the:K-homology_of_BGamma_and_bub(Gamma)}~%
\ref{the:K-homology_of_BGamma_and_bub(Gamma):K0(BGamma)}, the map
$$KO_{2m}(B\IZ^n_{\rho})_{\IZ/p}  \xrightarrow{\cong} KO_{2m}(B\Gamma)$$
is bijective after inverting $2$. In order to show that it is itself bijective,
it remains to show that it is bijective after inverting $p$.  This follows from
Proposition~\ref{prop:quotient_iso}.

Since we are dealing with $KO$-homology, the Atiyah-Hirzebruch spectral sequence
converges also for the infinite-dimensional $CW$-complex $B\Gamma$. Because of
the existence of Dold's Chern character, all its differentials vanish
rationally. For $m \in \IZ$ we have $H_{2m}(B\Gamma) \cong \IZ^{r_{2m}}$ by
Theorem~\ref{the:Homology_of_BGamma_and_bub(Gamma)}.  Hence we get for an odd
prime $p$ since $KO_m(\pt)_{(p)}$ is $\IZ_{(p)}$ for $m \equiv 0 \pmod 4$ and
$0$ otherwise
$$
KO_{2m}(B\Gamma)_{(p)} \cong (\IZ_{(p)})^{\sum_{l \in \IZ} r_{2m+4l}};
$$
We conclude that
$$KO_{2m}(B\Gamma) \cong 
\bigoplus_{l\in\Z} KO_{2m-l}(\pt)^{r_{l}};
$$
holds after localizing at $p$. It remains to show that it holds after inverting
$p$. This follows from Proposition~\ref{prop:quotient_iso} and
assertion~\ref{the:KO-homology_of_BGamma_and_bub(Gamma):KO2m(bub(Gamma)}.
\\[1mm]~%
\ref{the:KO-homology_of_BGamma_and_bub(Gamma):KO_2m_plus_1(BGamma)} The
Atiyah-Hirzebruch spectral sequence shows that $\widetilde{KO}_{2m}(B\Z/p) = 0$
for all $m \in \Z$.  The methods of \cite{Vick(1970)} together with the
Universal Coefficient Theorem for KO-theory show that
$\widetilde{KO}_{2m+3}(BG)$ is the Pontryagin dual of $\widetilde{KO}^{2m}(BG)$
for any finite group $G$.  Applying these facts to $G= \Z/p$ for an odd prime
$p$, we see
$$
\widetilde{KO}_m(B\Z/p) =
\begin{cases}
  (\Z/p^\infty)^{(p-1)/2} & m \; \text{odd;}\\
  0 & m \;\text{even.}
\end{cases}
$$
Thus the long exact $KO$-homology
sequence associated to the cellular
pushout~\eqref{pushout_for_BGamma_to_bunderbar(Gamma)} reduces to the exact
sequence
\begin{multline}
  \label{KO_homology_long_exact_sequence}
  0 \to KO_{2m}(B\Gamma) \xrightarrow{\overline{f}_{2m}} KO_{2m}(\bub{\Gamma})
  \xrightarrow{\partial_{2m}}
  \bigoplus_{(P) \in \calp} KO_{2m-1}(BP) \\
  \xrightarrow{\varphi_{2m-1}} KO_{2m-1}(B\Gamma)
  \xrightarrow{\overline{f}_{2m-1}} KO_{2m-1}(\bub{\Gamma}) \to 0.
\end{multline}
Note $\text{im }\partial_{2m}$ is a finite abelian $p$-group, since it is a
finitely generated subgroup of the $p$-torsion group
$$
\bigoplus_{(P)\in \calp} KO_{2m-1}(BP) \cong (\Z/p^{\infty})^{(p-1)p^k/2}.
$$
Thus $\text{im }\varphi_{2m-1} \cong (\Z/p^{\infty})^{(p-1)p^k/2}$ (compare with
the proof of
Theorem~\ref{the:K-cohomology_of_BGamma_and_bub(Gamma)}%
~\ref{the:K-cohomology_of_BGamma_and_bub(Gamma):K1(BGamma)}).
It remains to see that $\overline{f}_{2m-1}$ splits, which we verify at $p$ and
away from $p$.  The target of $\overline{f}_{2m-1}$ is free after localizing at
$p$ by
assertion~\ref{the:KO-homology_of_BGamma_and_bub(Gamma):KO2m_plus_1(bub(Gamma)},
so it splits.  After inverting $p$, the exact
sequence~\ref{KO_homology_long_exact_sequence} shows that
$\overline{f}_{2m-1}[1/p]$ is an isomorphism.  \\[1mm]~%
\ref{the:KO-homology_of_BGamma_and_bub(Gamma):KO(BGamma)} This follows from
assertions~\ref{the:KO-homology_of_BGamma_and_bub(Gamma):KO_2m(BGamma)},%
~\ref{the:KO-homology_of_BGamma_and_bub(Gamma):KO_2m_plus_1(BGamma)}
and~\ref{the:KO-homology_of_BGamma_and_bub(Gamma):KO2m_plus_1(bub(Gamma)}.
\\[1mm]~%
\ref{the:KO-homology_of_BGamma_and_bub(Gamma):KO(bub(Gamma):K_ev(BGamma)_to_K_ev(bub(Gamma)}
This follows from
assertions~\ref{the:KO-homology_of_BGamma_and_bub(Gamma):KO_2m(BGamma)}
and~\ref{the:KO-homology_of_BGamma_and_bub(Gamma):KO2m(bub(Gamma)} and the long
exact sequence~\eqref{KO_homology_long_exact_sequence}.  This finishes the proof
of Theorem~\ref{the:KO-homology_of_BGamma_and_bub(Gamma)}.
\end{proof}


\typeout{------------   Section 7: Equivariant K-cohomology ------------}

\section{Equivariant \texorpdfstring{$K$}{K}-cohomology}
\label{sec:Equivariant_K-cohomology}

In the sequel an equivariant cohomology theory is to be understood in the sense
of~\cite[Section~1]{Lueck(2005c)}.  Equivariant topological complex $K$-theory
$K_?^*$ is an example as shown in~\cite[Example~1.6]{Lueck(2005c)} based
on~\cite{Lueck-Oliver(2001b)}.  This applies also to equivariant topological
real $K$-theory $KO_?^*$.

Rationally one obtains
\begin{eqnarray*}
K^0_{\Gamma}(\eub{\Gamma}) \otimes \IQ & \cong &
\IQ^{(p-1)p^k + \sum_{l \in \IZ} r_{2l}};
\\
K^1_{\Gamma}(\eub{\Gamma}) \otimes \IQ & \cong &
\IQ^{\sum_{l \in \IZ} r_{2l+1}},
\end{eqnarray*}
from~\cite[Theorem~5.5 and Lemma~5.6]{Lueck-Oliver(2001b)}
using Theorem~\ref{the:Cohomology_of_BGamma_and_bub(Gamma)}~%
\ref{the:Cohomology_of_BGamma_and_bub(Gamma):bub(Gamma)}
and Lemma~\ref{lem:preliminaries_about_Gamma_and_Zn_rho}.
We want to get an integral computation. Recall that we have
computed $\sum_{l \in \IZ} r_{2l}$ and
$\sum_{l \in \IZ} r_{2l+1}$ in Lemma~\ref{lem:LambdajZ(zeta)_in_R_Q(Z/p)}%
~\ref{lem:LambdajZ(zeta)_in_R_Q(Z/p):r_odd_and_r_ev}.

\begin{theorem}[Equivariant $K$-cohomology of $\eub{\Gamma}$]
\label{the:equivariant_K-cohomology_of_eub(Gamma)}
\ \begin{enumerate}

\item \label{the:equivariant_K-cohomology_of_eub(Gamma):Kast_explicite} 
$$K^m_{\Gamma}(\eub{\Gamma}) \cong
\begin{cases}
\IZ^{(p-1)p^k + \sum_{l \in \IZ} r_{2l}} & m \; \text{even};
\\
\IZ^{\sum_{l \in \IZ} r_{2l+1}} & m \; \text{odd}.
\end{cases}
$$

\item \label{the:equivariant_K-cohomology_of_eub(Gamma):T1} 
There is an exact sequence
$$0 \to K^0(\bub{\Gamma}) \to K^0_{\Gamma}(\eub{\Gamma}) \to
\bigoplus_{(P) \in \calp} \II_{\IC}(P)
\to T^1 \to 0,$$
where $T^1$ is the finite abelian $p$-group appearing in
Theorem~\ref{the:K-cohomology_of_BGamma_and_bub(Gamma)}~%
\ref{the:K-cohomology_of_BGamma_and_bub(Gamma):K1(bub(Gamma)}.

\item \label{the:equivariant_K-cohomology_of_eub(Gamma):K1_isos}
The canonical maps
\begin{eqnarray*}
K^1_{\Gamma}(\eub{\Gamma) 
&\xrightarrow{\cong} &
K^1(B\Gamma});
\\
K^1(B\Gamma)   
& \xrightarrow{\cong} &  
K^1(B\IZ^n_{\rho})^{\IZ/p},
\end{eqnarray*}
are isomorphisms.

\end{enumerate}

\end{theorem}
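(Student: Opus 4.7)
The plan is to apply the equivariant complex $K$-theory functor to the cellular $\Gamma$-pushout~\eqref{G-pushoutfor_EGamma_to_eunderbar_Gamma}. Using the standard identifications $K^*_\Gamma(E\Gamma) = K^*(B\Gamma)$, $K^*_\Gamma(\Gamma\times_P EP) = K^*(BP)$, and $K^*_\Gamma(\Gamma/P) = K^*_P(\pt)$ (which equals $R_{\IC}(P)$ in even degrees and vanishes in odd degrees), together with $K^1(BP) = 0$ by~\eqref{widetilde(Km)(BZ/p)}, the Mayer--Vietoris sequence for this pushout reduces to the six-term exact sequence
$$0 \to K^0_\Gamma(\eub{\Gamma}) \to K^0(B\Gamma) \oplus \bigoplus_{(P) \in \calp} R_{\IC}(P) \xrightarrow{\psi} \bigoplus_{(P) \in \calp} K^0(BP) \to K^1_\Gamma(\eub{\Gamma}) \to K^1(B\Gamma) \to 0,$$
where $\psi(x,(r_P)) = (\res^\Gamma_P(x) - \alpha_P(r_P))_P$ and $\alpha_P\colon R_{\IC}(P) \to K^0(BP)$ is the Atiyah--Segal completion map.

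The main obstacle is proving that $\psi$ is surjective; this immediately gives the first isomorphism in assertion~\ref{the:equivariant_K-cohomology_of_eub(Gamma):K1_isos}, the second being Theorem~\ref{the:K-cohomology_of_BGamma_and_bub(Gamma)}~\ref{the:K-cohomology_of_BGamma_and_bub(Gamma):K1(BGamma)}. Splitting off ranks via $R_{\IC}(P) = \IZ \oplus \II_{\IC}(P)$ and $K^0(BP) = \IZ \oplus \widetilde{K}^0(BP)$, the rank component of $\psi$ is visibly surjective, reducing the problem to showing that the reduced part $(x,(a_P)) \mapsto (\varphi^0_P(x) - \alpha_P^{\mathrm{red}}(a_P))_P$ surjects onto $\bigoplus \widetilde{K}^0(BP)$. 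By Atiyah--Segal, $\widetilde{K}^0(BP)$ is the $\II_{\IC}(P)$-adic completion of the free abelian group $\II_{\IC}(P)$ of rank $p-1$, and since the $\II_{\IC}(P)$-adic and $p$-adic topologies agree on $R_{\IC}(P)$, this completion is $(\IZ\widehat{_p})^{p-1}$ and $\alpha_P^{\mathrm{red}}(\II_{\IC}(P))$ is $p$-adically dense. Because $T^1 = \cok(\varphi^0)$ from~\eqref{long_exact_K-cohomology_sequences_for_bub(Gamma)_BGamma)} is a finite discrete $p$-group, density forces the composite $\bigoplus \II_{\IC}(P) \to \bigoplus \widetilde{K}^0(BP) \twoheadrightarrow T^1$ to be surjective; hence $\im(\varphi^0) + \bigoplus \im(\alpha_P^{\mathrm{red}}) = \bigoplus \widetilde{K}^0(BP)$, which proves surjectivity of $\psi$.

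For assertion~\ref{the:equivariant_K-cohomology_of_eub(Gamma):T1}, any $(x,(r_P)) \in \ker\psi$ automatically satisfies $\rk(x) = \operatorname{aug}(r_P)$ for all $P$; writing $r_P = \rk(x)\cdot[1_P] + a_P$ with $a_P \in \II_{\IC}(P)$ identifies $K^0_\Gamma(\eub{\Gamma})$ with the pullback subgroup $\{(x,(a_P)) \in K^0(B\Gamma) \oplus \bigoplus_{(P)} \II_{\IC}(P) : \varphi^0_P(x) = \alpha_P^{\mathrm{red}}(a_P) \text{ for all } P\}$. Projection onto $\bigoplus \II_{\IC}(P)$ then has kernel $\ker\varphi^0 \cong K^0(\bub{\Gamma})$ by~\eqref{long_exact_K-cohomology_sequences_for_bub(Gamma)_BGamma)} and cokernel equal to the image of $\bigoplus \II_{\IC}(P)$ in $T^1$, which we have shown is all of $T^1$. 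Finally, assertion~\ref{the:equivariant_K-cohomology_of_eub(Gamma):Kast_explicite} is rank bookkeeping: the exact sequence of assertion~\ref{the:equivariant_K-cohomology_of_eub(Gamma):T1} exhibits $K^0_\Gamma(\eub{\Gamma})$ as an extension of a finite-index subgroup of $\IZ^{(p-1)p^k}$ by $\IZ^{\sum_l r_{2l}}$, which splits since both outer terms are free abelian, yielding total rank $(p-1)p^k + \sum_l r_{2l}$; the odd-degree computation follows from assertion~\ref{the:equivariant_K-cohomology_of_eub(Gamma):K1_isos} and Theorem~\ref{the:K-cohomology_of_BGamma_and_bub(Gamma)}~\ref{the:K-cohomology_of_BGamma_and_bub(Gamma):K(BGamma)}.
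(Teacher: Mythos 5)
Your proof is correct, and the core argument is the same as the paper's: the surjectivity you need is ultimately a density statement coming from the Atiyah--Segal completion map $R_{\IC}(P)\to K^0(BP)$ together with the finiteness of $T^1$. The organization differs in a mild but genuine way. The paper first establishes a general spliced long exact sequence (Lemma~\ref{lem:long_exact_sequence_for_equivariant_(co-)homology}) of the form $0 \to K^0(\bub\Gamma)\to K^0_\Gamma(\eub\Gamma)\to \bigoplus\overline R_{\IC}(P)\to K^1(\bub\Gamma)\to K^1_\Gamma(\eub\Gamma)\to 0$ (which it reuses in Sections~\ref{sec:Equivariant_K-homology}--\ref{sec:Equivariant_KO-homology}) and then compares it to the pushout sequence~\eqref{long_exact_K-cohomology_sequences_for_bub(Gamma)_BGamma)} via the five lemma, reducing to surjectivity of $\bigoplus R_{\IC}(P)\to \bigl(\bigoplus K^0(BP)\bigr)/p^l$ for all $l$. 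You instead work directly with the unspliced Mayer--Vietoris sequence from the $\Gamma$-pushout~\eqref{G-pushoutfor_EGamma_to_eunderbar_Gamma}, prove surjectivity of the middle map $\psi$ outright, and then extract assertion~\ref{the:equivariant_K-cohomology_of_eub(Gamma):T1} by identifying $K^0_\Gamma(\eub\Gamma)$ with the pullback/fiber-product subgroup; the splicing the paper does via its lemma you effectively perform inside this identification. Both routes land on the identical key step, so this buys you a somewhat more self-contained proof at the cost of not having the reusable Lemma~\ref{lem:long_exact_sequence_for_equivariant_(co-)homology} in hand for the $KO$ and homology analogues.

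Two small points worth tightening. First, the claim that ``the $\II_{\IC}(P)$-adic and $p$-adic topologies agree on $R_{\IC}(P)$'' is overstated: they agree on the augmentation ideal $\II_{\IC}(P)$ (which is all you use), not on all of $R_{\IC}(P)$, since the $\II$-adic topology is trivial on the rank summand $\IZ\subset R_{\IC}(P)$ --- this is why $K^0(B\Z/p)\cong\IZ\oplus(\IZ\widehat{_p})^{p-1}$ rather than $(\IZ\widehat{_p})^p$. Second, when you invoke density to force surjectivity onto $T^1$, it is cleaner to say explicitly that $T^1$ has exponent $p^l$ for some $l$, so the quotient map $\bigoplus\widetilde K^0(BP)\to T^1$ factors through reduction mod $p^l$ and surjectivity of $\bigoplus\II_{\IC}(P)\to\bigoplus\widetilde K^0(BP)/p^l$ suffices; that phrasing is exactly the paper's and avoids any topological subtleties about discrete quotients of direct sums.
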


In the sequel we will often use the following lemma.

\begin{lemma} \label{lem:long_exact_sequence_for_equivariant_(co-)homology}\
\begin{enumerate}

\item \label{lem:long_exact_sequence_for_equivariant_(co-)homology:cohomology}
Let $\calh_?^*$ be an equivariant cohomology theory in the sense
of~\cite[Section~1]{Lueck(2005c)}. Then there is a long exact sequence
\begin{multline*}
\cdots \to \calh^m(\bub{\Gamma}) 
\xrightarrow{\ind_{\Gamma \to 1}} \calh^m_{\Gamma}(\eub{\Gamma}) 
\xrightarrow{\varphi^m}
\bigoplus_{(P) \in \calp} \overline{\calh}^m_{P}(\pt) 
\\
\to  \calh^{m+1}(\bub{\Gamma}) 
\xrightarrow{\ind_{\Gamma \to 1}} \calh^{m+1}_{\Gamma}(\eub{\Gamma}) 
\to 
\cdots
\end{multline*}
where $\overline{\calh}^m_{P}(\pt)$ is the cokernel of the induction map
$\ind_{P \to 1} \colon \calh^m(\pt) \to \calh^m_{P}(\pt)$
and the map $\varphi^m$ is induced by the various
inclusions $P \to \Gamma$.

The map 
$$\ind_{\Gamma \to 1}[1/p] \colon  \calh^m(\bub{\Gamma})[1/p] 
\to \calh^m_{\Gamma}(\eub{\Gamma})[1/p]$$
is split injective.

\item \label{lem:long_exact_sequence_for_equivariant_(co-)homology:homology}
Let $\calh^?_*$ be an equivariant homology theory in the sense of~\cite[Section~1]{Lueck(2002b)}.
Then there is a long exact sequence
\begin{multline*}
\cdots \to \calh_{m+1}^{\Gamma}(\eub{\Gamma})  
\xrightarrow{\ind_{\Gamma \to 1}}
\calh_{m+1}(\bub{\Gamma})
\to 
\bigoplus_{(P) \in \calp} \widetilde{\calh}_m^{P}(\pt) 
\\
\xrightarrow{\varphi_m}
\calh_{m}^{\Gamma}(\eub{\Gamma})  
\xrightarrow{\ind_{\Gamma \to 1}}
\calh_{m}(\bub{\Gamma})
\to 
\cdots
\end{multline*}
where $\widetilde{\calh}_m^{P}(\pt)$ is the kernel of the induction map
$\ind_{P \to 1} \colon \calh_m^{P}(\pt) \to \calh_m(\pt)$
and the map $\varphi_m$ is induced by the various
inclusions $P \to \Gamma$.

The map 
$$\ind_{\Gamma \to 1}[1/p] \colon  \calh^{\Gamma}_m(\eub{\Gamma})[1/p]
\to \calh_m(\bub{\Gamma})[1/p]$$
is split surjective.

\end{enumerate}
\end{lemma}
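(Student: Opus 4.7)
The plan is to derive both long exact sequences from the cellular $\Gamma$-pushout~\eqref{G-pushoutfor_EGamma_to_eunderbar_Gamma} together with its quotient pushout~\eqref{pushout_for_BGamma_to_bunderbar(Gamma)}. Applying the equivariant cohomology theory $\calh^*_?$ to~\eqref{G-pushoutfor_EGamma_to_eunderbar_Gamma} gives a Mayer--Vietoris long exact sequence. The induction structure identifies $\calh^m_\Gamma(\Gamma/P) \cong \calh^m_P(\pt)$, and since $E\Gamma$ and each $\Gamma \times_P EP$ are free $\Gamma$-$CW$-complexes, Proposition~\ref{prop:quotient_iso} gives $\calh^m_\Gamma(E\Gamma) \cong \calh^m(B\Gamma)$ and $\calh^m_\Gamma(\Gamma \times_P EP) \cong \calh^m(BP)$. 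Applying the non-equivariant theory $\calh^*$ to the quotient pushout~\eqref{pushout_for_BGamma_to_bunderbar(Gamma)} produces a parallel six-term Mayer--Vietoris sequence with $\calh^m(\bub{\Gamma})$ in place of $\calh^m_\Gamma(\eub{\Gamma})$ and $\calh^m(\pt)$ in place of $\calh^m_P(\pt)$.

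The natural transformation from the non-equivariant to the equivariant theory fits these two Mayer--Vietoris sequences into a commutative ladder. The vertical maps on the $\calh^m(B\Gamma)$ and $\calh^m(BP)$ columns are identities, the vertical map on the $\bigoplus_{(P)} \calh^m(\pt) \to \bigoplus_{(P)} \calh^m_P(\pt)$ column is the direct sum of the induction maps $\ind_{P \to 1}$ and hence has cokernel $\bigoplus_{(P)} \overline{\calh}^m_P(\pt)$ by the very definition of $\overline{\calh}^m_P(\pt)$, and the leftmost vertical is $\ind_{\Gamma \to 1}$. Interpreting the ladder as a morphism of exact triangles and taking cofibers, the two identity columns contribute zero, so the cofiber of $\ind_{\Gamma \to 1}$ must coincide (up to degree shift) with $\bigoplus_{(P)} \overline{\calh}^m_P(\pt)$, yielding the desired six-term long exact sequence. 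An elementary diagram chase through the ladder produces the same conclusion without invoking triangulated categories.

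For the split injectivity of $\ind_{\Gamma \to 1}[1/p]$, I invoke the transfer construction associated to the inclusion $\{1\} \subset P$: the composition of $\ind_{P \to 1} \colon \calh^m(\pt) \to \calh^m_P(\pt)$ with the transfer $\calh^m_P(\pt) \to \calh^m(\pt)$ is multiplication by $|P| = p$, which becomes a unit after inverting $p$. Hence $\ind_{P \to 1}[1/p]$ is split injective for every $P \in \calp$, and combining these per-$P$ splittings with the identities on the $\calh^m(B\Gamma)$ column gives a retraction of the middle vertical of the ladder after inverting $p$; the five lemma then promotes this to a retraction of $\ind_{\Gamma \to 1}[1/p]$. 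The homology version is entirely dual: apply equivariant and non-equivariant homology to the same pair of pushouts, identify $\widetilde{\calh}_m^P(\pt)$ with the kernel of the induction map, and run the same transfer argument to obtain split surjectivity. The main obstacle is the bookkeeping in the Mayer--Vietoris comparison; once the identification of the cofibers, resp.\ fibers, as $\bigoplus_{(P)} \overline{\calh}^m_P(\pt)$, resp.\ $\bigoplus_{(P)} \widetilde{\calh}_m^P(\pt)$, is secured, the split (in)jectivity is formal.
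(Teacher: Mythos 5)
Your derivation of the two long exact sequences is essentially the paper's: apply the (equivariant resp.\ non-equivariant) theory to the $\Gamma$-pushout~\eqref{G-pushoutfor_EGamma_to_eunderbar_Gamma} and its quotient~\eqref{pushout_for_BGamma_to_bunderbar(Gamma)}, map one Mayer--Vietoris ladder to the other via $\ind_{\Gamma\to 1}$, identify the two free-$\Gamma$ columns, and splice. One small misattribution: the identifications $\calh^m_\Gamma(E\Gamma)\cong\calh^m(B\Gamma)$ and $\calh^m_\Gamma(\Gamma\times_P EP)\cong\calh^m(BP)$ do not come from Proposition~\ref{prop:quotient_iso} (which concerns $1/q$-local theories and quotients by finite groups); they are instances of the induction isomorphism for free proper $\Gamma$-actions, a built-in feature of an equivariant (co)homology theory, and this is how the paper justifies them.

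The split (in)jectivity after inverting $p$ is where your argument has a real gap. You invoke transfers $\calh^m_P(\pt)\to\calh^m(\pt)$ to split the middle vertical of the ladder and then assert that ``the five lemma promotes this to a retraction of $\ind_{\Gamma\to 1}[1/p]$.'' The five lemma transfers isomorphism (or mono/epi under suitable surrounding hypotheses); it does not produce retractions. To lift a retraction from one column of a ladder to another you would need the retraction to be compatible with the connecting homomorphisms, i.e.\ to be part of a morphism of long exact sequences going back, and nothing in your argument establishes this for the pointwise $P$-transfers. Indeed, already for a morphism of short exact sequences $(\alpha,\beta,\gamma)$ with $\alpha$ iso and $\gamma$ split mono, $\beta$ need not be split mono (take $0\to\Z/p^2\xrightarrow{\id}\Z/p^2\to 0\to 0$ mapping by $(\id,\cdot p,0)$ into $0\to\Z/p^2\xrightarrow{\cdot p}\Z/p^3\to\Z/p\to 0$). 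The paper avoids this entirely: it forms the commutative square comparing $\ind_{\Gamma\to 1}\colon\calh^m(\bub\Gamma)\to\calh^m_\Gamma(\eub\Gamma)$ with the pullback $\calh^m(\bub\Gamma)\to\calh^m(B\IZ^n)$ along $T^n\to\bub\Gamma$, notes that the latter is split injective after inverting $p$ by Proposition~\ref{prop:quotient_iso} and that $\calh^m(B\IZ^n)\cong\calh^m_\Gamma(\Gamma\times_{\IZ^n}E\IZ^n)$ is an isomorphism since $\Gamma$ acts freely there, and reads off a retraction of $\ind_{\Gamma\to 1}[1/p]$ by composing. You should replace your five-lemma step with this (or an equivalent) direct transfer argument for the finite quotient map $T^n\to\bub\Gamma$.
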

\begin{proof}~%
\ref{lem:long_exact_sequence_for_equivariant_(co-)homology:cohomology}
From the cellular $\Gamma$-pushout~\eqref{G-pushoutfor_EGamma_to_eunderbar_Gamma}
we obtain a long exact sequence
\begin{multline}
\cdots \to \calh^m_{\Gamma}(\eub{\Gamma}) 
\to
\calh^m_{\Gamma}(E\Gamma) \oplus \bigoplus_{(P) \in \calp} \calh^m_{\Gamma}(\Gamma/P)  
\\
\to \bigoplus_{(P) \in \calp} \calh^m_{\Gamma}(\Gamma \times_P EP) 
\to \calh^{m+1}_{\Gamma}(\eub{\Gamma}) 
\to \calh^{m+1}_{\Gamma}(E\Gamma) \oplus \bigoplus_{(P) \in \calp} \calh^{m+1}_{\Gamma}(\Gamma/P) 
 \to \cdots.
\label{K_Gamma-cohomology_Mayer_Vietoris_sequence}
\end{multline}
From the cellular pushout~\eqref{pushout_for_BGamma_to_bunderbar(Gamma)} 
we obtain the long 
exact sequence
\begin{multline}
\cdots \to \calh^m(\bub{\Gamma}) 
\to 
\calh^m(B\Gamma) \oplus \bigoplus_{(P) \in \calp} \calh^m(\pt)  
\\
\to \bigoplus_{(P) \in \calp} \calh^{m}(BP)
\to \calh^{m+1}(\bub\Gamma)  
\to \calh^{m+1}(B\Gamma)  
\oplus \bigoplus_{(P) \in \calp} \calh^{m+1}(\pt)
\to \cdots.
\label{K_cohomology_Mayer_Vietoris_sequence}
\end{multline}
Induction with the group homomorphism $ \Gamma \to 1$ yields
a map from the long exact sequence~\eqref{K_cohomology_Mayer_Vietoris_sequence}
to the long exact sequence~\eqref{K_Gamma-cohomology_Mayer_Vietoris_sequence}.
Recall that the induction homomorphism 
$\calh^m(\Gamma\backslash X) \to \calh^m_{\Gamma}(X)$
is an isomorphism if $\Gamma$ acts freely on the proper
$\Gamma$-$CW$-complex $X$. Therefore the maps
\begin{eqnarray*}
\bigoplus_{(P) \in \calp} \calh^{m}(BP) 
& \xrightarrow{\cong} &
\bigoplus_{(P) \in \calp} \calh^{m}_{\Gamma}(\Gamma \times_P EP);
\\
\calh^m(B\Gamma) 
& \xrightarrow{\cong} &
\calh^m_{\Gamma}(E\Gamma),
\end{eqnarray*}
are bijective. 
Hence one can splice the
long exact sequences~\eqref{K_Gamma-cohomology_Mayer_Vietoris_sequence}
and~\eqref{K_cohomology_Mayer_Vietoris_sequence} together
to obtain the desired long exact sequence, after noting the commutative diagram
$$
\xymatrix{\calh^m_\Gamma(\Gamma/P)  & \calh^m(\pt) \ar[l]_-{\ind_{\Gamma \to 1}}  \\
\calh^m_P(\pt)  \ar[u]^{\ind_{P \to \Gamma}}_{\cong}  & \calh^m(\pt) \ar[u]_{=}  \ar[l]_-{\ind_{P \to 1}}
}
$$

We have the following commutative diagram, where the vertical arrow
are given by induction with the group homomorphism $\Gamma \to 1$
$$
\xymatrix{
\calh^m(\bub{\Gamma})
\ar@{>->}[r] \ar[d] &
{\calh^m(B\IZ^n)} \ar[d]^{\cong}\\
{\calh^m_{\Gamma}(\eub{\Gamma})}
\ar[r]  &
{\calh^m_{\Gamma}(\Gamma \times_{\IZ^n} E\IZ^n)}
}
$$

The upper horizontal arrow is split injective after inverting $p$ by
Proposition~\ref{prop:quotient_iso}. 
The right vertical arrow is bijective since
$\Gamma$ acts freely on $\Gamma \times_{\IZ^n} E\IZ^n$. Hence 
$\calh^m(\bub{\Gamma}) \to \calh^m_{\Gamma}(\eub{\Gamma})$ is injective after inverting
$p$. 
\\[1mm]~%
\ref{lem:long_exact_sequence_for_equivariant_(co-)homology:homology}
The proof is analogous to the one of 
assertion~\ref{lem:long_exact_sequence_for_equivariant_(co-)homology:cohomology}.
This finishes the proof of Lemma~\ref{lem:long_exact_sequence_for_equivariant_(co-)homology}.
\end{proof}

\begin{proof}[Proof of Theorem~\ref{the:equivariant_K-cohomology_of_eub(Gamma)}]
 Recall that $K^0_{\Gamma}(\Gamma/P) \cong R_{\IC}(P)$ and
$K^1_{\Gamma}(\Gamma/P) \cong 0$.
Hence we obtain from Lemma~\ref{lem:long_exact_sequence_for_equivariant_(co-)homology}~%
\ref{lem:long_exact_sequence_for_equivariant_(co-)homology:cohomology}
the long exact sequence
\begin{multline}
0 \to K^0(\bub{\Gamma}) \to K^0_{\Gamma}(\eub{\Gamma}) \to
\bigoplus_{(P) \in \calp} \overline{R}_{\IC}(P)
\to K^1(\bub{\Gamma}) \to K^1_{\Gamma}(\eub{\Gamma}) \to 0,
\label{spliced_K_cohomology_sequence}
\end{multline}
where $\overline{R}_{\IC}(P)$ is the cokernel of the homomorphism
$R_{\IC}(1) \to R_{\IC}(P)$ given by restriction with $P \to 1$. 
Notice that the composite of the augmentation ideal $\II_{\IC}(P) \to
R_{\IC}(P)$ with the projection $R_{\IC}(P) \to \overline{R}_{\IC}(P)$
is an isomorphism  of finitely generated free abelian groups
\begin{eqnarray}
\II_{\IC}(P) &\xrightarrow{\cong} & \overline{R}_{\IC}(P)
\label{II_C_and_overlineR_C}
\end{eqnarray}
and that
$\II_{\IC}(P)$ is isomorphic to $\IZ^{p-1}$.


\ref{the:equivariant_K-cohomology_of_eub(Gamma):K1_isos}
We have already shown in Theorem~\ref{the:K-cohomology_of_BGamma_and_bub(Gamma)}~%
\ref{the:K-cohomology_of_BGamma_and_bub(Gamma):K1(BGamma)} 
that the map $K^1(B\Gamma)  \xrightarrow{\cong}  K^1(B\IZ^n_{\rho})^{\IZ/p}$
is bijective and that 
$K^1(B\Gamma) \cong \IZ^{\sum_{l \in \IZ} r_{2l+1}}$. Hence it remains to prove that
the composite
$$K^1_{\Gamma}(\eub{\Gamma}) \to K^1_{\Gamma}(E\Gamma)  \xrightarrow{\cong} K^{1}(B\Gamma)$$
is bijective.
We obtain from~\eqref{long_exact_K-cohomology_sequences_for_bub(Gamma)_BGamma)} 
and~\eqref{spliced_K_cohomology_sequence} the following commutative
diagram with exact rows
$$
\xymatrix{
\bigoplus_{(P) \in \calp} \overline{R}_{\IC}(P) \ar[r] \ar[d]
&
K^1(\bub{\Gamma})  \ar[r] \ar[d]_{\id}
&
K^1_{\Gamma}(\eub{\Gamma}) \ar[r] \ar[d]
& 
0
\\
\bigoplus_{(P) \in \calp} \widetilde{K}^{0}(BP) \ar[r]
&
K^{1}(\bub{\Gamma}) \ar[r]
&
K^{1}(B\Gamma) \ar[r]
&
0
}
$$
By the five lemma it suffices to show that the map
$$
\ker\left(K^{1}(\bub{\Gamma}) \to K^{1}_{\Gamma}(\eub{\Gamma})\right)
\to 
\ker\left(K^{1}(\bub{\Gamma}) \to K^{1}(B\Gamma)\right)
$$
is surjective. 
We conclude from Theorem~\ref{the:K-cohomology_of_BGamma_and_bub(Gamma)}~%
\ref{the:K-cohomology_of_BGamma_and_bub(Gamma):K1(bub(Gamma)_to_K1(BGamma)}
that the  kernel of
$K^{1}(\bub{\Gamma}) \to K^{1}(B\Gamma)$ is the  finite abelian $p$-group
$T^1$ appearing in Theorem~\ref{the:K-cohomology_of_BGamma_and_bub(Gamma)}~%
\ref{the:K-cohomology_of_BGamma_and_bub(Gamma):K1(bub(Gamma)}.
Hence it remains to show for every  integer $l > 0$
that the obvious composite
$$\bigoplus_{(P) \in \calp} R_{\IC}(P)
\to \bigoplus_{(P) \in \calp} K^{0}(BP) \to 
\left(\bigoplus_{(P) \in \calp} K^{0}(BP)\right)
\left/ 
p^l\cdot\left(\bigoplus_{(P) \in \calp} K^{0}(BP)\right)\right.$$
is surjective.
By the Atiyah-Segal Completion Theorem the map
$R_{\IC}(P) \to K^{0}(BP)$ can be identified with the map
$$\id \oplus i \colon \IZ \oplus I(\IZ/p) 
\to \IZ \oplus \bigl(I(\IZ/p) \otimes \IZ\widehat{_p}\bigr)$$
Hence it suffices to show that the composite
$$\IZ \to \IZ\widehat{_p} \to \IZ\widehat{_p}/p^l   \IZ\widehat{_p}$$
is surjective. This is true since the latter map can be identified with the canonical
epimorphism $\IZ \to \IZ/p^l$.
\\[1mm]~%
\ref{the:equivariant_K-cohomology_of_eub(Gamma):T1}
This follows from 
 Theorem~\ref{the:K-cohomology_of_BGamma_and_bub(Gamma)}~%
\ref{the:K-cohomology_of_BGamma_and_bub(Gamma):K1(bub(Gamma)_to_K1(BGamma)},
the long exact sequence~\eqref{spliced_K_cohomology_sequence}, the isomorphism~\eqref{II_C_and_overlineR_C}
and assertion~\ref{the:equivariant_K-cohomology_of_eub(Gamma):K1_isos}.
\\[1mm]~%
\ref{the:equivariant_K-cohomology_of_eub(Gamma):Kast_explicite}
We have shown  $K^0(\bub{\Gamma}) \cong \IZ^{\sum_{l \in \IZ} r_{2l}}$ in 
 Theorem~\ref{the:K-cohomology_of_BGamma_and_bub(Gamma)}%
~\ref{lem:preliminaries_about_Gamma_and_Zn_rho:order_of_calp}.
We have $\II(\IZ/p) \cong \IZ^{(p-1)/2}$. The order of $\calp$ is $p^k$ by
Lemma~\ref{lem:preliminaries_about_Gamma_and_Zn_rho}%
~\ref{lem:preliminaries_about_Gamma_and_Zn_rho:order_of_calp}.
Hence we conclude from assertion~\ref{the:equivariant_K-cohomology_of_eub(Gamma):T1}
$$K^0_{\Gamma}(\eub{\Gamma}) \cong \IZ^{(p-1)p^k + \sum_{l \in \IZ} r_{2l}}.$$
The computation of $K^1_{\Gamma}(\eub{\Gamma})$ follows
from Theorem~\ref{the:K-cohomology_of_BGamma_and_bub(Gamma)}~%
\ref{the:K-cohomology_of_BGamma_and_bub(Gamma):K1(BGamma)} 
and assertion~\ref{the:equivariant_K-cohomology_of_eub(Gamma):K1_isos}.
\end{proof}

\begin{remark}[Geometric interpretation of $T^1$]
\label{rem:geometric_interpretation_of_TO1}
The exact sequence appearing in Theorem~\ref{the:equivariant_K-cohomology_of_eub(Gamma)}~%
\ref{the:equivariant_K-cohomology_of_eub(Gamma):T1} 
has the following interpretation in terms of equivariant vector bundles.
Since $\Gamma$ is a crystallographic group, $\Gamma$ acts
properly on $\IR^n$ such that this action reduced to $\IZ^n$ is the free standard action
and $\IR^n$ is a model for $\eub{\Gamma}$.
Hence the quotient of $\IZ^n\backslash \IR^n$ is the standard $n$-torus $T^n$
together with a $\IZ/p$-action.  There is a bijection
$$\calp \xrightarrow{\cong} (T^n)^{\IZ/p}$$
coming from the fact that $(\IR^n)^P$ consists of exactly one point
for $(P) \in \calp$. In particular $(T^n)^{\IZ/p}$
consists of $p^k$ points (see Lemma~\ref{lem:preliminaries_about_Gamma_and_Zn_rho}%
~\ref{lem:preliminaries_about_Gamma_and_Zn_rho:fixed_set}.)
Hence for any complex $\IZ/p$-vector bundle $\xi$ 
we get a collection of complex $\IZ/p$-representations $\{\xi_x \mid x \in (T^n)^{\IZ/p}\}$
satisfying $\dim_{\IC}(\xi_x) = \dim_{\IC}(\xi_y) = \dim(\xi)$ for $x,y \in (T^n)^{\IZ/p}$. 
This yields a map
$$\beta \colon K_{\IZ/p}^0(T^n) \to \bigoplus_{P \in (P)} I_{\IC}(P).$$
sending the class of a $\IZ/p$-vector bundle $\xi$ to the collection
$\{[\xi_x] - \dim(\xi) \cdot [\IC] \mid x \in (T^n)^{\IZ/p}\}$.
Let 
$$\alpha \colon K^0\bigl((\IZ/p)\backslash T^n\bigr)  \to K_{\IZ/p}^0(T^n)$$
be the homomorphism coming from the pullback construction associated
to the projection $T^n \to (\IZ/p)\backslash T^n$.
We obtain
the exact sequence
$$0 \to K^0\bigl((\IZ/p)\backslash T^n\bigr) \xrightarrow{\alpha} 
K_{\IZ/p}^0(T^n) \xrightarrow{\beta}
\bigoplus_{(P) \in \calp} I_{\IC}(P)\to  T^1\to 0$$
which can be identified with exact sequence of 
Theorem~\ref{the:equivariant_K-cohomology_of_eub(Gamma)}~%
\ref{the:equivariant_K-cohomology_of_eub(Gamma):T1}. 

Thus the group $T^1$ is related to (stable version of) the question when a collection of
$\IZ/p$-representations $\{V_x \mid x \in (T^n)^{\IZ/p}\}$
with $\dim_{\IC}(V_x) = \dim_{\IC}(V_y)$ for $x,y \in (T^n)^{\IZ/p}$
can be realized as the fibers of a $\IZ/p$-vector bundle $\xi$ over $T^n$
at the points in $(T^n)^{\IZ/p}$. 

Moreover, a $\IZ/p$-vector bundle over $T^n$ is stably isomorphic to the pullback of
a vector bundle over $(\IZ/p)\backslash T^n$ if and only if
for every $x \in (T^n)^{\IZ/p}$ the $\IZ/p$-representation $\xi_x$ has trivial
$\IZ/p$-action.
\end{remark}


\typeout{------------   Section 8: Equivariant K-homology  ------------}

\section{Equivariant \texorpdfstring{$K$}{K}-homology}
\label{sec:Equivariant_K-homology}

In the sequel equivariant homology theory is to be understood in the sense
of~\cite[Section~1]{Lueck(2002b)}. Equivariant topological complex $K$-homology $K^?_*$ is an
example (see~\cite{Davis-Lueck(1998)},~\cite[Section~6]{Lueck-Reich(2005)}).
The construction there yields the same 
for proper $G$-$CW$-complexes as the construction
due to Kasparov~\cite{Kasparov(1988)}. It is two-periodic. 
For finite groups $G$ the group $K_m^G(\pt)$ is $R_{\IC}(G)$
for even $m$ and trivial for odd $m$.

We obtain from~\cite[Theorem~0.7]{Lueck(2002d)} using
Lemma~\ref{lem:preliminaries_about_Gamma_and_Zn_rho} an isomorphism
\begin{eqnarray*}
  K_m(\bub{\Gamma})\left[\frac{1}{p}\right] \oplus 
  \bigoplus_{(P)  \in \calp} 
  K_m(\pt) \otimes I_{\C}(P)\left[\frac{1}{p}\right]
  & \cong &
  K_m^{\Gamma}(\eub{\Gamma})\left[\frac{1}{p}\right]
\end{eqnarray*} 
and hence from Theorem~\ref{the:K-homology_of_BGamma_and_bub(Gamma)}
\begin{eqnarray}
  K_0^{\Gamma}(\eub{\Gamma})\left[\frac{1}{p}\right]
  & \cong &
  \bigl(\IZ[1/p]\bigr)^{(p-1)p^k + \sum_{l} r_{2l}}
  \\
  K_1^{\Gamma}(\eub{\Gamma})\left[\frac{1}{p}\right]
  & \cong &
  \bigl(\IZ[1/p]\bigr)^{\sum_{l} r_{2l+1}}.
\end{eqnarray} 
We want to get an integral computation.  

\begin{theorem}[Equivariant $K$-homology of $\eub{\Gamma}$]\
  \label{the:equivariant_K-homology_of_eub(Gamma)}
  \begin{enumerate}

  \item \label{the:equivariant_K-homology_of_eub(Gamma):explicit} We have
$$K_m^{\Gamma}(\eub{\Gamma}) \cong
\begin{cases}
  \IZ^{(p-1)p^k + \sum_{l \in \IZ} r_{2l}} & m \; \text{even};
  \\
  \IZ^{\sum_{l \in \IZ} r_{2l+1}} & m \; \text{odd}.
\end{cases}
$$

\item \label{the:equivariant_K-homology_of_eub(Gamma):EXT} There is a natural
  isomorphism
$$K^{\Gamma}_m(\eub{\Gamma})
\xrightarrow{\cong} \hom_{\IZ}\bigl(K_{\Gamma}^m(\eub{\Gamma}),\IZ).$$

\item \label{the:equivariant_K-homology_of_eub(Gamma):K_1} The map
  $K_{1}^{\Gamma}(\eub{\Gamma}) \to K_{1}(\bub{\Gamma})$ is an isomorphism.

  There is an exact sequence
$$0 \to \bigoplus_{(P) \in \calp} \widetilde{R}_{\IC}(P) 
\to K_{0}^{\Gamma}(\eub{\Gamma}) \to K_{0}(\bub{\Gamma}) \to 0,$$ where
$\widetilde{R}_{\IC}(P)$ is the kernel of the map $R_{\IC}(P) \to R_{\IC}(1)$
sending $[V]$ to $[\IC \otimes_{\IC P} V]$.  It splits after inverting $p$.

\end{enumerate}
\end{theorem}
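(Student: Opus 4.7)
The plan is to establish assertion~\ref{the:equivariant_K-homology_of_eub(Gamma):K_1} first, then derive~\ref{the:equivariant_K-homology_of_eub(Gamma):EXT} by a universal-coefficient style comparison, and finally read off~\ref{the:equivariant_K-homology_of_eub(Gamma):explicit} from~\ref{the:equivariant_K-homology_of_eub(Gamma):EXT} combined with Theorem~\ref{the:equivariant_K-cohomology_of_eub(Gamma)}~\ref{the:equivariant_K-cohomology_of_eub(Gamma):Kast_explicite}.

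First I would apply Lemma~\ref{lem:long_exact_sequence_for_equivariant_(co-)homology}~\ref{lem:long_exact_sequence_for_equivariant_(co-)homology:homology} to equivariant complex $K$-homology. For $P=\IZ/p$ one has $K_m^P(\pt)=R_{\IC}(P)$ for $m$ even and $0$ for $m$ odd, and the induction map to $K_m(\pt)$ is the augmentation $[V]\mapsto\dim_{\IC} V$; thus $\widetilde{K}_m^P(\pt)=\widetilde{R}_{\IC}(P)$ in even degrees and vanishes in odd degrees. Feeding this into the long exact sequence of the lemma collapses it into
$$0\to\bigoplus_{(P)\in\calp}\widetilde{R}_{\IC}(P)\to K_0^{\Gamma}(\eub{\Gamma})\to K_0(\bub{\Gamma})\to 0$$
together with an isomorphism $K_1^{\Gamma}(\eub{\Gamma})\xrightarrow{\cong}K_1(\bub{\Gamma})$, which is assertion~\ref{the:equivariant_K-homology_of_eub(Gamma):K_1}; the splitting after inverting $p$ is the last clause of Lemma~\ref{lem:long_exact_sequence_for_equivariant_(co-)homology}.

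Second, I would prove~\ref{the:equivariant_K-homology_of_eub(Gamma):EXT} by constructing a Kronecker pairing map $\Phi\colon K^{\Gamma}_m(X)\to\hom_{\IZ}(K_{\Gamma}^m(X),\IZ)$ for finite proper $\Gamma$-$CW$-complexes $X$, induced by the Kasparov index pairing $K^{\Gamma}_m(X)\otimes K_{\Gamma}^m(X)\to K^{\Gamma}_0(\pt)=\IZ$. Naturality and Mayer--Vietoris for the cellular $\Gamma$-pushout~\eqref{G-pushoutfor_EGamma_to_eunderbar_Gamma} let me check the isomorphism piece by piece: on $\Gamma/P$ the map $\Phi$ identifies with the character pairing $R_{\IC}(P)\to\hom_{\IZ}(R_{\IC}(P),\IZ)$, which is perfect since the irreducible characters form an orthonormal $\IZ$-basis; on $\Gamma\times_P EP$ it reduces to the usual Kronecker map appearing in the universal coefficient Theorem~\ref{the:Universal_coefficient_theorem_for_K-theory} for $BP$. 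Assembling these via a five-lemma comparison yields an equivariant universal coefficient sequence
$$0\to\Ext_{\IZ}(K_{\Gamma}^{m+1}(\eub{\Gamma}),\IZ)\to K^{\Gamma}_m(\eub{\Gamma})\xrightarrow{\Phi}\hom_{\IZ}(K_{\Gamma}^m(\eub{\Gamma}),\IZ)\to 0,$$
and by Theorem~\ref{the:equivariant_K-cohomology_of_eub(Gamma)}~\ref{the:equivariant_K-cohomology_of_eub(Gamma):Kast_explicite} the groups $K_{\Gamma}^{*}(\eub{\Gamma})$ are free abelian, so the $\Ext$-term vanishes and $\Phi$ is an isomorphism. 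Assertion~\ref{the:equivariant_K-homology_of_eub(Gamma):explicit} is then immediate: taking $\hom_{\IZ}(-,\IZ)$ of the ranks in Theorem~\ref{the:equivariant_K-cohomology_of_eub(Gamma)}~\ref{the:equivariant_K-cohomology_of_eub(Gamma):Kast_explicite} yields the claimed values of $K^{\Gamma}_m(\eub{\Gamma})$, and as a byproduct forces $K_0^{\Gamma}(\eub{\Gamma})$ in the short exact sequence of~\ref{the:equivariant_K-homology_of_eub(Gamma):K_1} to be torsion-free.

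The main obstacle will be the middle step: the non-equivariant UCT introduces nonzero $\Ext$-contributions on the individual pieces (for instance $K_1(BP)\cong(\IZ/p^{\infty})^{p-1}$ arises entirely from $\Ext_{\IZ}(K^0(BP),\IZ)$), and one must show that these assemble, via naturality of the Kasparov pairing on the pushout diagram and careful bookkeeping with the infinite-dimensional $B\Gamma$, into a single short exact sequence of the form above for $\eub{\Gamma}$, whose $\Ext$-piece is then killed by the freeness output of Theorem~\ref{the:equivariant_K-cohomology_of_eub(Gamma)}~\ref{the:equivariant_K-cohomology_of_eub(Gamma):Kast_explicite}.
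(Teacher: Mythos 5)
Your proposal reverses the paper's order (paper: (ii) $\Rightarrow$ (i) $\Rightarrow$ (iii); you: (iii) $\Rightarrow$ (ii) $\Rightarrow$ (i)), and while a version of your step for~\ref{the:equivariant_K-homology_of_eub(Gamma):K_1} can be made to work, your step for~\ref{the:equivariant_K-homology_of_eub(Gamma):EXT} has a genuine gap that you yourself flag and do not close.

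On~\ref{the:equivariant_K-homology_of_eub(Gamma):K_1}: you claim the long exact sequence from Lemma~\ref{lem:long_exact_sequence_for_equivariant_(co-)homology}~\ref{lem:long_exact_sequence_for_equivariant_(co-)homology:homology} ``collapses'' into the short exact sequence plus an isomorphism. That is not automatic: a priori the five-term sequence
$0\to K_1^{\Gamma}(\eub{\Gamma})\to K_1(\bub{\Gamma})\xrightarrow{\partial}\bigoplus_{(P)}\widetilde R_{\IC}(P)\to K_0^{\Gamma}(\eub{\Gamma})\to K_0(\bub{\Gamma})\to 0$
could have $\partial\neq 0$. The missing argument is: injectivity of the first map (from $\widetilde K_m^{\IZ/p}(\pt)=0$ for $m$ odd), plus the $[1/p]$-splitting of Lemma~\ref{lem:long_exact_sequence_for_equivariant_(co-)homology}, force the cokernel of $K_1^\Gamma(\eub\Gamma)\to K_1(\bub\Gamma)$ to be $p$-torsion; since this cokernel injects into the torsion-free group $\bigoplus\widetilde R_{\IC}(P)$, it vanishes, so $\partial=0$. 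You invoke the $[1/p]$-splitting only to get ``splits after inverting $p$'' but not to kill $\partial$, so as written the step is incomplete. (The paper instead gets $\partial=0$ from knowing, via~\ref{the:equivariant_K-homology_of_eub(Gamma):explicit}, that source and target of the injection are free abelian of the same rank.)

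The serious gap is~\ref{the:equivariant_K-homology_of_eub(Gamma):EXT}. Your idea of building a Kronecker pairing $\Phi$ and assembling it across the $\Gamma$-pushout~\eqref{G-pushoutfor_EGamma_to_eunderbar_Gamma} by a five-lemma comparison does not produce a UCT sequence for $\eub\Gamma$, for exactly the reason you name: the individual pieces carry nonzero $\Ext$ contributions (for instance $K_1(BP)\cong(\IZ/p^\infty)^{p-1}$ is pure $\Ext$), and there is no visible mechanism by which these cancel to leave a single short exact sequence with vanishing $\Ext$-term. ``Careful bookkeeping'' is not an argument. Note also that the pushout involves $E\Gamma$ and $EP$, which are infinite-dimensional, while the non-equivariant UCT of Theorem~\ref{the:Universal_coefficient_theorem_for_K-theory} that you want to feed into the comparison only holds in the homological direction for finite complexes, so the five-lemma diagram you need is not even available in the form you sketch. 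The idea that makes the paper's proof work, and which your proposal is missing, is the reduction: $\IZ^n$ acts freely on $\eub\Gamma$, so induction along $\Gamma\to\IZ/p$ identifies $K^\Gamma_*(\eub\Gamma)$ and $K_\Gamma^*(\eub\Gamma)$ with $K^{\IZ/p}_*(\IZ^n\backslash\eub\Gamma)$ and $K_{\IZ/p}^*(\IZ^n\backslash\eub\Gamma)$, and $\IZ^n\backslash\eub\Gamma$ is a \emph{finite} $\IZ/p$-$CW$-complex. One can then apply the equivariant UCT over $R_{\IC}(\IZ/p)$ (Theorem~\ref{the:Universal_coefficient_theorem_for_equivariant_K-theory}), and Lemma~\ref{lem:ext_RC(G)_versus_Ext_Z} converts the $\Ext_{R_{\IC}(\IZ/p)}$- and $\hom_{R_{\IC}(\IZ/p)}$-terms into $\Ext_{\IZ}$ and $\hom_{\IZ}$; the $\Ext$-term then dies because $K_\Gamma^*(\eub\Gamma)$ is free abelian by Theorem~\ref{the:equivariant_K-cohomology_of_eub(Gamma)}. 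Without this finite-complex reduction, your route to~\ref{the:equivariant_K-homology_of_eub(Gamma):EXT} does not go through, and consequently neither does your derivation of~\ref{the:equivariant_K-homology_of_eub(Gamma):explicit}.
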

Its proof needs some preparation.

\begin{lemma}
  \label{lem:ext_RC(G)_versus_Ext_Z}
  Let $G$ be a finite group.  Then there is an isomorphism of
  $R_{\IC}(G)$-modules
$$R_{\IC}(G) \xrightarrow{\cong} \hom_{\IZ}(R_{\IC}(G),\IZ)$$
which sends $[V]$ to the homomorphism $R_{\IC}(G) \to \IZ, \quad [W] \mapsto
\dim_{\IC}\bigl(\hom_{\IC G}(V,W)\bigr)$. Here $R_{\IC}(G)$ acts on 
$\hom_{\IZ}(R_{\IC}(G);\IZ)$ by $([V] \cdot
\phi)([W]) := \phi([V^*] \cdot [W])$.

In particular we get for any $R_{\IC}(G)$-module $M$ a natural isomorphism of
$R_{\IC}(G)$-modules
$$\Ext^i_{R_{\IC}(G)}(M,R_{\IC}(G)) \xrightarrow{\cong} \Ext^1_{\IZ}(M,\IZ)
\quad \text{for}\; i \ge 0.$$
\end{lemma}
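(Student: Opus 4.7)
The plan is to first establish the module isomorphism by dualizing the irreducible basis, and then to deduce the $\Ext$ statement by combining that identification with a Hom-tensor adjunction, exploiting the fact that $R_{\IC}(G)$ is finitely generated free over $\IZ$.

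For the first assertion, I would choose a complete list $V_1,\dots,V_r$ of pairwise non-isomorphic irreducible complex $G$-representations, so $\{[V_i]\}$ is a $\IZ$-basis of $R_{\IC}(G)$. Schur's lemma gives $\dim_{\IC}\hom_{\IC G}(V_i,V_j)=\delta_{ij}$, so the proposed map
$$\Phi\colon R_{\IC}(G)\longrightarrow \hom_{\IZ}(R_{\IC}(G),\IZ)$$
sends $[V_i]$ to the element of the $\IZ$-dual basis corresponding to $[V_i]$, and is therefore a $\IZ$-module isomorphism.

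To check that $\Phi$ is $R_{\IC}(G)$-linear with respect to the twisted action, I would compute, for complex $G$-representations $U,V,W$,
$$\Phi([U][V])([W])=\dim_{\IC}\hom_{\IC G}(U\otimes V,W),$$
$$\bigl([U]\cdot\Phi([V])\bigr)([W])=\Phi([V])([U^*][W])=\dim_{\IC}\hom_{\IC G}(V,U^*\otimes W),$$
and observe that the two sides agree by the tensor-hom adjunction $\hom_{\IC G}(U\otimes V,W)\cong\hom_{\IC G}(V,\hom_{\IC}(U,W))=\hom_{\IC G}(V,U^*\otimes W)$. This is the one non-routine step, and it explains why the action on $\hom_{\IZ}(R_{\IC}(G),\IZ)$ has to be twisted by $[V]\mapsto[V^*]$.

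For the $\Ext$ statement, let $P_\bullet\to M$ be a projective resolution of $M$ over $R_{\IC}(G)$. Since $R_{\IC}(G)\cong\IZ^r$ as $\IZ$-modules, any free $R_{\IC}(G)$-module is $\IZ$-free; a projective $R_{\IC}(G)$-module is a direct summand of a free one and is therefore $\IZ$-free as well (subgroups of free abelian groups are free). Hence $P_\bullet$ is simultaneously a $\IZ$-projective resolution of the underlying abelian group of $M$. Combining the identification $R_{\IC}(G)\cong\hom_{\IZ}(R_{\IC}(G),\IZ)$ just proved with the standard Hom-tensor adjunction
$$\hom_{R_{\IC}(G)}\bigl(-,\hom_{\IZ}(R_{\IC}(G),\IZ)\bigr)\cong\hom_{\IZ}\bigl(-\otimes_{R_{\IC}(G)}R_{\IC}(G),\IZ\bigr)=\hom_{\IZ}(-,\IZ),$$
and taking $i$-th cohomology of the complex $\hom_{R_{\IC}(G)}(P_\bullet,R_{\IC}(G))$ produces the desired natural isomorphism $\Ext^i_{R_{\IC}(G)}(M,R_{\IC}(G))\cong\Ext^i_{\IZ}(M,\IZ)$. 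There is no substantial obstacle beyond the linearity check above; everything else is formal.
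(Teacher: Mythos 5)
Your argument is correct in spirit and, unlike the paper (which simply cites Madsen [2.5, 2.10] for this lemma), self-contained: the Schur-lemma/dual-basis argument for $\Phi$, the tensor-hom check of $R_{\IC}(G)$-linearity, and the passage to $\Ext$ via an $R_{\IC}(G)$-projective resolution that is automatically a $\IZ$-projective resolution are all the right ingredients. (Note the displayed $\Ext^1_{\IZ}$ in the paper's statement is evidently a typo for $\Ext^i_{\IZ}$, which is what you prove; both sides vanish for $i\geq 2$ since $\IZ$ has global dimension one, so this is also internally consistent.)

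There is one small mismatch worth closing explicitly. Your $\Phi$ is $R_{\IC}(G)$-linear for the \emph{twisted} action $([V]\cdot\phi)([W])=\phi([V^*][W])$ on $\hom_{\IZ}(R_{\IC}(G),\IZ)$, but the tensor-hom adjunction you then invoke,
$$\hom_{R_{\IC}(G)}\bigl(-,\hom_{\IZ}(R_{\IC}(G),\IZ)\bigr)\cong\hom_{\IZ}(-,\IZ),$$
is valid for the \emph{standard} action $([V]\cdot\phi)([W])=\phi([V][W])$, the one coming from the $R_{\IC}(G)$-$\IZ$-bimodule structure of $R_{\IC}(G)$. As written, you cannot slot the isomorphism you proved directly into the adjunction, because it identifies $R_{\IC}(G)$ with the wrong module structure on the dual. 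The fix is short: the involution $[V]\mapsto[V^*]$ is a ring automorphism of $R_{\IC}(G)$, and precomposing a dual functional with it carries the twisted dual module isomorphically to the standard one. Equivalently, $\Psi([V])([W]):=\dim_{\IC}\hom_{\IC G}(V^*,W)$ is an $R_{\IC}(G)$-linear isomorphism onto the standard dual, by the very tensor-hom computation you already carried out. With $\Psi$ in place of $\Phi$ the $\Ext$ argument proceeds unchanged.
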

\begin{proof}
  See~\cite[2.5 and~2.10]{Madsen(1986)}.  
\end{proof}

\begin{theorem}[Universal coefficient theorem for equivariant K-theory]
  \label{the:Universal_coefficient_theorem_for_equivariant_K-theory}
  Let $G$ be a finite group and $X$ be a finite $G$-$CW$-complex. Then there are
  for $n \in \IZ$ natural exact sequences of $R_{\IC}(G)$-modules
 $$0 \to \Ext_{R_{\IC}(G)}\bigl(K^G_{n-1}(X),R_{\IC}(G)\bigr) \to
K_G^n(X) \to \hom_{R_{\IC}(G)}\bigl(K_n^G(X),R_{\IC}(G)\bigr) \to 0.$$ and
$$0 \to \Ext_{R_{\IC}(G)}\bigl(K_G^{n+1}(X),R_{\IC}(G)\bigr) \to
K_n^G(X) \to \hom_{R_{\IC}(G)}\bigl(K_G^n(X),R_{\IC}(G)\bigr) \to 0.$$
\end{theorem}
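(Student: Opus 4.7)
The plan is to adapt the classical strategy for Universal Coefficient Theorems in K-theory (as in \cite{Anderson(1964)} and \cite{Yosimura(1975)}) to the equivariant setting. The crucial input is Lemma~\ref{lem:ext_RC(G)_versus_Ext_Z}, which identifies $\Ext^i_{R_{\IC}(G)}(-,R_{\IC}(G))$ with $\Ext^i_{\IZ}(-,\IZ)$; in particular $R_{\IC}(G)$ has self-$\Ext$-dimension at most one, so applying $\hom_{R_{\IC}(G)}(-,R_{\IC}(G))$ to a short exact sequence of $R_{\IC}(G)$-modules yields a six-term exact sequence with only $\hom$ and $\Ext^1$ terms, paralleling the role of $\IZ$ in the non-equivariant setting.

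First I would construct the natural transformation
$$
\Phi\colon K_G^n(X) \to \hom_{R_{\IC}(G)}\bigl(K_n^G(X),R_{\IC}(G)\bigr)
$$
from the Kronecker pairing
$$
K_G^n(X)\otimes_{R_{\IC}(G)} K_n^G(X) \to K_G^0(\pt)=R_{\IC}(G),
$$
and likewise the companion map $\Psi\colon K_n^G(X)\to\hom_{R_{\IC}(G)}(K_G^n(X),R_{\IC}(G))$ to build the second sequence. Then I would prove exactness by induction on the number of equivariant cells of $X$. The base case is a single orbit $X=G/H$: here $K_G^n(G/H)\cong K_H^n(\pt)$ equals $R_{\IC}(H)$ for $n$ even and vanishes for $n$ odd, and similarly for $K_n^G(G/H)$. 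For $n$ odd, exactness reduces to $\Ext^1_{R_{\IC}(G)}(R_{\IC}(H),R_{\IC}(G))=0$, which holds by Lemma~\ref{lem:ext_RC(G)_versus_Ext_Z} because $R_{\IC}(H)$ is free abelian. For $n$ even, the claim becomes that $\Phi$ realizes the isomorphism $R_{\IC}(H)\cong \hom_{\IZ}(R_{\IC}(H),\IZ)\cong \hom_{R_{\IC}(G)}(R_{\IC}(H),R_{\IC}(G))$ of the same lemma (applied to $H$).

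For the inductive step, given a $G$-subcomplex $Y\subset X$ with one fewer equivariant cell, I would apply the five-lemma to the diagram comparing the long exact sequence of $K_G^*$ for the cofibration $Y\hookrightarrow X\to X/Y$ with the long exact sequence obtained by applying $\hom_{R_{\IC}(G)}(-,R_{\IC}(G))$ and $\Ext^1_{R_{\IC}(G)}(-,R_{\IC}(G))$ termwise to the long exact sequence of $K_*^G$. The vanishing of $\Ext^i_{R_{\IC}(G)}(-,R_{\IC}(G))$ for $i\ge 2$ ensures that this dualized sequence remains exact with only two non-trivial terms in each bidegree. The second exact sequence follows by a parallel argument using $\Psi$ (and finite generation of both $K_G^*(X)$ and $K_*^G(X)$ as $R_{\IC}(G)$-modules, itself established by the same cellular induction using that $R_{\IC}(H)$ is finitely generated over $R_{\IC}(G)$).

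The main obstacle will be verifying the naturality of $\Phi$ with respect to the connecting homomorphisms of both long exact sequences, so that the five-lemma applies; this amounts to the slant product commuting up to a sign with the boundary maps of the cofiber sequence, a standard but delicate check in the equivariant setting. Once this naturality is in hand, the rest is formal manipulation of the short exact sequences provided by Lemma~\ref{lem:ext_RC(G)_versus_Ext_Z}.
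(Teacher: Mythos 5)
The paper does not give a direct proof: the first sequence is simply cited to B\"okstedt's preprint, and the second sequence is then deduced from the first via equivariant $S$-duality for finite $G$-CW complexes (Madsen, Wirthm\"uller). You are therefore attempting to reprove the cited theorem from scratch, which is a legitimate ambition, but your inductive step has a genuine gap. You propose to ``apply the five-lemma to the diagram comparing the long exact sequence of $K_G^*$ for the cofibration $Y \hookrightarrow X \to X/Y$ with the long exact sequence obtained by applying $\hom_{R_{\IC}(G)}(-,R_{\IC}(G))$ and $\Ext^1_{R_{\IC}(G)}(-,R_{\IC}(G))$ termwise to the long exact sequence of $K_*^G$.'' But applying $\hom_R(-,R)$ and $\Ext^1_R(-,R)$ termwise to a long exact sequence does not produce a long exact sequence: $\hom_R(-,R)$ is only left exact, and there is no canonical way to splice the $\hom$ terms and the $\Ext^1$ terms of a three-term-periodic exact complex into a new exact complex. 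The vanishing of $\Ext^{\ge 2}$ gives you six-term exact sequences when you dualize a \emph{short} exact sequence, but the long exact sequence of a cofibration does not come with a preferred decomposition into short exact sequences, and the resulting diagram is not of the form to which the five-lemma applies. Moreover, the five-lemma proves that a map between two exact sequences is an isomorphism; it cannot by itself establish exactness of a candidate sequence, which is what you are trying to prove for $X$.

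The standard fix --- and essentially what B\"okstedt does --- is to introduce an auxiliary cohomology theory. Choose an injective resolution $0 \to R_{\IC}(G) \to I^0 \to I^1 \to 0$ (which exists of length $1$ precisely because of Lemma~\ref{lem:ext_RC(G)_versus_Ext_Z}), note that $X \mapsto \hom_{R_{\IC}(G)}(K_*^G(X),I^j)$ \emph{is} an equivariant cohomology theory since $I^j$ is injective, and define $M^*$ to be the theory fitting in the resulting fiber sequence (the equivariant analogue of Anderson's and Yosimura's construction). Then $M^*$ carries the UCT short exact sequence by construction, and a natural transformation $K_G^* \to M^*$ (lifting your pairing $\Phi$) can be compared with $K_G^*$ by exactly the cellular induction and five-lemma argument you outline --- your verification on orbits $G/H$ is then genuinely the base case. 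Constructing this transformation at the level of $G$-spectra is the nontrivial content of the cited reference. A parallel issue arises for your map $\Psi$ and the second sequence: the cleanest route there is the one the paper takes, namely finite equivariant $S$-duality, which transports the first UCT to the second; your sketch of a separate cellular argument would again require an auxiliary homology theory built from a projective resolution of $R_{\IC}(G)$.
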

\begin{proof}
  The first sequence is proved in~\cite{Boekstedt(1981)}.  The second sequence
  follows from the first by equivariant S-duality (see~\cite{Madsen(1986)}, \cite{Wirthmueller(1975)}).
\end{proof}

\begin{proof}[Proof of Theorem~\ref{the:equivariant_K-homology_of_eub(Gamma)}]%
~\ref{the:equivariant_K-homology_of_eub(Gamma):EXT} Since $\IZ^n$ acts freely
  on $\eub{\Gamma}$, induction with $\Gamma \to \IZ/p$ induces isomorphisms
  \begin{eqnarray*}
    K^{\Gamma}_n(\eub{\Gamma}) 
    & \xrightarrow{\cong} & 
    K_n^{\IZ/p}(\IZ^n\backslash \eub{\Gamma});
    \\
    K_{\IZ/p}^n(\IZ^n\backslash \eub{\Gamma}) 
    & \xrightarrow{\cong} & 
    K_{\Gamma}^n(\eub{\Gamma}).
  \end{eqnarray*} 
  Since $\IZ^n\backslash \eub{\Gamma}$ is a finite $\IZ/p$-$CW$-complex, we
  obtain from Lemma~\ref{lem:ext_RC(G)_versus_Ext_Z} and
  Theorem~\ref{the:Universal_coefficient_theorem_for_equivariant_K-theory} the
  exact sequence of $R_{\IC}(\IZ/p)$-modules
$$0 \to \Ext_{\IZ}^1\bigl(K_{\IZ/p}^{n+1}(\IZ^n\backslash \eub{\Gamma}),\IZ\bigr) 
\to K_n^{\IZ/p}(\IZ^n\backslash \eub{\Gamma}) \to
\hom_{\IZ}\bigl(K_{\IZ/p}^n(\IZ^n\backslash \eub{\Gamma}),\IZ) \to 0.$$ 
(Another construction of the sequence above will be given in~\cite{Joachim-Lueck(2010)}.)
Hence we get an exact sequence of $R_{\IC}(\IZ/p)$-modules (see
also~\cite[Proposition~2.8]{Madsen(1986)})
$$0 \to \Ext_{\IZ}^1\bigl(K_{\Gamma}^{n+1}(\eub{\Gamma}),\IZ\bigr) 
\to K^{\Gamma}_n(\eub{\Gamma}) \to
\hom_{\IZ}\bigl(K_{\Gamma}^n(\eub{\Gamma}),\IZ) \to 0.$$ Since
$K_{\Gamma}^{n+1}(\eub{\Gamma})$ is a finitely generated free abelian group for
all $n \in \IZ$ by Theorem~\ref{the:equivariant_K-cohomology_of_eub(Gamma)}, we
obtain for $n \in \IZ$ an isomorphism of $R_{\IC}(\IZ/p)$-modules
$$K^{\Gamma}_n(\eub{\Gamma})
\xrightarrow{\cong} \hom_{\IZ}\bigl(K_{\Gamma}^n(\eub{\Gamma}),\IZ)$$%
\ref{the:equivariant_K-homology_of_eub(Gamma):explicit} Apply
Theorem~\ref{the:equivariant_K-cohomology_of_eub(Gamma)}~%
\ref{the:equivariant_K-cohomology_of_eub(Gamma):Kast_explicite} and
assertion~\ref{the:equivariant_K-homology_of_eub(Gamma):EXT} to get the concrete
identification of $K^{\Gamma}_n(\eub{\Gamma})$.  \\[1mm]~%
\ref{the:equivariant_K-homology_of_eub(Gamma):K_1} From
Lemma~\ref{lem:long_exact_sequence_for_equivariant_(co-)homology}~%
\ref{lem:long_exact_sequence_for_equivariant_(co-)homology:homology} we obtain a
long exact sequence
\begin{multline}
  0 \to K_{1}^{\Gamma}(\eub{\Gamma}) \to K_{1}(\bub{\Gamma}) \to \bigoplus_{(P)
    \in \calp} \widetilde{K}_{0}^{\IZ/p}(\pt) \to K_{0}^{\Gamma}(\eub{\Gamma})
  \to K_{0}(\bub{\Gamma}) \to 0.
  \label{spliced_K_homology_sequence}
\end{multline}
where $\widetilde{K}_{0}^{\IZ/p}(\pt)$ is the kernel of the map
$K_{0}^{\IZ/p}(\pt) \to K_{0}(\pt)$ coming from induction with $\IZ/p \to 1$.
Since $K_{1}^{\Gamma}(\eub{\Gamma})$ and $K_{1}(\bub{\Gamma})$ are finitely
generated free abelian groups of the same rank by
assertion~\ref{the:equivariant_K-cohomology_of_eub(Gamma):Kast_explicite} and
Theorem~\ref{the:K-homology_of_BGamma_and_bub(Gamma)}~%
\ref{the:K-homology_of_BGamma_and_bub(Gamma):K1(bub(Gamma)} and 
$\bigoplus_{(P)  \in \calp} \widetilde{K}_{0}^{\IZ/p}(\pt)$ is torsion free, the map
$K_{1}^{\Gamma}(\eub{\Gamma}) \to K_{1}(\bub{\Gamma})$ is bijective and we get a
short exact sequence
$$0 \to \bigoplus_{(P) \in \calp} \widetilde{K}_{0}^{\IZ/p}(\pt) 
\to K_{0}^{\Gamma}(\eub{\Gamma}) \to K_{0}(\bub{\Gamma}) \to 0.$$

\end{proof}


\typeout{------------   Section 9: Equivariant KO-cohomology  ------------}

\section{Equivariant \texorpdfstring{$KO$}{}-cohomology}
\label{sec:Equivariant_KO-cohomology}

Recall that equivariant topological real $KO$-theory $KO_?^*$ is an
equivariant cohomology theory in the sense 
of~\cite[Section~1]{Lueck(2005c)}. It is $8$-periodic.
Recall also that equivariant topological real $K$-homology $KO^?_*$ is an
equivariant homology theory in the sense 
of~\cite[Section~1]{Lueck(2002b)}. It is $8$-periodic.
  
We first give some information about $KO^G_m(\pt)$ and
$KO_G^m(\pt)$ for finite $G$. We have $KO_m^G(\pt) = KO^{-m}_G(\pt)$ 
If $G$ is a finite group, then we get for $m \in \IZ$
$$KO^{-m}_G(\pt) \cong KO_m^G(\pt) \cong K_m(\IR G)$$
where $K_m(\IR G)$ is the topological $K$-theory of the real group $C^*$-algebra
$\IR G$. Let $\{V_i \mid i = 0,1,2, \ldots , r\}$ be a complete set of representatives 
for the $\IR G$-isomorphism classes of irreducible real $G$-representations.
By Schur's Lemma the endomorphism ring
$D_i = \Endo_{\IR G}(V_i)$ is a skewfield over $\IR$ and hence
isomorphic to $\IR$, $\IC$ or $\IH$. There are positive integers $k_i$ for
$i \in \{0,1,\ldots , r\}$ such that we obtain a splitting
$$\IR G \cong \prod_{i = 0}^r M_{k_i}(D_i).$$
Since topological $K$-theory is compatible with products, by Morita equivalence
we obtain for $m \in \IZ$ an isomorphism
\begin{eqnarray}
K_m(\IR G) & \cong & \prod_{i = 1}^r K_m(D_i)
\label{computation_K_m(RG)}
\end{eqnarray}
The real $K$-theory of the building blocks are given by $KO_m(\R) = KO_m(\pt)$, $KO_m(\C) \linebreak
= K_m(\pt)$, and $KO_m(\IH) = KO_{m+4}(\pt)$.
If $G = \IZ/p$ for an odd prime $p$ and we take for $V_0$ the trivial
real $\IZ/p$-representation $\IR$, 
then $r = (p-1)/2$, $D_0 = \IR$ and $D_i = \IC$ for $i \in \{1,2,  \ldots (p-1)/2\}$.
This implies
\begin{eqnarray}
KO_m^{\IZ/p}(\pt) & \cong & KO_m(\pt) \oplus K_m(\pt)^{(p-1)/2}.
\label{KO_mZ/p(pt)}
\\
KO^m_{\IZ/p}(\pt) & \cong & KO_{-m}(\pt) \oplus K_{-m}(\pt)^{(p-1)/2}.
\label{KOn_Z/p(pt)}
\end{eqnarray}
Let $\widetilde{KO}_m^{\IZ/p}(\pt)$ be the kernel of the map
$KO_m^{\IZ/p}(\pt) \to KO_m(\pt)$ given by induction with $\IZ/p \to 1$.
This corresponds under the isomorphism~\eqref{KO_mZ/p(pt)} to the obvious projection
of  $KO_{m}(\pt) \oplus K_{m}(\pt)^{(p-1)/2}$ onto $KO_{m}(\pt)$. 
Let $\overline{KO}^m_{\IZ/p}(\pt)$ be the cokernel of the map
$KO^m(\pt) \to KO^m_{\IZ/p}(\pt)$ given by induction with $\IZ/p \to 1$.
This corresponds under the isomorphism~\eqref{KOn_Z/p(pt)} to the obvious inclusion
of $KO_{-m}(\pt)$ into $KO_{-m}(\pt) \oplus KO_{-m}(\pt)^{(p-1)/2}$. 
Hence we get
\begin{eqnarray}
\widetilde{KO}_m^{\IZ/p}(\pt) & \cong & K_m(\pt)^{(p-1)/2};
\label{widetildeKO_homology_Z/p(pt)}
\\
\overline{KO}^m_{\IZ/p}(\pt) & \cong & K_{-m}(\pt)^{(p-1)/2};
\label{widetildeKO_cohomology_Z/p(pt)}
\end{eqnarray}
This implies
\begin{eqnarray}
&\widetilde{KO}_m^{\IZ/p}(\pt) \cong \overline{KO}^m_{\IZ/p}(\pt)
\cong 
\begin{cases}
\IZ^{(p-1)/2} & m\; \text{even};
\\
0 & m \; \text{odd}.
\end{cases}
\label{widetildeKOZ/p(pt)}
\end{eqnarray}

We conclude from~\cite[Theorem~5.2]{Lueck(2005c)} using
Lemma~\ref{lem:preliminaries_about_Gamma_and_Zn_rho}~%
\ref{lem:preliminaries_about_Gamma_and_Zn_rho:ideals}
for $m \in \IZ$ 
\begin{eqnarray*}
KO^{2m}_{\Gamma}(\eub{\Gamma}) \otimes \IQ & \cong &
\IQ^{p^k(p-1)/2 + \sum_{l \in \IZ} r_{2m+4l}};
\\
KO^{2m+1}_{\Gamma}(\eub{\Gamma}) \otimes \IQ & \cong &
\IQ^{\sum_{l \in \IZ} r_{2m+1+4l}}.
\end{eqnarray*}
Again we seek an integral computation.

\begin{theorem}[Equivariant KO-cohomology]
\label{the:equivariant_KO-cohomology}
Let $p$ be an odd prime and let $m$ be any integer.
\begin{enumerate}

\item \label{the:equivariant_KO-cohomology:explicite}
$$
KO^{m}_{\Gamma}(\eub{\Gamma}) \cong
\begin{cases}
\IZ^{p^k(p-1)/2} \oplus \bigoplus_{i \in \IZ} KO^{m-i}(\pt)^{r_i} & m \: \text{even} \\
\bigoplus_{i \in \IZ} KO^{m-i}(\pt)^{r_i} & m \: \text{odd}.
\end{cases}
$$

\item \label{the:equivariant_KO-cohomology_some_isos}
If $TO^{2m+1}$ is the finite abelian $p$-group appearing in
Theorem~\ref{the:KO-cohomology_of_BGamma_and_bub(Gamma)}~%
\ref{the:KO-cohomology_of_BGamma_and_bub(Gamma):K12m_plus_1(bub(Gamma)},
then there is an exact sequence
$$0 \to KO^{2m}(\bub\Gamma) \to KO^{2m}_{\Gamma}(\eub{\Gamma}) \to
\bigoplus_{(P) \in \calp}  \overline{KO}^{2m}_{\IZ/p}(\pt) \to TO^{2m+1} \to 0.$$

\item \label{the:equivariant_KO-cohomology:some_isos}
The canonical maps
\begin{eqnarray*}
KO^{2m+1}_{\Gamma}(\eub{\Gamma}) & \xrightarrow{\cong} & KO^{2m+1}(B\Gamma);
\\
KO^{2m+1}(B\Gamma)  & \xrightarrow{\cong} & KO^{2m+1}(B\IZ^n_{\rho})^{\IZ/p},
\end{eqnarray*}
are isomorphisms.

\end{enumerate}
\end{theorem}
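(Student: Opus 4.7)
The plan is to parallel the proof of Theorem~\ref{the:equivariant_K-cohomology_of_eub(Gamma)}, substituting real $K$-theory for complex and using the Atiyah-Segal identification~\eqref{reduced_KO-cohomology_of_BP} together with the identification~\eqref{widetildeKOZ/p(pt)} of $\overline{KO}^m_{\IZ/p}(\pt)$. First I would apply Lemma~\ref{lem:long_exact_sequence_for_equivariant_(co-)homology}~\ref{lem:long_exact_sequence_for_equivariant_(co-)homology:cohomology} to $KO^*_?$, producing
\begin{multline*}
  \cdots \to \bigoplus_{(P) \in \calp} \overline{KO}^{m-1}_{P}(\pt) \to KO^m(\bub{\Gamma}) \to KO^m_{\Gamma}(\eub{\Gamma}) \\
  \to \bigoplus_{(P) \in \calp} \overline{KO}^m_{P}(\pt) \to KO^{m+1}(\bub{\Gamma}) \to \cdots.
\end{multline*}
By~\eqref{widetildeKOZ/p(pt)} and Lemma~\ref{lem:preliminaries_about_Gamma_and_Zn_rho}~\ref{lem:preliminaries_about_Gamma_and_Zn_rho:order_of_calp} the sum $\bigoplus_{(P)} \overline{KO}^m_{P}(\pt)$ is isomorphic to $\IZ^{p^k(p-1)/2}$ for $m$ even and vanishes for $m$ odd, so all nontrivial connecting behavior is concentrated in the odd-to-even transitions.

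Next I would establish assertion~\ref{the:equivariant_KO-cohomology:some_isos}. The second isomorphism is Theorem~\ref{the:KO-cohomology_of_BGamma_and_bub(Gamma)}~\ref{the:KO-cohomology_of_BGamma_and_bub(Gamma):KO2m_plus_1(BGamma)}. For the first, the vanishing of $\overline{KO}^{2m+1}_P(\pt)$ identifies $KO^{2m+1}_\Gamma(\eub{\Gamma})$ with the cokernel of a connecting map $\delta^{2m}\colon \bigoplus_{(P)} \overline{KO}^{2m}_P(\pt) \to KO^{2m+1}(\bub{\Gamma})$. On the other hand, the cellular pushout~\eqref{pushout_for_BGamma_to_bunderbar(Gamma)} identifies $KO^{2m+1}(B\Gamma)$ with the cokernel of $\delta'^{2m}\colon \bigoplus_{(P)} \widetilde{KO}^{2m}(BP) \to KO^{2m+1}(\bub{\Gamma})$, whose image equals $TO^{2m+1}$ by Theorem~\ref{the:KO-cohomology_of_BGamma_and_bub(Gamma)}~\ref{the:KO-cohomology_of_BGamma_and_bub(Gamma):KO_odd(BGamma)_to_KO_odd(bub(Gamma))}. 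In the natural commutative square comparing $\delta^{2m}$ and $\delta'^{2m}$, Atiyah-Segal~\eqref{reduced_KO-cohomology_of_BP} realizes the left vertical map as the canonical inclusion $\IZ^{(p-1)/2} \hookrightarrow (\IZ\widehat{_p})^{(p-1)/2}$ on each summand. This yields $\im(\delta^{2m}) \subseteq TO^{2m+1}$; for the reverse inclusion, $TO^{2m+1}$ has $p$-power exponent $p^l$ and $\IZ \to \IZ\widehat{_p}/p^l\IZ\widehat{_p} \cong \IZ/p^l$ is onto, exactly as in the proof of Theorem~\ref{the:equivariant_K-cohomology_of_eub(Gamma)}~\ref{the:equivariant_K-cohomology_of_eub(Gamma):K1_isos}.

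With assertion~\ref{the:equivariant_KO-cohomology:some_isos} in hand, assertion~\ref{the:equivariant_KO-cohomology_some_isos} is immediate: the long exact sequence restricted between positions $2m$ and $2m+1$ gives precisely the desired four-term exact sequence with right end $TO^{2m+1} = \im(\delta^{2m})$. For assertion~\ref{the:equivariant_KO-cohomology:explicite}, the odd case combines the previous step with Lemma~\ref{lem:Leray-Serre_ss_for_KO(BGamma)}~\ref{lem:Leray-Serre_ss_for_KO(BGamma):fixed_set}. In the even case, the short exact sequence
$$0 \to KO^{2m}(\bub{\Gamma}) \to KO^{2m}_\Gamma(\eub{\Gamma}) \to \ker(\delta^{2m}) \to 0$$
splits because $\ker(\delta^{2m})$ is a subgroup of the free abelian group $\bigoplus_{(P)} \overline{KO}^{2m}_P(\pt) \cong \IZ^{p^k(p-1)/2}$, hence free, and of full rank because the cokernel $TO^{2m+1}$ of $\delta^{2m}$ is finite. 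Combined with Theorem~\ref{the:KO-cohomology_of_BGamma_and_bub(Gamma)}~\ref{the:KO-cohomology_of_BGamma_and_bub(Gamma):KO2m(bub(Gamma)}, this yields the explicit formula.

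The main obstacle is the equality $\im(\delta^{2m}) = TO^{2m+1}$ in assertion~\ref{the:equivariant_KO-cohomology:some_isos}: the inclusion $\subseteq$ is formal from functoriality, but equality requires both the Atiyah-Segal completion identification and the density of $\IZ$ in $\IZ\widehat{_p}$ modulo every power of $p$. This is what makes the completion theorem indispensable even though the final answers are stated over the integers.
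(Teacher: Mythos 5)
Your proof is correct, but for assertion~\ref{the:equivariant_KO-cohomology:some_isos} you take a genuinely different route from the paper. You directly parallel the argument of Theorem~\ref{the:equivariant_K-cohomology_of_eub(Gamma)}~\ref{the:equivariant_K-cohomology_of_eub(Gamma):K1_isos}: compare the boundary maps $\delta^{2m}$ and $\delta'^{2m}$ in the natural commutative square, identify the left vertical map via the Atiyah--Segal completion theorem as the component-wise inclusion $\IZ^{(p-1)/2} \hookrightarrow (\IZ\widehat{_p})^{(p-1)/2}$, and use the surjectivity of $\IZ \to \IZ\widehat{_p}/p^l\IZ\widehat{_p}$ to upgrade $\im(\delta^{2m}) \subseteq TO^{2m+1}$ to equality. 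The paper instead takes a shortcut: having already established (from Theorem~\ref{the:KO-cohomology_of_BGamma_and_bub(Gamma)}~\ref{the:KO-cohomology_of_BGamma_and_bub(Gamma):KO_odd(BGamma)_to_KO_odd(bub(Gamma))} and the spliced sequence) that $\ker\bigl(KO^{2m+1}_\Gamma(\eub{\Gamma}) \to KO^{2m+1}(B\Gamma)\bigr)$ is a finite abelian $p$-group, it feeds this through the complexification--realification diagram $KO^* \to K^* \to KO^*$, whose horizontal composite is $2\cdot\id$ and whose middle vertical arrow is an isomorphism by the already-proved complex case Theorem~\ref{the:equivariant_K-cohomology_of_eub(Gamma)}; this forces the kernel to have exponent $2$, hence to vanish since $p$ is odd. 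The paper's argument thus avoids re-running the completion-plus-density computation in the real setting by leveraging the complex result, while yours is self-contained and makes the structural parallel with Theorem~\ref{the:equivariant_K-cohomology_of_eub(Gamma)} explicit. The derivations of assertions~\ref{the:equivariant_KO-cohomology:explicite} and~\ref{the:equivariant_KO-cohomology_some_isos} from assertion~\ref{the:equivariant_KO-cohomology:some_isos} are essentially the same in both proofs (though you prove (ii) before (i), the paper the reverse), relying on the freeness of $\ker(\delta^{2m})$ to split the even-degree sequence.
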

\begin{proof}~%
\ref{the:equivariant_KO-cohomology:some_isos}
Lemma~\ref{lem:long_exact_sequence_for_equivariant_(co-)homology}~%
\ref{lem:long_exact_sequence_for_equivariant_(co-)homology:cohomology} 
together with~\eqref{widetildeKOZ/p(pt)} implies
that there is a  long exact sequence
\begin{multline}
0 \to KO^{2m}(\bub{\Gamma}) \to KO^{2m}_{\Gamma}(\eub{\Gamma}) \to
\bigoplus_{(P) \in \calp}  \overline{KO}^{2m}_{\IZ/p}(\pt)
\\
\to KO^{2m+1}(\bub{\Gamma}) \to KO^{2m+1}_{\Gamma}(\eub{\Gamma}) \to 0,
\label{spliced_KO_cohomology_sequence}
\end{multline}
and that the kernel of the epimorphism
$KO^{2m+1}(\bub{\Gamma}) \to KO^{2m+1}_{\Gamma}(\eub{\Gamma})$ is a finite abelian
$p$-group.

For $m \in \IZ$ the composite
$$KO^{2m+1}(\bub{\Gamma}) \xrightarrow{\alpha} KO^{2m+1}_{\Gamma}(\eub{\Gamma}) \xrightarrow{\beta} KO^{2m+1}(B\Gamma)$$
is surjective and has a finite abelian $p$-group as kernel by
Theorem~\ref{the:KO-cohomology_of_BGamma_and_bub(Gamma)}~%
\ref{the:KO-cohomology_of_BGamma_and_bub(Gamma):KO_odd(BGamma)_to_KO_odd(bub(Gamma))}.
Hence the map
$\beta$
is surjective for all $m \in \IZ$. 
Since $\alpha$ is surjective
by~\eqref{spliced_KO_cohomology_sequence}, 
the map $\text{ker }(\beta \circ \alpha) \to \text{ker }(\beta)$ is surjective and hence the kernel of $\beta$ is a
finite abelian $p$-group.

The following diagram commutes
$$
\xymatrix{KO^{2m+1}_{\Gamma}(\eub{\Gamma}) \ar[r] \ar[d] \ar@/^5mm/[rr]^-{2 \cdot \id}&
K^{2m+1}_{\Gamma}(\eub{\Gamma}) \ar[r] \ar[d]^-{\cong} &
KO^{2m+1}_{\Gamma}(\eub{\Gamma})  \ar[d] 
\\
KO^{2m+1}(B\Gamma) \ar[r] \ar@/_5mm/[rr]_-{2 \cdot \id} &
K^{2m+1}(B\Gamma) \ar[r]  &
KO^{2m+1}(B\Gamma) 
}
$$
where the left horizontal maps are given by
induction with $\IR \to \IC$, the right horizontal maps by restriction with
$\IR \to \IC$ and the middle vertical arrow is an isomorphism by
Theorem~\ref{the:equivariant_K-cohomology_of_eub(Gamma)}.
Hence the kernel of the epimorphism
$KO^{2m+1}_{\Gamma}(\eub{\Gamma}) \to KO^{2m+1}(B\Gamma)$
is an abelian group of exponent $2$. 
We have already shown that its kernel is a finite abelian
$p$-group. Since $p$ is odd, we conclude that
$$KO^{2m+1}_{\Gamma}(\eub{\Gamma}) \xrightarrow{\cong} KO^{2m+1}(B\Gamma)$$
is an isomorphism. 

The bijectivity of 
$KO^{2m+1}(B\Gamma)  \xrightarrow{\cong} KO^{2m+1}(B\IZ^n_{\rho})^{\IZ/p}$
has already been proved in
Theorem~\ref{the:KO-cohomology_of_BGamma_and_bub(Gamma)}~%
\ref{the:KO-cohomology_of_BGamma_and_bub(Gamma):KO2m_plus_1(BGamma)}.
\\[1mm]~%
\ref{the:equivariant_KO-cohomology:explicite}
Since kernel of the epimorphism
$KO^{2m+1}(\bub{\Gamma}) \to KO^{2m+1}_{\Gamma}(\eub{\Gamma})$ is a finite abelian
$p$-group and  $\bigoplus_{(P) \in \calp}  \overline{KO}^{2m}_{\IZ/p}(\pt)$ is 
isomorphic to $\IZ^{p^k(p-1)/2}$ by
Lemma~\ref{lem:preliminaries_about_Gamma_and_Zn_rho}~%
\ref{lem:preliminaries_about_Gamma_and_Zn_rho:order_of_calp} 
and by~\eqref{widetildeKOZ/p(pt)}, we conclude from the exact 
sequence~\eqref{spliced_KO_cohomology_sequence} that
$$KO^{2m}_{\Gamma}(\eub{\Gamma}) \cong 
KO^{2m}(\bub{\Gamma}) \oplus \IZ^{p^k(p-1)/2}.$$
Since we have already computed $KO^{2m}(\bub{\Gamma})$ and $KO^{2m+1}(B\Gamma)$ in
Theorem~\ref{the:KO-cohomology_of_BGamma_and_bub(Gamma)}, 
assertion~\ref{the:equivariant_KO-cohomology:explicite} follows using
assertion~\ref{the:equivariant_KO-cohomology:some_isos}.
\\[1mm]~%
\ref{the:equivariant_KO-cohomology_some_isos}
The kernel of the epimorphism $KO^{2m+1}(\bub{\Gamma}) \to KO^{2m+1}(B\Gamma)$
is isomorphic to $TO^{2m+1}$ by 
Theorem~\ref{the:KO-cohomology_of_BGamma_and_bub(Gamma)}~%
\ref{the:KO-cohomology_of_BGamma_and_bub(Gamma):K12m_plus_1(bub(Gamma)} 
and~\ref{the:KO-cohomology_of_BGamma_and_bub(Gamma):KO_odd(BGamma)_to_KO_odd(bub(Gamma))}.
Since $KO^{2m+1}_{\Gamma}(\eub{\Gamma}) \xrightarrow{\cong} KO^{2m+1}(B\Gamma)$
is bijective by assertion~\ref{the:equivariant_KO-cohomology:some_isos},
the claim follows from the long exact  
sequence~\eqref{spliced_KO_cohomology_sequence}.
\end{proof}


\typeout{------------   Section 10: Equivariant KO-homology  ------------}

\section{Equivariant \texorpdfstring{$KO$}{KO}-homology}
\label{sec:Equivariant_KO-homology}

We obtain from~\cite[Theorem~0.7]{Lueck(2002d)} using
Lemma~\ref{lem:preliminaries_about_Gamma_and_Zn_rho} isomorphisms
\begin{eqnarray*}
  KO_{2m}^{\Gamma}(\eub{\Gamma}) \otimes \IQ & \cong &
  \IQ^{p^k(p-1)/2 + \sum_{l \in \IZ} r_{4l+2m}};
  \\
  KO_{2m+1}^{\Gamma}(\eub{\Gamma}) \otimes \IQ & \cong &
  \IQ^{\sum_{l \in \IZ} r_{4l+2m+1}}.
\end{eqnarray*}

We want to get an integral computation.

\begin{theorem}[Equivariant $KO$-homology]
  \label{the:Equivariant_KO-homology}
  Let $p$ be an odd prime and $m$ be any integer.
  \begin{enumerate}

  \item \label{the:Equivariant_KO-homology:KO}
$$
KO_m^\Gamma(\eub{\Gamma}) \cong
\begin{cases}
  \IZ^{p^k(p-1)/2} \oplus
  \left(\bigoplus_{i=0}^n  KO_{m-i}(\pt)^{r_{i}}\right) & m \; \text{even;}\\
  \bigoplus_{i=0}^n KO_{m-i}(\pt)^{r_{i}}. & m \: \text{odd.}
\end{cases}
$$

\item \label{the:Equivariant_KO-homology:odd} For $m \in \IZ$ the map $
  KO_{2m+1}^{\Gamma}(\eub{\Gamma}) \xrightarrow{\cong} KO_{2m+1}(\bub{\Gamma}) $
  is an isomorphism.

\item \label{the:Equivariant_KO-homology:even} There is a short exact sequence
  \begin{eqnarray*}
    & 0 \to \bigoplus_{(P) \in \calp} \widetilde{KO}_{2m}^{\IZ/p}(\pt) 
    \to KO_{2m}^{\Gamma}(\eub{\Gamma}) \to KO_{2m}(\bub{\Gamma}) \to 0,
  \end{eqnarray*}
  where $\widetilde{KO}^{\Z/p}_{2m}(\pt)$ is the kernel of the map
  ${KO}^{\Z/p}_{2m}(\pt) \to KO_{2m}(\pt)$ coming from induction with $\IZ/p  \to 1$.  It splits after inverting $p$.
\end{enumerate}

\end{theorem}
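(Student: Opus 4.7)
The plan is to mirror the strategy used for equivariant complex $K$-homology in Theorem~\ref{the:equivariant_K-homology_of_eub(Gamma)}: exploit the long exact sequence from Lemma~\ref{lem:long_exact_sequence_for_equivariant_(co-)homology}~\ref{lem:long_exact_sequence_for_equivariant_(co-)homology:homology} applied to equivariant real $K$-homology $KO^?_*$, and then use the vanishing of $\widetilde{KO}^{\IZ/p}_m(\pt)$ in odd degrees to split that sequence into manageable pieces. By~\eqref{widetildeKOZ/p(pt)} we have $\widetilde{KO}^{\IZ/p}_m(\pt) = 0$ for $m$ odd and $\widetilde{KO}^{\IZ/p}_{2k}(\pt) \cong \IZ^{(p-1)/2}$ for $m$ even, and by Lemma~\ref{lem:preliminaries_about_Gamma_and_Zn_rho}~\ref{lem:preliminaries_about_Gamma_and_Zn_rho:order_of_calp} we have $|\calp| = p^k$. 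Hence the long exact sequence reduces, for each $k \in \IZ$, to a six-term exact sequence
\begin{equation*}
0 \to KO^{\Gamma}_{2k+1}(\eub{\Gamma}) \to KO_{2k+1}(\bub{\Gamma}) \to \IZ^{p^k(p-1)/2} \to KO^{\Gamma}_{2k}(\eub{\Gamma}) \to KO_{2k}(\bub{\Gamma}) \to 0.
\end{equation*}

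I would then prove assertion~\ref{the:Equivariant_KO-homology:odd} by a rank comparison: the rational dimensions of $KO^{\Gamma}_{2k+1}(\eub{\Gamma}) \otimes \IQ$ and $KO_{2k+1}(\bub{\Gamma}) \otimes \IQ$ both equal $\sum_{l \in \IZ} r_{4l+2k+1}$, by the rational computation cited immediately before the theorem and by~\eqref{KO_m(bub(Gamma))_rationally}. Since both groups are finitely generated abelian, the injection $KO^{\Gamma}_{2k+1}(\eub{\Gamma}) \hookrightarrow KO_{2k+1}(\bub{\Gamma})$ has torsion cokernel; but this cokernel embeds via the connecting homomorphism into the torsion-free group $\IZ^{p^k(p-1)/2}$, and therefore vanishes. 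Assertion~\ref{the:Equivariant_KO-homology:even} is then the residual piece of the six-term sequence, and the splitting statement after inverting $p$ is exactly the last sentence of Lemma~\ref{lem:long_exact_sequence_for_equivariant_(co-)homology}~\ref{lem:long_exact_sequence_for_equivariant_(co-)homology:homology}, which asserts that $\ind_{\Gamma \to 1}[1/p]$ is split surjective.

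For assertion~\ref{the:Equivariant_KO-homology:KO}, the odd case is immediate from~\ref{the:Equivariant_KO-homology:odd} combined with Theorem~\ref{the:KO-homology_of_BGamma_and_bub(Gamma)}~\ref{the:KO-homology_of_BGamma_and_bub(Gamma):KO2m_plus_1(bub(Gamma)}. For the even case, inverting $p$ the extension from~\ref{the:Equivariant_KO-homology:even} splits and the $p$-torsion summand $TO^{2m+5}$ of $KO_{2m}(\bub{\Gamma})$ (see Theorem~\ref{the:KO-homology_of_BGamma_and_bub(Gamma)}~\ref{the:KO-homology_of_BGamma_and_bub(Gamma):KO2m(bub(Gamma)}) dies, yielding the claimed formula tensored with $\IZ[1/p]$. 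The $p$-local part must be handled separately: my approach is to identify $KO^{\Gamma}_{2m}(\eub{\Gamma}) \cong KO^{\IZ/p}_{2m}(T^n)$ via induction with $\Gamma \to \IZ/p$ (valid because $\IZ^n$ acts freely on $\eub{\Gamma} = \IR^n$), and then dualize the equivariant $KO$-cohomology computation of Theorem~\ref{the:equivariant_KO-cohomology}~\ref{the:equivariant_KO-cohomology:explicite} through the equivariant analogue of the Universal Coefficient Theorem~\ref{lem:Universal_Coefficient_Theorem_for_KO-theory} applied to the finite $\IZ/p$-$CW$-complex $T^n$.

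The hard part will be the $p$-local analysis in assertion~\ref{the:Equivariant_KO-homology:KO}: showing that $KO^{\Gamma}_{2m}(\eub{\Gamma})_{(p)}$ is free abelian of the stated rank amounts to proving that the necessarily non-split extension at $p$ absorbs the $p$-torsion $TO^{2m+5}$ into a free $\IZ_{(p)}$-module of the predicted rank $p^k(p-1)/2 + \sum_{l\equiv 0 \pmod 4}r_{2m-l}$. One route is the equivariant Universal Coefficient Theorem sketched above, combined with the observation that the only torsion appearing in $KO^{*}_{\Gamma}(\eub{\Gamma})$ is the $2$-torsion inherited from $KO^{*}(\pt)$, which is invisible after localization at the odd prime $p$. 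A second route is a direct analysis of the Bredon (equivariant Atiyah--Hirzebruch) spectral sequence converging to $KO^{\IZ/p}_{*}(T^n)_{(p)}$, adapting the rank-and-torsion bookkeeping used in Section~\ref{sec:KO-cohomology} in the cohomological setting; this is the step where the delicate interplay between the transfer to $\IZ^n$ and the $\IZ/p$-fixed-point data controls whether the extension can be free.
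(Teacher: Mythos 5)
Your treatment of assertions~\ref{the:Equivariant_KO-homology:odd} and~\ref{the:Equivariant_KO-homology:even} matches the paper's: both deduce the isomorphism and the short exact sequence from the six-term sequence coming from Lemma~\ref{lem:long_exact_sequence_for_equivariant_(co-)homology}~\ref{lem:long_exact_sequence_for_equivariant_(co-)homology:homology}, exploiting that $\widetilde{KO}^{\IZ/p}_{2m}(\pt)$ is free and that the relevant cokernel is $p$-torsion (the paper uses the $[1/p]$-splitting, you use a rational rank comparison; these are interchangeable). Your derivation of the odd case of~\ref{the:Equivariant_KO-homology:KO} from~\ref{the:Equivariant_KO-homology:odd} is also what the paper does.

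For the even case of~\ref{the:Equivariant_KO-homology:KO} there is a real gap. You correctly identify that the crux is the $p$-local behaviour and you even state the decisive fact in passing — that the torsion in $KO_{*}^{\Gamma}(\eub\Gamma)$ is $2$-torsion only — but you leave it as an unproved ``observation'' to be ``combined with'' other machinery, and neither of your two proposed routes is adequate. The first route invokes an \emph{equivariant} Universal Coefficient Theorem for $KO$-theory; no such theorem is stated or cited in the paper, and unlike the complex case (Theorem~\ref{the:Universal_coefficient_theorem_for_equivariant_K-theory}, which rests on the very rigid structure of $R_{\IC}(G)$ via Lemma~\ref{lem:ext_RC(G)_versus_Ext_Z}) the real coefficient ring $KO^{G}_{*}(\pt)$ is not hereditary and a clean two-term UCT is not available. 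The second route, a Bredon/equivariant Atiyah--Hirzebruch analysis of $KO^{\IZ/p}_{*}(T^n)_{(p)}$, would amount to redoing a substantial chunk of the paper's bookkeeping and is not needed. The paper's actual argument is much shorter: the composite $KO_{i}^{\Gamma}(\eub{\Gamma}) \to K_{i}^{\Gamma}(\eub{\Gamma}) \to KO_{i}^{\Gamma}(\eub{\Gamma})$ (realification after complexification) is multiplication by $2$, and $K_{i}^{\Gamma}(\eub{\Gamma})$ is finitely generated \emph{free} by Theorem~\ref{the:equivariant_K-homology_of_eub(Gamma)}; hence any torsion element of $KO_{i}^{\Gamma}(\eub{\Gamma})$ dies in $K_{i}^{\Gamma}$ and is therefore killed by $2$. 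Once this is in hand, the short exact sequence of~\ref{the:Equivariant_KO-homology:even}, together with the known decomposition $KO_{2m}(\bub\Gamma) \cong (\bigoplus_{i} KO_{2m-i}(\pt)^{r_i}) \oplus TO^{2m+5}$ with $TO^{2m+5}$ a finite $p$-group, the $[1/p]$-splitting, and the structure theorem for finitely generated abelian groups, force the stated isomorphism without any equivariant UCT or Bredon spectral sequence. You should replace your two proposed routes by this complexification trick and then carry out the straightforward rank-and-torsion bookkeeping.
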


\begin{proof}
  Lemma~\ref{lem:long_exact_sequence_for_equivariant_(co-)homology}%
~\ref{lem:long_exact_sequence_for_equivariant_(co-)homology:homology} implies
  that there is an long exact sequence
  \begin{multline}
    0 \to KO_{2m+1}^{\Gamma}(\eub{\Gamma}) \to KO_{2m+1}(\bub{\Gamma}) \to
    \bigoplus_{(P) \in \calp} \widetilde{KO}_{2m}^{\IZ/p}(\pt)
    \\
    \to KO_{2m}^{\Gamma}(\eub{\Gamma}) \to KO_{2m}(\bub{\Gamma}) \to 0.
    \label{spliced_KO_homology_sequence}
  \end{multline}
  and that the map
$$KO_{i}^{\Gamma}(\eub{\Gamma})[1/p] \to KO_{i}(\bub{\Gamma})[1/p]$$
is split surjective for $i \in \IZ$. In particular the cokernel of
$KO_{2m+1}^{\Gamma}(\eub{\Gamma}) \to KO_{2m+1}(\bub{\Gamma})$ is a finite
abelian $p$-group.  Since $\widetilde{KO}_{2m}^{\IZ/p}(\pt)$ is a finitely
generated free abelian group by~\eqref{widetildeKOZ/p(pt)}, the long exact
sequence~\eqref{spliced_KO_homology_sequence} reduces to an isomorphism
\begin{eqnarray*}
  KO_{2m+1}^{\Gamma}(\eub{\Gamma}) & \xrightarrow{\cong} &  KO_{2m+1}(\bub{\Gamma})
\end{eqnarray*}
and a short exact sequence
\begin{eqnarray}
  & 0 \to \bigoplus_{(P) \in \calp} \widetilde{KO}_{2m}^{\IZ/p}(\pt) 
  \to KO_{2m}^{\Gamma}(\eub{\Gamma}) \to KO_{2m}(\bub{\Gamma}) \to 0,
  \label{short_exact_sequence_KO_split_1/p}
\end{eqnarray}
which splits after inverting $p$.  We have proven
assertions~\ref{the:Equivariant_KO-homology:odd}
and~\ref{the:Equivariant_KO-homology:even}.

Since the composite
$$KO_{i}^{\Gamma}(\eub{\Gamma}) \to K_{i}^{\Gamma}(\eub{\Gamma}) 
\to KO_{i}^{\Gamma}(\eub{\Gamma})$$ is multiplication with $2$ and
$K_{i}^{\Gamma}(\eub{\Gamma})$ is a finitely generated free abelian group by
Theorem~\ref{the:equivariant_K-homology_of_eub(Gamma)}, the torsion subgroup of
the finitely generated abelian group $KO_{i}^{\Gamma}(\eub{\Gamma})$ is
annihilated by $2$ for $i \in \IZ$.  Since by
Theorem~\ref{the:KO-homology_of_BGamma_and_bub(Gamma)}~%
\ref{the:KO-homology_of_BGamma_and_bub(Gamma):KO2m(bub(Gamma)}
\begin{eqnarray*}
  \bigoplus_{(P) \in \calp} \widetilde{KO}_{2m}^{\IZ/p}(\pt) 
  & \cong &
  \IZ^{p^k(p-1)/2};
  \\
  KO_{2m}(\bub{\Gamma}) 
  & \cong & 
  \left(\bigoplus_{i=0}^n  KO_{2m-i}(\pt)^{r_{i}}\right)
  \oplus TO^{2m+5}
\end{eqnarray*}
for a finite abelian $p$-group $TO^{2m+5}$ and the torsion in $\bigoplus_{i=0}^n
KO_{m-i}(\pt)^{r_{i}}$ is annihilated by multiplication with $2$, we get
from~\eqref{short_exact_sequence_KO_split_1/p} an isomorphism of abelian groups
$$KO_{2m}^{\Gamma}(\eub{\Gamma}) \cong \IZ^{p^k(p-1)/2} \oplus 
\left(\bigoplus_{i=0}^n KO_{2m-i}(\pt)^{r_{i}}\right).$$ This is the even case
of assertion~\ref{the:Equivariant_KO-homology:KO}.  The odd case of
assertion~\ref{the:Equivariant_KO-homology:KO} follows from
assertion~\ref{the:Equivariant_KO-homology:odd} and
Theorem~\ref{the:KO-homology_of_BGamma_and_bub(Gamma)}%
~\ref{the:KO-homology_of_BGamma_and_bub(Gamma):KO2m_plus_1(bub(Gamma)}.
\end{proof}


\typeout{------------   Section 11: Topological K-theory of the group C^*-algebra ------------}

\section{Topological \texorpdfstring{$K$}{K}-theory of the group \texorpdfstring{$C^*$}{C*}-algebra}

\label{sec:Topological_K-theory_of_the_group_Cast-algebra}

In this section we compute the topological $K$-theory $K_n(C^*_r(\Gamma))$ of
the complex reduced group $C^*$-algebra $C^*_r(\Gamma)$ and the topological
$K$-theory $KO_n(C^*_r(\Gamma;\IR)) := K_n(C^*_r(\Gamma;\IR))$ of the real
reduced group $C^*$-algebra $C^*_r(\Gamma;\IR)$.

The Baum-Connes Conjecture (see~\cite[Conjecture 3.15 on page
254]{Baum-Connes-Higson(1994)}) predicts for a group $G$ that the complex and
the real assembly maps
\begin{eqnarray}
  K^{G}_n(\eub{G}) & \xrightarrow{\cong} & K_n(C^*_r(G));
  \label{BCC_complex}
  \\
  KO^{G}_n(\eub{G}) & \xrightarrow{\cong} & KO_n(C^*_r(G;\IR)),
  \label{BCCreal}
\end{eqnarray}
are bijective for $n \in \IZ$. It has been proved for $G = \Gamma$ (and many
more groups) in~\cite{Higson-Kasparov(2001)}.


\subsection{The complex case}
\label{subsec:The_complex_case}
We begin with the complex case.

\begin{proof}[Proof of Theorem~\ref{the:Topological_K-theory_of_the_complex_group_Cast-algebra}]
  Because of the isomorphism~\eqref{BCC_complex} all claims follow from
  Lemma~\ref{lem:preliminaries_about_Gamma_and_Zn_rho}~%
\ref{lem:preliminaries_about_Gamma_and_Zn_rho:ideals},
  Lemma~\ref{lem:LambdajZ(zeta)_in_R_Q(Z/p)}%
~\ref{lem:LambdajZ(zeta)_in_R_Q(Z/p):r_odd_and_r_ev} and
  Theorem~\ref{the:equivariant_K-homology_of_eub(Gamma)} except the statement
  that
$$K_1(C^*_r(\Gamma)) \xrightarrow{\cong} K_1(C^*_r(\IZ^n_\rho))^{\IZ/p}$$
is bijective. Induction with $\iota \colon \IZ^n \to \Gamma$ yields a
homomorphism
$$ K_1(C^*_r(\IZ^n)) \to K_1(C^*_r(\Gamma))$$
and restriction with $\iota$ yields a homomorphism
$$ K_1(C^*_r(\Gamma)) \to K_1(C^*_r(\IZ^n)).$$
Since an inner automorphism of $\Gamma$ induces the identity on
$K_1(C^*_r(\Gamma))$, these homomorphisms induce homomorphisms
\begin{eqnarray*}
  \iota_* \colon K_1(C^*_r(\IZ^n_\rho)_{\IZ/p}  & \to & K_1(C^*_r(\Gamma));
  \\
  \iota^* \colon K_1(C^*_r(\Gamma )) & \to & K_1(C^*_r(\IZ^n_\rho))^{\IZ/p}.
\end{eqnarray*}
By the double coset formula the composite $\iota^* \circ \iota_*$ is the norm
map
$$N \colon K_1(C^*_r(\IZ^n_\rho))_{\IZ/p}  \to K_1(C^*_r(\IZ^n_\rho))^{\IZ/p}.$$ 
The cokernel of the norm map is
$\widehat{H}^0(\IZ/p; K_1(C^*_r(\IZ^n_\rho))$.  Note
\begin{align*}
  \widehat{H}^0(\IZ/p; K_1(C^*_r(\IZ^n_\rho)) 
& \cong \widehat{H}^0(\IZ/p; K_1(B\IZ^n_\rho)) 
& \text{(the BC Conjecture for $\Z^n$)}
\\
  & \cong \widehat{H}^0(\IZ/p; K^1(B\IZ^n_\rho)^*) 
& \text{(the UCT for $K$-theory~\ref{the:Universal_coefficient_theorem_for_K-theory})}
\\
  & \cong \widehat{H}^{-1}(\IZ/p; K^1(B\IZ^n_\rho)) 
&
  \text{(Lemma~\ref{Tate_duality} proven below)}
  \\
  & = 0 & \text{(Lemma~\ref{lem:Leray-Serre_ss_for_K(BGamma)}%
~\ref{lem:Leray-Serre_ss_for_K(BGamma):Tate_cohomology}).}
\end{align*}

This implies that the norm map $N$ and hence $\iota^* \colon K_1(C^*_r(\Gamma))
\xrightarrow{\cong} K_1(C^*_r(\IZ^n_\rho))^{\IZ/p}$ are surjective.  Since
source and target of $\iota^*$ are finitely generated free abelian groups of the
same rank by
assertion~\ref{the:Topological_K-theory_of_the_complex_group_Cast-algebra:explicite}
and Lemma~\ref{lem:Leray-Serre_ss_for_K(BGamma)}~%
\ref{lem:Leray-Serre_ss_for_K(BGamma):KOast(BZn)_over_Z[Z/p]}, $\iota^*$ is an
isomorphism.
\end{proof}


\subsection{The real case}
\label{subsec:The_real_case}

Next we treat the real case.

\begin{proof}[Proof of Theorem~\ref{the:Topological_K-theory_of_the_real_group_Cast-algebra}]
  Because of the isomorphisms~\eqref{widetildeKOZ/p(pt)} and~\eqref{BCCreal} all
  claims follow from Theorem~\ref{the:Equivariant_KO-homology} except the claim
  that
$$KO_{2m+1}(C^*_r(\Gamma;\IR)) \xrightarrow{\cong} KO_{2m+1}(C^*_r(\IZ^n_\rho;\IR))^{\IZ/p}$$
is bijective. As we have natural transformations of cohomology theories
$i^* \colon KO_* \to K_*$ and $r^* \colon K_* \to KO_*$ with $r^* \circ i^* = 2
\cdot\id$,
Theorem~\ref{the:Topological_K-theory_of_the_complex_group_Cast-algebra}~%
\ref{the:Topological_K-theory_of_the_complex_group_Cast-algebra:K1_abstract}
implies that the map is bijective after inverting $2$. Since $p$ is odd, it
remains to show that it is bijective after inverting $p$.  Because of the
bijectivity of $KO_{2m+1}(C^*_r(\Gamma;\IR)) \xrightarrow{\cong}
KO_{2m+1}(\bub{\Gamma})$, the fact that $KO_{2m+1}(B\IZ^n_{\rho})_{\IZ/p} \to
KO_{2m+1}(\bub{\Gamma})$ is bijective after inverting $p$ (use
Proposition~\ref{prop:quotient_iso}), the fact that norm map is always bijective
after inverting $p$, and the the isomorphism~\eqref{BCCreal} for $\Z^n$, the
claim holds.
\end{proof}


\typeout{------------   Section 12: The group 
Gamma satisfies the (unstable) Gromov-Lawson-Rosenberg Conjecture  ------------}

\section{The group   \texorpdfstring{$\Gamma$}{Gamma} 
satisfies the (unstable) Gromov-Lawson-Rosenberg Conjecture}

\label{sec:The_(unstable)_Gromov-Lawson-Rosenberg_Conjecture_holds_for_Gamma}

In this section we give the proof of
Theorem~\ref{the:The_(unstable)_Gromov-Lawson-Rosenberg_Conjecture_holds_for_Gamma},
after first giving some background.


\subsection{The Gromov-Lawson-Rosenberg Conjecture}
\label{subsec:The_Gromov-Lawson-Rosenberg_Conjecture}

For a closed, spin manifold $M$ of dimension $m$ with fundamental group $G$, one
can define an invariant
\begin{eqnarray}
  \alpha (M)  & \in & KO_m(C^*_r(G); \R), 
  \label{alpha(M)}
\end{eqnarray}
which vanishes if $M$ admits a metric of positive scalar 
curvature (see~\cite{Rosenberg(1986b)}).  The \emph{(unstable) Gromov-Lawson-Rosenberg
  Conjecture} for a group $G$ states that if $\alpha(M) = 0$ and $\dim M \geq
5$, then $M$ admits a metric of positive scalar curvature.  The (unstable)
Gromov-Lawson-Rosenberg Conjecture is known to be valid for some fundamental
groups, for example, the trivial group (see~\cite{Stolz(1992)}), for finite groups with periodic cohomology
(see~\cite{Botvinnik-Gilkey-Stolz(1997)} and~\cite{Kwasik-Schultz(1990)}),
some torsion-free infinite groups, for example,
when $G$ is a fundamental group of a complete Riemannian manifold of non-positive
sectional curvature (see~\cite{Rosenberg(1986b)}), and some infinite groups with
torsion, for example, cocompact Fuchsian groups (see~\cite{Davis-Pearson(2003)}), but
not in general -- there is a counterexample when $G = \IZ^4 \times \IZ/3$ due to
Schick~\cite{Schick(1998e)}.

There is a weaker version of the conjecture which may be valid for all groups.
Let $B^8$ be a ``Bott manifold,'' a simply-connected spin $8$-manifold with
$\widehat{A}$-genus equal to one.  We say that a manifold $M$ \emph{stably
  admits a metric of positive scalar curvature} if $M \times (B^8)^j$ admits a
metric of positive scalar curvature for some $j \geq 0$.  The \emph{stable
  Gromov-Lawson-Rosenberg Conjecture} formulated by
Rosenberg-Stolz~\cite{Rosenberg-Stolz(1995)} states that for a closed spin
manifold $M$ with fundamental group $G$, then $M$ stably admits a metric of
positive scalar curvature if and only if $\alpha(M) = 0$.  Since the Baum-Connes
Conjecture implies the stable Gromov-Lawson-Rosenberg Conjecture
(see~\cite[Theorem~3.10]{Stolz(2002)} for an outline of the proof) and $\Gamma$
satisfies the Baum-Connes Conjecture, we know already that $\Gamma$ satisfies
the stable Gromov-Lawson-Rosenberg Conjecture.

There are two definitions of the invariant $\alpha$, one topological and one
analytic.  Let $\bfKO$ be the periodic spectrum underlying real $K$-theory, and
let $\bfp\colon \bfko \to \bfKO$ be the $0$-connective cover, that is, it
induces an isomorphism on $\pi_i$ for $i \geq 0$ and $\pi_i(\bfko)= 0$ for $i$
negative.  Then the topological definition of $\alpha(M)$ is the image of the
class $[f_M\colon M \to BG]$ where $f_M$ induces the identity on the fundamental
group under the composite
\begin{eqnarray}
  \Omega^{\text{Spin}}_m(BG) \xrightarrow{D} ko_m(BG) \xrightarrow{p_{BG}} KO_m(BG)
  \xrightarrow{A} KO_m(C^*_r(G))
  \label{ApD}
\end{eqnarray}
where $D$ is the $ko$-orientation of spin bordism, $p_{BG}$ is the canonical map from connective to the periodic $K$-theory,
and $A$ is the assembly map. The analytic definition of $\alpha(M)$ is the index of the Dirac operator.  These two definitions agree
(see~\cite{Rosenberg(1986b)}).
Furthermore if $M$ has positive scalar curvature, then the
Bochner-Lichnerowicz-Weitzenb\"ock formula shows that the index is zero so that
$\alpha(M) =0$.

Finally, we mention one more result in our quick review, and that is the
generalization of the Gromov-Lawson surgery theorem of due to Jung and
Stolz~\cite[3.7]{Rosenberg-Stolz(1995)}.  
 
\begin{proposition}
  \label{prop:Stolz-Jung}
  Let $M$ be a connected closed spin manifold with fundamental group $G$ and
  dimension $m \ge 5$. Let $[f\colon N \to BG] \in \Omega^{\text{Spin}}_m(BG)$.
  (Note that $N$ need not have fundamental group $G$.)  If $D[f_M :M \to BG] =
  D[f\colon N \to BG] \in ko_m(BG)$ and $N$ admits a metric of positive scalar
  curvature, then so does $M$.

\end{proposition}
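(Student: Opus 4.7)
The strategy is to reduce the statement to the classical Gromov-Lawson surgery theorem for positive scalar curvature by absorbing the discrepancy between spin bordism and connective $ko$-homology into quaternionic projective bundles, which carry positive scalar curvature for geometric reasons.

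The first step is to use Stolz's analysis of the $ko$-orientation $D \colon \Omega^{\text{Spin}}_*(BG) \to ko_*(BG)$. The kernel of $D$ is generated additively by total spaces of fiber bundles $\IH P^2 \to E \to B$ with structure group $\mathrm{PSp}(3)$, and by work of Gromov and Lawson, every such total space carries a metric of positive scalar curvature coming from a submersion metric that uses the symmetric space psc-metric on the $\IH P^2$-fibers. Applying this to the class $[f_M\colon M \to BG] - [f\colon N \to BG]$, which lies in $\ker D$, I would choose a spin manifold $T$ of the form $T = E_1 \sqcup \cdots \sqcup E_r$ equipped with a reference map $g\colon T \to BG$ such that
$$[f_M] = [f \sqcup g \colon N \sqcup T \to BG] \in \Omega^{\text{Spin}}_m(BG).$$
Since $N$ admits a psc-metric by hypothesis and each $E_i$ admits one by the construction above, the disjoint union $N \sqcup T$ admits a psc-metric.

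The second step is to produce a spin cobordism $W^{m+1}$ with $\partial W = M \sqcup -(N \sqcup T)$ together with a map $F\colon W \to BG$ extending $f_M \sqcup (f \sqcup g)$, and then to modify $W$ by handle trading so that it is built from $N \sqcup T$ by attaching handles of index at most $m-2$ (equivalently codimension $\ge 3$ when viewed from the $M$-side). This is the Gromov-Lawson surgery theorem: any sequence of surgeries of codimension $\ge 3$ applied to a psc-manifold produces a psc-manifold. Because $m \ge 5$, handles of index $0,1,2$ on the $N \sqcup T$ side can be eliminated at the expense of introducing handles of complementary index on the $M$ side. The key input allowing this elimination is that $f_M$ induces an isomorphism on $\pi_1$, so one can use Whitney moves and handle cancellation in the cobordism $W$ without creating topological obstructions in the reference space $BG$. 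Once $W$ has only handles of codimension $\ge 3$ relative to $M$, the surgery theorem transports the psc-metric on $N \sqcup T$ across these surgeries to yield a psc-metric on $M$.

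The main obstacle is the handle-trading step in the second paragraph: one needs to arrange $W$ so that the handles are of the right index, and for this one must carefully use the hypothesis that $\pi_1(M) = G$ and the reference map $F\colon W \to BG$ to perform the trading within the category of manifolds equipped with maps to $BG$. This is precisely where the condition $m \ge 5$ is used (to get enough room for Whitney disks) and where the argument would follow Stolz's refinement of the Gromov-Lawson-Rosenberg reduction, as presented in \cite{Rosenberg-Stolz(1995)}. The first step, namely identifying $\ker D$ with $\IH P^2$-bundles, is a deep theorem of Stolz and is the reason the statement involves $ko$-homology rather than $KO$-homology.
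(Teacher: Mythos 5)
The paper itself offers no proof of Proposition~\ref{prop:Stolz-Jung}; it simply cites it as~\cite[3.7]{Rosenberg-Stolz(1995)} (Jung--Stolz). Your outline is a correct reconstruction of the argument one finds there, in the right order: first Stolz's theorem that $\ker(D)$ is generated by $\IH P^2$-bundle classes (each of which carries a submersion psc metric), then the Bordism Theorem for spin bordism over $BG$ via handle-trading and the Gromov--Lawson--Schoen--Yau surgery theorem.

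Two small points worth tightening. First, when you write $[f_M] = [f \sqcup g\colon N \sqcup T \to BG]$, you are quietly absorbing signs: $[f_M]-[f]$ is a $\IZ$-linear combination of $\IH P^2$-bundle classes, some with negative coefficient, and you should say explicitly that $-[E\to BG]$ is realized by $E$ with the reversed orientation and the induced opposite spin structure, which still carries the same psc metric, so $T$ can indeed be taken to be a disjoint union of psc manifolds. Second, the parenthetical ``(equivalently codimension $\ge 3$ when viewed from the $M$-side)'' has the direction reversed. The codimension-$\ge 3$ condition governs the surgeries performed on $N \sqcup T$ (the side where the psc metric lives): a handle of index $k\le m-2$ attached to $N\sqcup T$ is attached along a sphere of codimension $m-k+1\ge 3$ in $N\sqcup T$, which is precisely what Gromov--Lawson requires. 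From the $M$-side the dual handles have index $\ge 3$. The handle-trading that eliminates indices $m-1$ and $m$ on the $N\sqcup T$-side (equivalently, indices $0,1$ on the $M$-side) is where $m\ge 5$ and the $2$-equivalence hypothesis on $f_M$ enter, and since $BG$ is aspherical the $\pi_2$-surjectivity part of the $2$-equivalence condition is automatic. With these clarifications your sketch matches the standard argument in the cited literature.
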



\subsection{The proof of
  Theorem~\ref{the:The_(unstable)_Gromov-Lawson-Rosenberg_Conjecture_holds_for_Gamma}}
\label{subsec:Proof_of_Theorem(the:The_(unstable)_Gromov-Lawson-Rosenberg_Conjecture_holds_for_Gamma)}

The proof of
Theorem~\ref{the:The_(unstable)_Gromov-Lawson-Rosenberg_Conjecture_holds_for_Gamma}
needs some preparation.

\begin{lemma}
  \label{lem_widetildeOmega_spin_to_widetilde_ko_is-surjective}
  Let $p$ be an odd prime. Then the map
$$\widetilde{D}\colon \widetilde{\Omega}^{\text{Spin}}_m(B\IZ/p) 
\to \widetilde{ko}_m(B\IZ/p)$$ is surjective for all $m \ge 0$.
\end{lemma}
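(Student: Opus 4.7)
My plan is to argue in two stages, first reducing to the $p$-local setting and then realizing the $p$-local generators of $\widetilde{ko}_*(B\IZ/p)$ by lens spaces.

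For the reduction: because $p$ is odd, the reduced integral homology $\widetilde H_*(B\IZ/p;\IZ)$ is $p$-primary torsion in positive degrees, and $\widetilde H_*(B\IZ/p;\IZ/2) = 0$. Since the coefficient groups $ko_j(\pt)$ are all isomorphic to $\IZ$, $\IZ/2$, or zero, the Atiyah--Hirzebruch spectral sequence
\[E^2_{i,j} \,=\, \widetilde H_i\bigl(B\IZ/p;\, ko_j(\pt)\bigr) \,\Longrightarrow\, \widetilde{ko}_{i+j}(B\IZ/p)\]
has an entirely $p$-primary-torsion $E^2$-page, and therefore a $p$-primary-torsion abutment. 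In particular $\widetilde{ko}_m(B\IZ/p)$ is already $p$-local, so it suffices to prove surjectivity of $\widetilde D$ after localization at $p$.

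For the realization step: the standard analysis of the above AHSS at odd $p$ shows that $\widetilde{ko}_m(B\IZ/p)_{(p)}$ vanishes for even $m$ and is a finite cyclic $p$-group for each odd $m\ge 1$, whose top filtration quotient under the AHSS is $\widetilde H_m(B\IZ/p;\IZ)\cong \IZ/p$. For each $n\ge 0$ I would realize a generator of $\widetilde{ko}_{2n+1}(B\IZ/p)_{(p)}$ by the lens space $L^{2n+1}_p := S^{2n+1}/(\IZ/p)$, which, because $p$ is odd, is a closed spin manifold of dimension $2n+1$ with fundamental group $\IZ/p$. Its classifying map $f_n\colon L^{2n+1}_p \to B\IZ/p$ defines a class $[f_n] \in \widetilde\Omega^{\mathrm{Spin}}_{2n+1}(B\IZ/p)$, and under the AHSS edge homomorphism (i.e.\ the Hurewicz map $\widetilde{ko}_{2n+1} \to \widetilde H_{2n+1}$) the image of $\widetilde D[f_n]$ is $(f_n)_*[L^{2n+1}_p] \in \widetilde H_{2n+1}(B\IZ/p;\IZ)\cong \IZ/p$. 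This is a generator because $f_n$ is a $(2n+1)$-equivalence. Since the target is a cyclic $p$-group whose unique maximal subgroup is the image of multiplication by $p$, Nakayama's lemma then implies that $\widetilde D[f_n]$ itself generates $\widetilde{ko}_{2n+1}(B\IZ/p)_{(p)}$, yielding the required surjectivity.

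The main technical obstacle is the verification that $\widetilde{ko}_{2n+1}(B\IZ/p)_{(p)}$ is cyclic and that its top AHSS filtration quotient is $\IZ/p$, which requires a careful control of the AHSS differentials for $\widetilde{ko}_*(B\IZ/p)$ at odd primes. One can avoid this detailed computation by appealing directly to Kwasik--Schultz or Botvinnik--Gilkey--Stolz, both of whom establish an analogous realization result as a key step in their proofs of the (unstable) Gromov--Lawson--Rosenberg conjecture for finite groups with periodic cohomology.
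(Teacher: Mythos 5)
Your initial reduction is sound and matches the paper: since $p$ is odd, $\widetilde H_*(B\IZ/p;\IZ)$ is $p$-primary torsion, so by the Atiyah--Hirzebruch spectral sequence $\widetilde{ko}_m(B\IZ/p)$ and $\widetilde{\Omega}^{\mathrm{Spin}}_m(B\IZ/p)$ are already $p$-local, and it suffices to work after $p$-localization. Beyond that point, however, your argument contains a genuine error, not merely an acknowledged gap.

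The problem is the claim that $\widetilde{ko}_{2n+1}(B\IZ/p)_{(p)}$ is a finite \emph{cyclic} $p$-group whose top AHSS filtration quotient is $\IZ/p$; the cyclicity is what makes your Nakayama step work, and it is false for $p\ge 5$. At an odd prime, $ko_{(p)}$ splits as a wedge of $(p-1)/2$ suspensions of the connective Adams summand $\ell = BP\langle 1\rangle$, namely $ko_{(p)} \simeq \bigvee_{j=0}^{(p-3)/2}\Sigma^{4j}\ell$ (consistent with $ko_*(\pt)_{(p)}\cong \IZ_{(p)}[\alpha]$, $|\alpha|=4$, versus $\ell_*\cong \IZ_{(p)}[v_1]$, $|v_1|=2(p-1)$). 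Hence $\widetilde{ko}_m(B\IZ/p)_{(p)}\cong \bigoplus_{j=0}^{(p-3)/2}\widetilde\ell_{m-4j}(B\IZ/p)$. For $p\ge 5$ there are at least two summands, several of which are nonzero for odd $m$ sufficiently large; for example with $p=5$ and $m=9$ one gets $\widetilde\ell_9(B\IZ/5)\oplus\widetilde\ell_5(B\IZ/5)$ with the second summand equal to $\IZ/5$ and the first of order at least $25$, so the group is certainly non-cyclic. Consequently hitting a generator of the single filtration quotient $E^\infty_{2n+1,0}\cong\IZ/p$ via the lens space $L^{2n+1}_p$ does not force surjectivity of $\widetilde D$. (Also, a small terminological slip: all AHSS differentials vanish here by the checkerboard pattern; the issue your argument would have to control is the extension problems, not the differentials.) Your fallback of citing Kwasik--Schultz or Botvinnik--Gilkey--Stolz is not a substitute for a proof, since neither reference states the lemma in the form required here, and both argue by rather different means (eta invariants, surgery up to bordism).

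The paper's proof circumvents the group structure of $\widetilde{ko}_*(B\IZ/p)$ entirely. Because $p$ is odd, both $\Omega^{\mathrm{Spin}}_*(\pt)_{(p)}$ and $ko_*(\pt)_{(p)}$ are concentrated in degrees $\equiv 0\pmod 4$ and are free over $\IZ_{(p)}$; combined with $\widetilde H_*(B\IZ/p;\IZ_{(p)})$ being concentrated in odd degrees, this forces both Atiyah--Hirzebruch spectral sequences to collapse at $E^2$, with $E^2_{i,j}=\widetilde H_i(B\IZ/p)\otimes(\text{coefficients})_j$ and no $\Tor$-terms. Since the $E^\infty$-pages are thus simply tensored up from the coefficient map, surjectivity of $\widetilde D_{(p)}$ on the abutments follows once $D_{(p)}\colon \Omega^{\mathrm{Spin}}_j(\pt)_{(p)}\to ko_j(\pt)_{(p)}$ is surjective for all $j$. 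As $ko_*(\pt)_{(p)}$ is a polynomial algebra $\IZ_{(p)}[\alpha]$ on a degree-$4$ generator and $D$ is a ring map, this reduces to $j=4$, which is handled by the Kummer surface $K$: $D[K]=\hat A(K)/2 = 1$ generates $ko_4(\pt)$. If you wish to salvage a ``geometric realization'' version of your argument, you would need to realize a set of generators for \emph{each} filtration quotient $E^\infty_{i,m-i}$ (e.g.\ by classes of the form $L^{i}_p\times K^{\times k}$), which is essentially what the paper's coefficient-level reduction accomplishes in one step.
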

\begin{proof}
  If $M$ is a $\IZ[\IZ/p]$-module, then $H_i(\IZ/p;M)[1/p] = 0$ for $i \ge 1$
  and hence the canonical maps
$$H_i(B\IZ/p;M) \xrightarrow{\cong} H_i(B\IZ/p;M)_{(p)}
\xrightarrow{\cong} H_i(B\IZ/p;M_{(p)})$$ are bijective for $i \ge 1$.  We
conclude from the Atiyah-Hirzebruch spectral sequences that the vertical maps in
the commutative diagram
$$\xymatrix@!C=10em{
  \widetilde{\Omega}^{\text{Spin}}_m(B\IZ/p) \ar[r]^{\widetilde{D}}
  \ar[d]^{\cong} & \widetilde{ko}_m(B\IZ/p) \ar[d]^{\cong}
  \\
  \widetilde{\Omega}^{\text{Spin}}_m(B\IZ/p)_{(p)} \ar[r]^{\widetilde{D}_{(p)}}
  & \widetilde{ko}_m(B\IZ/p)_{(p)} }
$$
are bijective for $m \ge 0$.  Hence it suffices to prove the surjectivity of the
lower horizontal map.  Since $p$ is odd, $\Omega^{\text{Spin}}_j(\pt)_{(p)}$ is
zero for $j \not\equiv 0 \pmod 4$ and $\Omega^{\text{Spin}}_j(\pt)_{(p)}$ is a
finitely generated free $\IZ_{(p)}$-module for $j \equiv 0 \pmod 4$ (see
\cite{Anderson-Brown-Peterson(1967)}).  The same is true for $ko_j(\pt)_{(p)}$ by
Bott periodicity. Hence there are no differentials in Atiyah-Hirzebruch spectral
sequences converging to $\widetilde{\Omega}^{\text{Spin}}_{i+j}(B\IZ/p)_{(p)}$
and $\widetilde{ko}_{i+j}(B\IZ/p)_{(p)}$ and we get for the $E^{\infty}$-terms
\begin{eqnarray*}
  E^{\infty}_{i,j}\left(\widetilde{\Omega}^{\text{Spin}}_{i+j}(B\IZ/p)_{(p)}\right)
  & \cong & 
  \widetilde{H}_i(\IZ/p) \otimes \Omega^{\text{Spin}}_j(\pt)_{(p)};
  \\
  E^{\infty}_{i,j}\left(\widetilde{ko}_{i+j}(B\IZ/p)_{(p)}\right)
  & \cong  & 
  \widetilde{H}_i(\IZ/p) \otimes ko_j(\pt)_{(p)}.
\end{eqnarray*}
It suffices to show that the map on the $E^{\infty}$-terms is surjective for all
$i,j$. Hence it is enough to show that the map
$$D_{(p)} \colon \Omega^{\text{Spin}}_j(\pt)_{(p)} \to ko_j(\pt)_{(p)}$$
is surjective for all $j$. Since $ko_*(\pt)_{(p)}$ is a polynomial algebra on a
single generator in dimension 4, it suffices to prove $D_{(p)}$ is onto when $j
= 4$.  In this case both $\Omega^{\text{Spin}}_4(\pt)$ and $ko_4(\pt)$ are
infinite cyclic with the former generated by a spin manifold of signature 16,
for example the Kummer surface $K$.  The $\hat A$-genus of $K$ is 2 and the
index of the real Dirac operator is $\widehat A(K)/2$ 
(see~\cite[Theorem II.7.10]{Lawson-Michelsohn(1989)}).  Hence $D\colon \Omega^{\text{Spin}}_4(\pt) \to
ko_4(\pt)$ is an isomorphism.
\end{proof}

\begin{theorem}[ko-homology]
  \label{the:ko-homology}
  Let $p$ be an odd prime and let $m$ be any integer.

  \begin{enumerate}

  \item \label{the:ko-homology:ko(BGamma)}
$$ko_m(B\Gamma) \cong
\begin{cases}
  \bigoplus_{i=0}^n  ko_{m-i}(\pt)^{r_{i}} & m \; \text {even;}\\
  to_{m}(B\Gamma) \oplus \left(\bigoplus_{i=0}^n ko_{m-i}(\pt)^{r_{i}}\right) &
  m \; \text {odd.}
\end{cases}
$$
where $to_m(B\Gamma)$ is a finite abelian $p$-group defined for $m$ odd.

\item \label{the:ko-homology:ko_2m(BGamma)} The inclusion map $\Z^n \to \Gamma$
  induces an isomorphism
$$ko_{2m}(B\IZ^n_{\rho})_{\IZ/p}  \xrightarrow{\cong} ko_{2m}(B\Gamma)$$
and $ko_{2m}(B\IZ^n_{\rho})_{\IZ/p} \cong \bigoplus_{i=0}^n
ko_{2m-i}(\pt)^{r_{i}}.  $

\item \label{the:ko-homology:long_exact_sequence} There is a long exact sequence
  \begin{multline*}
    0 \to ko_{2m}(B\Gamma) \xrightarrow{\overline{f}_{2m}} ko_{2m}(\bub{\Gamma})
    \xrightarrow{\partial_{2m}} \bigoplus_{(P) \in \calp}
    \widetilde{ko}_{2m-1}(BP)
    \\
    \xrightarrow{\varphi_{2m-1}} ko_{2m-1}(B\Gamma)
    \xrightarrow{\overline{f}_{2m-1}} ko_{2m-1}(\bub{\Gamma}) \to 0.
  \end{multline*}
  Hence $ko_{m}(B\Gamma)[1/p] \to ko_{m}(\bub{\Gamma})[1/p]$ is an isomorphism
  for $m \in \IZ$. 

\item \label{the:ko-homology:ko_2m_plus_1(bub(Gamma))}
$$ko_{2m+1}(\bub{\Gamma}) \cong 
\bigoplus_{i=0}^{2m+1} ko_{2m+1-i}(\pt)^{r_{i}}.
$$

\item \label{the:ko-homology:other_values} Let $to_{2m}(\bub\Gamma) =
  \im \partial_{2m}$ and $to_{2m-1}(B\Gamma) = \im \varphi_{2m-1}$.  These are
  finite abelian $p$-groups.  There is an exact sequence
$$0 \to ko_{2m}(B\Gamma) \to ko_{2m}(\bub{\Gamma}) \to to_{2m}(\bub\Gamma) \to 0$$ 
and an isomorphism
$$
ko_{2m+1}(B\Gamma) \cong to_{2m+1}(B\Gamma) \oplus \bigoplus_{i=0}^n
ko_{2m+1-i}(\pt)^{r_{i}}.
$$

\end{enumerate}
\end{theorem}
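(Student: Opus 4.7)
The plan is to mimic the $KO$-homology computation of Theorem~\ref{the:KO-homology_of_BGamma_and_bub(Gamma)}, with the proviso that $ko$ is connective so $ko_j(\pt) = 0$ for $j < 0$. The backbone is the Mayer-Vietoris long exact sequence in $ko$-homology coming from the cellular pushout~\eqref{pushout_for_BGamma_to_bunderbar(Gamma)}:
$$
\cdots \to \bigoplus_{(P) \in \calp} \widetilde{ko}_m(BP) \to ko_m(B\Gamma) \to ko_m(\bub{\Gamma}) \to \bigoplus_{(P) \in \calp} \widetilde{ko}_{m-1}(BP) \to \cdots
$$

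First I would compute $\widetilde{ko}_*(B\IZ/p)$ for $p$ odd via the Atiyah-Hirzebruch spectral sequence. After inverting $p$ the reduced theory vanishes; after localizing at $p$, the $E^2$-page is supported in bidegrees $(i,j)$ with $i$ odd and $j\equiv 0\pmod 4$, $j\ge 0$, so the total degree is forced to be odd. Hence $\widetilde{ko}_m(B\IZ/p)$ is trivial for $m$ even and a finite abelian $p$-group for $m$ odd. Substituting into the Mayer-Vietoris sequence yields the five-term exact sequence of assertion (iii), and identifies $to_{2m}(\bub{\Gamma}) := \im \partial_{2m}$ and $to_{2m-1}(B\Gamma) := \im \varphi_{2m-1}$ as finite abelian $p$-groups, since each is a finitely generated subgroup of the finite $p$-group $\bigoplus_{(P)} \widetilde{ko}_{\text{odd}}(BP)$.

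Next I would prove (ii) by the same Hochschild-Serre argument used for $H_{2m}$ in Theorem~\ref{the:Homology_of_BGamma_and_bub(Gamma)}~\ref{the:Homology_of_BGamma_and_bub(Gamma):coinvariants}, applied to the spectral sequence $E^2_{i,j} = H_i(\IZ/p; ko_j(B\IZ^n_\rho)) \Rightarrow ko_{i+j}(B\Gamma)$. A $ko$-analogue of Lemma~\ref{lem_KO(BZn)_over_Z[Z/p]} (splitting $ko_*(B\IZ^n_\rho)$ as a $\IZ[\IZ/p]$-module after inverting either $2$ or $p$) plus the Tate cohomology vanishing of Lemma~\ref{lem:Hochschild-Serre_ss_forKast(BGamma)}~\ref{lem:Hochschild-Serre_ss_forKast(BGamma):Tate_cohomology} force $E^2_{i,2m-i}$ to vanish when $i$ is even and positive, leaving $E^{\infty}_{0,2m} = ko_{2m}(B\IZ^n_\rho)_{\IZ/p}$ torsion-free and mapping isomorphically onto $ko_{2m}(B\Gamma)$; the concrete identification with $\bigoplus_{i=0}^n ko_{2m-i}(\pt)^{r_i}$ is obtained by the same decomposition argument as in the $KO$ case.

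For (iv) I would analyze the Atiyah-Hirzebruch spectral sequence for the finite $CW$-complex $\bub{\Gamma}$, splitting between localization at $p$ and inversion of $p$. Away from $p$ one compares with $KO_*(\bub{\Gamma})[1/p]$ via Theorem~\ref{the:KO-homology_of_BGamma_and_bub(Gamma)} and Proposition~\ref{prop:quotient_iso}. At $p$, the $E^2$ entries in total degree $2m+1$ with $i$ even vanish because $ko_j(\pt)_{(p)}$ is concentrated in degrees $\equiv 0 \pmod 4$, while with $i$ odd the localized homology $H_i(\bub{\Gamma})_{(p)}$ is torsion-free. The hard part will be the collapse at the prime $2$; I would attack this through the identification $ko_*(T^n)_{(2),\IZ/p} \cong ko_*(\bub{\Gamma})_{(2)}$ (valid because $p$ is odd, via Proposition~\ref{prop:quotient_iso}) combined with a Künneth-style computation of $ko_*(T^n)_{(2)}$ from the product structure of the torus and the torsion-freeness of $ko_*(\pt)$ in the relevant summands. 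Once (iv) is established, $ko_{2m+1}(\bub{\Gamma})$ has no $p$-torsion, so the surjection $ko_{2m+1}(B\Gamma) \twoheadrightarrow ko_{2m+1}(\bub{\Gamma})$ with kernel the pure $p$-group $to_{2m+1}(B\Gamma)$ splits for trivial reasons; observing that $\bigoplus_{i=0}^n ko_{2m+1-i}(\pt)^{r_i} = \bigoplus_{i=0}^{2m+1} ko_{2m+1-i}(\pt)^{r_i}$ by connectivity of $ko$, assertion (v) follows, and (i) is a routine consequence of (ii) and (v).
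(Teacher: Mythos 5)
Your overall strategy matches the paper's: the Mayer--Vietoris sequence for the pushout, the AHSS computation of $\widetilde{ko}_*(B\Z/p)$, the Leray--Serre argument for (ii) via a $ko$-analogue of Lemma~\ref{lem_KO(BZn)_over_Z[Z/p]} and Tate vanishing, the AHSS for $\bub\Gamma$ localized at $p$, and the splitting of $\overline{f}_{2m+1}$ from $p$-torsion-freeness of the target. Parts (i), (iii), (v) and the bulk of (ii) are essentially identical to the paper's proof. Two points deserve comment.

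In (ii) you write that the Tate vanishing forces $E^2_{i,2m-i}$ to vanish ``when $i$ is even and positive.'' This should be: $E^2_{i,j}=0$ for all $i>0$ with $i+j$ even (so for all $i\ge 1$ on the diagonal $i+j=2m$). The even/odd restriction you impose would leave the odd columns unaccounted for; the argument only closes because every positive column in total degree $2m$ dies. This is likely a slip, but the correct statement is needed to conclude that $E^\infty_{0,2m}$ is the whole of $ko_{2m}(B\Gamma)$.

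The more substantive problem is in (iv), in the ``away from $p$'' branch. Comparing directly with $KO_*(\bub\Gamma)[1/p]$ via the natural map $ko\to KO$ does not give the answer: $ko$ is connective while $KO$ is periodic, and the two theories have genuinely different values on $\bub\Gamma$ (the paper's answer for $ko_{2m+1}(\bub\Gamma)$ is a finite sum over $0\le i\le 2m+1$, whereas the $KO$ answer sums over all $l\in\Z$; they differ in low degree). Your subsequent ``hard part at the prime~$2$'' discussion seems to recognize this, but the proposed fix appeals to ``torsion-freeness of $ko_*(\pt)$ in the relevant summands,'' which is misleading since $ko_1(\pt)\cong ko_2(\pt)\cong\Z/2$. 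The paper's argument away from $p$ avoids both issues: it uses Proposition~\ref{prop:quotient_iso} to identify $ko_j(\bub\Gamma)[1/p]\cong ko_j(B\Z^n_\rho)_{\Z/p}[1/p]$, and then applies the $ko$-analogue of Lemma~\ref{lem_KO(BZn)_over_Z[Z/p]} (the isomorphisms~\eqref{the:ko-homology:ko_2m(BGamma):iso_away_2} and~\eqref{the:ko-homology:ko_2m(BGamma):iso_at_2}, established at the prime~$2$ and away from it by the K\"unneth-style induction) together with Theorem~\ref{the:Homology_of_BGamma_and_bub(Gamma)} to read off the decomposition. In other words, the same module-splitting lemma you invoke in (ii) does the work in (iv) as well, with no need for a comparison with periodic $KO$ or a separate $2$-primary analysis.
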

\begin{proof}~\ref{the:ko-homology:long_exact_sequence} The Atiyah-Hirzebruch
  spectral sequence implies that that $\widetilde{ko}_{2m}(B\IZ/p)$ vanishes and
  that $\widetilde{ko}_{2m+1}(B\IZ/p)$ is a finite abelian $p$-group.  Now the
  claim follows from the long exact sequence associated to the cellular
  pushout~\eqref{pushout_for_BGamma_to_bunderbar(Gamma)}.  
\\[1mm]~\ref{the:ko-homology:ko_2m(BGamma)} 
  The proof is similar to that of
  Theorem~\ref{the:Homology_of_BGamma_and_bub(Gamma)}%
~\ref{the:Homology_of_BGamma_and_bub(Gamma):coinvariants}.  We analyze the
  Leray-Serre spectral sequence associated to the
  extension~\eqref{Gamma_as_extension}
$$
E^2_{i,j} = H_i(\IZ/p;ko_j(B\IZ^n_{\rho})) \Rightarrow ko_{i+j}(B\Gamma).
$$
One can show analogously to the proof of Lemma~\ref{lem_KO(BZn)_over_Z[Z/p]}
that there are isomorphisms of $\IZ[\IZ/p]$-modules
\begin{align}
  \label{the:ko-homology:ko_2m(BGamma):iso_away_2}
  ko_j(B\IZ^n_{\rho}) \otimes \Z[1/2] & \cong
  \bigoplus_{l=0}^n H_{l}(\IZ^n_{\rho}) \otimes ko_{j-l}(\pt) \otimes \Z[1/2] \\
  \label{the:ko-homology:ko_2m(BGamma):iso_at_2}
  ko_j(B\IZ^n_{\rho}) \otimes {\Z_{(2)}} & \cong \bigoplus_{l=0}^n
  H_{l}(\IZ^n_{\rho}) \otimes ko_{j-l}(\pt) \otimes {\Z_{(2)}}.
\end{align}

Since $ko_m(\pt)_{(p)}$ is $\Z_{(p)}$ when $m$ is divisible by four and vanishes
otherwise,
$$
\widehat H^{i+1}(\Z/p; ko_j(B\IZ^n_{\rho})) \cong \bigoplus_{\ell}\widehat
H^{i+1}(\Z/p; H_{j-4\ell}(\IZ^n_{\rho}) ).
$$
This fact, the Universal Coefficient Theorem, Lemma~\ref{Tate_duality}, and
Lemma~\ref{lem:Hochschild-Serre_ss_forKast(BGamma)}~%
\ref{lem:Hochschild-Serre_ss_forKast(BGamma):Tate_cohomology} imply $\widehat
H^{i+1}(\Z/p; ko_j(B\IZ^n_{\rho})) = 0$ when $i + j$ is even.
 
Thus $E^2_{0,2m} = ko_{2m}(B\Z^n_\rho)_{\Z/p}$ maps injectively to
$ko_{2m}(B\Z^n_\rho)^{\Z/p}$ and hence is $p$-torsion-free, and for $i >0$,
$E^2_{i,j}$ has exponent $p$ and vanishes if $i + j$ is even.  Thus
$$
ko_{2m}(B\Z^n_\rho)_{\Z/p} \cong E^2_{0,2m} = E^\infty_{0,2m}
\xrightarrow{\cong} ko_{2m}(B\Gamma).
$$

By\eqref{the:ko-homology:ko_2m(BGamma):iso_away_2},~\eqref{the:ko-homology:ko_2m(BGamma):iso_at_2}
and Theorem~\ref{the:Homology_of_BGamma_and_bub(Gamma)}~%
\ref{the:Homology_of_BGamma_and_bub(Gamma):Hm(BGamma)},%
~\ref{the:Homology_of_BGamma_and_bub(Gamma):coinvariants}
,
$$
ko_{2m}(B\IZ^n_{\rho})_{\Z/p} \cong \bigoplus_{l=0}^n H_{l}(\IZ^n_{\rho})_{\Z/p}
\otimes ko_{2m-l}(\pt) \cong \bigoplus_{l=0}^n ko_{2m-l}(\pt)^{r_l}
$$
\\[1mm]~%
\ref{the:ko-homology:ko_2m_plus_1(bub(Gamma))} We will compute the group
$ko_{2m+1}(\bub\Gamma)$ after localizing at $p$ and after inverting $p$.  We
will begin with localizing at $p$.  We use the Atiyah-Hirzebruch spectral
sequence
$$
E^2_{i,j} =H_i(\bub\Gamma; ko_j(\pt)_{(p)}) \Rightarrow ko_{i+j}(\bub
\Gamma)_{(p)}
$$
for the generalized homology theory $ko_m(-)_{(p)}$.  Note also that when $i$ is
odd, Theorem~\ref{the:Homology_of_BGamma_and_bub(Gamma)}~%
\ref{the:Homology_of_BGamma_and_bub(Gamma):bub(Gamma)} states that
$H_i(\bub\Gamma) \cong \Z^{r_i}$.  In particular, when $i+j$ is odd, $E^2_{i,j}$
is finitely generated free over $\Z_{(p)}$.  Since the differentials in the
Atiyah-Hirzebruch spectral sequence are rationally trivial, $E^{\infty}_{i,j}
\subset E^2_{i,j}$ and has finite $p$-power index whenever $i + j$ is odd.
Hence
$$
ko_{2m+1}(\bub\Gamma)_{(p)} \cong \bigoplus_i E^\infty_{i,2m+1-i} \cong
\bigoplus_i \left(ko_{2m+1-i}(\pt)^{r_i}\right)_{(p)}.
$$
Now we invert $p$.  For any integer $j \geq 0$,
\begin{align*} ko_j(\bub \Gamma)[1/p]
  & \xleftarrow{\cong} ko_j(B\Z^n_\rho)_{\Z/p}[1/p] & (\text{Proposition~\ref{prop:quotient_iso}})\\
  & \cong \oplus_i H_{i}(B\Z^n_\rho)_{\Z/p} \otimes ko_{j-i}(\pt)[1/p] &
  (\text{isomorphisms \eqref{the:ko-homology:ko_2m(BGamma):iso_away_2},
    \eqref{the:ko-homology:ko_2m(BGamma):iso_at_2}})
  \\
  & \cong \oplus_i \left(ko_{j-i}(\pt)^{r_i}\right)[1/p] &
  \text{Theorem~\ref{the:Homology_of_BGamma_and_bub(Gamma)}%
~\ref{the:Homology_of_BGamma_and_bub(Gamma):Hm(BGamma)},%
~\ref{the:Homology_of_BGamma_and_bub(Gamma):coinvariants}}
\end{align*}
\\[1mm]~
\ref{the:ko-homology:other_values} The group $to_{2m}(\bub\Gamma)$ is a subgroup
and the group $to_{2m-1}(B\Gamma)$ is a quotient group of the finite abelian
$p$-group $\widetilde{ko}_{2m}(B\IZ/p)$, hence are finite abelian $p$-groups
themselves.  To complete the proof of
assertion~\ref{the:ko-homology:other_values}, by
assertions~\ref{the:ko-homology:long_exact_sequence}
and~\ref{the:ko-homology:ko_2m_plus_1(bub(Gamma))} we only need prove that
$\overline{f}_{2m+1}$ is a split surjection.  This follows since
$ko_{2m+1}(\bub\Gamma)_{(p)}$ is free over $\Z_{(p)}$ and $\overline{f}_{2m+1}
\otimes \id_{\Z[1/p]}$ is an isomorphism.  \\[1mm]~%
\ref{the:ko-homology:ko(BGamma)} This follows from
assertions~\ref{the:ko-homology:ko_2m(BGamma)}
and~\ref{the:ko-homology:other_values}.
\end{proof}

Now we are ready to prove
Theorem~\ref{the:The_(unstable)_Gromov-Lawson-Rosenberg_Conjecture_holds_for_Gamma}.

\begin{proof}[Proof of 
Theorem~\ref{the:The_(unstable)_Gromov-Lawson-Rosenberg_Conjecture_holds_for_Gamma}] 
Let $M$ be a closed $m$-dimensional manifold with $m \ge 5$ and fundamental
group $\pi_1(M) \cong \Gamma$.  Suppose that $\alpha(M) = 0$. We have to show
that $M$ carries a metric with positive scalar curvature.

The following commutative diagram with exact rows is key to the proof.
$$
\xymatrix{\bigoplus_{(P) \in \calp} \widetilde{ko}_{m}(BP) \ar[r] &
  ko_{m}(B\Gamma) \ar[d]_{A \circ p_{B\Gamma}} \ar[r]^{\beta} &
  ko_{m}(\bub{\Gamma}) \ar[d]_{p_{\bub{\Gamma}}}
  \\
  & KO_{m}(C^*_r(\Gamma;\IR)) \ar[r] & KO_{m}(\bub{\Gamma}) }
$$
where the bottom map is the composite of the inverse of the Baum-Connes map
$KO_m^\Gamma (\eub\Gamma) \to KO^\Gamma_m(C^*_r(\Gamma;\R))$ (which is an
isomorphism by \cite{Higson-Kasparov(2001)}) and the  map
$KO_m^\Gamma (\eub\Gamma) \to KO_m (\bub\Gamma)$ coming from induction with $\Gamma \to 1$.  
The top row is exact by Theorem~\ref{the:ko-homology}~%
\ref{the:ko-homology:long_exact_sequence}. The square commutes since the map
$p_{\bub{\Gamma}} \circ \beta$ equals the composite
$$
ko_m(B\Gamma) \to KO_m(B\Gamma) = KO^\Gamma_m(E\Gamma) \to
KO_m^\Gamma(\eub\Gamma) \to KO_m(B\Gamma).
$$

Since by assumption $\alpha(M) = 0$, the image of $D[f_M \colon M \to B\Gamma]
\in ko_m(B\Gamma)$ under the composite $p_{\bub{\Gamma}} \circ \beta$ is zero,
where $f_M \colon M \to B\Gamma$ is the classifying map of $M$ associated to
$\pi_1(M) \cong \Gamma$.

Next we show that the map $p_{\bub{\Gamma}}[1/p]$ is injective.  Because of
Proposition~\ref{prop:quotient_iso}, it suffices to show
$ko_{m}(B\IZ^n_{\rho})_{\IZ/p}[1/p] \to KO_{m}(B\IZ^n_{\rho})_{\IZ/p}[1/p] $ is
injective.  Since $p$ divides the order of $\IZ/p$ it suffices to show that $ ko_{m}(B\IZ^n) \to KO_{m}(B\IZ^n)$
is injective. This follows from the following commutative square
$$\xymatrix{
  \bigoplus_{l=0}^n \bigl(ko_{m-l}(\pt)\bigr)^{\binom{n}{l}} \ar[r]_-{\cong}
  \ar[d]_-{\bigoplus_{l=0}^n \bigl(p_{\pt}\bigr)^{\binom{n}{l}}} &
  ko_{m}(B\IZ^n) \ar[d]_-{p_{B\IZ^n}}
  \\
  \bigoplus_{l=0}^n \bigl(KO_{m-l}(\pt)\bigr)^{\binom{n}{l}} \ar[r]_-{\cong} &
  KO_{m}(B\IZ^n) }
$$
since $p_{\pt} \colon ko_{m}(\pt) \to KO_m(\pt)$ is injective for all $m \in
\IZ$.  This finishes the proof that the kernel of the map $p_{\bub{\Gamma}}$
consists of $p$-torsion. Hence $\beta (D[f_M \colon M \to B\Gamma]) \in
ko_m(\bub\Gamma)$ is $p$-torsion.

Now we can finish the proof in the case that $m$ is even. Then the map $\beta$
is injective and its domain is a finitely generated abelian group without
$p$-torsion by
Theorem~\ref{the:ko-homology}~\ref{the:ko-homology:ko_2m(BGamma)}
and~\ref{the:ko-homology:long_exact_sequence}. Hence $D[f_M \colon M \to B\Gamma] \in
ko_m(B\Gamma)$ is trivial and we conclude from Proposition~\ref{prop:Stolz-Jung}
that $M$ carries a metric with positive scalar curvature.

Hence we will now assume that $m$ is odd. Then the target of $\beta$ is a
finitely generated abelian group without $p$-torsion by
Theorem~\ref{the:ko-homology}~\ref{the:ko-homology:ko_2m_plus_1(bub(Gamma))}.
Hence the image of $D[f_M \colon M \to B\Gamma] \in ko_m(B\Gamma)$ under $\beta$
is zero.  We conclude from Theorem~\ref{the:ko-homology}~%
\ref{the:ko-homology:long_exact_sequence} that there is an element
$$(x_P)_{(P) \in \calp} \in \bigoplus_{(P) \in \calp} \widetilde{ko}_{m}(BP)$$
which is mapped under $\bigoplus_{(P) \in \calp} \widetilde{ko}_{m}(BP) \to
ko_m(B\Gamma)$ to $D[f_M \colon M \to B\Gamma]$. Combining this with
Lemma~\ref{lem_widetildeOmega_spin_to_widetilde_ko_is-surjective} yields
elements $[N_P \to BP] \in \widetilde \Omega^{\text{Spin}}_m(B\IZ/p)$ such that
the image of $[N_P \to BP]_{(P) \in \calp}$ under the composite
$$
\bigoplus_{(P) \in \calp} \widetilde \Omega^{\text{Spin}}_m(BP) \to
\Omega^{\text{Spin}}_m(B\Gamma) \xrightarrow{D} ko_m(B\Gamma)
$$
agrees with $D[f_M \colon M \to B\Gamma]$.  By surgery we can arrange that the
map $N_P \to BP$ is $2$-connected and in particular a classifying map for
$N_P$.  Since $m$ is odd,
$\widetilde{KO}_m(C^*_r(P;\R)) = 0$ (see the beginning of
Section~\ref{sec:Equivariant_KO-cohomology}).  Hence since the
Gromov-Lawson-Rosenberg conjecture holds for manifolds whose fundamental group
is odd-order cyclic \cite{{Kwasik-Schultz(1990)}}, each $N_P$ admits a metric of
positive scalar curvature.  Recall
$$
D[f_M\colon M\to B\Gamma] = D[\left(\amalg_{P \in (\calp)} N_P\right) \to
\left(\amalg_{P \in (\calp)}BP\right) \to B\Gamma ] \in ko_m(B\Gamma).
$$
Hence, by Proposition~\ref{prop:Stolz-Jung}, $M$ admits a metric of positive
scalar curvature.  

Now we just need to show that the last sentence of
Theorem~\ref{the:The_(unstable)_Gromov-Lawson-Rosenberg_Conjecture_holds_for_Gamma}
is valid.

Let $M$ be a closed spin manifold with odd dimension $m \geq 5$ and fundamental
group $\Gamma$.  Suppose that its $p$-cover $\widehat M$ associated with the
subgroup $\iota \colon \Z^n \to \Gamma$ admits a metric of positive scalar curvature.
Then $0 = \alpha(\widehat M) = \iota^*\alpha(M) \in KO_m(C^*_r(\Z^n; \R))$.
Hence by Theorem~\ref{the:Topological_K-theory_of_the_real_group_Cast-algebra}%
~\ref{the:Topological_K-theory_of_the_real_group_Cast-algebra:K1_abstract},
$\alpha(M) = 0$.  Hence by our argument above, $M$ admits a metric of positive
scalar curvature. 
\end{proof}


\typeout{---------------------------------------- Appendix   ---------------------------------}

\begin{appendix}
  \section{Tate cohomology, duality, and transfers} \label{tate_and_transfer}

  Here we collect facts concerning duality in Tate cohomology, transfers in
  generalized (co)-homology theories, and edge homomorphisms in the Leray-Serre
  spectral sequence.

  Recall that $\widehat{H}^*(G;M)$ denotes the \emph{Tate cohomology}
  (see~\cite[VI.4]{Brown(1982)}) of a finite group $G$ with coefficients in a
  $\IZ [G]$-module $M$, that $\widehat{H}^i(G;M) = H^i(G;M)$ for $i \ge 1$, that
  $\widehat{H}^i(G;M) = H_{-i-1}(G;M)$ for $i \le -2$, and that there is an
  exact sequence
$$\label{norm_exact_sequence} 0 \to \widehat{H}^{-1}(G;M) \to M_G \xrightarrow{N} M^G \to 
\widehat{H}^{0}(G;M) \to 0.$$ Here $M^G $ are the \emph{invariants} of $M$, $
M_G = M \otimes_{\IZ G } \Z = M/\langle gm - m \rangle_{g \in G, m\in M}$ are
the \emph{coinvariants} of $M$, and $N[m] = \sum_{g \in G} gm$ is the \emph{norm
  map}.  Note $M^G = H^0(G; M)$ and $M_G= H_0(G;M)$.

For a abelian group $M$, define the \emph{dual $M^* = \hom_\Z(M,\Z)$} and the
\emph{torsion dual $M^\wedge = \hom_\Z(M,\IQ/\Z)$}.  Note that if $M$ is a
finitely generated free abelian group (respectively a finite abelian group) then
there is a non-canonical isomorphism $M \cong M^*$ (respectively $M \cong
M^\wedge$).  If $M$ is a left $\IZ G$-module, give $M^*$ and $M^\wedge$ the
structure of left $\IZ G$-modules by defining $(g\varphi)(m) :=
\varphi(g^{-1}m)$ for $g \in G$ and $m \in M$.

\begin{lemma}[Tate duality] \label{Tate_duality} Let $G$ be a finite group and
  $M$ be a finitely generated $\IZ G $-module which contains no $p$-torsion for
  all primes $p$ dividing the order of $G$.  Then for all integers $i$ there is
  an isomorphism of abelian groups
$$\widehat{H}^i(G;M) \cong \widehat H^{-i}(G;M^*).$$ 
Hence for all integers $i >0$,
$$
H^{i+1}(G;M) \cong H_i(G;M^*).
$$
\end{lemma}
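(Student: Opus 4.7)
The plan is a standard application of the Universal Coefficient Theorem to a complete resolution, preceded by a reduction to the $\Z$-free case. First, let $T \subset M$ be the torsion subgroup; by hypothesis $|T|$ is coprime to $|G|$, and since Tate cohomology $\widehat{H}^*(G;-)$ is both $|G|$-torsion and $|T|$-torsion on $T$, it vanishes on $T$. The long exact sequence of $0 \to T \to M \to M/T \to 0$ therefore gives $\widehat{H}^i(G;M) \cong \widehat{H}^i(G;M/T)$ for every $i$, while $\hom_\Z(T,\Z)=0$ gives $M^*\cong(M/T)^*$; so we may replace $M$ by $M/T$ and assume $M$ is finitely generated and $\Z$-free.

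Fix a complete resolution $P_\bullet$ of $\Z$ by finitely generated free $\Z G$-modules, so that $\widehat{H}^i(G;N) = H^i(\hom_{\Z G}(P_\bullet,N))$. Tensor-hom adjunction produces a natural isomorphism of cochain complexes
\[
\hom_{\Z G}(P_\bullet,\,M^*) \;\cong\; \hom_\Z\bigl((P_\bullet\otimes_\Z M)_G,\,\Z\bigr),
\]
where $G$ acts diagonally on $P_\bullet \otimes_\Z M$. The chain complex $C_\bullet := (P_\bullet\otimes_\Z M)_G$ consists of finitely generated free abelian groups (each $P_n$ is $\Z G$-free and $M$ is $\Z$-free), and the standard computation of Tate cohomology via complete resolutions identifies $H_i(C_\bullet)$ with $\widehat{H}^{-i-1}(G;M)$.

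Applying the Universal Coefficient Theorem to $C_\bullet$ yields the short exact sequence
\[
0 \to \Ext^1_\Z\bigl(\widehat{H}^{-i}(G;M),\,\Z\bigr) \to \widehat{H}^i(G;M^*) \to \hom_\Z\bigl(\widehat{H}^{-i-1}(G;M),\,\Z\bigr) \to 0.
\]
Each group $\widehat{H}^j(G;M)$ is finite, being $|G|$-torsion and finitely generated, so the $\hom_\Z$-term vanishes and $\Ext^1_\Z(A,\Z) \cong A^\wedge$ for finite $A$. This gives a natural isomorphism $\widehat{H}^i(G;M^*) \cong \widehat{H}^{-i}(G;M)^\wedge$, and a finite abelian group is non-canonically isomorphic to its Pontryagin dual, whence $\widehat{H}^i(G;M^*) \cong \widehat{H}^{-i}(G;M)$ as abelian groups. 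Applying this to $M^*$ in place of $M$ (which is $\Z$-free, hence satisfies the hypothesis) and using $M^{**}\cong M$ yields the stated isomorphism $\widehat{H}^i(G;M) \cong \widehat{H}^{-i}(G;M^*)$. The second assertion is then the case $i\ge 1$, combining $\widehat{H}^{i+1}=H^{i+1}$ with $\widehat{H}^{-i-1}(G;M^*)=H_i(G;M^*)$. The main care required is in the tensor-hom bookkeeping (left vs.\ right actions, diagonal action on $P_\bullet\otimes_\Z M$, and coinvariants vs.\ $\otimes_{\Z G}$); once those conventions are fixed, the rest is routine.
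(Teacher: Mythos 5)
Your proof is correct, but it takes a different route than the paper's.  The paper invokes Brown's Tate duality theorem (VI.7.3), which asserts the \emph{Pontryagin} duality $\widehat H^i(G;M)^\wedge \cong \widehat H^{-i-1}(G;M^\wedge)$ for arbitrary $\IZ G$-modules $M$, and then converts $\IQ/\IZ$-duality into $\IZ$-duality by a dimension shift coming from the exact sequence $0 \to (FM)^* \to \hom_\IZ(FM,\IQ) \to (FM)^\wedge \to 0$, where $FM = M/\tors$; the middle term is cohomologically trivial since it is a $\IQ$-vector space.  You instead go back to first principles: after killing the prime-to-$|G|$ torsion, you feed a complete resolution $P_\bullet$ through tensor--hom adjunction to identify $\hom_{\IZ G}(P_\bullet, M^*)$ with the $\IZ$-dual of the free complex $C_\bullet = (P_\bullet \otimes_\IZ M)_G$, whose homology is shifted Tate cohomology, and then apply the Universal Coefficient Theorem.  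This effectively re-proves the relevant case of Brown's duality inside the proof rather than citing it, and it lands directly on $\IZ$-duality, which sidesteps the paper's $\wedge$-to-$*$ dimension shift at the cost of needing the $\IZ$-free reduction up front.  Both routes are sound; yours is more self-contained, the paper's is shorter by leaning on the reference.  One small remark: your final paragraph's replacement of $M$ by $M^*$ is unnecessary, since the isomorphism $\widehat H^i(G;M^*) \cong \widehat H^{-i}(G;M)$ you have already derived becomes the stated $\widehat H^i(G;M) \cong \widehat H^{-i}(G;M^*)$ simply by substituting $-i$ for $i$.
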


\begin{proof}
  The Tate cohomology group $\widehat{H}^i(G;M)$ is a finitely generated group
  of exponent $|G|$, hence is a finite abelian group.  Thus there is a
  non-canonical isomorphism of abelian groups $\widehat H^i(G; M) \cong \widehat
  H^i(G; M)^\wedge$.  Duality in Tate cohomology shows
$$
\widehat H^i(G; M)^\wedge \cong \widehat H^{-i-1}(G; M^\wedge)
$$ (see \cite[VI.7.3]{Brown(1982)}; duality holds for any $\IZ G$-module).  
Let $FM$ be $M$ modulo its torsion subgroup.  Then $(FM)^* \to M^*$ 
and $(FM)^\wedge \otimes \Z_{(|G|)} \to M^\wedge \otimes \Z_{(|G|)}$ are isomorphisms and 
$$
0 \to \hom_\Z(FM,\Z) \to \hom_\Z(FM,\IQ) \to \hom_\Z(FM,\IQ/\Z) \to 0
$$
is a short exact sequence.  Thus
$$
\widehat H^{-i-1}(G; M^\wedge) \cong \widehat H^{-i-1}(G; (FM)^\wedge) \cong
\widehat H^{-i}(G; (FM)^*) \cong \widehat H^{-i}(G; M^*)
$$
as desired.
\end{proof}

\begin{remark}
  \label{invariants_and_coinvariants_are_dual}
  Here is a related remark.  Let $G = \langle g \rangle$ be a finite cyclic
  group and $M$ be a $\IZ G$-module.  Then by dualizing the exact sequence
$$
M \xrightarrow{g-1} M \to M_G \to 0
$$
one obtains the exact sequence
$$
0 \to (M_G)^* \to M^* \xrightarrow{g^{-1}-1} M^*.
$$
Hence $(M_G)^* \cong (M^*)^G$.
\end{remark}

Let $\pi\colon E \to B$ be a regular $G$-cover of $CW$-complexes.  Let $\calh_*$ a
generalized homology theory and $\calh^*$ a generalized cohomology theory.
There are transfer maps $\trf_*$ and $\trf^*$ switching the domain and range of
$\pi_*$ and $\pi^*$.  Their definition is given in \cite[Chapter
4]{Adams(1978a)} when $B$ is finite and in \cite[Chapter IV,
3]{Lewis-May-Steinberger(1986)} in general.  All four maps are $G$-equivariant
with respect to the induced $G$-action on $\calh_*(E)$ and the trivial
$G$-action on $\calh_*(B)$ and $\calh^*(B)$.  Hence we have maps
\begin{align*}
  \pi_*  \colon \calh_*(E)_G & \to \calh_*(B)\\
  \trf_*  \colon \calh_*(B) & \to \calh_*(E)^G\\
  \pi^*  \colon \calh^*(B) & \to \calh^*(E)^G\\
  \trf_* \colon \calh^*(E)_G & \to \calh^*(B)
\end{align*}
The basic theorem connecting the two is this special case of the double coset
formula \cite[Corollary 6.4, p.~206]{Lewis-May-Steinberger(1986)}.
\begin{theorem} \label{norm_is_composite} Both $\trf_* \circ \pi_*$ and $\pi^*
  \circ \trf^*$ are given by the norm map, i.e.~multiplication by $\sum_{g \in
    G} g$.
\end{theorem}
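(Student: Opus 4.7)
The plan is to deduce this from the Mackey (base change) property of transfers applied to the pullback of $\pi$ along itself. Specifically, form the pullback square
$$
\xymatrix{
E \times_B E \ar[r]^-{\pi_2} \ar[d]_-{\pi_1} & E \ar[d]^-{\pi}\\
E \ar[r]_-{\pi} & B
}
$$
The base change formula for transfers (see e.g.~\cite[IV.5]{Lewis-May-Steinberger(1986)} for the cohomological version and its homological dual in~\cite[Chapter 4]{Adams(1978a)} for the case when $B$ is a finite $CW$-complex) asserts the identities
$$
\pi^* \circ \trf^*_\pi \;=\; \trf^*_{\pi_1} \circ \pi_2^*,
\qquad
\trf^{\phantom{*}}_{\pi,*} \circ \pi_* \;=\; \pi_{2,*} \circ \trf^{\phantom{*}}_{\pi_1,*}\,,
$$
both as endomorphisms of $\calh^*(E)$ and $\calh_*(E)$ respectively.

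The next step is to identify the pullback explicitly. Since $\pi\colon E \to B$ is a regular $G$-cover, the map
$$
E \times G \;\xrightarrow{\;\cong\;}\; E \times_B E,\qquad (x,g) \longmapsto (x, g\cdot x),
$$
is a $G$-equivariant homeomorphism over $B$. Under this identification $\pi_1$ becomes the trivial cover $E \times G \to E$, namely projection onto the first factor, while $\pi_2(x,g) = g\cdot x$ restricts on the copy $E \times \{g\}$ to the covering translation $L_g\colon E \to E$. Since $\pi_1$ is a trivial $|G|$-sheeted cover, its transfer is the direct sum of the inclusions of the sheets, i.e.~the isomorphism $\trf^*_{\pi_1}\colon \bigoplus_{g} \calh^*(E) \xrightarrow{\cong} \calh^*(E\times G)$ followed by summation, and $\trf_{\pi_1,*}$ is dual.

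Combining the two ingredients then gives, on cohomology,
$$
\pi^* \circ \trf^*_\pi \;=\; \sum_{g \in G} L_g^* \;=\; N,
$$
and, on homology,
$$
\trf^{\phantom{*}}_{\pi,*} \circ \pi_* \;=\; \sum_{g \in G} L_{g,*} \;=\; N,
$$
because under the identification $E \times G \cong \coprod_g E_g$ both composites break up as the sum, over $g \in G$, of the action of the covering translation $L_g$ on $\calh^*(E)$ or $\calh_*(E)$.

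The only real subtlety is that our $B\Gamma$ and $B\IZ^n$ are infinite $CW$-complexes, so one cannot quote the finite-cover version of the transfer directly. This is handled by working stably in the sense of~\cite[Chapter IV]{Lewis-May-Steinberger(1986)}: the transfer is constructed as a map of spectra using Thom-Pontryagin collapse on the equivariant normal bundle of $\pi$ (which is trivial since $\pi$ is a covering), and the base change formula in that setup is precisely \cite[Cor.~6.4, p.~206]{Lewis-May-Steinberger(1986)}. With this foundational result in hand, the geometric identification $E \times_B E \cong E \times G$ and the summation description of the transfer of a trivial cover are formal, so no further work is required.
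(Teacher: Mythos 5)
Your argument is correct and rests on the same foundational input the paper itself invokes: the paper offers no independent proof of Theorem~\ref{norm_is_composite}, merely citing \cite[Corollary~6.4, p.~206]{Lewis-May-Steinberger(1986)} as the relevant special case of the double coset formula. You have simply unpacked that special case by identifying the pullback $E\times_B E$ with $E\times G$ and noting that the transfer of a trivial $|G|$-sheeted cover is summation over the sheets, so your route is the same in substance, just made explicit.
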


For ordinary (co)homology theory, $\pi_* \circ \trf_*$ and $\trf^* \circ~\pi^*$
are both multiplication by $q = |G|$.  This has the consequence that $\pi_*$ and
$\pi_*$ are isomorphisms after inverting $q$.  These last composite formulae are
no longer true for generalized (co)homology theories, but one can say something.

A generalized homology theory is $1/q$-local if $\calh_*(X) \otimes \Z \to
\calh_*(X) \otimes \Z[1/q]$ is an isomorphism for all $X$ and $m$.  For example,
for any generalized homology theory, $\calh_*(X) \otimes \Z[1/q]$ is a
$1/q$-local generalized homology theory.  There is an analogous definition and
remark for generalized cohomology theories.

\begin{proposition}\label{prop:quotient_iso} Let $G$ be a finite group of order
  $q$.  Let $\calh_*$ and $\calh^*$ be $1/q$-local (co)homology theories.  Let
  $X$ be a $G$-$CW$-complex and $\pi \colon X \to \overline{X}$ the quotient map.
  \begin{enumerate}
  \item $\pi_m \colon \calh_m(X)_G \xrightarrow{\cong} \calh_m(\overline{X})$ is an
    isomorphism for all $m \in \Z$.
  \item If $X$ is a finite $CW$-complex, then $\pi^m \colon \calh^m(\overline X)
    \xrightarrow{\cong} \calh^m(X)^G$ is an isomorphism for all $m \in \Z$.
  \end{enumerate}
\end{proposition}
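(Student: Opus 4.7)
The plan is to prove both parts by induction on the equivariant skeleta of $X$, with exactness of the (co)invariant functors on $\Z[1/q][G]$-modules as the key algebraic ingredient. Since $\calh_*$ and $\calh^*$ are $1/q$-local by hypothesis, each $\calh_m(X)$ and $\calh^m(X)$ is naturally a $\Z[1/q][G]$-module, so this is the natural setting.

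First I would record that the averaging element $e = q^{-1}\sum_{g \in G} g \in \Z[1/q][G]$ is an idempotent splitting the augmentation $\Z[1/q][G] \to \Z[1/q]$. This exhibits $\Z[1/q]$ as a direct summand of $\Z[1/q][G]$, hence both projective and injective over it. Consequently, $(-)_G = - \otimes_{\Z[1/q][G]} \Z[1/q]$ and $(-)^G = \hom_{\Z[1/q][G]}(\Z[1/q], -)$ are exact functors on $\Z[1/q][G]$-modules, and the canonical composite $M^G \hookrightarrow M \twoheadrightarrow M_G$ is an isomorphism for every such $M$.

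Next I would verify the base case $X = G/H$: one has $\calh_m(G/H) = \bigoplus_{gH \in G/H} \calh_m(\pt)$ as a $\Z[G]$-module with $G$ permuting the summands transitively, so $\bar\pi_m \colon \calh_m(G/H)_G \to \calh_m(\pt) = \calh_m(\overline{G/H})$ is the canonical identification; no localization is needed here, and the cohomological analogue is dual. The induction is on the skeletal filtration $X^{(0)} \subset X^{(1)} \subset \cdots$ of $X$ as a $G$-CW-complex. The equivariant cellular structure gives $X^{(n)}/X^{(n-1)} \simeq \bigvee_\alpha (G/H_\alpha)_+ \wedge S^n$ with $G$ acting through orbit permutation, so the relative term $\calh_m(X^{(n)}, X^{(n-1)})$ reduces to the base case via excision and suspension. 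Applying the exact functor $(-)_G$ to the long exact sequence of the equivariant pair and comparing via $\bar\pi_*$ with the long exact sequence of $(\overline{X^{(n)}}, \overline{X^{(n-1)}})$, the five lemma completes the induction.

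For infinite-dimensional $X$ in part (i), one passes to the filtered colimit, since both $\calh_*$ and $(-)_G$ commute with filtered colimits. Part (ii) is the exact dual, using $(-)^G$, cohomological long exact sequences, and finiteness of $X$ to avoid the colimit. The main obstacle is essentially bookkeeping: verifying $G$-equivariance of the subquotient identifications and compatibility of the comparison maps with boundary maps. Once exactness of $(-)_G$ and $(-)^G$ on $\Z[1/q][G]$-modules is in hand, the five-lemma comparison proceeds without complication.
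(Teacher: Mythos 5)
Your proposal is correct and follows essentially the same route as the paper's proof: establish exactness of $(-)_G$ and $(-)^G$ on $\Z[1/q][G]$-modules so that both sides of the comparison map are (co)homology theories on $G$-$CW$-complexes, check the map is an isomorphism on orbits $G/H$, and bootstrap to all $G$-$CW$-complexes via a Mayer--Vietoris/five-lemma argument together with (for homology) a colimit over skeleta. The only cosmetic difference is that you make the averaging idempotent $e = q^{-1}\sum_{g\in G} g$ explicit, which is just a more detailed packaging of the same exactness observation the paper states directly.
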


\begin{proof} We give the argument only for homology, the one for cohomology is
  analogous.

  Given a $G$-$CW$-complex $X$, we obtain a natural map
$$j_* \colon \calh_*(X)_G
\to \calh_*(G\backslash X)$$ Since the functor sending a $\IZ[1/q][G]$-module
$M$ to $M_G$ is an exact functor, the assignment sending a $G$-$CW$-complex $X$
to $\calh_*(X)_G$ and to $\calh_*(G\backslash X)$ are $G$-homology theories and
$j_*$ is a natural transformation of $G$-homology theories.  One easily checks
that $j_*$ is a bijection when $X$ is $G/H$ for any subgroup $H \subset G$. A
Mayer-Vietoris argument implies that $j_*$ is a bijection for any finite
$G$-$CW$-complex, and, since homology commutes with colimits, $j_*$ is a
bijection for any $G$-$CW$-complex.
\end{proof}
Atiyah's computation of $K^0(B\Z/p)$ shows that a finiteness hypothesis is
necessary for a generalized cohomology theory.

At several places in this paper we use a property of edge homomorphisms in
spectral sequences and we review this now.  Let $\calh_*$ and $\calh^*$ be
(co)homology theories.  Let $F \to E \to B$ be a fibration. Assume that $B$ is
path-connected with fundamental group $G$.  There are Leray-Serre spectral
sequences
\begin{align*}
  E^2_{i,j} = H_i(B;\calh_j(F)) & \Rightarrow \calh_{i+j}(E) \\
  E_2^{i,j} = H^i(B;\calh^j(F)) & \Rightarrow \calh^{i+j}(E).
\end{align*}
These spectral sequences have coefficients twisted by the action of $G$ on the
(co)homology of the fiber, in particular
\begin{align*}
  E^2_{0,j} \cong H_0(G;\calh_j(F)) & = \calh_{j}(F)_G \\
  E_2^{0,j} \cong H^0(G;\calh^j(F)) & = \calh^{j}(F)^G.
\end{align*}
The spectral sequences give maps
\begin{align*}
  H_j(F)_G \cong E^2_{0,j} \twoheadrightarrow & E^\infty_{0,j} \rightarrowtail \calh_j(E) \\
  \calh^j(E) \twoheadrightarrow & E_\infty^{0,j} \rightarrowtail E_2^{0,j} \cong
  H^j(F)^G;
\end{align*}
the composites are called the \emph{edge homomorphisms}.

The proof of the proposition below follows the proof in the untwisted case
\cite[page 354]{Switzer(1975)}.

\begin{proposition}[Edge homomorphisms] \label{prop:edge_homomorphisms} The edge
  homomorphisms
  \begin{align*}
    \calh_j(F)_G & \to \calh_j(E) \\
    \calh^j(E) & \to \calh^j(F)^G \\
  \end{align*}
  equal the maps on (co)homology induced by the inclusion of the fiber $F \to
  E$.
\end{proposition}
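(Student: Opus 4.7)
The plan is to build the Leray--Serre spectral sequence from an explicit skeletal filtration of $E$ and then track the edge map back to the inclusion $F \hookrightarrow E$ by unwinding the construction. Without loss of generality we may replace $B$ by a weakly equivalent $CW$-complex with a single $0$-cell $b_0$, and let $F = \pi^{-1}(b_0)$, so that $F$ is the fiber. Set $E^{(i)} = \pi^{-1}(B^{(i)})$; this yields an ascending filtration
$$F = E^{(0)} \subset E^{(1)} \subset E^{(2)} \subset \cdots \subset E.$$
The associated exact couple produces the Leray--Serre spectral sequence, with
$$E^1_{i,j} = \calh_{i+j}(E^{(i)}, E^{(i-1)}) \cong C_i(B;\calh_j(F)),$$
where the right hand side is cellular chains with local coefficients (this identification uses that for each $i$-cell $e$ of $B$ the pair $(\pi^{-1}(\overline{e}), \pi^{-1}(\partial e))$ is equivalent to $(D^i \times F, S^{i-1} \times F)$, combined with the standard monodromy argument to get the twisting).

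First I would observe that on the zeroth column one has $E^1_{0,j} = \calh_j(F)$ and $E^2_{0,j} = H_0(B;\calh_j(F)) = \calh_j(F)_G$, with the natural projection $E^1_{0,j} \twoheadrightarrow E^2_{0,j}$ being the canonical quotient $\calh_j(F) \to \calh_j(F)_G$. For the homological edge map, next I would use that the filtration of the abutment is $F_p\calh_j(E) = \operatorname{im}\bigl(\calh_j(E^{(p)}) \to \calh_j(E)\bigr)$ and that $E^\infty_{0,j} = F_0 \calh_j(E)/F_{-1}\calh_j(E) = \operatorname{im}\bigl(\calh_j(F) \to \calh_j(E)\bigr)$, because $F_{-1} = 0$. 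In particular the composite
$$\calh_j(F) = E^1_{0,j} \twoheadrightarrow E^2_{0,j} \twoheadrightarrow E^\infty_{0,j} \hookrightarrow \calh_j(E)$$
is, by the very definition of the filtration quotient, equal to the map induced by the inclusion of the subcomplex $F = E^{(0)} \hookrightarrow E$. Factoring out the surjection $\calh_j(F) \twoheadrightarrow \calh_j(F)_G$ from both sides gives exactly the statement.

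For the cohomological edge map I would argue dually: now the filtration on $\calh^j(E)$ is $F^p \calh^j(E) = \ker\bigl(\calh^j(E) \to \calh^j(E^{(p-1)})\bigr)$, so $E_\infty^{0,j}$ is the image of $\calh^j(E)$ in $\calh^j(F) = E_1^{0,j}$ under restriction. The composition
$$\calh^j(E) \twoheadrightarrow E_\infty^{0,j} \hookrightarrow E_2^{0,j} = \calh^j(F)^G \hookrightarrow \calh^j(F) = E_1^{0,j}$$
is therefore restriction along $F \hookrightarrow E$ by the same unwinding.

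There is no real obstacle: this is a direct calculation with the definitions, and the only subtlety is making sure the identification of $E^1_{0,j}$ with $\calh_j(F)$ (and of the edge of the filtration of the abutment with the image from the fiber) is compatible with the maps on the nose, not just up to sign or shift. This is handled by choosing $b_0$ as the unique $0$-cell so that $E^{(0)} = F$ literally and the $0$-skeleton of the cellular chain complex is $\calh_j(F)$ with the trivial $G$-action coming from the basepoint.
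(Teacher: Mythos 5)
Your proof is correct and is essentially the approach the paper takes: the paper simply cites Switzer (page 354) for the untwisted case and remarks that the twisted case follows the same proof, which is exactly what you have reproduced, building the Leray--Serre spectral sequence from the skeletal filtration $E^{(i)} = \pi^{-1}(B^{(i)})$ and tracing the $(0,j)$-edge through the definitions of $E^1_{0,j}$, $E^\infty_{0,j}$, and the filtration on the abutment. One small point worth stating explicitly (you have it implicitly in ``factoring out the surjection''): the map $\calh_j(F)\to\calh_j(E)$ induced by inclusion does factor through $\calh_j(F)_G$ because the monodromy action on $F$ becomes homotopic to the inclusion after composing with $F \hookrightarrow E$; since the quotient is surjective, the induced map on $\calh_j(F)_G$ is unique, so exhibiting the edge homomorphism as one such factorization identifies it.
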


\end{appendix}




\end{document}